\newcommand*{\MRref}[2]{ \href{http://www.ams.org/mathscinet-getitem?mr=#1}{MR \textbf{#1}}}
\newcommand*{\arxiv}[1]{\href{http://www.arxiv.org/abs/#1}{arXiv: #1}}
\renewcommand{\PrintDOI}[1]{\href{http://dx.doi.org/\detokenize{#1}}{doi: \detokenize{#1}}%
  \IfEmptyBibField{pages}{, (to appear in print)}{}}
\DeclareMathAlphabet{\mathbbm}{U}{bbm}{m}{n}% from bbm.sty
\def\commutatif{\ar@{}[rd]|{\circlearrowleft}}
\newcommand{\eq}[1][r]
   {\ar@<-3pt>@{-}[#1]
    \ar@<-1pt>@{}[#1]|<{}="gauche"
    \ar@<+0pt>@{}[#1]|-{}="milieu"
    \ar@<+1pt>@{}[#1]|>{}="droite"
    \ar@/^2pt/@{-}"gauche";"milieu"
    \ar@/_2pt/@{-}"milieu";"droite"}
    \def\dar[#1]{\ar@<2pt>[#1]\ar@<-2pt>[#1]}
\newtheorem{thm}{Theorem}[section]
\newtheorem{pro}[thm]{Proposition}
\newtheorem{lem}[thm]{Lemma}
\newtheorem{cor}[thm]{Corollary}
\newtheorem{qst}[thm]{Question}
\theoremstyle{definition}{\normalfont}
\newtheorem{df}[thm]{Definition}
\newtheorem{dfpro}[thm]{Definition and Proposition}
\newtheorem{dflem}[thm]{Definition and Lemma}
\newtheorem{rem}[thm]{Remark}
\newtheorem{ex}[thm]{Example}
\newtheorem{nota}[thm]{Notations}
\newcommand{\cA}{{\mathcal A}}  
\newcommand{\cB}{{\mathcal B}}  
\newcommand{\cC}{{\mathcal C}}  
\newcommand{\cE}{{\mathcal E}}  
\newcommand{\cG}{{\mathcal G}}
\newcommand{\cH}{{\mathcal H}}  \newcommand{\sH}{{\mathscr H}}
\newcommand{\cI}{{\mathcal I}}
\newcommand{\cK}{{\mathcal K}}
\newcommand{\cL}{{\mathcal L}}  
\newcommand{\cM}{{\mathcal M}}
\newcommand{\cS}{{\mathcal S}}
\newcommand{\cU}{{\mathcal U}}  \newcommand{\sU}{{\mathscr{U}}}
\newcommand{\CC}{{\mathbb C}}
\newcommand{\EE}{{\mathbb E}}
\newcommand{\HH}{{\mathbb H}}
\newcommand{\KK}{{\mathbb K}}
\newcommand{\NN}{{\mathbb N}}
\newcommand{\PP}{{\mathbb P}}
\newcommand{\QQ}{{\mathbb Q}}
\newcommand{\RR}{{\mathbb R}}
\newcommand{\uc}{{\mathbb S}^1}
\newcommand{\ZZ}{{\mathbb Z}}
\newcommand{\bfS}{{\mathbf S}}
\newcommand{\bft}{{\mathbf t}}
\newcommand{\id}{{\mathbf 1}}
\newcommand{\wGa}{{\widetilde \Gamma}}
\newcommand{\wPU}{\what{\operatorname{PU}}}
\newcommand{\U}{\operatorname{U}}
\newcommand{\wU}{\what{\operatorname{U}}}
\newcommand{\wRExt}{{\what{\operatorname{\textrm{Ext}R}}}}
\newcommand{\wRBr}{{\what{\operatorname{\textrm{Br}R}}}}
\newcommand{\wBRO}{\what{\operatorname{\textrm{Br}O}}}
\newcommand{\fwRBr}{{\what{\mathfrak{BrR}}}}
\newcommand{\wK}{\what{\cK}}
\newcommand{\wKK}{\hat{\mathbb{K}}}
\newcommand{\wfK}{\hat{\mathfrak{K}}}
\newcommand{\wBr}{\what{{\text{\textrm{Br}}}}}
\newcommand{\fwBr}{\what{\mathfrak{Br}}}
\newcommand{\Inv}{{\operatorname{\mathsf{Inv}}}}
\newcommand{\range}{\operatorname{Im}}
\newcommand{\Gpdo}{\cG^{(0)}}
\newcommand{\grpd}{\xymatrix{\cG \dar[r] & X}}
\newcommand{\gamgpd}{\xymatrix{\Ga \dar[r] & Y}}
\newcommand{\Gamo}{\Ga^{(0)}}
\newcommand{\what}{\widehat}
\newcommand{\fr}{{\mathfrak r}}
\newcommand{\fs}{{\mathfrak s}}
\newcommand{\RG}{{\mathfrak {RG}}}
\newcommand{\frc}{\mathfrak{c}}
\newcommand{\To}{\longrightarrow}
\newcommand{\sto}{\rightarrow}
\newcommand{\mto}{\longmapsto}
\newcommand{\cstar}{C^{\ast}}
\newcommand{\Ga}{\Gamma}
\newcommand{\del}{\delta}
\newcommand{\al}{\alpha}
\newcommand{\vr}{\varrho}
\newcommand{\ve}{\varepsilon}
\newcommand{\vp}{\varphi}
\newcommand{\g}{\gamma}
\newcommand{\Om}{\Omega}
\newcommand{\om}{\omega}
\newcommand{\Hom}{\operatorname{Hom}}
\newcommand{\Isom}{\operatorname{Isom}}
\newcommand{\Aut}{\operatorname{\textsf{Aut}}}
\newcommand{\Cl}{\CC l}
\newcommand{\Co}{{\mathcal C}_0}
\def\<{\langle}
\def\>{\rangle}
\let\ipscriptstyle=\scriptscriptstyle
\def\lipsqueeze{{\mskip -3.0mu}}
\def\ripsqueeze{{\mskip -3.0mu}}
\def\ipcomma{\nobreak\mathrel{,}\nobreak}
\newbox\ipstrutbox
\def\ipstrut{\copy\ipstrutbox}
\def\lip#1<#2,#3>{\mathopen{\relax_{\ipstrut\ipscriptstyle{
#1}}\lipsqueeze
\langle} #2\ipcomma #3 \rangle}
\def\blip#1<#2,#3>{\mathopen{\relax_{\ipstrut
\ipscriptstyle{ #1}}\lipsqueeze\bigl\langle} #2\ipcomma #3 \bigr\rangle}
\def\rip#1<#2,#3>{\langle #2\ipcomma #3
\rangle_{\ripsqueeze\ipstrut\ipscriptstyle{#1}}}
\def\brip#1<#2,#3>{\bigl\langle #2\ipcomma #3
\bigr\rangle_{\ripsqueeze\ipstrut\ipscriptstyle{#1}}}
\def\angsqueeze{\mskip -6mu}
\def\smangsqueeze{\mskip -3.7mu}
\def\trip#1<#2,#3>{\langle\smangsqueeze\langle #2\ipcomma #3
\rangle\smangsqueeze\rangle_{\ripsqueeze\ipstrut\ipscriptstyle{#1}}}
\def\btrip#1<#2,#3>{\bigl\langle\angsqueeze\bigl\langle #2\ipcomma
#3
\bigr\rangle
\angsqueeze\bigr\rangle_{\ripsqueeze\ipstrut\ipscriptstyle{#1}}}
\def\tlip#1<#2,#3>{\mathopen{\relax_{\ipstrut\ipscriptstyle{
#1}}\lipsqueeze \langle\smangsqueeze\langle} #2\ipcomma #3
\rangle\smangsqueeze\rangle}
\def\btlip#1<#2,#3>{\mathopen{\relax_{\ipstrut\ipscriptstyle{
#1}}\lipsqueeze
\bigl\langle\angsqueeze\bigl\langle} #2\ipcomma #3
\bigr\rangle\angsqueeze\bigr\rangle}
\def\ip(#1|#2){(#1\mid #2)}
\def\bip(#1|#2){\bigl(#1 \mid #2\bigr)}
\def\Bip(#1|#2){\Bigl( #1 \bigm| #2 \Bigr)}
\def\h[#1,#2]{[#1,#2]_{H}}
\newcommand{\Id}{\text{\normalfont Id}}
\newcommand{\mydot}{\mathbin{:}}
\def\ipp(#1|#2){\ip({#1}|{#2})_{\pi}}
\title{Graded Brauer groups of a groupoid with involution}
\author{El-ka\"ioum M. Moutuou}
\address{LMAM, Universit\'e de Lorraine - Metz, CNRS UMR 7122, Ile du Saulcy,
F-57045 Metz Cedex 1, France
}
\curraddr{School of Mathematical Sciences, University of Southampton, Highfield, Southampton SO17 1BJ, UK}
\email{E.MohamedMoutuou@soton.ac.uk}
\begin{document}

\begin{abstract}
We define a group $\wRBr(\cG)$ containing, in a sense, the graded complex and orthogonal Brauer groups of a locally compact groupoid $\cG$ equipped with an involution. When the involution is trivial, we show that the new group naturally provides a generalisation of Donovan-Karoubi's graded orthogonal Brauer group $GBrO$. More generally, it is shown to be a direct summand of the well-known graded complex Brauer goup. In addition, we prove that $\wRBr(\cG)$ identifies with a direct sum of a Real cohomology group and the abelian group $\wRExt(\cG,\uc)$ of Real graded $\uc$-central extensions. A cohomological picture is then given.  
\end{abstract}

\maketitle

%\tableofcontents

%%%%%%%

\section*{Introduction}

The idea of working with $\ZZ_2$-graded real $\cstar$-algebras~\cite{Li:Real_algebras} as if they were complex ones first emerged in Kasparov's founding paper~\cite{Kasparov:Operator_K} of bivariant $K$-theory. The trick merely consists of "\emph{complexifying}" a given graded real $\cstar$-algebra; that is, considering the complex $\cstar$-algebra $A_\CC:=A\otimes_\RR\CC$ together with the induced $\ZZ_2$-grading. The latter admits the obvious conjugate-linear involution $a\otimes_\RR\lambda \mto a\otimes_\RR\bar{\lambda}$. Conversely, any $\ZZ_2$-graded complex $\cstar$-algebra $B$ admitting a conjugate linear involution $\sigma$ is necessarily the complexification of a graded real $\cstar$-algebra $B_\RR$, which identifies with the fixed points of $\sigma$. It follows that "complexification" defines an equivalence from the category of $\ZZ_2$-graded real $\cstar$-algebras to the category of $\ZZ_2$-graded complex $\cstar$-algebras endowed with conjugate-linear involutions (also called \emph{Real involutions} or Real structures in the literature~\cite{Kasparov:Operator_K}). The inverse functor is "\emph{realification}"; that is, taking the fixed point set of the involution. In fact, working with the complexified algebra instead of the original real one is useful especially when it comes to discuss functional calculus. However, that equivalence of categories no longer holds when the $\cstar$-algebras are acted upon by topological groupoids endowed with Real structures~\cite{Moutuou:Real.Cohomology}. By an action of a Real groupoid $(\cG,\tau)$ on a $\ZZ_2$-graded $\cstar$-algebra $A$ equipped with a Real involution $\sigma$ we mean an action $\al=(\al_g)_{g\in \cG}$ of $\cG$ by ${}^\ast$-automorphisms on $A$~\cite{Kumjian-Muhly-Renault-Williams:Brauer} such that for all $g\in \cG, a\in A_{s(g)}, \al_{\tau(g)}(\sigma(a))=\sigma(\al_g(a))$, and $\al_g:A_{s(g)}\To A_{r(g)}$ is an isomorphism of $\ZZ_2$-graded $\cstar$-algebras. The reason of such a failure is that an action of $\cG$ on $A_\RR$ does not extends to an action of $(\cG,\tau)$ on $A$ satisfying the condition mentioned above, unless the involution $\tau:\cG \To \cG$ is trivial.

In the present paper, we are dealing with stable continuous-trace $\ZZ_2$-graded $\cstar$-algebras $A$~\cite{Parker:Brauer,Rosenberg:Continuous-trace} endowed with Real involution  and acted upon by a Real groupoid $(\cG,\tau)$. Forgetting the involutions, it is known that~\cite{Kumjian-Muhly-Renault-Williams:Brauer,Tu:Twisted_Poincare} giving such $\cstar$-algebras is equivalent to giving $\ZZ_2$-graded Dixmier-Douady bundles $\cA$ over $\cG$; that is, a $\ZZ_2$-graded elementary $\cstar$-bundle $\cA\To \Gpdo$ satisfying Fell's condition, together with a family of $\ZZ_2$-graded ${}^\ast$-isomorphisms $\al_g:\cA_{s(g)}\To \cA_{r(g)}$ such that $\al_{gh}=\al_g\al_h$ whenever the product makes sense and $\al_g^{-1}=\al_{g^{-1}}$. The \emph{graded Brauer group} $\wBr(\cG)$~\cite{Tu:Twisted_Poincare} of $\cG$ is defined from Morita equivalence classes of such bundles, or equivalently from stable continuous-trace $\ZZ_2$-graded $\cstar$-algebras equipped with a $\cG$--action. If $\cG$ is a transformation groupoid $X\rtimes G$, where $G$ is a locally compact group acting on $X$, then $\wBr(\cG)=\wBr_G(X)$ is nothing but the equivariant analogue of Parker's~\cite{Parker:Brauer}, and the graded analogue of Crocker-Kumjian-Raeburn-Williams~\cite{Crocker-Kumjian-Raeburn-Williams:Brauer}. It is shown in~\cite{Tu:Twisted_Poincare} that if $\cG$ is locally compact second-countable and Hausdorff, then 
$$\wBr(\cG)\cong \check{H}^0(\cG_\bullet,\ZZ_2)\times \check{H}^1(\cG_\bullet,\ZZ_2)\times \check{H}^2(\cG_\bullet,\uc).$$  

Note that $\wBr$ generalises Donovan-Karoubi's $GBrU$~\cite{Donovan-Karoubi} and Parker's $GBr^\infty$. Roughly speaking, a graded Dixmier-Douady bundle $\cA\in \wBr(\cG)$ is of \emph{parity $0$} (resp. of parity $1$) if it has typical fiber $\KK(\hat{\cH})$ (resp. $\KK(\cH)\oplus \KK(\cH)$), where $\hat{\cH}$ (resp. $\cH$) is a graded complex separable Hilbert space (resp. is a complex separable Hilbert space)~\cite{Tu:Twisted_Poincare,Freed-Hopkins:Twisted_K2}. Noticing that $\KK(\cH)\oplus \KK(\cH)\cong \KK(\cH)\hat{\otimes}\Cl_1$, the isomorphism above implies that if the base space of $\cG$ is connected, then $\wBr(\cG)$ is a $\ZZ_2$-graded group. 

Instead of simply generalising Donovan-Karoubi's graded orthogonal Brauer group $GBrO$ to groupoids, we are going further. More precisely, we introduce a new group $\wRBr$, which enables us to study graded complex and real Dixmier-Douady bundles simultaneously. We start with a locally compact Hausdorff second-countable Real groupoid $(\cG,\tau)$ with a Haar system~\cite[\S2]{Moutuou:Real.Cohomology}, and define $\wRBr(\cG)$ as the set of Morita equivalence classes of \emph{Real graded Dixmier-Douady bundles} over $(\cG,\tau)$; \emph{i.e.}, graded Dixmier-Douady bundles $\cA$ that come equipped with Real structures satisfying some relevant relations. We shall note that we introduced in~\cite{Moutuou:Twistings} a group $\wRBr_\ast(\cG)$ which actually is but the subgroup of $\wRBr(\cG)$ consisting of Real graded Dixmier-Douady bundles (that we called $B$-fields in {\it loc. cit.}) that locally look like a graded elementary complex $\cstar$-algebra $\wKK$ endowed with a Real involution. We have shown that such bundles are, up to Morita equivalence, of eight types. Thus, for a Real groupoid $(\cG,\tau)$ with connected base space, $\wRBr_\ast(\cG)$ is a $\ZZ_8$-graded group. Roughly speaking, $\wRBr_\ast(\cG)$ is the subgroup of elements of constant types in $\wRBr(\cG)$. In the present paper, we explore both geometric (in terms of groupoid extensions) and cohomological interpretations of $\wRBr(\cG)$. We show that when the Real structure of $\cG$ is trivial, $\wRBr(\cG)$ is a generalisation of Donovan-Karoubi's graded orthogonal Brauer group. For fixed point free involutions, we show that $\wRBr$ is a direct summand of $\wBr$. Our interest in the cohomological classification of Real graded Dixmier-Douady bundles is motivated by the study of twisted $KR$-theory we present in~\cite{Moutuou:Thesis}. \\

{\bf General plan.} In Appendix A, we classify all Real structures on graded elementary complex $\cstar$-algebras. In Section 1, we give general notions of Real graded Banach bundles on a Real space. In Section 2, we define Real graded Dixmier-Douady bundles over a locally compact second-countable Hausdorff Real groupoid. Then we define the group $\wRBr(\cG)$ for locally compact second-countable Hausdorff Real groupoid $\cG$ and present few properties. In Section 3, we investigate connections between $\wRBr$ and the already known Brauer groups of topological groupoids and spaces, mainly with complex and real Brauer groups. In Section 4, we introduce a group $\Inv\wfK$ that is crucial in the cohomological picture of $\wRBr(\cG)$. In Section 5, we define the notion of generalised classifying morphisms for Real graded Dixmier-Douady bundles, and then exhibit their construction in Section 6. In Section 7, we prove the first intermediate isomorphism theorem establishing an isomorphism between the subgroup $\wRBr_0(\cG)$ generated by elements of "type 0" and the group of isomorphism classes of "stable" generalised classifying morphisms. Section 8 is devoted to the case of a locally compact Real group; {\it i.e.}, when the unit space of the groupoid is the one point set. In Section 9, we prove the main isomorphism theorems of the paper. Finally, in Section 10, we discuss the case of "ungraded" Real Dixmier-Douady bundles, from which we obtain the Real analogue of~\cite{Kumjian-Muhly-Renault-Williams:Brauer}.\\

{\bf Preliminaries and conventions}. Recall~\cite{Atiyah:K_Reality} that a Real space is a pair $(X,\tau)$ consisting of a (topological) space $X$ and a homeomorphism $\tau:X\To X$, called {\it (Real) involution} such that $\tau^2=\id$. A {\it Real map} $f:(X,\tau)\To (Y,\sigma)$ between Real spaces consists of a map $f:X\To Y$ such that $\sigma(f(x))=f(\tau(x))$ for all $x\in X$. More generally, a {\it Real groupoid} is a groupoid $\grpd$ endowed with a groupoid isomorphism $\tau:\cG\To \cG$ such that $\tau^2=\id$. {\it Real morphisms} between Real groupoids are defined in the obvious way. The category of all Real groupoids and Real morphisms is denoted $\RG_s$. There is also the notion of Real generalised morphisms between Real groupoids defined in~\cite{Moutuou:Real.Cohomology}, which is an equivariant version of the ordinary notion of generalised morphism (~\cite{Hilsum-Skandalis:Morphismes,Tu-Xu-Laurent-Gengoux:Twisted_K}. The category whose objects are Real groupoids and whose morphisms are Real generalised morphisms is denoted by $\RG$. We refer to~\cite{Moutuou:Real.Cohomology} for details about all the materials used in this paper: Real groupoids and their cohomology theory, Real generalised morphisms and Real graded central extensions. 

Given a Real space $(X,\tau)$ (resp. a Real groupoid $(\cG,\tau)$), we will often omit the involution and simply write $X$ (resp. $\cG$). The image of an element by the involution will often be represented by a "bar" over it; {\it i.e.}, for $g\in \cG$, we write $\bar{g}:=\tau(g)$. By a Real point of a Real groupoid we will mean a one that is invariant under the involution. The {\it Real part} of $\cG$ is the subset of all its Real points. 

Finally, throughout the paper, by a Real groupoid we will mean a locally compact second-countable Hausdorff Real groupoid with Real Haar system (see {\it loc.cit.}), unless otherwise stated.

%%%%

\section{Real fields of graded Banach bundles}

In this section we give the basics of Real fields of graded Banach spaces, Banach algebras, Hilbert spaces, and of $\cstar$--algebras over Real spaces. We shall first recall from Appendix A that a {\it Real graded $\cstar$-algebra} (abbreviated as {\it Rg $\cstar$--algebra}) is a $\ZZ_2$-graded complex $\cstar$-algebra $A=A^0\oplus A^1$ endowed with a conjugate-linear ${}^\ast$-automorphism $\sigma:A\To A$ such that $\sigma^2=\id$ and $\sigma(A^i)=A^i,i=0,1$. Rg Banach spaces, Banach algebras or Hilbert spaces are defined similarly. The involution $\sigma$ is called a {\it Real structure}, and will often be omitted and represented, as in the case of groupoids, by a "bar"; that is, for $a\in A$ we will write $\bar{a}:=\sigma(a)$.

\begin{df}[Compare~\cite{Doran-Fell:Representations}]\label{R-gdd-bdl}
Let $(X,\tau)$ be a locally compact Hausdorff Real space. A  \emph{continuous (resp. u.s.c., for upper semi-continuous) Real field of graded Banach spaces} $\cA$ over $(X,\tau)$ consists of a family $(\cA_x)_{x\in X}$ of graded Banach spaces together with a topology on $\tilde{\cA}=\coprod_{x\in X}\cA_x$ and an involution $\sigma:\tilde{\cA}\To \tilde{\cA}$ such that :
\begin{itemize}
  \item[(i)] the topology on $\cA_x$ induced from that on $\tilde{\cA}$ is the norm-topology;
  \item[(ii)] the projection $p:\tilde{\cA}\To X$ is Real, continuous, and open;
 \item[(iii)] the map  $a\mto \|a\|$ is continuous (resp. u.s.c) from $\tilde{\cA}$ to $\RR^+$, and $\|\sigma(a)\|=\|a\|, \ \forall a\in A$. 
 \item[(iv)] the map $(a,b)\mto a+b$ is continuous from $\tilde{\cA}\times_{X} \tilde{\cA}$ to $\tilde{\cA}$;
 \item[(v)] the scalar multiplication $(\lambda, a) \mto \lambda a$ is continuous from $\CC \times \tilde{\cA}$ to $\tilde{\cA}$;
  \item[(vi)] the induced bijection  $\sigma_x:\cA_x \To \cA_{\tau(x)}$ is an anti-linear isomorphism of graded Banach spaces for every $x\in X$, {\it i.e.} the diagram 
\begin{equation}~\label{diag-usc}
  \xymatrix{\CC \times \cA_x \ar[r] \ar[d]  & \cA_x \ar[d] \\ \CC \times \cA_{\tau(x)} \ar[r] & \cA_{\tau(x)}
  }
\end{equation}
   commutes, where the horizontal arrows are the action of $\CC$ on the fibres and the vertical ones are the Real involutions ($\CC$ being endowed with the complex conjugation), and $\sigma \circ \epsilon_x= \epsilon_{\tau(x)}\circ \sigma_x$.
 \item[(vii)] if $\lbrace a_i\rbrace$ is a net in $\tilde{\cA}$ such that $\|a_i\| \sto 0$ and $p(a_i)\sto x \in X$, then $a_i\sto 0_x$, where $0_x$ is the zero element in $\cA_x$. 
  \end{itemize}
We also say that $(\cA,\sigma)$ is a \emph{Real graded Banach bundle (resp. u.s.c. bundle) over $(X,\tau)$}.   
\end{df}

\begin{df}
A \emph{Rg Hilbert bundle (resp. u.s.c. bundle)} over $(X,\tau)$ is a Real graded Banach bundle (resp. u.s.c. bundle) $(\cA,\sigma)$ over $(X,\tau)$ each fibre $\cA_x$ o which is a graded Hilbert space such that the fibre-wise scalar product satisfies $$ \< \sigma_x(\xi),\sigma_x(\eta)\> = \overline{\< \xi,\eta\>}$$ for all $\xi, \eta \in \cA_x$.
\end{df}

\begin{df}
A \emph{Rg $\cstar$-bundle} (resp. u.s.c. $\cstar$-bundle) over $(X,\tau)$ is a Rg Banach bundle (resp. u.s.c. bundle) $(\cA,\sigma)$ such that each fibre is a graded $\cstar$-algebra and the following properties hold
\begin{itemize}
 \item[(a)] the map $(a,b)\mto ab$ is continuous from $\tilde{\cA} \times_X \tilde{\cA}$ to $\tilde{\cA}$;
 \item[(b)] $\sigma(ab)=\sigma(a)\sigma(b)$ for all $(a,b)\in \tilde{\cA}\times_X \tilde{\cA}$;
 \item[(c)] for $x\in X$, $\sigma_x(a^\ast)=\sigma_x(a)^\ast$ for all $a\in \cA_x$.
 \end{itemize}		
\end{df}

Homomorphism of Rg u.s.c. Banach bundles and of u.s.c. $\cstar$-bundles are defined in an obvious way. 

\begin{ex}[Trivial bundles]
If $(A,{}^-)$ is any Rg Banach algebra (resp. $\cstar$-algebra), then the first projection $pr_1: (X\times A,\tau \times {}^-)\To (X,\tau)$ defines a Rg Banach bundle (resp. $\cstar$-bundle) with fibre $A$. A Real graded Banach bundle (resp. $\cstar$-bundle) of this form is called \emph{trivial}. 
\end{ex}

\begin{df}~\label{df:loc-triv_gBund}
A u.s.c. field of graded Banach spaces $\tilde{\cA}\To X$ (without Real structure) is said to be \emph{locally trivial} if for every $x\in X$, there exists a neighborhood $U\ni x$ such that $\tilde{\cA}_{|U}$ is isomorphic (under a graded isomorphism) to a trivial field $U\times A$, where $A$ is a graded Banach space. 

Similarly, we talk about locally trivial field of graded Hilbert spaces, graded Banach algebras, and graded $\cstar$-algebras.
\end{df}

Unless otherwise stated, all of the graded Banach bundles and $\cstar$-bundles we are dealing with are assumed to be locally trivial.

We shall however point out that the above notion of local triviality is not sufficient when Real structures are involved. Specifically, suppose $(X,\tau)$ is a Real space and $(\cA,\sigma)\To (X,\tau)$ is a u.s.c. Real field of graded Banach spaces which is locally trivial in the sense of Definition~\ref{df:loc-triv_gBund}. Then it is not true that there exists a Rg Banach space $A$ such that the Real space $\cA$ locally behaves like $A$ in the sense that there would exist for all $x\in X$ an open invariant neighborhood (also called {\it Real neighborhood}) $U$ of $x$ (\emph{i.e.} $\tau(U)=U$) and a Real homeomorphism $h:p^{-1}(U)\To U\times A$; or equivalently, that there would exist a Real open cover $\{U_i\}_{i\in I}$ of $X$ ({\it i.e.} $I$ has an involution $i\mto \bar{i}$ such that $\tau(U_i)=U_{\bar{i}}, \forall i\in I$; see~\cite{Moutuou:Real.Cohomology}) and a trivialisation $h_i:p^{-1}(U_i)\To U_i\times A$ such that the following diagram commutes 

\begin{eqnarray}~\label{eq:diagram-LTCR}
 \xymatrix{p^{-1}(U_i) \ar[r]^{h_i}\ar[d]^{\tau_{|U_i}} & U_i\times A \ar[d]^{\tau \times bar} \\ p^{-1}(U_{\bar{i}}) \ar[r]^{h_{\bar{i}}} & U_{\bar{i}}\times A}	
 \end{eqnarray}

We then give the following

\begin{df}
A Rg Banach bundle (resp. $\cstar$-bundle, Hilbert bundle, etc.) $\cA\To X$ is LTCR (\emph{locally trivial in the category of Real spaces}) if there exists a Rg Banach space (resp. $\cstar$-algebra, Hilbert space, etc.) $A$ and a Real local trivialisation $(U_i,h_i)_{i\in I}$ such that the diagram~\eqref{eq:diagram-LTCR} commutes.	
\end{df}

Here is an example of a not LTCR Real graded $\cstar$-bundle.

\begin{ex}
Let $A$ be a simple separable stably finite unital $\cstar$-algebra that is not the complexification of any real $\cstar$-algebra~\cite[Corollary 4.1]{Phillips:anti-isom}. Define a continuous Real field of (trivially) graded $\cstar$-algebras $\tilde{\cA}$ over the Real space $\bfS^{0,1}=\{+1,-1\}$ by setting $$\cA_{-1}:=A, \quad  {\rm and\ } \cA_{+1}:=\overline{A},$$ where $\overline{A}$ is the complex conjugate of $A$, together with the Real structure $\sigma:\tilde{\cA}\To \tilde{\cA}$ given by the conjugate linear ${}^\ast$-isomorphism $\flat:A\To \overline{A}$ (the identity map). Then $\cA$ is not LTCR since $A \ncong \overline{A}$.	
\end{ex}

\begin{df}[Pull-backs]
If $(\cA,\sigma)$ is a Rg $\cstar$-bundle over $(X,\tau)$ and $\vp:(Y,\vr) \To (X,\tau)$ is a continuous Real map, then the \emph{pull-back of $(\cA,\sigma)$ along $\vp$} is the Rg $\cstar$-bundle $(\vp^\ast \cA,\vp^\ast \sigma)\To (Y,\vr)$, where $\vp^\ast \cA:=Y \times_{\vp,Y,p}\cA$, and $\vp^\ast \sigma (y,a):=(\vr(x),\sigma(a)), \ \forall (y,a)\in \vp^\ast \cA$. Each fibre $(\vp^\ast \cA)_y$ can be identified with $\cA_{\vp(y)}$ and then inherits the grading of the latter.
\end{df}

It can easily be checked that if $(\cA,\sigma)\To (X,\tau)$ is LTCR, then so is the pull-back $(\vp^\ast\cA,\vp^\ast\sigma)$ over $(Y,\vr)$.

From now on, all our Rg bundles are supposed to be LTCR, unless otherwise stated.

\begin{rem}
For any Rg Banach (resp. $\cstar$-) bundle $(\cA,\sigma)\To (X,\tau)$, $\Co(X,\cA)$ is a Rg Banach (resp. $\cstar$-) algebra with respect to the obvious pointwise operations and norm $\|s\|:=\sup_{x\in X} \|s(x)\|$; the grading and the Real structure are given by $\epsilon(s)(x):=\epsilon_x(s(x))$ and $\sigma(s)(x):=\sigma_{\tau(x)}(s(\tau(x)))$.\
It is straightforward that $\sigma_x \circ \sigma_{\tau(x)} = \Id_{\cA_{\tau(x)}}, \  \sigma_{\tau(x)}\circ \sigma_x=\Id_{\cA_x}$. In particular, for a Real point $x\in X$, $\cA_x$ is a Rg Banach (resp. $\cstar$-) algebra. 
\end{rem}

Note that if $p:(\cA,\sigma)\To (X,\tau)$ is a Rg $\cstar$-bundle, then $\Co(X)$ acts by multiplication on $\Co(X,\cA)$. Moreover, this action is Real and graded in the sense that it is compatible with the Real structure and the grading. Indeed, for $f\in \Co(X)$ and $s\in \Co(X,\cA)$, we set $\sigma(f\cdot s)(x):=\overline{f(\tau(x))}\sigma(s(\tau(x)))=\tau(f)(x)\cdot \sigma(s)(x)$. Thus, $(A,\sigma)$, where $A=\Co(X,\cA)$, is a Rg $\Co(X)$-module~\cite{Kasparov:Operator_K}.

If $(\cA,\sigma)$ is a Rg Banach bundle over $(X,\tau)$, then a continuous function $s:X\To \cA$ such that $p\circ s=\Id_X$ is called a \emph{section} of $\cA$. Observe that if $s$ is a section of $\cA$, then for any $x\in X$, $s(\tau(x))$ and $\sigma(s(x))$ are in the same fibre $\cA_{\tau(x)}$. We say that $s$ is Real if $s(\tau(x))=\sigma(s(x))$. The set of all sections $s$ for which $x\mto \|s(x)\|$ is in $\Co(X)$  is denoted by $\Co(X,\cA)$.

\begin{df}
A Rg Banach bundle $p:\cA \To X$ has \emph{enough sections} if given any $x\in X$ and any $a\in \cA_x$, there is a continuous section $s\in \Co(X,\cA)$ such that $s(x)=a$. 
\end{df}

The following result ensures that all Rg Banach bundles considered in the paper have enough sections (see~\cite[Appendix C]{Doran-Fell:Representations} for a detailed proof).

\begin{thm}[Douady--dal Soglio-H\'erault]~\label{enough-sect}
Any Banach bundle over a paracompact or locally compact space has enough sections.
\end{thm}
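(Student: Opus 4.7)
The plan is to exploit the standing local triviality assumption stated after Definition~\ref{df:loc-triv_gBund}: under that hypothesis, producing a global continuous section through a prescribed element reduces to a routine partition of unity / Urysohn extension argument, so the statement is in fact much weaker than the full Douady--dal Soglio-H\'erault theorem. Note also that the conclusion concerns only the underlying Banach bundle structure, so the Real involution $\sigma$ plays no role in the argument.

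First I would fix $x_{0}\in X$ and $a\in\cA_{x_{0}}$ and use local triviality to select an open neighborhood $U\ni x_{0}$ together with a graded isomorphism $h_{U}\colon p^{-1}(U)\To U\times B$ for some graded Banach space $B$. Writing $h_{U}(a)=(x_{0},b)$, the formula $s_{U}(x):=h_{U}^{-1}(x,b)$ defines a continuous section of $\cA$ over $U$ with $s_{U}(x_{0})=a$ and $\|s_{U}(x)\|=\|b\|$ for every $x\in U$.

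Second I would globalize this section. Under either of the hypotheses on $X$ there exists a continuous function $f\colon X\to[0,1]$ with $f(x_{0})=1$ and $\supp f\subset U$: paracompact Hausdorff spaces admit subordinate partitions of unity, while on a locally compact Hausdorff space Urysohn's lemma produces such a bump function directly. Setting $s(x):=f(x)\,s_{U}(x)$ on $U$ and $s(x):=0_{x}$ otherwise, continuity inside $\supp f$ is immediate from axioms (iv)--(v) of Definition~\ref{R-gdd-bdl}, while continuity across $\partial\supp f$ reduces to axiom (vii): whenever a net $x_{\alpha}$ in $X$ satisfies $f(x_{\alpha})\to 0$, one has $\|s(x_{\alpha})\|=|f(x_{\alpha})|\|b\|\to 0$ and hence $s(x_{\alpha})\to 0_{x}$. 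Finally $\|s(\cdot)\|=|f|\,\|b\|\in\Co(X)$ and $s(x_{0})=a$, as required.

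The genuinely delicate step---and the one that forces the hypothesis ``paracompact or locally compact'' to enter essentially in the classical form of the theorem---is the construction of the local section $s_{U}$ using only the upper semi-continuity axiom (vii), without assuming local triviality; this is what requires the Michael-style selection argument carried out in~\cite[Appendix C]{FD}. Under the standing local triviality convention in force here this difficulty disappears, and only the extension step described above needs to be executed.
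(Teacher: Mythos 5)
Your argument is correct as far as it goes, but it proves a strictly weaker statement than the theorem, and the paper itself does not argue this way: it offers no proof at all, simply citing~\cite[Appendix C]{FD}, where the result is established for arbitrary (not necessarily locally trivial) Banach bundles via the selection-type argument you allude to at the end. Your route trades that generality for elementarity: once local triviality is granted, pulling back the constant section $x\mto(x,b)$ through $h_{U}$ and damping it by a Urysohn bump $f$ with $\supp f\subset U$ does produce a global section through $a$, and in fact the continuity check is even easier than you make it --- $X$ is covered by the two open sets $U$ and $X\setminus\supp f$, on each of which $s$ is manifestly continuous, so axiom (vii) of Definition~\ref{R-gdd-bdl} is not really needed. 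Two small caveats. First, an isomorphism onto a trivial field in Definition~\ref{df:loc-triv_gBund} need not be fibrewise isometric, so $\|s_{U}(x)\|=\|b\|$ is unwarranted; but axiom (iii) makes $x\mto\|s_{U}(x)\|$ continuous, which is all the final step requires. Second, the theorem as stated (and as used in~\cite{FD}) concerns \emph{any} Banach bundle, whereas your proof covers only the locally trivial ones to which the paper's standing convention applies; that is enough for every application in this paper (in particular for the corollary producing Real sections), but it should be presented as a proof of the special case rather than of the Douady--dal Soglio-H\'erault theorem itself.
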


\begin{cor}
Suppose $(X,\tau)$ is a locally compact Hausdorff Real space. Then, if $p: (\cA,\sigma)\To (X,\tau)$ is a Rg Banach bundle, Real sections always exist.
\end{cor}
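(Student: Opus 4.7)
The plan is to produce Real sections by a symmetrization (averaging) trick applied to arbitrary continuous sections supplied by Theorem~\ref{enough-sect}. Given any $s\in \Co(X,\cA)$, I would set
\[
s'(x) := \tfrac{1}{2}\bigl(s(x) + \sigma(s(\rho(x)))\bigr),\qquad x\in X.
\]
Since $p\circ \sigma = \rho\circ p$ (axiom (ii) of Definition~\ref{R-gdd-bdl}) and $\rho^2=\Id_X$, both summands lie in the same fibre $\cA_x$, so $p\circ s' = \Id_X$ and $s'$ is a well-defined section.

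Continuity of $s'$ follows at once from the continuity of $s$, of $\rho$, of the Real involution $\sigma\colon \tilde{\cA}\to \tilde{\cA}$, and of the fibrewise addition and complex-scalar multiplication (axioms (iv) and (v) of Definition~\ref{R-gdd-bdl}). Using axiom (iii) one obtains
\[
\|s'(x)\|\le \tfrac{1}{2}\bigl(\|s(x)\|+\|\sigma(s(\rho(x)))\|\bigr)=\tfrac{1}{2}\bigl(\|s(x)\|+\|s(\rho(x))\|\bigr),
\]
and the right-hand side belongs to $\Co(X)$ because $x\mapsto \|s(x)\|$ does; hence $s'\in \Co(X,\cA)$.

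To verify Reality, I would apply $\sigma$ and use $\sigma^2=\Id$ together with the fact that the real coefficient $\tfrac{1}{2}$ passes through the conjugate-linear $\sigma$:
\[
\sigma(s'(x))=\tfrac{1}{2}\bigl(\sigma(s(x))+s(\rho(x))\bigr)=s'(\rho(x)).
\]
This is the defining condition of a Real section, so $s'$ qualifies.

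I expect no serious obstacle: the only point deserving care is checking that the averaging coefficient commutes with the conjugate-linear $\sigma$, which is automatic since $\tfrac12\in\RR$. If one wants the strengthened ``enough Real sections'' statement — namely a Real section $s'$ with prescribed value $s'(x)=a\in\cA_x$ (under the necessary condition $\sigma(a)=a$ whenever $\rho(x)=x$) — one starts the averaging from a section $s$ with $s(x)=a$ and, when $\rho(x)\neq x$, also $s(\rho(x))=\sigma(a)$. The latter is obtained by gluing two sections given by Theorem~\ref{enough-sect} via a continuous partition of unity subordinate to disjoint open neighbourhoods of $x$ and $\rho(x)$, which exist because $(X,\rho)$ is locally compact Hausdorff.
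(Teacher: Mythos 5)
Your proposal is correct and follows essentially the same route as the paper: both take an arbitrary continuous section from the Douady--dal Soglio-H\'erault theorem and symmetrize it as $\tilde{s}=\tfrac{1}{2}(s+\sigma(s))$, using that $s(x)$ and $\sigma(s(\rho(x)))$ lie in the same fibre and that the real coefficient $\tfrac12$ commutes with the conjugate-linear involution. The extra paragraph on prescribing values is not needed for the statement, but your verification of the averaging argument matches the paper's proof.
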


\begin{proof}
Let $x\in X, a\in \cA_x$; then by Theorem~\ref{enough-sect} there exists $s\in \Co(X,\cA)$ such that $s(x)=a$. Since for every $x\in X$, $s(x)$ and $\sigma_{\tau(x)}(s(\tau(x)))$ belong to the Banach algebra $\cA_x$, the map $\tilde{s}:=\frac{1}{2}(s+\sigma(s))$ is a well-defined section in $\Co(X,\cA)$ which verifies $\sigma(\tilde{s})=\tilde{s}$.
\end{proof}

\begin{df}[Elementary Rg $\cstar$-bundle]
A Rg $\cstar$-bundle $(\cA,\sigma)\To (X,\tau)$ is called \emph{elementary} if each fibre $\cA_x$ is isomorphic to a graded elementary $\cstar$-algebra. 
\end{df}

\begin{df}
We say that a Rg elementary $\cstar$-bundle $p:(\cA,\sigma)\To(X,\tau)$ satisfies \emph{Fell's condition} if (and only if) $\Co(X,\cA)$ is continuous-trace.
\end{df}

If $(\cA,\sigma)\To (X,\tau)$ is a Rg elementary $\cstar$-bundle, the spectrum of $A$ is naturally identified with the Real space $(X,\tau)$ (see~\cite{Rosenberg:Continuous-trace, Raeburn-Williams:Morita_equivalence, Li:Real_algebras}). 

In the sequel, we will write $A$ for $\Co(X,\cA)$ and if $\vp: (Y,\vr)\To (X,\tau)$ is a continuous Real map, we will write $\vp^\ast A$ for $\Co(Y,\vp^\ast \cA)$.

\begin{df}~\label{Morita-DD-bdl}
Suppose that $p_\cA:(\cA,\sigma_\cA)\To (X,\tau)$ and $p_\cB:(\cA,\sigma_\cB)\To (X,\tau)$ are Rg $\cstar$-bundles. Then a Rg Banach bundle $q: (\cE,\sigma_\cA)\To (X,\tau)$ is called a \emph{Rg $\cA$--$\cB$-imprimitivity bimodule} if each fibre $\cE_x$ is a graded $\cA_x$--$\cB_x$--imprimitivity bimodule such that 
\begin{itemize}
\item[(a)] the natural maps $(\cA\times_X \cE,\sigma_\cA \times \sigma_\cE)\To (\cE,\sigma_\cE), \ (a,\xi)\To a\cdot\xi$ and $(\cE\times_X \cB,\sigma_\cE \times \sigma_\cA)\To (\cE,\sigma_\cE), \ (b,\xi)\mto b\cdot\xi$ are Real and continuous;
\item[(b)] $(\sigma_\cA)_x ({}_{\cA_x}\langle \xi,\eta\rangle)={}_{\cA_{\tau(x)}}\langle (\sigma_\cE)_x(\xi),(\sigma_\cE)_x(\eta)\rangle$  and  $(\sigma_\cB)_x(\langle \xi,\eta \rangle_{\cB_x})=\langle (\sigma_\cB)_x(\xi),(\sigma_\cE)_x(\eta)\rangle_{\cB_{\tau(x)}}$.
\end{itemize}

If such a Rg $\cA$--$\cB$-imprimitivity bimodule exists, we say that $(\cA,\sigma_\cA)$ and $(\cB,\sigma_\cB)$ are \emph{Morita equivalent}.
\end{df}

Let $(\cA,\sigma_\cA)$ and $(\cB,\sigma_\cB)$ be elementary Rg $\cstar$-bundles over $(X,\tau)$. Then, as in the ungraded complex case~\cite[p.18]{Kumjian-Muhly-Renault-Williams:Brauer}, there is a unique elementary Rg $\cstar$-bundle $\cA \hat{\otimes} \cB$ over $X \times X$ with fibre $\cA_x \hat{\otimes} \cB_y$ over $(x,y)$ and such that $(x,y)\mto f(x)\hat{\otimes} g(y)$ is a section for all $f\in A=\Co(X,\cA)$ and $g\in B=\Co(X,\cB)$. The Real structure is given by $(\sigma_\cA)_x \hat{\otimes} (\sigma_\cB)_y$ over $(x,y)$. By this construction, the elementary Rg $\cstar$-bundle $(\cA\hat{\otimes}\cB,\sigma_\cA\hat{\otimes}\sigma_\cB)$ satisfies Fell's condition if $(\cA,\sigma_\cA)$ and $(\cB,\sigma_\cB)$ do so, as does its restriction $(\cA\hat{\otimes}_X\cB,\sigma_\cA\hat{\otimes}_X\sigma_\cB)$ to the diagonal $\Delta=\lbrace (x,x)\in X\times X \rbrace$.

\begin{df}
 Let $(\cA,\sigma_\cA)$ and $(\cB,\sigma_\cB)$ be Rg elementary $\cstar$-bundles over $(X,\tau)$. Then, their \emph{tensor product} is defined to be the Rg elementary $\cstar$-bundle $(\cA \hat{\otimes}_X\cB,\sigma_\cA\hat{\otimes}_X\sigma_\cB)$ over the Real space $(X,\tau)$ which is identified with the diagonal $(\Delta,\tau)$ of $(X\times X,\tau \times \tau)$.
\end{df}

For the rest of the paper, all Real structures will be omitted, except where there likely to be confusion. We will then simply write $\cA$ for the Rg Banach bundle $(\cA,\sigma)$, and $\cG$ for the Real groupoid $(\cG,\tau)$.

%%%%%%%% End of section 

\section{The graded Brauer group of a Real groupoid}

The graded Brauer group of a locally compact groupoid $\cG$ was defined in~\cite{Tu:Twisted_Poincare} as the set of Morita equivalence classes of graded complex Dixmier-Douady bundles over $\cG$, generalising~\cite{Kumjian-Muhly-Renault-Williams:Brauer}. Our purpose in this section is to define a new Brauer group which is relevant to the category of (locally compact second-countable Hausdorff) Real groupoids.

\begin{df}\label{df-DD-bdle}
Let $\grpd$ be a Real groupoid with involution $\tau$. Let $p:\cA\To X$ be a (LTCR) Rg u.s.c. Banach bundle. A Real $\cG$-action by isomorphisms $\alpha$ on $\cA$ is a collection $(\al_g)_{g\in \cG}$ of graded isomorphisms (resp. $\ast$-isomorphisms) $\alpha_g : \cA_{s(g)}\To \cA_{r(g)}$ such that
\begin{itemize}
\item[(a)] $g\cdot a:=\alpha_g(a)$ makes $(\cA,\sigma)$ into a (left) Real $\cG$-space with respect to $p$;
\item[(b)] the induced anti-linear graded isomorphisms $\tau_x:\cA_x\To \cA_{\bar{x}}$ verify $\tau_{r(g)}\circ \alpha_g=\alpha_{\bar{g}}\circ \tau_{s(g)}: \cA_{s(g)} \overset{\cong}{\To} \cA_{\overline{r(g)}}$, for every $g\in \cG$;
\item[(c)] $\alpha_{gg'}=\alpha_g \circ \alpha_{g'}$ for all $(g,g')\in \cG^{(2)}$. 
\end{itemize}

We say that $(\cA,\alpha)$ is a Rg u.s.c. Banach $\cG$--bundle. If the field $\tilde{\cA}=\coprod_X\cA_x\To X$ is continuous, then $(\cA,\al)$ is called a {\it Rg Banach $\cG$-bundle}.  
\end{df}

One also defines Rg u.s.c. $\cstar$--$\cG$-bundles, and Rg u.s.c. Hilbert bundles. In the case of $\cstar$--$\cG$-bundles, the isomorphisms $\al_g$ are required to be ${}^\ast$-isomorphisms, while in the case of Hilbert $\cG$-bundles, they are required to be isometries.  

\begin{df}
A morphism of Rg Banach $\cG$-bundles (resp. $\cstar$--$\cG$-bundles) $(\cA,\alpha)\To(\cB,\beta)$ is a morphism $\Phi: \cA\To \cB$ of Rg Banach bundles (resp. $\cstar$-bundles) which is $\cG$-equivariant; \emph{i.e.}, $\Phi_{r(g)}\circ \alpha_g=\beta_g\circ \Phi_{s(g)}$ for all $g\in \cG$.
\end{df}

Notice that if $(\cA,\alpha)$ is Rg Banach $\cG$-bundle, then $\alpha_x=\Id_{\cA_x}$ for all $x\in X$. Indeed, for every $x\in X$, $\alpha_x:\cA_x \To \cA_x$ is a graded automorphism, and $\alpha_x=\alpha_{x\cdot x}=\alpha_x \circ \alpha_x$. In particular, if we put $x=gg^{-1}\in X$ for $g\in \cG$, we obtain $\alpha_{gg^{-1}}=\alpha_g \circ \alpha_{g^{-1}}  =\Id$ and then $\alpha_{g^{-1}}=\alpha_g^{-1}$ for every $g\in \cG$.

\begin{df}
A \emph{Rg D-D (Dixmier-Douady) bundle} over $(\cG,\tau)$ is a Rg $\cstar$--$\cG$-bundle $(\cA,\alpha)$ such that  $\cA\To X$ is a Rg elementary $\cstar$-bundle that satisfies Fell's condition. We denote by $\fwRBr(\cG)$ the collection all Rg D-D bundles over $\cG$.
\end{df}

Suppose $\alpha$ is a Rg $\cG$-action by isomorphisms on the Rg $\cstar$--$\cG$-bundle $\cA$. Consider the Rg $\cstar$-algebra $A=\Co(X;\cA)$. Then $\al$ induces a Rg $\Co(\cG)$--linear isomorphism $\alpha: s^\ast A \To r^\ast A$ defined by $\alpha(f)(g):=\alpha_g(f(g))$ for $f\in s^\ast A$ and $g\in \cG$. 

\begin{ex}~\label{ex:triv_DD-bdle}
Let $X\times \CC$ be endowed with the Real structure $\overline{(x,t)}:=(\tau(x),\bar{t})$, and the $\cG$-action by automorphisms $g\cdot (s(g),t):=(r(g),t)$. Then with respect to the projection $X\times\CC\To X$, $X\times \CC$ is a Rg D-D bundle over $\grpd$.	
\end{ex}

\begin{ex}~\label{ex:construction-K^G}
Let $\mu=\{\mu^x\}_{x\in X}$ be a Real Haar system on $\grpd$. Let the graded Hilbert space $\hat{\cH}=l^2(\NN)\oplus l^2(\NN)$ be equipped with a fixed Real structure of type $J_{\RR,0}$ (see Appendix A). For $x\in X$, we put $\what{\cH}_{p,x}:=L^2(\cG^x)\hat{\otimes}\hat{\cH}$, together with the scalar product $\tlip<\cdot,\cdot>(x)$ given by
\begin{equation}~\label{eq:scalar-prod_on_H_G}
	\tlip<\xi,\eta>(x):= \int_{\cG^x}\<\xi(g),\eta(g)\>_\CC d\mu_\cG^x(g), \ {\rm for\ } \xi,\eta\in L^2(\cG^x;\hat{\cH})\cong L^2(\cG^x)\hat{\otimes}\hat{\cH}.
\end{equation}

 Let $\hat{\cH}_{\cG}:=\coprod_{x\in X}\hat{\cH}_x$ be equipped with the action $g\cdot(s(g),\vp\hat{\otimes}\xi):=(r(g),(\vp\circ g^{-1})\hat{\otimes}\xi)\in \what{\cH}_{r(g)}$. Define the Real structure on $\what{\cH}_{\cG}$ by $(x,\vp \hat{\otimes}\xi)\mto (\tau(x),\tau(\vp)\hat{\otimes}J_{\RR,0}(\xi))$. Then one shows that there exists a unique topology on $\hat{\cH}_\cG$ such that the canonical projection $\what{\cH}_{\cG}\To \Gpdo$ defines a locally trivial Rg Hilbert $\cG$-bundle.

Now, let $\what{\cK}_{x}:=\cK(\what{\cH}_{x})$ be equipped with the operator norm topology, and put $$\what{\cK}_{\cG}:=\coprod_{x\in X}\what{\cK}_{x}$$ together with the Real structure given by $\overline{(x,T)}:=(\bar{x},\bar{T})$, where $\bar{T}\in \what{\cK}_{\bar{x}}$ is defined by $\bar{T}(\vp\hat{\otimes}\xi):=T(\tau(\vp)\hat{\otimes}Ad_{J_\RR,0}(\xi))$ for all $\vp\hat{\otimes}\xi \in \what{\cH}_{\bar{x}}$. Next, define the Real $\cG$-action $\theta$ on $\what{\cK}_{\cG}$ by 
\[ \theta_g(s(g),T):=(r(g),gTg^{-1}). 
\]
We then have a Rg D-D bundle $(\what{\cK}_{\cG},\theta)$ over $\cG$ given by the canonical projection 
\[\what{\cK}_{\cG}  \To X, \ (x,T)\mto x .\]
\end{ex}

\begin{rem}
The construction of the Rg D-D bundle $\what{\cK}_{\cG}$ in Example~\ref{ex:construction-K^G} may be used in a straightforward way to obtain a Rg D-D bundle $\cK(\hat{\sH})$ out of any Rg Hilbert bundle $\cG$-bundle $\hat{\sH}$. 
\end{rem}

Let $\grpd$ and $\gamgpd$ be Real groupoids and let 
 $$\xymatrix{Z \ar[d]^\fr \ar[r]^\fs & X \\ Y}$$ be a generalised Real homomorphism~\cite{Moutuou:Real.Cohomology}. Let $p:\cA\To X$ is a Rg (u.s.c.) Banach bundle. Then $\fs^\ast \cA=Z \times_{\fs,X,p}\cA$ is a principal (right) $\cG$-space with respect to the operation:
\[ (z,a)\cdot g:=(zg,\alpha_g^{-1}(a))
\]
for $r(g)=z$. It is obviously a Real space with respect to the involution $\overline{(z,a)}:=(\bar{z},\bar{a})$. Next, define $\cA^Z$ to be the quotient space $\fs^\ast \cA / \cG$ together with the induced Real structure 
$$\overline{[z,a]}:=[\overline{(z,a)}],$$
 where we use the notation $[z,a]$ for the orbit of $(z,a)$ in $\cA^Z$. Consider the continuous surjective map $\fr \circ pr_1: \fs^\ast \cA \To Y, \ (z,a)\mto \fr(z)$, where, as usual, $pr_1$ denotes the first projection. Since $\fr(zg)=\fr(z)$ for $(z,g)\in Z\times_{\fs,X,r}\cG$ (see for instance~\cite[Definition 1.17]{Moutuou:Real.Cohomology}), we get a well defined continuous surjection $p^Z: \cA^Z \To Y$ given by $p^Z([z,a]):=\fr(z)$. Furthermore, since $\fr$ is Real, one has $$p^Z(\overline{[z,a]})=p^Z([\bar{z},\bar{a}])=\fr(\bar{z})=\overline{\fr(z)}=\overline{p^Z([z,a])}.$$
Thus, $p^Z: \cA^Z\To Y$ is a continuous Real surjection. Moreover, it is not hard to check that $p^Z: \cA^Z\To \Gamo$ is open and the map $a\mto [z,a]$ defines a graded isomorphism from $\cA_{\fs(z)}$ onto $\cA^Z_{\fr(z)}$ (see~\cite[p.14]{Kumjian-Muhly-Renault-Williams:Brauer}).

\begin{pro}.
Let $\grpd$ and $\gamgpd$ be Real groupoids. Suppose that $Z:\Ga\To\cG$ is a generalised Real homomorphism and that $(\cA,\alpha)$ is a Rg Banach $\cG$-bundle. Then, with the constructions above, $p^Z: \cA^Z\To Y$ is a Rg Banach bundle. Furthermore, \begin{eqnarray}
\alpha^Z_\g [z,a]:=[\g\cdot z,a], \ \text{for} \ \fr(z)=s(\g),
\end{eqnarray}
defines a Real left $\Ga$-action on $\cA^Z$ making $(\cA^Z,\alpha^Z)$ into a Rg Banach $\Ga$-bundle called the \emph{pull-back} of $(\cA,\alpha)$ along $Z$.

In particular, if $(\cA,\alpha)\in \fwRBr(\cG)$, then $(\cA^Z,\alpha^Z)\in \fwRBr(\Ga)$.
\end{pro}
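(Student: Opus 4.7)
The argument proceeds in three phases: (i) verify that $p^Z: \cA^Z \To Y$ satisfies the axioms of a Rg Banach bundle (Definition~\ref{R-gdd-bdl}); (ii) check that $\al^Z$ defines a Real $\Ga$-action by graded isomorphisms in the sense of Definition~\ref{df-DD-bdle}; and (iii) transfer the elementary $\cstar$-bundle structure together with Fell's condition from $\cA$ to $\cA^Z$ when $(\cA,\al)\in\fwRBr(\cG)$. The key tool throughout is that the right $\cG$-action on $\fs^\ast\cA$ defined by $(z,a)\cdot g=(zg,\al_g^{-1}(a))$ is free and proper (since $Z$ is a principal $\cG$-bundle over $Y$), so the quotient map $q:\fs^\ast\cA\To\cA^Z$ is open.

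For (i), the discussion preceding the statement already shows that $p^Z$ is a continuous, open Real surjection and that, for each $z$ with $\fr(z)=y$, the map $a\mto[z,a]$ is a bijection $\cA_{\fs(z)}\To\cA^Z_y$. This bijection transports a graded Banach space structure to $\cA^Z_y$ which does not depend on the choice of $z$: if $z'=zg$ then $[z',a]=[z,\al_g(a)]$ and $\al_g$ is an isomorphism of graded Banach spaces. The Real structure $[z,a]\mto[\bar z,\bar a]$ is well defined because $(z,a)\mto(\bar z,\bar a)$ is equivariant with respect to the $\cG$-action, using condition (b) of Definition~\ref{df-DD-bdle} (namely $\tau_{r(g)}\circ\al_g=\al_{\bar g}\circ\tau_{s(g)}$, which yields $\overline{\al_g^{-1}(a)}=\al_{\bar g}^{-1}(\bar a)$). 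Continuity of addition, scalar multiplication, the norm and the involution on $\cA^Z$ is then inherited from the corresponding structures on $\fs^\ast\cA$ by passing through $q$; that $\cG$ acts by isometries on fibres ensures that the norm descends continuously.

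For (ii), well-definedness of $\al^Z_\gamma[z,a]=[\gamma\cdot z,a]$ comes from the fact that the $\Ga$- and $\cG$-actions on $Z$ commute: $\al^Z_\gamma[zg,\al_g^{-1}(a)]=[(\gamma\cdot z)g,\al_g^{-1}(a)]=[\gamma\cdot z,a]$. Continuity follows from continuity of the $\Ga$-action on $Z$ combined with the openness of $q$. The cocycle identity $\al^Z_{\gamma\gamma'}=\al^Z_\gamma\circ\al^Z_{\gamma'}$ and the Real compatibility with the involution on $\cA^Z$ are then immediate from the corresponding facts on $Z$ and $\cA$.

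For (iii), the fibrewise bijections $a\mto[z,a]$ are in fact graded ${}^\ast$-isomorphisms, so each $\cA^Z_y$ is a graded elementary $\cstar$-algebra. Fell's condition is local on $Y$: since $\fr:Z\To Y$ is a principal $\cG$-bundle, it admits continuous local sections $\xi_i:U_i\To Z$, and over each $U_i$ the formula $(y,a)\mto[\xi_i(y),a]$ produces an isomorphism $(\fs\circ\xi_i)^\ast\cA\cong\cA^Z|_{U_i}$ of Rg $\cstar$-bundles. Since Fell's condition is preserved under pull-back along continuous maps, $(\cA^Z,\al^Z)\in\fwRBr(\Ga)$. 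I expect the main obstacle to lie in phase (i), specifically the topological bookkeeping that upgrades the quotient set $\cA^Z$ to a genuinely locally trivial Rg Banach bundle; the openness of $q$, the isometric action of $\cG$ on fibres, and the local triviality of $\fr$ are the crucial inputs that ultimately reduce the problem to the ordinary pull-back $(\fs\circ\xi_i)^\ast\cA$, for which all the bundle axioms are routine.
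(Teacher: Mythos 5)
Your proposal is correct and follows essentially the same route as the paper, which in fact omits the argument entirely and simply refers to \cite[Proposition 2.15]{KMRW}: your three phases reproduce that standard quotient-bundle construction, with the only genuinely new content being the Real-equivariance checks (that $(z,a)\mto(\bar z,\bar a)$ descends, via $\tau_{r(g)}\circ\al_g=\al_{\bar g}\circ\tau_{s(g)}$, and that $\al^Z$ commutes with the involutions), which you handle correctly. The one point worth a sentence in a written-up version is that for plain Banach bundles the $\al_g$ are only assumed to be graded isomorphisms, so the fibre norms transported from $z$ and from $zg$ agree only up to equivalence unless one takes the $\al_g$ isometric (automatic in the $\cstar$- and Hilbert cases, which are the ones used for $\fwRBr$).
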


The proof of this proposition is almost the same as that of~\cite[Proposition 2.15]{Kumjian-Muhly-Renault-Williams:Brauer}, so we omit it.

\begin{cor}
Let $\grpd$ and $\gamgpd$ be Real groupoids. Suppose that $Z: \cG\To \Ga$ is a Real Morita equivalence. Then the map $\Phi^Z: \fwRBr(\cG) \To \fwRBr(\Ga)$ given by 
\begin{eqnarray}
\Phi^Z(\cA,\alpha):= (\cA^Z,\alpha^Z)
\end{eqnarray}
is a bijection.
\end{cor}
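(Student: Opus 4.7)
The plan is to exhibit an explicit two-sided inverse to $\Phi^Z$ using the Real Morita equivalence structure. Since $Z:\cG\To\Ga$ is a Real Morita equivalence, the opposite space $Z^{\op}$ carries the structure of a generalized Real homomorphism $\Ga\To\cG$, and by the previous proposition this yields a well-defined map $\Phi^{Z^{\op}}:\fwRBr(\Ga)\To\fwRBr(\cG)$. The goal is to show that $\Phi^{Z^{\op}}\circ\Phi^Z$ and $\Phi^Z\circ\Phi^{Z^{\op}}$ are the identity (up to canonical isomorphism of Rg D-D bundles).

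The key technical ingredient is the \emph{functoriality of the pull-back construction} with respect to composition of generalized Real homomorphisms. More precisely, given composable generalized Real homomorphisms $W:\Lambda\To\Ga$ and $Z:\cG\To\Ga$ (with the appropriate sources/targets), for any $(\cA,\al)\in\fwRBr(\cG)$ there is a canonical Real, $\ZZ_2$-graded, ${}^\ast$-isomorphism
\[
\big((\cA^Z)^W,(\al^Z)^W\big)\;\cong\;\big(\cA^{Z\times_\cG W},\al^{Z\times_\cG W}\big),
\]
given on orbits by $[w,[z,a]]\mto [[w,z],a]$. Combined with the fact that pull-back along the identity bibundle $\cG$ (viewed as a generalized self-equivalence $\cG\To\cG$) is canonically isomorphic to the identity map on $\fwRBr(\cG)$ via $[g,a]\mto \al_g(a)$, this functoriality immediately gives both compositions.

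Thus the remaining work is to verify these two canonical isomorphisms. For the identity bibundle one checks directly that $[g,a]\mto\al_g(a)$ is well-defined on $\cA^\cG=\fs^\ast\cA/\cG$, Real (using cocycle condition $\tau_{r(g)}\circ\al_g=\al_{\bar g}\circ\tau_{s(g)}$ from Definition~\ref{df-DD-bdle}(b)), graded, multiplicative, and intertwines the left $\cG$-actions. For the compatibility with composition, the Real structure on $Z\times_\cG W$ is by definition the one induced from the product of Real structures on $Z$ and $W$, so the bijection $[w,[z,a]]\mto[[w,z],a]$ is Real by construction, and the verification of the remaining algebraic structures is purely formal. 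Finally, applying both identities to $W=Z^{\op}$ (resp.~$W=Z$) together with the defining property $Z\times_\cG Z^{\op}\cong\Ga$ and $Z^{\op}\times_\Ga Z\cong\cG$ of a Real Morita equivalence yields the inverse relations.

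The main obstacle is not conceptual but bookkeeping: one must carefully track how the Real involution $\overline{[z,a]}:=[\bar z,\bar a]$ descends through iterated quotients by the two groupoid actions, and confirm that the graded $\cstar$-bundle structure (including Fell's condition, which is preserved under pull-back since it is checked locally and the trivializations pull back to trivializations of the same local form) is preserved by all canonical isomorphisms in sight. Since each individual verification is analogous to the non-Real case treated in~\cite[\S2]{KMRW}, we only highlight the points where the Real structures intervene.
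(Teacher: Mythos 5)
Your argument is correct and is essentially the one the paper implicitly relies on: the paper offers no proof of this corollary, deferring (via the preceding proposition) to \cite[\S 2]{KMRW}, where exactly this strategy — inverting via $Z^{\op}$, functoriality of the pull-back under composition of bibundles, and triviality of the pull-back along the identity bibundle — is carried out, and your tracking of the Real structures through the quotients supplies the only genuinely new verifications. The one point to tidy is the direction bookkeeping: with the paper's convention that pull-back along a generalized homomorphism $\Ga\To\cG$ sends $\fwRBr(\cG)$ to $\fwRBr(\Ga)$, your composites should read $W\times_\Ga Z$ rather than $Z\times_\cG W$, but this does not affect the substance.
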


The following construction is slightly modified from~\cite[p.20]{Kumjian-Muhly-Renault-Williams:Brauer}.

\begin{df}~\label{df:conjugate-Ban-G-bdl}
Let $(\cA,\al)$ be a Rg (u.s.c.) Banach $\cG$-bundle. Define the \emph{conjugate bundle} $(\overline{\cA},\bar{\al})$ of $(\cA,\al)$ as follows. Let $\overline{\cA}$ be the topological Real space $\cA$  and let $\flat:\cA\To \overline{\cA}$ be the identity map. Then $\overline{p}:\overline{\cA}\To X$ defined by $\bar{p}(\flat(a))=\flat(p(a))$ is a Rg (u.s.c.) Banach bundle with fibre $\overline{\cA}_x$ identified with the conjugate graded Banach algebra of $\cA_x$ (the grading is $\overline{\cA}_x^i=\overline{\cA^i_x}, i=0,1$). Furthermore, endowed with the Real $\cG$-action by automorphisms $\bar{\al}_g(\flat(a)):=\flat(\al_g(a))$ for $g\in \cG, a\in \cA_{s(g)}$, $\overline{\cA}\To X$ becomes a Rg (u.s.c.) Banach $\cG$-bundle. If $(\cA,\al)\in \fwRBr(\cG)$, then $(\overline{\cA},\bar{\al})\in \fwRBr(\cG)$. 	
\end{df}

\begin{df}
Let $\grpd$ be a Real groupoid. Two elements $(\cA,\alpha)$ and $(\cB,\beta)$ in $\fwRBr(\cG)$ are \emph{Morita equivalent} if there is a Rg $\cA$--$\cB$-imprimitivity bimodule $\cE\To X$ which admits a Real $\cG$-action $V$ by isomorphisms such that
\begin{equation}~\label{eq_df:morita-BrR}
\begin{array}{ll}
{}_{\cA_{s(g)}} \langle V_g(\xi),V_g(\eta)\rangle = & \alpha_g({}_{\cA_{s(g)}} \langle \xi,\eta\rangle); \text{and}\\
\langle V_g(\xi),V_g(\eta)\rangle_{\cB_{s(g)}} = & \beta_g(\langle \xi,\eta\rangle_{\cB_{s(g)}})
\end{array}
\end{equation}
for all $g\in \cG$, and $\xi, \eta \in \cE_{s(g)}$. In this case we write $(\cA,\alpha)\sim_{(\cE,V)}(\cB,\beta)$.
\end{df}

\begin{ex}
Suppose that $\Phi:(\cA,\alpha)\To (\cB,\beta)$ is an isomorphism of Rg D-D bundles over $\grpd$. Then $(\cA,\alpha)\sim_{(\cB,\beta)}(\cB,\beta)$.
\end{ex}

\begin{lem}
Morita equivalence of Rg DD-bundles over $\grpd$ is an equivalence relation in $\fwRBr(\cG)$.
\end{lem}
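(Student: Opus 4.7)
The plan is to check reflexivity, symmetry, and transitivity, in each case taking a fibrewise imprimitivity bimodule from classical Rieffel theory and then equipping it with a Real structure and a Real $\cG$-action that satisfy the compatibility formulas~\eqref{eq_df:morita-BrR}.

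For reflexivity, the example immediately preceding already delivers the result: take $\Phi=\Id_{\cA}$, so that $\cA$ itself, viewed as an $\cA$-$\cA$-imprimitivity bimodule under multiplication with inner products ${}_{\cA}\langle a,b\rangle:=ab^{*}$, $\langle a,b\rangle_{\cA}:=a^{*}b$, Real structure $\sigma_{\cA}$, and $\cG$-action $V=\alpha$, witnesses $(\cA,\alpha)\sim(\cA,\alpha)$. The relations in Definition~\ref{Morita-DD-bdl} and~\eqref{eq_df:morita-BrR} then reduce to the fact that each $\alpha_{g}$ is a graded $*$-homomorphism and that $\sigma_{\cA}$ is a graded conjugate-linear $*$-involution.

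For symmetry, given $(\cA,\alpha)\sim_{(\cX,V)}(\cB,\beta)$, I will construct the conjugate bundle $\overline{\cX}$ along the lines of Definition~\ref{df:conjugate-Ban-G-bdl}. Fibrewise, $\overline{\cX}_{x}$ is the usual conjugate $\cB_{x}$-$\cA_{x}$-imprimitivity bimodule, with actions $b\cdot\flat(\xi)\cdot a:=\flat(a^{*}\cdot\xi\cdot b^{*})$ and the inner products swapped, $_{\cB_{x}}\langle\flat(\xi),\flat(\eta)\rangle:=\langle\eta,\xi\rangle_{\cB_{x}}$ and $\langle\flat(\xi),\flat(\eta)\rangle_{\cA_{x}}:={}_{\cA_{x}}\langle\eta,\xi\rangle$. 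The Real structure is inherited from $\cX$ through $\flat$, and the $\cG$-action is $\overline{V}_{g}(\flat(\xi)):=\flat(V_{g}\xi)$; the equivariance relations for $\overline{V}$ then follow from those for $V$ after applying the $*$-involution on the fibres of $\cA$ and $\cB$. For transitivity, given also $(\cB,\beta)\sim_{(\cY,W)}(\cC,\gamma)$, I will form the fibrewise graded balanced tensor product $\cX\hat{\otimes}_{\cB}\cY$. By classical Rieffel theory this is a graded $\cA_{x}$-$\cC_{x}$-imprimitivity bimodule in each fibre, with the inner products determined by ${}_{\cA_{x}}\langle\xi_{1}\hat{\otimes}\eta_{1},\xi_{2}\hat{\otimes}\eta_{2}\rangle={}_{\cA_{x}}\langle\xi_{1}\cdot{}_{\cB_{x}}\langle\eta_{1},\eta_{2}\rangle,\xi_{2}\rangle$ and the mirror formula on the right. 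I endow it with the Real structure $\sigma_{\cX}\hat{\otimes}_{\cB}\sigma_{\cY}$ and the diagonal $\cG$-action $(V\hat{\otimes}W)_{g}(\xi\hat{\otimes}_{\cB}\eta):=V_{g}\xi\hat{\otimes}_{\cB}W_{g}\eta$, which descends to the balanced tensor product because $V$ and $W$ are $\beta$-equivariant in the appropriate sense; the compatibility formulas for this diagonal action with $\alpha$ and $\gamma$ follow immediately from those satisfied by $V$ and $W$.

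The least automatic step is verifying that the conjugate and balanced tensor product constructions genuinely yield Rg Banach bundles in the sense of Definition~\ref{R-gdd-bdl}: one has to equip them with a topology, prove continuity of the norm, of the algebraic operations and of the Real involution, and confirm that they admit enough sections. The conjugate case is essentially formal, since the underlying topological space is unchanged. For the tensor product I will topologise $\cX\hat{\otimes}_{\cB}\cY$ via the sections $x\mapsto s(x)\hat{\otimes}_{\cB_{x}}t(x)$ for $s\in\Co(X,\cX)$ and $t\in\Co(X,\cY)$, invoke Theorem~\ref{enough-sect} to guarantee enough sections, and use Fell's condition on $\cA$, $\cB$, $\cC$ to secure local triviality. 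The Real and $\ZZ_{2}$-graded compatibilities are then checked on such generating sections and extended by continuity, completing the verification that $\sim$ is an equivalence relation on $\fwRBr(\cG)$.
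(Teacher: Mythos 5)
Your proof is correct and follows essentially the same route as the paper, which simply defers to Lemma 3.2 of~\cite{KMRW}: reflexivity via the identity bimodule, symmetry via the conjugate bimodule, and transitivity via the $\cB$-balanced internal tensor product, with the Real structures and $\cG$-actions carried along. Your additional care about topologising the balanced tensor product through sections is exactly the point handled in the cited reference, so nothing is missing.
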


\begin{proof}
The proof can almost be copied from that of Lemma 3.2 in~\cite{Kumjian-Muhly-Renault-Williams:Brauer}.	
\end{proof}

We are now ready to define the Real graded Brauer group.

\begin{df}
Let $\grpd$ be a Real groupoid. \emph{The Real graded Brauer group of $\cG$} $\wRBr(\cG)$ is defined as the set of Morita equivalence classes of Rg D-D bundles over $\cG$. The class of $(\cA,\al)$ in $\wRBr(\cG)$ is denoted by $[\cA,\al]$.
\end{df}

\begin{ex}
Let $\cG$ be the one point set $\lbrace \ast \rbrace$ together with the trivial involution. Then, every Rg DD-bundle over $\lbrace \ast \rbrace$ is trivial; \emph{i.e}. it is given by a Rg elementary $\cstar$-algebra $\wK_p$. We thus recover the Real graded Brauer group of the point $\wRBr(\ast)\cong \ZZ_8$ described in Appendix~\ref{subsect:BrR(pt)}.
\end{ex}

Let $(\cA,\alpha)$ and $(\cB,\beta)$ be Rg DD-bundles. We have defined, in the previous section, the tensor product $\cA\hat{\otimes}_X \cB$ which is a Rg $\cstar$-bundle over $X$. We  want to equip this bundle with a Real $\cG$-action $\alpha \hat{\otimes}\beta$ such that $(\cA\hat{\otimes}_X \cB,\alpha \hat{\otimes}\beta) \in \fwRBr(\cG)$. We define $\alpha \hat{\otimes}\beta$ as follows. For all $g\in \cG$, we put $\alpha_g \hat{\otimes}\beta_g: \cA_{s(g)}\hat{\otimes}\cB_{s(g)}\To \cA_{r(g)}\hat{\otimes}\cB_{r(g)}, \ a\hat{\otimes}b \mto \alpha_g(a)\hat{\otimes}\beta_g(b)$. Note that from the definition of a Real $\cG$-action on a Rg $\cstar$-bundle, $\alpha_g\hat{\otimes}\beta_g$ is a graded $\ast$-isomorphism that clearly verifies conditions (b) and (c) of Definition~\ref{df-DD-bdle}. Therefore, the same arguments used in~\cite[pp.18-19]{Kumjian-Muhly-Renault-Williams:Brauer} can be used here to show that $\alpha\hat{\otimes}\beta$ is continuous; thus, its restriction $\alpha \hat{\otimes} \beta$ on the closed subset $\cA\hat{\otimes}_X \cB$ of $\cA\hat{\otimes}\cB$ defines a Real $\cG$-action. Furthermore, this operation is easily seen to be Morita equivalence preserving.

\begin{pro}
Let $\grpd$ be a Real groupoid. Then $\wRBr(\cG)$ is an abelian group with respect to the operations
 \begin{equation}
 	[\cA,\al]+[\cB,\beta]:=[\cA\hat{\otimes}_X\cB,\al\hat{\otimes}\beta].	
 	\end{equation}	
The identity of $\wRBr(\cG)$ is given by the class $0:=[X\times \CC,\tau\times bar]$ of the Rg D-D bundle defined in Example~\ref{ex:triv_DD-bdle}. The inverse of $[\cA,\al]$ is $[\overline{\cA},\bar{\al}]$.
\end{pro}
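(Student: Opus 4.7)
The plan is to verify the group axioms one by one, following the pattern established for the complex graded Brauer group in Sections 3.1--3.3 of~\cite{KMRW}, with particular attention at each step to the compatibility of the Real structures with the tensor product.

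First I would address well-definedness of the operation. Given Morita equivalences $(\cA,\al)\sim_{(\cX,V)}(\cA',\al')$ and $(\cB,\beta)\sim_{(\cY,W)}(\cB',\beta')$, the fiberwise graded internal tensor product $\cX\hat{\otimes}_X\cY$ is a Rg $(\cA\hat{\otimes}_X\cB)$--$(\cA'\hat{\otimes}_X\cB')$-imprimitivity bimodule once it is endowed with the Real structure $\sigma_\cX\hat{\otimes}\sigma_\cY$ and the diagonal Real $\cG$-action $V_g\hat{\otimes}W_g$. The bimodule inner products and the relations~\eqref{eq_df:morita-BrR} are all checked fiber by fiber from the corresponding relations for $(\cX,V)$ and $(\cY,W)$ together with the definition of $\al\hat{\otimes}\beta$.

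Next, associativity and commutativity reduce to the canonical isomorphisms $(\cA\hat{\otimes}_X\cB)\hat{\otimes}_X\cC\cong\cA\hat{\otimes}_X(\cB\hat{\otimes}_X\cC)$ and the graded flip $\cA\hat{\otimes}_X\cB\cong\cB\hat{\otimes}_X\cA$ sending $a\hat{\otimes}b$ to $(-1)^{|a||b|}b\hat{\otimes}a$ on homogeneous elements; both are Real $\cG$-equivariant isomorphisms of Rg D-D bundles, and so induce the desired identities in $\wRBr(\cG)$. For the neutral element, the fiberwise map $(x,\lambda)\hat{\otimes}a\mapsto\lambda a$ gives an isomorphism $(X\times\CC)\hat{\otimes}_X\cA\cong\cA$ of Rg D-D bundles, intertwining $\tau\times bar$ with $\sigma_\cA$ and $(\tau\times \id)\hat{\otimes}\al$ with $\al$.

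The main obstacle is existence of inverses. The plan is to exhibit explicitly a Morita equivalence $(\cA\hat{\otimes}_X\overline{\cA},\al\hat{\otimes}\bar\al)\sim(X\times\CC,\tau\times bar)$. Fiberwise, when $\cA_x\cong \cK(\hat H_x)$ the well-known Hilbert-Schmidt construction gives a Rg $\cK(\hat H_x)\hat{\otimes}\overline{\cK(\hat H_x)}$--$\CC$-imprimitivity bimodule structure on $\cA_x$ itself, with left action $(a\hat{\otimes}\bar b)\cdot c:=acb^{\ast}$ and left inner product $_{\cA_x\hat{\otimes}\overline{\cA_x}}\langle c,d\rangle$ identified with $c\hat{\otimes}\bar d$ under the canonical isomorphism $\cA_x\hat{\otimes}\overline{\cA_x}\cong\cK(\cA_x)$. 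Assembling these fiberwise bimodules one obtains a Rg Banach bundle $\cE\to X$ (taking $\cE:=\cA$ as a topological space, but with the bimodule structure just described) whose Real structure is inherited from $\sigma_\cA$ and whose Real $\cG$-action is inherited from $\al$. A direct computation shows that both imprimitivity inner products satisfy~\eqref{eq_df:morita-BrR} with respect to $\al\hat{\otimes}\bar\al$ and the trivial action on $X\times\CC$, and that $\sigma_\cA$ intertwines the left inner product with $\sigma_\cA\hat{\otimes}\bar\sigma_\cA$ and the right inner product with the complex conjugation $bar$. This yields $[\cA,\al]+[\overline{\cA},\bar\al]=0$ in $\wRBr(\cG)$. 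The delicate point throughout is purely bookkeeping: one must check that every natural isomorphism used above is simultaneously graded, Real, and $\cG$-equivariant, but no new ingredient beyond Definition~\ref{df:conjugate-Ban-G-bdl} and the definition of the product $\al\hat{\otimes}\beta$ is required.
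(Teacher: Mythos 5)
Your proposal is correct and takes essentially the same route as the paper, whose proof simply defers to Proposition 3.6 and Theorem 3.7 of~\cite{KMRW} and leaves the reader to check that the Real structures are compatible with each step, exactly the bookkeeping you describe. One small correction for the inverse step: the imprimitivity bimodule realizing $(\cA\hat{\otimes}_X\overline{\cA},\al\hat{\otimes}\bar{\al})\sim (X\times\CC,\tau\times bar)$ is the bundle of Hilbert--Schmidt operators, fibrewise $\hat{H}_x\hat{\otimes}\overline{\hat{H}_x}$ with the Hilbert--Schmidt norm, and not $\cA$ itself with its operator-norm topology, since the right $\CC$-valued inner product $\operatorname{Tr}(c^{\ast}d)$ is finite only on that dense subspace; this is also where the paracompactness of $X$ is used to get a locally trivial Rg Hilbert bundle.
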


\begin{proof}
See Proposition 3.6 and Theorem 3.7 in~\cite{Kumjian-Muhly-Renault-Williams:Brauer}.	
\end{proof}

For the sake of simplicity we will often use the following notations.

\begin{nota}~\label{nota:notations_in_BrR}
We will abusively write $\cA$ for the class $[\cA,\al]$ in $\wRBr(\cG)$; we will also leave out the actions when we are working in the group $\wRBr(\cG)$: for instance we will write $\cA+\cB$ instead of $[\cA,\al]+[\cB,\beta]$. 	
\end{nota}

\begin{lem}~\label{lem:A+K_G=A}
Let $(\cA,\al)\in \fwRBr(\cG)$ and let $(\wK_\cG,\theta)$ be the Rg D-D bundle defined in Example~\ref{ex:construction-K^G}. Then $\cA+\wK_\cG=\cA$ in $\wRBr(\cG)$.
\end{lem}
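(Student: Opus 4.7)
The strategy is to show that $[\widehat{\cK}_\cG,\theta]=0$ in $\wRBr(\cG)$, i.e., that $(\widehat{\cK}_\cG,\theta)$ is Morita equivalent, as a Rg D-D bundle, to the identity element $(X\times\CC,\tau\times \overline{\phantom{x}})$ from Example~\ref{ex:triv_DD-bdle}. Once this is established, the lemma is immediate: $\cA+\wK_\cG=\cA+0=\cA$. The natural candidate for the implementing bimodule is the Rg Hilbert $\cG$-bundle $\widehat{\cH}_\cG\to X$ already constructed in Example~\ref{ex:construction-K^G}.

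Fibrewise, $\widehat{\cH}_x=L^2(\cG^x)\hat{\otimes}\widehat{\cH}$ is a graded $\widehat{\cK}_x$-$\CC$-imprimitivity bimodule in the standard way: left action by operators, right action by scalar multiplication, $\widehat{\cK}_x$-valued inner product ${}_{\widehat{\cK}_x}\langle\xi,\eta\rangle=|\xi\rangle\langle\eta|$, and $\CC$-valued inner product $\langle\xi,\eta\rangle_{\CC}$ given by~\eqref{eq:scalar-prod_on_H_G}. The axioms of a Banach imprimitivity bimodule follow from the classical identity $|\xi\rangle\langle\eta|\cdot\zeta=\xi\langle\eta,\zeta\rangle_\CC$ together with the continuity and local triviality of $\widehat{\cH}_\cG$ asserted in Example~\ref{ex:construction-K^G}.

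The $\cG$-action is built in: set $V_g(\varphi\hat{\otimes}\xi):=(\varphi\circ g^{-1})\hat{\otimes}\xi$, which is exactly the action used to define $\widehat{\cH}_\cG$. Since $V_g$ is unitary, a direct calculation gives $V_g|\xi\rangle\langle\eta|V_g^{-1}=|V_g\xi\rangle\langle V_g\eta|$, which is precisely the first condition of~\eqref{eq_df:morita-BrR} with respect to the conjugation action $\theta_g(T)=gTg^{-1}$; and unitarity of $V_g$ gives $\langle V_g\xi,V_g\eta\rangle_\CC=\langle\xi,\eta\rangle_\CC$, which matches the trivial action on $X\times\CC$, yielding the second condition.

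For the Real compatibility I would unwind the Real structures on simple tensors: because $J_{\RR,0}$ is an anti-unitary of type $(+1)$, it satisfies $\langle J_{\RR,0}\xi,J_{\RR,0}\eta\rangle=\overline{\langle\xi,\eta\rangle}$, which together with pointwise conjugation on $L^2(\cG^x)$ induced by $\tau$ gives the required conjugation law for $\langle\cdot,\cdot\rangle_\CC$. The same computation on rank-one operators yields $\overline{|\xi\rangle\langle\eta|}=|\bar\xi\rangle\langle\bar\eta|$ under the bar of Example~\ref{ex:construction-K^G}, matching the Real structure on $\widehat{\cK}_\cG$. The grading compatibility is automatic since the imprimitivity bimodule structure is built from graded tensor products with the grading of $\widehat{\cH}$.

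The main technical obstacle is the last paragraph: carefully verifying on simple tensors that the three ingredients (the Real structure on $\widehat{\cH}_\cG$, the conjugate-linear $T\mapsto\bar T$ on $\widehat{\cK}_\cG$, and complex conjugation on $\CC$) interact correctly with both inner products. This is a mechanical check once the definitions of $J_{\RR,0}$ in Appendix A are recalled, but it is where signs and gradings must be tracked carefully. Once it is done, $(\widehat{\cH}_\cG,V)$ furnishes the desired Morita equivalence and the lemma follows.
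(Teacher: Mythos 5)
Your argument is correct, but it is organized differently from the paper's. The paper proves the lemma in a single step: it exhibits $(\cA\hat{\otimes}_X\hat{\cH}_\cG,\al\hat{\otimes}\Theta)$ directly as a Rg $(\cA\hat{\otimes}_X\wK_\cG)$--$\cA$-imprimitivity bimodule, writing out the four module actions and inner products explicitly, including the Koszul signs that arise from moving vectors of $\hat{\cH}_x$ past elements of $\cA_x$. You instead prove the sharper intermediate statement $[\wK_\cG]=0$, namely that $\hat{\cH}_\cG$ itself implements a Morita equivalence $\wK_\cG\sim X\times\CC$, and then conclude via the group structure of $\wRBr(\cG)$ (well-definedness of $+$ on Morita classes plus the fact that $[X\times\CC]$ is the identity). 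Since the proposition establishing that structure precedes this lemma in the paper and does not depend on it, there is no circularity and your reduction is legitimate. What your route buys: the fibrewise verification is the bare $\cK(H)$--$\CC$ equivalence with no extra graded signs (the signs in the paper's formulas come precisely from the tensored-on factor of $\cA$), and the fact you prove along the way is exactly the converse direction of the lemma that immediately follows in the paper (a Rg Hilbert $\cG$-bundle $(\hat{\mathscr{H}},U)$ makes $(\cK(\hat{\mathscr{H}}),Ad_U)$ trivial in $\wRBr(\cG)$), which the paper itself says is proved ``using the same operations as in the proof of Lemma~\ref{lem:A+K_G=A}.'' What the paper's route buys: it relies only on the definition of Morita equivalence of Rg D-D bundles and never needs to invoke the group axioms. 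Do make sure, in the ``mechanical check'' you defer, that you verify unitarity of $V_g$ via left-invariance of the Real Haar system and that you read the Real structure on $\wK_\cG$ as conjugation by the Real structure of $\hat{\cH}_\cG$ (so that $\overline{T_{\xi,\eta}}=T_{\bar{\xi},\bar{\eta}}$); these are the only points where the verification is not purely formal.
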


\begin{proof}
Recall that the Real $\cG$-action $\theta$ is given by $Ad_{\Theta}$, where $\Theta$ is the Real $\cG$-action on the Rg Hilbert $\cG$-bundle $\hat{\cH}_\cG\To X$ (see Example~\ref{ex:construction-K^G}); \emph{i.e}. $\theta_g(T)=\Theta_g T\Theta_g^{-1}$. 
The Rg Banach $\cG$-bundle $(\cA\hat{\otimes}_X\hat{\cA},\al\hat{\otimes}\Theta)$ provides a Morita equivalence $$(\cA\hat{\otimes}_X\wK_\cG,\al\hat{\otimes}\theta)\sim (\cA,\al)$$ in $\fwRBr(\cG)$ with respect to the point-wise actions and inner-product given by:
\begin{eqnarray}
(a\hat{\odot}T)\cdot (b\hat{\otimes}\xi) &:=& (-1)^{\partial T\cdot \partial b}ab\hat{\odot}T\xi, \ {\rm and\ } \nonumber \\
{}_{\cA_x\hat{\otimes}\wK_x}\<b\hat{\odot}\xi,d\hat{\odot}\eta\> &:=&(-1)^{\partial\xi\partial d}bd^\ast\hat{\odot}T_{\xi,\eta};\nonumber \\
(b\hat{\odot}\xi)\cdot c &:=& bc\hat{\odot}\xi,  \ {\rm and\ } \nonumber \\
 \<b\hat{\odot}\xi,d\hat{\odot}\eta\>_{\cA_x}&:= &\tlip<\xi,\bar{\eta}>(x)\cdot b^\ast d,\nonumber	
\end{eqnarray}
for $x\in X$, $a\hat{\odot}T\in \cA_x\hat{\otimes}\wK_x, b\hat{\odot}\xi,d\hat{\odot}\eta\in \cA_x\hat{\otimes}\hat{\cH}_x$, and $c\in \cA_x$.
\end{proof}

\begin{lem}
Let $\grpd$ be a Real groupoid and $(\cA,\al)\in \fwRBr(\cG)$. Then $\cA=0$ in $\wRBr(\cG)$ if and only if there exists a Rg Hilbert $\cG$-bundle $(\hat{\mathscr{H}},U)$ such that $(\cA,\al)\cong (\cK(\hat{\mathscr{H}}), Ad_U)$ in $\fwRBr(\cG)$. 	
\end{lem}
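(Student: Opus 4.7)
The plan is to prove both implications by exhibiting an explicit Morita equivalence (for the ``$\Leftarrow$'' direction) and by reading off a Hilbert bundle from an imprimitivity bimodule (for the ``$\Rightarrow$'' direction); in both cases the main content is verifying that the Real structures, the gradings, and the $\cG$-actions all remain compatible under the construction.

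\textbf{Sufficiency.} Suppose $(\cA,\al)\cong (\cK(\hat{\mathscr{H}}),Ad_U)$ for some Rg Hilbert $\cG$-bundle $(\hat{\mathscr{H}},U)$. I would use $\hat{\mathscr{H}}$ itself as the bimodule implementing the equivalence with the identity element $[X\times\CC,\tau\times bar]$. Fibrewise $\hat{\mathscr{H}}_x$ is the standard graded $\cK(\hat{\mathscr{H}}_x)$--$\CC$-imprimitivity bimodule (with left inner product the rank-one operators $|\xi\>\<\eta|$, right inner product the scalar product, right action of $\CC$, and left action by compact operators). The Real structure on $\hat{\mathscr{H}}$ and the identity $\<\sigma(\xi),\sigma(\eta)\>=\overline{\<\xi,\eta\>}$ ensure that conditions~(a)--(b) of Definition~\ref{Morita-DD-bdl} hold Real-equivariantly, while the unitarity of $U$ together with $Ad_U=\al$ is exactly the relation~\eqref{eq_df:morita-BrR}. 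Hence $(\cA,\al)\sim_{(\hat{\mathscr{H}},U)}(X\times\CC,\tau\times bar)$, so $\cA=0$ in $\wRBr(\cG)$.

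\textbf{Necessity.} Conversely, suppose $[\cA,\al]=0$, so there exists a Rg $\cA$--$(X\times\CC)$-imprimitivity bimodule $(\cX,V)$ realising
$$(\cA,\al)\sim_{(\cX,V)}(X\times\CC,\tau\times bar).$$
I would set $\hat{\mathscr{H}}:=\cX$ and verify that $\hat{\mathscr{H}}$ is a Rg Hilbert $\cG$-bundle whose compacts recover $(\cA,\al)$. Fibrewise, each $\cX_x$ is a graded $\cA_x$--$\CC$-imprimitivity bimodule; the right $\CC$-valued inner product $\<\cdot,\cdot\>_\CC$ turns $\cX_x$ into a graded Hilbert space, and the left Morita condition forces the canonical map $\cA_x\To \cK(\cX_x)$, $a\mapsto ({}_{\cA_x}\<\cdot,a\cdot\cdot\>\text{-adjoints})$, to be a graded ${}^\ast$-isomorphism. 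The Real structure $\sigma_\cX$ satisfies $\<\sigma_\cX(\xi),\sigma_\cX(\eta)\>_\CC = \overline{\<\xi,\eta\>_\CC}$ because the right Real structure on $X\times\CC$ is complex conjugation, so $(\cX,\sigma_\cX)$ is a Rg Hilbert bundle over $(X,\rho)$. The $\cG$-action $V$ preserves the right $\CC$-valued inner product by the second equation of~\eqref{eq_df:morita-BrR}, so each $V_g$ is a graded Real isometry; thus $(\hat{\mathscr{H}},U):=(\cX,V)$ is a Rg Hilbert $\cG$-bundle. Finally, the first equation of~\eqref{eq_df:morita-BrR} translates, under the identification $\cA_x\cong \cK(\hat{\mathscr{H}}_x)$, into $\al_g=Ad_{U_g}$, yielding the desired isomorphism $(\cA,\al)\cong (\cK(\hat{\mathscr{H}}),Ad_U)$ in $\fwRBr(\cG)$.

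\textbf{Where the work sits.} The routine ingredients are the classical identification of a Hilbert space with the standard $\cK(H)$--$\CC$-imprimitivity bimodule and the isomorphism $\cA_x\cong\cK(\cX_x)$. The genuine points to check are (i) the Real structure on $\cX$ really satisfies the conjugation law for the $\CC$-valued inner product (traced back to how the Real structure on $X\times\CC$ enters the definition of a Real imprimitivity bimodule), and (ii) the global topological statement that $\cX$ is a locally trivial Hilbert bundle, not merely a fibrewise Hilbert space bundle, which uses paracompactness of $X$ and Theorem~\ref{enough-sect} to produce enough Real sections. Once these are in place, the equivariance and the inner-product identities of Definition~\ref{Morita-DD-bdl} translate, one after another, into precisely the data of a Rg Hilbert $\cG$-bundle whose bundle of graded compacts is $(\cA,\al)$.
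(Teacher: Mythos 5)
Your proposal is correct and follows essentially the same route as the paper: for necessity it reads off the Rg Hilbert $\cG$-bundle from the imprimitivity bimodule $(\cX,V)$ via the fibrewise identification $\cA_x\cong\cK(\cX_x)$ coming from fullness, with the relations~\eqref{eq_df:morita-BrR} giving $\al_g=Ad_{V_g}$; for sufficiency it uses $\hat{\mathscr{H}}$ itself as the equivalence bimodule with the trivial bundle, exactly as the paper does by appeal to the operations in Lemma~\ref{lem:A+K_G=A}.
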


\begin{proof}
If $(\cE,V)$ is a Morita equivalence between $(\cA,\al)$ and the trivial bundle $X\times\CC$, then each fibre $\cE_x$ is a graded Hilbert space; and since $\cE_x$ is a full graded Hilbert $\cA_x$-module and since $\cE$ is a Rg Morita equivalence, there is an isomorphism of graded $\cstar$-algebras $\vp_x:\cA_x\To \cK(\cE_x)$ such that $\vp_x({}_{\cA_x}\<\xi,\eta\>)=T_{\xi,\eta}$, for all $\xi,\eta\in \cE_x$, and $\vp_{\bar{x}}(a)=\overline{\vp_x(\bar{a})}$ for all $a\in \cA_x$. Moreover, in view of relations~\eqref{eq_df:morita-BrR}, we have \[\vp_{r(g)}(\al_g({}_{\cA_{s(g)}}\<\xi,\eta\>))=\vp_{r(g)}({}_{\cA_{r(g)}}\<V_g(\xi),V_g(\eta)\>)=T_{V_g\xi,V_g\eta}=Ad_{V_g}(T_{\xi,\eta}),\]
for every $\g\in \cG$ and $\xi,\eta\in \cA_{s(g)}$. It follows that the family $(\vp_x)_{x\in X}$ is an isomorphism of Real graded D-D bundles $\vp: (\cA,\al)\To (\cK(\cE),Ad_V)$.

Conversely, using the same operations as in the proof of Lemma~\ref{lem:A+K_G=A}, the Rg Hilbert $\cG$-bundle $(\hat{\mathscr{H}},U)$ defines a Morita equivalence of Rg D-D bundles between $(\cK(\hat{\mathscr{H}}),Ad_U)$ and the trivial one $X\times \CC\To X$.
\end{proof}

From this lemma we deduce the following characterisation of Morita equivalent Rg D-D bundles.

\begin{cor}~\label{cor:characterization_Morita_BrR}
Let $(\cA,\al)$ and $(\cB,\beta)\in \fwRBr(\cG)$. Then $\cA=\cB$ in $\wRBr(\cG)$ if and only if there exists a Rg Hilbert $\cG$-bundle $(\hat{\mathscr{H}},U)$ such that $(\cA\hat{\otimes}_X\overline{\cB},\al\hat{\otimes}\bar{\beta})\cong (\cK(\hat{\mathscr{H}}),Ad_U)$ in $\fwRBr(\cG)$.	
\end{cor}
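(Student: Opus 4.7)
The plan is to reduce this corollary directly to the preceding lemma by exploiting the group structure of $\wRBr(\cG)$. Since $\wRBr(\cG)$ is an abelian group with identity the class $0=[X\times\CC,\tau\times\mathrm{bar}]$ and with $[\overline{\cA},\bar{\al}]=-[\cA,\al]$, the equality $[\cA,\al]=[\cB,\beta]$ in $\wRBr(\cG)$ is equivalent to $[\cA,\al]+[\overline{\cB},\bar{\beta}]=0$, i.e.\ to $[\cA\hat{\otimes}_X\overline{\cB},\al\hat{\otimes}\bar{\beta}]=0$.

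First I would invoke the definition of the sum in $\wRBr(\cG)$ to rewrite $[\cA,\al]-[\cB,\beta]$ as $[\cA\hat{\otimes}_X\overline{\cB},\al\hat{\otimes}\bar{\beta}]$; this is just Notation~\ref{nota:notations_in_BrR} combined with the inverse formula from the previous proposition. Next I would apply the preceding lemma with the Rg D-D bundle $(\cA\hat{\otimes}_X\overline{\cB},\al\hat{\otimes}\bar{\beta})$ in the role of $(\cA,\al)$: that lemma asserts precisely that its class vanishes in $\wRBr(\cG)$ if and only if there is a Rg Hilbert $\cG$-bundle $(\hat{\mathscr{H}},U)$ with $(\cA\hat{\otimes}_X\overline{\cB},\al\hat{\otimes}\bar{\beta})\cong (\cK(\hat{\mathscr{H}}),Ad_U)$ in $\fwRBr(\cG)$. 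Chaining the two equivalences gives the stated corollary.

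There is essentially no obstacle here; the only point requiring minor care is to ensure that the inverse identification $-[\cB,\beta]=[\overline{\cB},\bar{\beta}]$ and the identity element $0=[X\times\CC,\tau\times\mathrm{bar}]$ from the group structure are compatible with the tensor product used in forming $\cA\hat{\otimes}_X\overline{\cB}$, but this was already verified in the construction of the group operation on $\wRBr(\cG)$. Hence the corollary is immediate from the lemma.
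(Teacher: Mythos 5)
Your proposal is correct and matches the paper's route exactly: the corollary is stated there as an immediate consequence of the preceding lemma, obtained by writing $\cA-\cB=[\cA\hat{\otimes}_X\overline{\cB},\al\hat{\otimes}\bar{\beta}]$ via the group structure and then applying the lemma to that class. Nothing is missing.
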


%%%%%%End of section 

\section{Complex and orthogonal Brauer groups}

The aim of this section is to compare the group $\wRBr(\cG)$ of a Real groupoid $\grpd$ we defined in the previous section with the well-known graded complex and Brauer group $\wBr(\cG)$ of the groupoid $\cG$ (see~\cite{Parker:Brauer, Donovan-Karoubi,Freed-Hopkins:Twisted_K2, Tu:Twisted_Poincare}), as well as with a generalisation of Donovan-Karoubi's \emph{graded orthogonal Brauer group} $\wBRO(X)$~\cite{Donovan-Karoubi} mentioned in the introduction.

Recall again that the \emph{graded complex Brauer group} $\wBr(\cG)$ is defined as the set of Morita equivalence classes of \emph{graded complex D-D bundles}~\footnote{Elements of $\wBr(\cG)$ are defined in the same way as that of $\wRBr(\cG)$ except that no Real structures are involved} over the groupoid $\cG$. Moreover, there is an interpretation of $\wBr(\cG)$ in terms of \v{C}ech cohomology classes. More precisely, there is an isomorphism 
\begin{equation}
\wBr(\cG)\cong \check{H}^0(\cG_\bullet,\ZZ_2)\oplus\left( \check{H}^1(\cG_\bullet,\ZZ_2)\ltimes \check{H}^2(\cG_\bullet,\cS^1)\right).
\end{equation} 
For topological spaces, this group was denoted by $\textsf{GBr}^\infty(X)$ in Parker's paper~\cite{Parker:Brauer} and classifies countinuous-trace separable $\cstar$-algebras with spectrum $X$.  

In order to define \emph{twisted $K$-theory}, Donovan and Karoubi defined in their founding paper~\cite{Donovan-Karoubi} two groups $\textsf{GBrU}(X)$ and $\textsf{GBrO}(X)$ respectively called \emph{the graded unitary Brauer group} and \emph{the graded orthogonal Brauer group} of the space $X$. The former is just the finite-dimensional version of $\textsf{GBr}^\infty(X)$, while the latter is the set of equivalence classes of \emph{graded real simple algebra bundles}. They proved that 
\begin{equation}~\label{eq:isom_GBrO-cohom}
\textsf{GBrO}{X}\cong \check{H}^0(X,\ZZ_8)\oplus \left(\check{H}^1(X,\ZZ_2)\ltimes \check{H}^2(X,\ZZ_2)\right).
\end{equation}
We will define an infinite-dimensional analogue of $\textsf{GBrO}(X)$ for groupoids, and show later an isomorphism analogous to~\eqref{eq:isom_GBrO-cohom}.

We first give the following few results.

\begin{pro}~\label{pro:RBr_vs_Br-special-case}
Suppose that $\grpd$ is a Real groupoid which can be written as the disjoint union of two groupoids $\xymatrix{\cG_1\dar[r]& X_1}$ and $\xymatrix{\cG_2\dar[r]&X_2}$ such that $\tau(g_1)\in \cG_2, \tau(g_2)\in \cG_1, \forall g_1\in \cG_1,g_2\in \cG_2$. Then 
\[\wRBr(\cG)\cong \wBr(\cG_1)\cong \wBr(\cG_2).\]	
\end{pro}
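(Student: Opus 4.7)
The basic idea is that when the involution $\tau$ swaps two disjoint clopen subgroupoids, the Real structure on a Rg D-D bundle over $\cG$ contains no information beyond the bundle itself: it is automatically determined, up to natural isomorphism, by the underlying graded D-D bundle over $\cG_1$ (equivalently over $\cG_2$).

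The plan is to construct a ``restriction'' homomorphism $\Phi_1 : \wRBr(\cG) \To \wBr(\cG_1)$ and an ``extension'' homomorphism $\Psi_1 : \wBr(\cG_1)\To \wRBr(\cG)$, and verify they are mutually inverse. Write $X=X_1\sqcup X_2$ with $\tau(X_1)=X_2$. For a Rg D-D bundle $(\cA,\al)$ over $(\cG,\tau)$, set
\[
\Phi_1(\cA,\al) := (\cA|_{X_1},\, \al|_{\cG_1}),
\]
where on the right we forget the Real structure (there is none to keep, since $X_1$ is $\tau$-free inside $\cG$). Conversely, given $(\cB,\beta)\in\fwBr(\cG_1)$, form the Rg D-D bundle $\Psi_1(\cB,\beta)=(\cA,\al)$ over $\cG$ as follows: put $\cA|_{X_1}:=\cB$ and $\cA|_{X_2}:=(\tau|_{X_1})_\ast \overline{\cB}$, where $\overline{\cB}$ is the conjugate graded D-D bundle of Definition~\ref{df:conjugate-Ban-G-bdl}; define $\al_g:=\beta_g$ for $g\in\cG_1$ and $\al_g:=\bar{\beta}_{\tau(g)}$ (suitably transported) for $g\in\cG_2$; and define the Real structure $\sigma:\cA\to\cA$ by $b\mapsto \flat(b)$ on the $X_1$ part and $\flat(b)\mapsto b$ on the $X_2$ part. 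This $\sigma$ is conjugate-linear, graded, and an involution by construction, and it interacts correctly with $\al$ because of how $\al|_{\cG_2}$ was defined via $\bar\beta$.

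Next I would verify the algebraic structure. That $\Phi_1$ is well-defined on Morita classes is immediate: a Morita equivalence $(\cX,V)$ over $\cG$ restricts to a $\cG_1$-equivariant imprimitivity bimodule over $X_1$. It is also manifestly additive: $(\cA\hat\otimes_X\cB)|_{X_1}=\cA|_{X_1}\hat\otimes_{X_1}\cB|_{X_1}$, and the trivial class $X\times\CC$ restricts to $X_1\times\CC$. A symmetric argument shows $\Psi_1$ is well-defined on Morita classes (a Morita equivalence of graded D-D bundles on $\cG_1$ extends, via its conjugate on $X_2$, to a Morita equivalence of the two corresponding Rg D-D bundles on $\cG$) and is additive (the conjugate distributes over graded tensor products up to canonical Rg isomorphism).

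For the inverse identities: $\Phi_1\circ\Psi_1=\Id$ is tautological, since the construction of $\Psi_1$ puts $\cB$ verbatim on $X_1$ with the original action. For $\Psi_1\circ\Phi_1=\Id$ one must exhibit, for each $(\cA,\al,\sigma)\in\fwRBr(\cG)$, a canonical isomorphism of Rg D-D bundles from $\Psi_1(\cA|_{X_1},\al|_{\cG_1})$ back to $(\cA,\al,\sigma)$. On $X_1$ this is the identity; on $X_2$ it is precisely the map $\sigma_{\tau^{-1}(\cdot)}:\overline{\cA}_{\tau^{-1}(y)}\to \cA_y$, which is a well-defined fibrewise graded $\ast$-isomorphism because $\sigma$ is a conjugate-linear graded $\ast$-involution and $\sigma_y\circ\sigma_{\tau^{-1}(y)}=\Id$; the $\cG$-equivariance of this identification is then exactly the compatibility relation (b) of Definition~\ref{df-DD-bdle} between $\al$ and the Real structure.

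The main (but mild) obstacle is the last step: checking that the canonical isomorphism $\Psi_1\Phi_1(\cA,\al,\sigma)\cong(\cA,\al,\sigma)$ really is a morphism of Rg D-D bundles, i.e.\ that it intertwines not only the actions over $\cG_1$ and $\cG_2$ separately but also the Real structures, and that my choice of $\al|_{\cG_2}$ in $\Psi_1$ is intrinsic (independent of any arbitrary choice). Both points reduce to the axioms $\sigma^2=\Id$ and $\tau_{r(g)}\circ\al_g=\al_{\tau(g)}\circ\tau_{s(g)}$ in Definitions~\ref{R-gdd-bdl}--\ref{df-DD-bdle}, which force the $\cG_2$-part of any Rg D-D bundle to be the conjugate transport of its $\cG_1$-part. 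Finally, interchanging the roles of $\cG_1$ and $\cG_2$ gives the second isomorphism $\wRBr(\cG)\cong\wBr(\cG_2)$.
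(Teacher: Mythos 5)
Your proposal is correct and follows essentially the same route as the paper: the paper's map $\Phi_{12}$ is your restriction $\Phi_1$, and its inverse $\Phi_{12}'([\cA_1,\al_1])=[\cA_1\oplus\overline{\tau^\ast_{|X_2}\cA_1},\al_1\oplus\overline{\tau^\ast_{|\cG_2}\al_1}]$ is exactly your extension $\Psi_1$ (pullback along the involution agreeing with your pushforward since $\tau^2=\Id$). Your explicit verification that $\Psi_1\Phi_1\cong\Id$ via the fibrewise maps $\sigma_{\tau^{-1}(\cdot)}$ is in fact slightly more detailed than the paper, which only records that the Real structure identifies $(\cA_2,\al_2)$ with $(\overline{\cA_1},\overline{\al_1})$ and then declares the two maps to be mutually inverse.
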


\begin{proof}
Observe first that $\tau$ induces an isomorphism $\cG_1\cong \cG_2$, so that $\wBr(\cG_1)\cong \wBr(\cG_2)$.

 Let $(\cA,\al)\in \fwRBr(\cG)$. Then $\cA=\cA_1\oplus \cA_2$, where $\cA_1\To X_1$ and $\cA_2\To X_2$ are graded complex elementary $\cstar$-bundles. It is clear that the graded $\cG$-action $\al$ on $\cA$ induces a $\cG_i$-action $\al_i$ on $\cA_i, i=1,2$, making $(\cA_i,\al_i)$ into a graded complex D-D bundle over $\cG_i$. However, since the projection $p:\cA\to X_1\sqcup X_2$ intertwines the Real structure of $\cA$ and that of $X$, we have $\bar{a}_1\in \cA_2$ and $\bar{a}_2\in \cA_1$ for all $a_1\in \cA_1, a_2\in \cA_2$. Indeed, over all $x\in X_1$, the involution induces the conjugate linear isomorphism $$\tau_{x}:\cA_x=(\cA_1)_x\To \cA_{\bar{x}}=(\cA_2)_{\bar{x}}.$$ It turns out that the Real structure of $\cA$ induces an isomorphism of graded complex D-D bundles $$\tau:(\cA_2,\al_2)\stackrel{\cong}{\To}(\overline{\cA_1},\overline{\al_1})$$ over the groupoid $\cG_2$. In fact $(\overline{\cA},\bar{\al})$ is isomorphic to the Rg D-D bundle $(\cA',\al')=(\tau^\ast\cA,\tau^\ast\al)$; {\it i.e}. $(\cA',\al')$ is such that 

\begin{equation}~\label{eq:A-bar_vs_A-prime}
\left\{ \begin{array}{ll}
\cA'_x = (\cA_2)_{\bar{x}}, & {\rm if\ } x\in X_1;\\ 
\cA'_x=(\cA_1)_{\bar{x}}, & {\rm if\ } x\in X_2;\\ 
\al'_{g_1}=\al_{\bar{g}_1}: (\cA_2)_{s(\bar{g}_1)}\To (\cA_2)_{r(\bar{g}_1)}, & g_1\in \cG_1; \\
\al'_{g_2}=\al_{\bar{g}_2}:(\cA_1)_{s(\bar{g}_2)}\To (\cA_1)_{r(\bar{g}_2)}, & g_2\in \cG_2. \end{array}\right.
\end{equation} 

In fact, the same is true for every Rg Banach bundle over $\cG$. Now define the map
\begin{equation*}
\begin{array}{llll}
\Phi_{12}: & \wRBr(\cG) & \To & \wBr(\cG_1) \\
 & [\cA,\al] & \mto & [\cA_1,\al_1],  
\end{array}
\end{equation*}

$\Phi_{12}$ is well-defined since if $(\cA,\al)\sim_{(\cE,V)}(\cB,\beta)$ in $\fwRBr(\cG)$, then the restriction $(\cE_1,V_1)$ of $(\cE,V)$ over $\cG_1$ induces a Morita equivalence of graded complex D-D bundles $(\cA_1,\al_1)\sim (\cB_1,\beta_1)$ over $\cG_1$. Moreover from the identifications~\eqref{eq:A-bar_vs_A-prime} we see that $\Phi_{12}(-[\cA,\al])=-\Phi([\cA,\al])$. Furthermore, we have clearly $(\cA\hat{\otimes}_X\cB)_i=\cA_i\hat{\otimes}_{X_i}\cB_i$ for $i=1,2$, and that the involution induces an isomorphism of graded complex D-D bundles $\cA_2\hat{\otimes}_{X_2}\cB_2\stackrel{\cong}{\To} \overline{\cA_1\hat{\otimes}_{X_1}\cB_1}=\overline{\cA_1}\hat{\otimes}_{X_2}\overline{\cB_1}$ over $\xymatrix{\cG_2\dar[r]& X_2}$, which shows $\Phi_{12}$ is a group homomorphism. \\

Conversely, if $(\cA_1,\al_1)$ is a graded complex D-D bundle over $\cG_1$, we define the Real graded D-D bundle $(\cA,\al)$ over $\cG$ by setting $\cA:=\cA_1\oplus \overline{\tau^\ast_{|X_2}\cA_1}$, and $\al:=\al_1\oplus \overline{\tau^\ast_{|\cG_2}\al_1}$; then we define 
\begin{equation*}
\begin{array}{llll}
\Phi_{12}' : & \wBr(\cG_1) & \To &\wRBr(\cG)\\
 & [\cA_1,\al_1] & \mto &[\cA_1\oplus\overline{\tau_{|X_2}^\ast \cA_1},\al_1\oplus\overline{\tau^\ast_{|\cG_2}\al_1}].
\end{array}
\end{equation*}
It is straightforward that $\Phi_{12}$ and $\Phi_{12}'$ are inverse of each other.
\end{proof}

\begin{cor}~\label{cor:BrR(G-S)_vs_Br(G)}
Let $\grpd$ be a groupoid. Let the product groupoid $\xymatrix{\cG\times\bfS^{0,1}\dar[r]& X\times\bfS^{0,1}}$ be equipped with the Real structure $(g,\pm1)\mto (g,\mp1)$. Then \[\wRBr(\cG\times\bfS^{0,1})\cong \wBr(\cG).\]	
\end{cor}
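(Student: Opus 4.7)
The plan is to deduce this corollary as a direct application of Proposition~\ref{pro:RBr_vs_Br-special-case}. The key observation is that the product groupoid $\cG\times\bfS^{0,1}$ naturally splits, as a groupoid (without involution), into two disjoint connected components corresponding to the two points of $\bfS^{0,1}$.

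First I would set $\cG_1:=\cG\times\{+1\}$ and $\cG_2:=\cG\times\{-1\}$, with unit spaces $X_1=X\times\{+1\}$ and $X_2=X\times\{-1\}$. Since the groupoid structure on $\cG\times\bfS^{0,1}$ is the product of that on $\cG$ with the (unit) groupoid structure on $\bfS^{0,1}$ (whose only arrows are identities), there are no arrows between $\cG_1$ and $\cG_2$, so $\cG\times\bfS^{0,1}=\cG_1\sqcup \cG_2$ as a locally compact Hausdorff groupoid, and each factor is canonically isomorphic to $\cG$.

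Next I would verify the hypothesis on the involution: the prescribed Real structure $\tau(g,\pm 1)=(g,\mp 1)$ clearly sends $\cG_1$ onto $\cG_2$ and vice versa, which is exactly what Proposition~\ref{pro:RBr_vs_Br-special-case} requires. Moreover, the base space $X\times\bfS^{0,1}$ is paracompact (it is a disjoint union of two paracompact spaces), so both $\cG_1$ and $\cG_2$ satisfy the running hypotheses. Applying Proposition~\ref{pro:RBr_vs_Br-special-case} then yields
\[
\wRBr(\cG\times\bfS^{0,1})\;\cong\;\wBr(\cG_1)\;\cong\;\wBr(\cG),
\]
where the last isomorphism is induced by the canonical identification $\cG_1\cong \cG$.

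There is essentially no obstacle here; the only point that requires a moment's attention is checking that the product groupoid really does decompose as a disjoint union of groupoids (which relies on the trivial fact that $\bfS^{0,1}$, viewed as a unit groupoid, has no non-identity arrows), so that Proposition~\ref{pro:RBr_vs_Br-special-case} applies verbatim.
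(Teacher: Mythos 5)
Your proposal is correct and is exactly the paper's argument: the paper's own proof consists of applying Proposition~\ref{pro:RBr_vs_Br-special-case} to the decomposition $\cG\times\bfS^{0,1}=(\cG\times\{+1\})\sqcup(\cG\times\{-1\})$, with the involution swapping the two copies. You have merely spelled out the verification of the hypotheses in more detail.
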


\begin{proof}
Apply Proposition~\ref{pro:RBr_vs_Br-special-case} to $\cG=(\cG\times\{+1\})\sqcup (\cG\times \{-1\})$.
\end{proof}

\begin{ex}
The groupoid $\xymatrix{\bfS^{0,1}\dar[r]& \bfS^{0,1}}$ identifies with $\{pt\}\times \{\pm1\}$. Thus from Corollary~\ref{cor:BrR(G-S)_vs_Br(G)} we get 
\[\wRBr(\bfS^{0,1})\cong \wBr(\{pt\})\cong \ZZ_2.\]	
\end{ex}

\begin{df}
Let $\grpd$ be a groupoid. A \emph{graded real~\footnote{Here "real" with a lower-case "r" is to emphasise that the fibres of $\cA$ are $\RR$-$\cstar$-algebras.} D-D bundle} $(\cA,\al)$ over $\cG$ consists of a locally trivial $\cstar$-bundle $p:\cA\To X$, a family of isomorphisms of graded $\RR$-$\cstar$-algebras $\al_g:\cA_{s(g)}\To \cA_{r(g)}$, such that 
\begin{itemize}
\item[(a)] the operation $g\cdot a:=\al_g(a)$ makes $\cA$ into a $\cG$-space with respect to the projection $p$;
\item[(b)] $\al_{gh}=\al_g\circ \al_h, \forall (g,h)\in \cG^{(2)}$;
\item[(c)] the \emph{complexification $(\cA_\CC,\al_\CC)$} of $(\cA,\al)$ defines an element of the collection $\fwBr(\cG)$ of graded complex D-D bundles over $\cG$, where $\cA_\CC:=\cA\otimes_\RR\CC \To X$ is the bundle with fibre $(\cA_\CC)_x:=\cA_x\otimes_\RR\CC$, and for $g\in \cG$, $(\al_\CC)_g:=\al\otimes \Id_\CC$.	
\end{itemize} 	
\end{df}

\begin{df}
Let $\grpd$ be a groupoid. The \emph{graded orthogonal Brauer group $\wBRO(\cG)$} of $\cG$ is defined to be the set or Morita equivalence classes of graded real D-D bundles over $\cG$, where two such bundles $(\cA,\al)$ and $(\cB,\beta)$ are said to be Morita equivalent if and only if their complexifications $(\cA_\CC,\al_\CC)$ and $(\cB_\CC,\beta_\CC)$ are Morita equivalent in $\fwBr(\cG)$.	
\end{df}

We will use the same notations in $\wBr(\cG)$ and $\wBRO(\cG)$ as in Notations~\ref{nota:notations_in_BrR}.

\begin{thm}~\label{thm:Br_vs_BrR_vs_BrO}
Let $\grpd$ be a Real groupoid. 
\begin{enumerate}
\item If the Real structure $\tau$ is fixed point free, then we have an isomorphism 
\begin{equation}~\label{eq:decomp_Br}
	\wBr(\cG)\otimes\ZZ[\nicefrac{1}{2}] \cong (\wRBr(\cG)\oplus \wBr(\cG/_{\tau}))\otimes \ZZ[\nicefrac{1}{2}],		
		\end{equation}
where $\cG/_{\tau}$ is the groupoid $\xymatrix{\cG/_\tau \dar[r] & X/_{\tau}}$ obtained from $\grpd$ by identifying every point $g\in \cG$ with its image by $\tau$.
\item	It $\tau$ is trivial, then every element $\cA\in \wRBr(\cG)$ is a $2$-torsion; {\it i.e}. \[2\cA=0.\]	
	\end{enumerate}	
Furthermore, $\wRBr(\cG)\cong \wBRO(\cG)$. In particular, $\wBRO(\cG)$ is an abelian group under the obvious operations, the zero element being given by the trivial bundle $X\times \RR\To X$ with the $\cG$-action $g\cdot(s(g),t):=(r(g),t)$.  
\end{thm}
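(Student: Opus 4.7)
The plan is to establish (3) first, since both (1) and (2) profit from the realification/complexification dictionary. For (3), I would build a pair of mutually inverse maps. In one direction, complexification $C\colon \wBRO(\cG)\to\wRBr(\cG)$ sends $[\cA,\al]$ to $[\cA\otimes_\RR\CC,\al\otimes\id_\CC]$ endowed with the Real structure $\sigma=\id\otimes c$, where $c$ is complex conjugation on the second tensor factor; in the other direction, realification $R\colon \wRBr(\cG)\to\wBRO(\cG)$ sends $[\cB,\beta,\sigma]$ to the fixed-point subbundle $[\cB^\sigma,\beta|_{\cB^\sigma}]$. One checks both maps preserve graded tensor products and Morita equivalence, $R\circ C=\id$ is immediate, and $C\circ R=\id$ follows from the canonical identification $\cB^\sigma\otimes_\RR\CC\cong \cB$ (the bundle version of the $\cstar$-algebra equivalence recalled in the introduction). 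The subtle point, when $\tau$ is non-trivial, is that the fibrewise conjugation $\id\otimes c$ must be shown to cover $\tau$; this is arranged by combining it with the bundle isomorphisms $\cA_x\cong \cA_{\tau(x)}$ induced from the pull-back $\tau^{\ast}\cA$, which is itself a graded real D--D bundle, together with the $\cG$-equivariance of $\al$.

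For (2), assuming $\tau$ trivial, by (3) it suffices to show that every $[\cA,\al]\in \fwBRO(\cG)$ satisfies $2[\cA]=0$. Since $\tau$ is trivial, the Real structure on the complexification of $\cA$ is a fibrewise anti-linear involution $\sigma$, and the class of $\cA_\CC$ in the forgetful group $\wBr(\cG)$ satisfies $\cA_\CC\cong\overline{\cA_\CC}$, so that $\cA_\CC\hat\otimes_X\cA_\CC\cong \cA_\CC\hat\otimes_X\overline{\cA_\CC}\cong \cK(\cA_\CC)$ as complex graded D--D bundles. The hard part is to upgrade this Morita equivalence to a \emph{Real} one, i.e.\ to exhibit a Rg Hilbert $\cG$-bundle $\hat{\mathscr{H}}$ with $\cA\hat\otimes_X\cA\cong \cK(\hat{\mathscr{H}})$ in $\fwRBr(\cG)$ and invoke Corollary~\ref{cor:characterization_Morita_BrR}. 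The natural candidate for $\hat{\mathscr{H}}$ is $\cA_\CC$ viewed as a graded Hilbert $\cG$-bundle via its real structure, carrying $\sigma$. The main obstacle I expect is the $\check H^0(X,\ZZ_8)$-valued type component: the type of $\cA\hat\otimes_X\cA$ is twice the type of $\cA$ modulo $8$, so for types $1,2,5,6$ the naive doubling does not vanish, and either the statement must be interpreted modulo the type subgroup or the proof has to split $X$ along the locally constant type function and reduce to the cases where doubling is trivial.

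For (1), introduce the involution $\Theta\colon \wBr(\cG)\to\wBr(\cG)$ defined by $\Theta[\cA,\al]:=[\overline{\tau^{\ast}\cA},\overline{\tau^{\ast}\al}]$. Since $\Theta^2=\id$, inverting $2$ gives a canonical splitting $\wBr(\cG)\otimes\ZZ[\nicefrac{1}{2}]=M^+\oplus M^-$ into $(\pm1)$-eigenspaces of $\Theta$. The identification $M^+\cong \wRBr(\cG)\otimes\ZZ[\nicefrac{1}{2}]$ goes via the forgetful map: a class is $\Theta$-invariant precisely when $\overline{\tau^{\ast}\cA}\cong \cA$, which is exactly the datum required to equip $\cA$ with a Real structure, and two Real structures on the same $\cA$ differ by a $\check H^1(\cG_\bullet,\ZZ_2)$-valued ($2$-torsion) discrepancy that becomes trivial after $\otimes\ZZ[\nicefrac{1}{2}]$. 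For $M^-$, freeness of $\tau$ makes $\pi\colon \cG\to \cG/_{\tau}$ a genuine principal $\ZZ/2$-cover; the pullback $\pi^{\ast}\colon \wBr(\cG/_{\tau})\to\wBr(\cG)$ lands in $M^-$ because $\tau^{\ast}\pi^{\ast}=\pi^{\ast}$ gives $\Theta\pi^{\ast}=-\pi^{\ast}$, and $\ZZ/2$-Galois descent identifies it with an isomorphism onto $M^-\otimes\ZZ[\nicefrac{1}{2}]$. The main obstacle in (1) will be precisely this descent step: showing that every $\Theta$-anti-invariant class is, after inverting $2$, the pullback of a unique class from $\cG/_{\tau}$, which requires averaging a $\Theta$-skew cocycle over the $\ZZ/2$-action and hence the inversion of $2$.
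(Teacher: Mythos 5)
Your treatment of (1) is essentially the paper's own route: the paper introduces the involution $\hat{\tau}(\cA)=\overline{\tau^{\ast}\cA}$ (Lemma~\ref{lem:involution_on_Br}), splits $\wBr(\cG)\otimes\ZZ[\nicefrac{1}{2}]$ into the $(\pm1)$-parts, identifies the invariant part with $\wRBr(\cG)\otimes\ZZ[\nicefrac{1}{2}]$, and identifies the anti-invariant part with $\wBr(\cG/_{\tau})$ via $\pi_\tau^{\ast}$ and an explicit descent $\cA\mapsto\cA\hat{\otimes}_X\cK(\hat{\mathscr{H}})\hat{\otimes}_X\cK(\tau^{\ast}\hat{\mathscr{H}})$ built from the characterization of Morita triviality (Corollary~\ref{cor:characterization_Morita_BrR}). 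One caveat: you identify $M^{+}$ by claiming that $\overline{\tau^{\ast}\cA}\cong\cA$ is "exactly the datum required to equip $\cA$ with a Real structure." It is not: such an isomorphism is an anti-linear self-equivalence whose square need not be the identity, and promoting it to an involution is precisely the Connes--Jones obstruction recalled in Appendix~A. The paper sidesteps this entirely by using the norm map $\Phi_R(\cA)=\cA\hat{\otimes}_X\overline{\tau^{\ast}\cA}$ equipped with the explicit flip involution $a\hat{\odot}\flat(b)\mapsto(-1)^{\partial a\partial b}b\hat{\odot}\flat(a)$, and checking that both composites with the forgetful map are multiplication by $2$; no Real structure on $\cA$ itself is ever produced.

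The genuine gaps are in (2) and in your "(3)". The isomorphism $\wRBr(\cG)\cong\wBRO(\cG)$ is proved in the paper only under the hypothesis of (2), namely $\tau$ trivial, and your attempt to establish it for arbitrary $\tau$ cannot work as written: the realification $R$ (fibrewise fixed points of $\sigma$) does not yield a bundle over $X$ when $\tau$ moves points, since $\sigma_x$ maps $\cB_x$ to $\cB_{\tau(x)}$ and the fixed-point set only lives over $X^{\tau}$. You flag this "subtle point" but do not resolve it, and it is not resolvable in that generality. For (2) you then take a detour through $\wBRO$ and leave the decisive step explicitly unproved. The paper's argument is direct and short: the Real structure of any $(\cA,\al)\in\fwRBr(\cG)$, composed fibrewise with $\flat$, is itself a $\CC$-linear isomorphism of Rg D-D bundles $(\cA,\al)\cong(\overline{\tau^{\ast}\cA},\overline{\tau^{\ast}\al})$; when $\tau$ is trivial this reads $\cA\cong\overline{\cA}=-\cA$, whence $2\cA=0$, using that the inverse in $\wRBr(\cG)$ is the conjugate bundle. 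I will add that the obstruction you raise is not imaginary: conjugation preserves the type in $\Inv\wfK$ while inversion negates it, so "conjugate equals inverse" forces the type to be $2$-torsion, and your worry about types $1,2,5,6$ is a legitimate question to put to the statement (compare $\wRBr(\ast)\cong\ZZ_8$). But raising it is not proving the theorem, and your proposal neither closes that gap nor reproduces the paper's direct argument.
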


We shall mention that property 2 was already proved by D. Saltman in the special case of Azumaya algebras with involution (see~\cite[Theorem 4.4 (a)]{Saltman:Azumaya}). Our result extends his work to infinite-dimensional Real bundles of algebras.

To prove Theorem~\ref{thm:Br_vs_BrR_vs_BrO} we need the 

\begin{lem}~\label{lem:involution_on_Br}
Let $(\cG,\tau)$ be a Real groupoid. Then the assignment $(\cA,\al)\mto (\overline{\tau^\ast\cA},\overline{\tau^\ast\al})$ defines a group involution 
\[
\begin{array}{llll}
\hat{\tau}: &\wBr(\cG) & \To & \wBr(\cG) \\
& \cA & \mto & -\tau^\ast \cA
\end{array}
\]
such that the Real part $\wBr(\cG)_\tau$ is isomorphic to $\wRBr(\cG)$ after tensoring with $\QQ$; more precisely, 
\[
\wBr(\cG)_\tau\otimes \ZZ[{1\over 2}]\cong \wRBr(\cG)\otimes \ZZ[{1\over 2}].
\]	
\end{lem}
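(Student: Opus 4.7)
The plan is to relate $\wRBr(\cG)$ to $\wBr(\cG)_\tau$ via a forgetful homomorphism $F$ with image in the fixed locus and a ``norm-type'' map $N$, then to check that $F\circ N$ and $N\circ F$ are both multiplication by $2$.

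First I would verify that $\hat\tau$ is well-defined. The pull-back $\tau^\ast$ and the conjugation $\cB\mapsto\overline{\cB}$ of Definition~\ref{df:conjugate-Ban-G-bdl} both preserve Morita equivalences and graded tensor products of graded complex D-D bundles over $\cG$, so $[\cA,\al]\mapsto[\overline{\tau^\ast\cA},\overline{\tau^\ast\al}]$ descends to a group homomorphism $\hat\tau:\wBr(\cG)\to\wBr(\cG)$. Since $\tau^2=\id$ and $\overline{\overline{\cB}}\cong\cB$ canonically, $\hat\tau^2=\id$. Because $-[\cB]=[\overline{\cB}]$ in $\wBr(\cG)$ (standard Morita equivalence $\cB\hat\otimes\overline\cB\sim\wKK_\cG$), we obtain $\hat\tau[\cA]=-[\tau^\ast\cA]$.

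Next I would define the forgetful homomorphism $F:\wRBr(\cG)\to\wBr(\cG)$, $[\cA,\sigma,\al]\mapsto[\cA,\al]$, and check that its image lies in $\wBr(\cG)_\tau$: indeed, the conjugate-linear Real structure $\sigma_x:\cA_x\to\cA_{\tau(x)}$ factors through a complex-linear, $\cG$-equivariant, graded isomorphism $\tilde\sigma:\cA\stackrel{\cong}{\to}\overline{\tau^\ast\cA}$ of D-D bundles (using the tautological identification $J_{\tau(x)}:\cA_{\tau(x)}\to\overline{\cA_{\tau(x)}}$), giving $[\cA,\al]=\hat\tau[\cA,\al]$. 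In the opposite direction I would introduce the ``norm'' map
\[
 N[\cA,\al] := \bigl[\cA\hat\otimes\overline{\tau^\ast\cA},\ \al\hat\otimes\overline{\tau^\ast\al},\ \sigma_{\mathrm{can}}\bigr]\in\wRBr(\cG),
\]
where $\sigma_{\mathrm{can},x}(a\otimes J(b)) := b\otimes J(a)$ for $a\in\cA_x$ and $b\in\cA_{\tau(x)}$. A direct fibrewise computation shows that $\sigma_{\mathrm{can}}$ is conjugate-linear, squares to the identity, and is compatible with $\al\hat\otimes\overline{\tau^\ast\al}$; also $N$ respects both Morita equivalence and the graded tensor product. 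The first composition reads
\[
 F\circ N[\cA,\al]=[\cA,\al]+[\overline{\tau^\ast\cA},\overline{\tau^\ast\al}]=[\cA,\al]+\hat\tau[\cA,\al],
\]
which restricts on $\wBr(\cG)_\tau$ to multiplication by $2$.

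Finally, I would need to establish $N\circ F=2\cdot\id$ in $\wRBr(\cG)$, i.e.\ a Real Morita equivalence
\[
 \bigl(\cA\hat\otimes\overline{\tau^\ast\cA},\sigma_{\mathrm{can}}\bigr)\ \sim\ \bigl(\cA\hat\otimes\cA,\sigma\hat\otimes\sigma\bigr).
\]
The isomorphism $\id\hat\otimes\tilde\sigma:\cA\hat\otimes\cA\to\cA\hat\otimes\overline{\tau^\ast\cA}$ of (non-Real) graded D-D bundles pulls $\sigma_{\mathrm{can}}$ back to a Real structure on $\cA\hat\otimes\cA$ that differs from $\sigma\hat\otimes\sigma$ by a $\cG$-equivariant graded inner automorphism; the usual argument (as in the complex case of~\cite{KMRW}) that inner automorphisms yield Morita-equivalent D-D bundles then furnishes the required Real Morita equivalence. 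Granted this, both $F\circ N|_{\wBr(\cG)_\tau}$ and $N\circ F$ become multiplication by $2$, hence isomorphisms after tensoring with $\ZZ[{1\over 2}]$, and
\[
 F\otimes\ZZ[{1\over 2}]:\ \wRBr(\cG)\otimes\ZZ[{1\over 2}]\stackrel{\cong}{\To}\wBr(\cG)_\tau\otimes\ZZ[{1\over 2}],
\]
with inverse ${1\over 2}\,N\otimes\ZZ[{1\over 2}]$. The hard step is this last Real Morita equivalence: matching $\sigma_{\mathrm{can}}$ and $\sigma\hat\otimes\sigma$ on the nose fails, and one must exhibit an explicit $\cG$-equivariant Real imprimitivity bimodule (built from $\cA$ and $\sigma$) which absorbs the discrepancy into an inner automorphism.
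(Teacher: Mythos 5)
Your proposal is correct and follows essentially the same route as the paper: your $F$ and $N$ are exactly the paper's $\Phi_\CC$ (forgetting the Real structure) and $\Phi_R:\cA\mapsto\cA+\hat\tau(\cA)$ equipped with the swap Real structure $\sigma_x(a\hat{\odot}\flat(b))=(-1)^{\partial a\cdot\partial b}\,b\hat{\odot}\flat(a)$ (note the Koszul sign, which your formula omits but which is needed for $\sigma_{\mathrm{can}}$ to be multiplicative on the graded tensor product), and the conclusion is drawn from $F\circ N=\id+\hat\tau$ and $N\circ F=2$ after inverting $2$. The step you single out as the hard one is precisely the one the paper dispatches most tersely, by observing that $\sigma$ itself furnishes the isomorphism $(\cA,\al)\cong(\overline{\tau^\ast\cA},\overline{\tau^\ast\al})$ and leaving the reconciliation of $\sigma_{\mathrm{can}}$ with $\sigma\hat{\otimes}\sigma$ (an inner, hence Morita-trivial, discrepancy) implicit.
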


\begin{proof}
That $\hat{\tau}$ is a group homomorphism follows from the functorial property of the abelian group $\wBr(\cG)$ with respect to $\cG$ shown in~\cite{Kumjian-Muhly-Renault-Williams:Brauer}. Now let $$\Phi_\CC: \wRBr(\cG)\To \wBr(\cG), \cA\mto \cA$$ be the map consisting of "forgetting the Real structures", and let \[\begin{array}{llll}\Phi_R:& \wBr(\cG) & \To & \wRBr(\cG) \\ & \cA & \mto & \cA+\hat{\tau}(\cA)\end{array}\]
That $\Phi_\CC$ is a well-defined group homomorphism is clear. 

To prove that $\Phi_R$ is well defined, we shall first verify that $(\cA\hat{\otimes}_X\overline{\tau^\ast\cA},\al\hat{\otimes}\overline{\tau^\ast\al})\in \fwRBr(\cG)$ for all $(\cA,\al)\in \fwBr(\cG)$. Let $\sigma=(\sigma_x)_x$ be the family of conjugate-linear isomorphisms of graded complex $\cstar$-algebras $\sigma_x:  \cA_x\hat{\otimes}\overline{\cA_{\bar{x}}}  \To \cA_{\bar{x}}\hat{\otimes} \overline{\cA_x}$ given on homogeneous tensors by 
\begin{equation}
\sigma_x(a\hat{\odot}\flat(b)):= (-1)^{\partial a\cdot\partial b}(b\hat{\odot}\flat(a)).
\end{equation}
Then $\sigma$ is a Real structure on the bundle $\cA\hat{\otimes}_X\overline{\tau^\ast\cA}\To X$, and it is a matter of simple verifications to see that conditions (a)-(c) in Definition~\ref{df-DD-bdle} are satisfied when $(\cA\hat{\otimes}_X\overline{\tau^\ast\cA},\al\hat{\otimes}\overline{\tau^\ast\al})$ is equipped with the involution $\sigma$. 

Suppose further that $(\cA,\al)\sim_{(\cE,V)}(\cB,\beta)$ in $\fwBr(\cG)$. By using the same reasoning we used above for graded complex D-D bundles, one verifies that the graded complex Banach $\cG$-bundle $(\cE\hat{\otimes}_X\overline{\tau^\ast\cE},V\hat{\otimes}\overline{\tau^\ast V})$ admits a Real structure $\sigma^{\cE}$ making it into a Rg Banach $\cG$-bundle. Furthermore, this bundle implements a Morita equivalence $(\cA\hat{\otimes}_X\overline{\tau^\ast\cA},\al\hat{\otimes}\overline{\tau^\ast\al})\sim (\cB\hat{\otimes}_X\overline{\tau^\ast\cB},\beta\hat{\otimes}\overline{\tau^\ast\beta})$ in $\fwBr(\cG)$. Moreover, since by definition 
\begin{align*}
{}_{\cA_x\hat{\otimes}\overline{\cA_{\bar{x}}}}\<\xi\hat{\odot}\flat(\eta),\xi'\hat{\odot}\flat(\eta')\>= {}_{\cA_x}\<\xi,\xi'\>\hat{\odot}{}_{\overline{\cA_{\bar{x}}}}\<\flat(\eta),\flat(\eta')\>, \ {and\ } 
\<\xi\hat{\odot}\flat(\eta),\xi'\hat{\odot}\flat(\eta')\>_{\cB_x\hat{\otimes}\overline{\cB_{\bar{x}}}}
\end{align*}
for every $x\in X, \xi,\xi',\eta,\eta'\in \cE_x$, we see that the inner products $_{\cA\hat{\otimes}_X\overline{\tau^\ast\cA}}\<\cdot,\cdot\>$ and $\<\cdot,\cdot\>_{\cB\hat{\otimes}_X\overline{\tau^\ast\cB}}$ of $\cE\hat{\otimes}_X\overline{\tau^\ast\cE}$ intertwine the Real structures; hence we have a Morita equivalence $$(\cA\hat{\otimes}_X\overline{\tau^\ast\cA},\al\hat{\otimes}\overline{\tau^\ast\al})\sim_{(\cE\hat{\otimes}_X\overline{\tau^\ast\cE},V\hat{\otimes}\overline{\tau^\ast V})}(\cB\hat{\otimes}_X\overline{\tau^\ast\cB},\beta\hat{\otimes}\overline{\tau^\ast\beta})$$ in $\fwRBr(\cG)$, so that $\Phi_R$ is well defined.

$\Phi_R$ is a group homomorphism since $\wBr(\cG)$ is an abelian group and since $\hat{\tau}$ is linear; {\it i.e}. for every $\cA,\cB\in \wBr(\cG)$, $$(\cA+\hat{\tau}(\cA))+(\cB+\hat{\tau}(\cB))=(\cA+\cB)+\hat{\tau}(\cA+\cB).$$ 

Let us verify that up to inverting $2$, $\Phi_R'$ and $\Phi_\CC$ are inverse of each other, where $\Phi_R'$ is the restriction of $\Phi_R$ on the fixed points $\wBr(\cG)_\RR$ of $\hat{\tau}$. First observe that if $(\cA,\al)\in \fwRBr(\cG)$, then the Real structure of $\cA$ induces an isomorphism $(\cA,\al)\cong (\overline{\tau^\ast\cA},\overline{\tau^\ast \al})$ in $\fwBr(\cG)$. Thus, for $\cA\in \wRBr(\cG)$, we get $(\Phi_R\circ \Phi_\CC)(\cA)=2\cA$. Suppose now that $\cA\in \wBr(\cG)_\RR$. Then $(\Phi_\CC\circ\Phi_R')(\cA)=\Phi_\CC(2\cA)=2\cA$, which completes the proof.
\end{proof}

\begin{rem}~\label{rem:characterization_Morita_Br}
It is straightforward, by using Lemma~\ref{lem:involution_on_Br}, that  one has a similar characterisation for graded complex D-D bundles as that of Corollary~\ref{cor:characterization_Morita_BrR}. 	
\end{rem}

\begin{proof}[Proof of Theorem~\ref{thm:Br_vs_BrR_vs_BrO}]
1. It suffices to show that the imaginary part ${}^\cI \wBr(\cG)$ with respect to the involution $\hat{\tau}:\wBr(\cG)\To \wBr(\cG)$ of Lemma~\ref{lem:involution_on_Br} ({\it i.e.}, the set of all $\cA\in \wBr(\cG)$ such that $\hat{\tau}(\cA)=-\cA$) is isomorphic to $\wBr(\cG/_\tau)$ (after inverting $2$), and then we will apply~\cite[Lemma 1.4]{Moutuou:Real.Cohomology}.

Assume $(\cA,\al)\in \fwRBr(\cG)$ is such that $\hat{\tau}(\cA)=-\cA$. Then thanks to Corollary~\ref{cor:characterization_Morita_BrR}, there exists a Rg Hilbert $\cG$-bundle $(\hat{\mathscr{H}},U)$ and an isomorphism of Rg D-D bundles 
\begin{equation}~\label{eq:isom_A-tens-K(H)-imaginary}
(\tau^\ast\cA\hat{\otimes}_X\cK(\hat{\mathscr{H}}),\overline{\tau^\ast\al}\hat{\otimes}Ad_U)\stackrel{\cong}{\To} (\cA\hat{\otimes}_X\cK(\hat{\mathscr{H}}),\bar{\al}\hat{\otimes}Ad_U).
\end{equation}
We then obtain a Rg D-D bundle $(\cA/_\tau,\al^\tau)$ over $\xymatrix{\cG/_\tau \dar[r]& X/_\tau}$ by setting 
\begin{equation}
	\cA/_\tau:=\cA\hat{\otimes}_X\cK(\hat{\mathscr{H}})\hat{\otimes}_X\cK(\tau^\ast\hat{\mathscr{H}}), \ {\rm and\ } \al^\tau:=\al\hat{\otimes}Ad_U\hat{\otimes}Ad_{\tau^\ast U}),
\end{equation}
with projection $p_\tau:\cA/_\tau\To X/_\tau$ given by $$p_\tau(a\hat{\odot}T\hat{\odot}T')=p(a), \ {\rm for\ } a\hat{\odot}T\hat{\odot}T'\in \cA_x\hat{\otimes}\cK(\hat{\mathscr{H}}_x)\hat{\otimes}\cK(\hat{\mathscr{H}}_{\bar{x}}).$$
Next define the map 
\begin{equation*}
\begin{array}{llll}
\Psi_\tau: & {}^\cI \wRBr(\cG) & \To & \wBr(\cG/_\tau)\\
& \cA & \mto & \cA/_\tau.
\end{array}
\end{equation*}
This definition does not depend on the choice of $(\hat{\mathscr{H}},U)$, for if $(\hat{\mathscr{H}'},U')$ is another Rg Hilbert $\cG$-bundle such that $(\tau^\ast\cA\hat{\otimes}_X\cK(\hat{\mathscr{H}}'),\tau^\ast\al\hat{\otimes}Ad_{U'})\cong (\cA\hat{\otimes}_X\cK(\hat{\mathscr{H}}'),\al\hat{\otimes}Ad_{U'})$, then putting $$\cA'/_\tau:=\cA\hat{\otimes}_X\cK(\hat{\mathscr{H}}')\hat{\otimes}_X\cK(\tau^\ast\hat{\mathscr{H}}'),$$ we get 
\[
\begin{array}{ll}
\cA/_\tau\hat{\otimes}_{X/_\tau}\overline{\cA'/_\tau} & \cong \cA\hat{\otimes}_X\overline{\cA}\hat{\otimes}_X\hat{\otimes} \cK(\hat{\mathscr{H}}\hat{\otimes}_X\tau^\ast\hat{\mathscr{H}}\hat{\otimes}_X\overline{\hat{\mathscr{H}}'}\hat{\otimes}_X\overline{\tau^\ast\hat{\mathscr{H}}'}) \\
& \cong \cK(\hat{\mathscr{H}}\hat{\otimes}_X\hat{\mathscr{H}}\hat{\otimes}_X\tau^\ast\hat{\mathscr{H}}\hat{\otimes}_X\overline{\hat{\mathscr{H}}'}\hat{\otimes}_X\overline{\tau^\ast\hat{\mathscr{H}}'}).
\end{array}
\]
Moreover, $\cK(\hat{\mathscr{H}}\hat{\otimes}_X\hat{\mathscr{H}}\hat{\otimes}_X\tau^\ast\hat{\mathscr{H}}\hat{\otimes}_X\overline{\hat{\mathscr{H}}'}\hat{\otimes}_X\overline{\tau^\ast\hat{\mathscr{H}}'}$ defines a graded Hilbert $\cG/_\tau$-bundle. Hence, by Corollary~\ref{cor:characterization_Morita_BrR} and Remark~\ref{rem:characterization_Morita_Br} we see that $\cA/_\tau=\cA'/_\tau$ in $\wBr(\cG/_\tau)$. $\Psi_\tau$ is a group homomorphism by commutativity of the graded tensor product.

Conversely, denote by $\pi_\tau:\cG\To \cG/_\tau$ the canonical projection. Then the pull-back of a graded complex D-D bundle $(\cA,\al)\in \fwBr(\cG/_\tau)$ is a graded complex D-D bundle $(\cA',\al'):=(\pi_\tau^\ast\cA,\pi_\tau^\ast\al)\in \fwBr(\cG)$ which clearly verifies $(\tau^\ast\cA',\tau^\ast\al')\cong (\cA',\al')$ in $\fwBr(\cG)$ (this is because for all $x\in X$ we have $\cA'_x=\cA'_{\bar{x}}$); so $\hat{\tau}(\cA')=-\cA'$ and $\cA'\in {}^\cI\wBr(\cG)$. Thus the pull-back map $\pi_\tau^\ast$ induces a group homomorphism
\[ 
\begin{array}{llll}
\pi_\tau^\ast: & \wBr(\cG/_\tau) & \To & {}^\cI\wBr(\cG) \\
 & \cA & \mto & \cA':= \pi_\tau^\ast\cA.
\end{array} 
\]
Now, for all $\cA\in \wBr(\cG/_\tau)$ we have $(\pi_\tau^\ast\cA)/_\tau=\cA$ since $\tau^\ast\pi_\tau^\ast\cA=\pi_\tau^\ast\cA$ and so that a graded Hilbert $\cG$-bundle $\hat{\mathscr{H}}$ such that relation~\eqref{eq:isom_A-tens-K(H)-imaginary} holds for the graded complex D-D bundle $(\pi_\tau^\ast\cA,\pi_\tau^\ast\al)$ is the trivial one $X\times \CC\To X$. This shows that $\Psi_\tau\circ \pi_\tau^\ast=\Id$. Also, one clearly has $\pi_\tau^\ast \circ \Psi_\tau=\Id$, which gives the isomorphism ${}^\cI\wBr(\cG)\cong \wBr(\cG/_\tau)$. From Lemma~\ref{lem:involution_on_Br}, we obtain the desired isomorphism~\eqref{eq:decomp_Br}. 

2.  We always have $\cA+\overline{\cA}=0$ in $\wRBr(\cG)$ for all $(\cA,\al)\in \fwRBr(\cG)$. Moreover, we have already seen in the end of the proof of Lemma~\ref{lem:involution_on_Br} that the Real structure of $\cA$ induces an isomorphism of Rg D-D bundles $(\cA,\al)\cong (\overline{\tau^\ast\cA},\overline{\tau^\ast\al})$. In particular, if $\tau:\cG\To\cG$ is trivial, we have $(\cA,\al)\cong (\overline{\cA},\bar{\al})$; hence $\cA=-\cA$ in $\wRBr(\cG)$.

Furthermore, $\tau$ being trivial, each fibre of $\cA$ is in fact a Rg elementary $\cstar$-algebra, and then the complexification of a graded real elementary $\cstar$-algebra. $(\cA,\al)$ is then the complexification of a graded real D-D bundle over $\cG$. Conversely, every complexification $(\cA_\CC,\al_\CC)$ of a graded real D-D bundle $(\cA,\al)$ over $\cG$ is a Rg D-D bundle whose Real structure is carried out by $\CC$; {\it i.e}. $\overline{a\otimes_\RR \lambda}:=a\otimes_\RR \bar{\lambda}$ for $a\otimes_\RR\lambda\in \cA_x\otimes_\RR\CC$. This process is easily seen to provide an isomorphism $\wRBr(\cG)\cong \wBRO(\cG)$.
\end{proof}

Observe that any Rg D-D bundle $(\cA,\al)$ can also be considered as a graded real D-D bundle $(\cA_{real},\al_{real})$ by forgetting the complex structure of the fibres. Moreover, the conjugate bundle of real $\cstar$-algebras $(\cA_{real},\al_{real})$ identifies to itself. Hence, if the involution $\tau$ of $\cG$ is fixed point free, we have $\overline{\tau^\ast\cA_{real}}=\tau^\ast\cA_{real}\cong \cA_{real}$, which means that $(\cA_{real},\al_{real})$ is a bundle of graded real elementary $\cstar$-algebras over the quotient groupoid $\xymatrix{\cG/_\tau\dar[r]&X/_\tau}$. We therefore have the

\begin{pro}
Suppose $\grpd$ is endowed with a fixed point free involution $\tau$. Then there is a group homomorphism 
\[\Psi_{real}:\wRBr(\cG)\To \wBRO(\cG/_\tau)\]
obtained by "forgetting the complex structures" of Rg graded D-D bundles over $\cG$.	
\end{pro}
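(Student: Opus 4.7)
The plan is to construct $\Psi_{real}$ in two stages — first forget the complex structures of the fibers, then descend along the free involution $\tau$ — and then verify that this procedure respects Morita equivalence, tensor products, and inverses.

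Given $(\cA, \al) \in \fwRBr(\cG)$, apply the forgetful construction mentioned just before the proposition to obtain a graded real $\cstar$-$\cG$-bundle $(\cA_{real}, \al_{real})$. Viewed as a morphism of \emph{real} $\cstar$-bundles, the Real structure $\sigma$ becomes an honest graded $\cstar$-isomorphism $\sigma_{real}: \cA_{real} \To \tau^\ast \cA_{real}$ (the complex structure that made it conjugate-linear has been discarded). The relations $\sigma_{real}^2 = \Id$ and $(\tau^\ast \al_{real}) \circ \sigma_{real} = \sigma_{real} \circ \al_{real}$ exhibit $\sigma_{real}$ as a descent datum along the free topological double cover $\pi_\tau : X \To X/_\tau$, so we obtain a quotient graded real elementary $\cstar$-bundle $\cA_{real}/\sigma_{real}$ over $X/_\tau$ with induced $\cG/_\tau$-action $[\al_{real}]$, and set
\[
\Psi_{real}([\cA, \al]) := [\cA_{real}/\sigma_{real},\ [\al_{real}]].
\]

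For well-definedness on Morita classes, given a Rg $\cA$-$\cB$-imprimitivity bimodule $(\cX, V)$, the same forget-and-descend procedure yields a graded real imprimitivity bimodule over $\cG/_\tau$ between $\Psi_{real}(\cA)$ and $\Psi_{real}(\cB)$; the inner products descend compatibly thanks to condition (b) of Definition~\ref{Morita-DD-bdl}, and complexifying produces a Morita equivalence in $\fwBr(\cG/_\tau)$, which is exactly the defining relation in $\wBRO(\cG/_\tau)$. The homomorphism property comes from the natural identifications $(\cA \hat\otimes_X \cB)_{real} \cong \cA_{real} \hat\otimes_X \cB_{real}$ and $\sigma^{\cA \hat\otimes\cB} = \sigma^\cA \hat\otimes \sigma^\cB$, combined with the observations that the trivial Rg D-D bundle $X \times \CC$ descends to the trivial real bundle over $X/_\tau$, and that the Rg conjugation $(\cA, \al) \mapsto (\overline{\cA}, \bar\al)$ descends to the operation sending a real $\cstar$-bundle to its opposite (which represents the inverse in $\wBRO(\cG/_\tau)$).

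The main obstacle is checking that the image really lies in $\wBRO(\cG/_\tau)$ — \emph{i.e.}, that local triviality and Fell's condition survive both the forgetful step and the descent. For the forgetful step, the classification of Real structures on graded elementary $\cstar$-algebras in Appendix A allows one to work in Real charts and restrict scalars to obtain local trivializations in the real category. For the descent, freeness of $\tau$ makes $\pi_\tau$ a local homeomorphism, so Real local trivializations of $(\cA, \sigma)$ push forward to local trivializations of the quotient bundle, and the fiberwise complexification of $\cA_{real}/\sigma_{real}$ is locally modelled on $\wKK_p \otimes_\RR \CC$, which is a graded complex elementary $\cstar$-algebra of the appropriate type. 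Once these technical points are settled, the remaining verifications are routine diagram chases.
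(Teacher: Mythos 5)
Your construction is precisely the paper's: the proposition is justified there solely by the observation in the paragraph immediately preceding it, namely that after forgetting the complex structure the conjugate of $\cA_{real}$ coincides with $\cA_{real}$, so the Real structure $\sigma$ becomes an $\RR$-linear isomorphism $\cA_{real}\cong\tau^\ast\cA_{real}$, i.e.\ a descent datum along the free involution, and the bundle descends to the quotient groupoid $\cG/_\tau$. Your extra verifications (Morita invariance via descending the imprimitivity bimodule, compatibility with tensor products and inverses, and local triviality of the descended bundle) are exactly the details the paper leaves implicit, and they go through as you describe.
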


\begin{rem}
Beware that $\Psi_{real}$ is not injective; indeed $\Psi_{real}(\overline{\cA})=\Psi_{real}(\cA)$ for all $\cA\in \wRBr(\cG)$, while in general $\overline{\cA}\neq \cA$ in $\wRBr(\cG)$.	
\end{rem}

%%%%%%%%%%%% End of bsection
\section{Elementary involutive triples and types of Rg D-D bundles}

In this section we define the \emph{type} of a Rg D-D bundle over a Real groupoid. We start by introducing few notions.

\begin{df}
An \emph{elementary involutive triple} $(\wKK,\wKK^-,\bft)$ consists of a graded elementary $\cstar$-algebra $\wKK$, a graded $\cstar$-algebra $\wKK^-$ isomorphic to the conjugate $\cstar$-algebra of $\wKK$, and a conjugate linear isomorphism $\bft:\wKK\To \wKK^-$ of graded $\cstar$-algebra. Such triple will be represented by the map $\bft$. Denote by $\wfK$ the collection of all elementary involutive triples.

A morphism from $\bft$ to $\bft'$ is the data of homomorphisms of graded $\cstar$-algebras $\vp:\wKK\To \wKK'$, and $\vp^-:\wKK^-\To \wKK'^-$ such that the following diagram commutes 
\begin{eqnarray}
\xymatrix{\wKK\ar[d]^\bft \ar[r]^\vp & \wKK' \ar[d]^{\bft'}\\ \wKK^- \ar[r]^{\vp^-} & \wKK'^-}
\end{eqnarray}
Finally, we define the sum in $\wfK$ by:
\[\bft+\bft':=(\wKK\hat{\otimes}\wKK',\wKK^-\hat{\otimes}\wKK'^-,\bft\hat{\otimes}\bft').\]	
\end{df}

\begin{ex}
The Real structure "$bar$" of $\wK_0$ induces an isomorphism of $\wK_0$ into its conjugate algebra. We then have an elementary involutive triple $\bft_0=(\wK_0,\wK_0,bar)$.	
\end{ex}

\begin{dflem}
Two elements $\bft,\bft'\in \wfK$ are said to be \emph{stably isomorphic} if and only if $\bft+\bft_0$ is isomorphic to $\bft'+\bft_0$; in this case, we write $\bft\cong_s \bft'$. The set of stable isomorphism classes of elements of $\wfK$ forms an abelian group $\Inv \wfK$ under the sum defined above. The inverse of $\bft$ in $\Inv \wfK$ is the stable isomorphisms class of $$-\bft:= (\wKK^-,\wKK,\bft^{-1}).$$
The class of $\bft$ in $\Inv \wfK$ will also be denoted by $\bft$.	
\end{dflem}

\begin{proof}
It is straightforward that $\bft+\bft'=\bft'+\bft$ in $\Inv \wfK$. Moreover, we have $$\bft-\bft=(\wKK\hat{\otimes}\wKK^-,\wKK^-\hat{\otimes}\wKK,\bft\hat{\otimes}\bft^{-1})\cong (\wKK\hat{\otimes}\wKK^-,\wKK\hat{\otimes}\wKK^-,\bft'),$$ via the isomorphism $(\Id_{\wKK\hat{\otimes}\wKK^-}, \vp')$, where $\vp':\wKK^-\hat{\otimes}\wKK\To \wKK\hat{\otimes}\wKK^-$ is the canonical isomorphism $\vp'(T\hat{\otimes}T'):=(-1)^{\partial T\partial T'}T'\hat{\otimes}T$, and $\bft':=\vp'\circ (\bft\hat{\otimes}\bft^{-1})$. Thus, $\bft-\bft\cong_s \bft_0$. 	
\end{proof}

We can recover the group $\wRBr(\ast)$ from $\Inv \wfK$. More precisely, suppose $\bft=-\bft$, and $$(\vp,\vp'):(\wKK\hat{\otimes}\wK_0,\wKK^-\hat{\otimes}\wK_0,\bft\hat{\otimes}bar)\To (\wKK^-\hat{\otimes}\wK_0,\wKK\hat{\otimes}\wK_0,\bft^{-1}\hat{\otimes}bar)$$ is an isomorphism. Then, $\vp'\circ (\bft\hat{\otimes}bar)=(\bft^{-1}\hat{\otimes}bar)\circ \vp$ is a Real structure on the graded elementary $\cstar$-algebra $\wKK\hat{\otimes}\cK(\hat{\cH})$. Moreover, if $(\vp_0,\vp_0')$ is another isomorphism, it is easy to check that $\vp'\circ (\bft\hat{\otimes}bar)$ and $\vp_0'\circ (\bft\hat{\otimes}bar)$ are conjugate, hence define the same element of $\wRBr(\ast)$. Conversely, any Real graded elementary $\cstar$-algebra is obviously a $2$-torsion of $\Inv \wfK$. We then have proved the following 

\begin{lem}
The group $\wRBr(\ast)$ is isomorphic to the subgroup of $\Inv\wfK$ of elements of order $2$.
\end{lem}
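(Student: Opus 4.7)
The plan is to construct explicit maps in both directions, mimicking the sketch given in the paragraph preceding the lemma, and verify they are mutually inverse group homomorphisms.

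First I would define a map $\Phi: \{\bft \in \Inv\wfK \mid 2\bft = 0\} \to \wRBr(\ast)$. Given a 2-torsion class $\bft=(\wKK,\wKK^-,\bft)$, choose a stable isomorphism
\[
(\vp,\vp'):\, \bft+\bft_0 \;\stackrel{\cong}{\To}\; -\bft+\bft_0,
\]
i.e.\ a commuting pair $\vp:\wKK\hat{\otimes}\wK_0\To \wKK^-\hat{\otimes}\wK_0$, $\vp':\wKK^-\hat{\otimes}\wK_0\To \wKK\hat{\otimes}\wK_0$ intertwining $\bft\hat{\otimes}bar$ and $\bft^{-1}\hat{\otimes}bar$. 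Set $\sigma:=\vp'\circ(\bft\hat{\otimes}bar)$. The commutativity of the defining square for $(\vp,\vp')$ yields $\sigma^2=\id$, so $\sigma$ is a Real structure of the form classified in Appendix~A on the graded elementary $\cstar$-algebra $\wKK\hat{\otimes}\wK_0$. I would then set $\Phi(\bft):=[\wKK\hat{\otimes}\wK_0,\sigma]\in \wRBr(\ast)$.

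Next I would verify $\Phi$ is well defined. If $(\vp_0,\vp_0')$ is a second choice producing $\sigma_0:=\vp_0'\circ(\bft\hat{\otimes}bar)$, then $\psi:=\vp'\circ\vp_0'^{-1}$ is a (linear) graded ${}^\ast$-automorphism of $\wKK\hat{\otimes}\wK_0$ satisfying $\psi\circ\sigma_0=\sigma\circ\psi$, so $(\wKK\hat{\otimes}\wK_0,\sigma_0)$ and $(\wKK\hat{\otimes}\wK_0,\sigma)$ are isomorphic as Rg $\cstar$-algebras and therefore equal in $\wRBr(\ast)$. Replacing $\bft$ by a stably isomorphic representative changes $\sigma$ only by tensoring with $bar$ on further copies of $\wK_0$, which is trivial in $\wRBr(\ast)$ by Lemma~\ref{lem:A+K_G=A} applied to the one-point groupoid. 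For the homomorphism property, the product triple $\bft+\bft'$ can be trivialized by $(\vp\hat{\otimes}\vp'',\vp'\hat{\otimes}\vp''')$, giving the Real structure $\sigma\hat{\otimes}\sigma'$ on $(\wKK\hat{\otimes}\wK_0)\hat{\otimes}(\wKK'\hat{\otimes}\wK_0)$, which represents $\Phi(\bft)+\Phi(\bft')$ after absorbing an extra $\wK_0$ via Lemma~\ref{lem:A+K_G=A}.

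Now I would define the inverse $\Psi:\wRBr(\ast)\To \Inv\wfK$ by $\Psi[A,\sigma]:=(A,A,\sigma)$, viewing $\sigma$ as a conjugate-linear graded ${}^\ast$-isomorphism $A\To\overline{A}\cong A$. Since $\sigma^2=\id$, the triple $(A,A,\sigma)+(A,A,\sigma)=(A\hat{\otimes}A,A\hat{\otimes}A,\sigma\hat{\otimes}\sigma)$ is isomorphic to $-\big((A,A,\sigma)+(A,A,\sigma)\big)$ via the flip used in the proof of the preceding lemma, so the image lies in the 2-torsion subgroup. Well-definedness on Morita equivalence classes in $\wRBr(\ast)$ reduces to checking that tensoring by $(\cK(\hat{\mathcal{H}}),Ad_U)$ for some Real graded Hilbert space $(\hat{\mathcal{H}},U)$ only changes the triple by a summand stably isomorphic to $\bft_0$, which is immediate from the definitions. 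Finally, verifying $\Phi\circ\Psi=\Id$ amounts to the tautological choice $\vp=\vp'=\Id$, while $\Psi\circ\Phi=\Id$ follows by construction, since the Real structure $\sigma$ produced by $\Phi$ is itself the defining conjugate-linear isomorphism of the triple $\Psi\Phi(\bft)$.

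The main obstacle I expect is the independence of $\Phi$ on the trivializing pair $(\vp,\vp')$: one must carefully track the grading and the parity signs to ensure that different stable trivializations give genuinely conjugate (rather than merely equivalent) Real structures. Once the trivializations are normalized using the eight model Real structures on graded elementary $\cstar$-algebras from Appendix~A, everything else reduces to bookkeeping with the graded tensor product.
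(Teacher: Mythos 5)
Your proposal follows the same route as the paper: the paper's entire argument is the paragraph immediately preceding the lemma, where the Real structure $\sigma=\vp'\circ(\bft\hat{\otimes}bar)=(\bft^{-1}\hat{\otimes}bar)\circ\vp$ is extracted from a trivializing isomorphism of $\bft+\bft_0$ with $-\bft+\bft_0$, independence of the choice is asserted, and the converse direction is declared obvious. You supply more of the bookkeeping (well-definedness under stable isomorphism, the homomorphism property, the explicit inverse $[A,\sigma]\mto(A,A,\sigma)$), all of which is consistent with what the paper leaves implicit.

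One step deserves a caveat. You claim that commutativity of the defining square for $(\vp,\vp')$ yields $\sigma^2=\id$. It does not: from $\vp'\circ(\bft\hat{\otimes}bar)=(\bft^{-1}\hat{\otimes}bar)\circ\vp$ one computes
\[
\sigma^2=\vp'\circ(\bft\hat{\otimes}bar)\circ(\bft^{-1}\hat{\otimes}bar)\circ\vp=\vp'\circ\vp,
\]
which is a degree-zero automorphism of $\wKK\hat{\otimes}\wK_0$ but not the identity unless $\vp'$ is chosen to be $\vp^{-1}$ (and it is not automatic that $(\vp,\vp^{-1})$ is again a morphism of triples). So $\sigma$ is a priori only an involution up to an inner automorphism, and one must either normalize the pair $(\vp,\vp')$ or invoke the classification of conjugate-linear automorphisms of graded elementary $\cstar$-algebras from Appendix~A to replace $\sigma$ by a genuine Real structure in its conjugacy class. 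The paper asserts without comment that $\sigma$ is a Real structure, so this gap is shared; but since you explicitly offer a (non-working) justification, you should repair it rather than rely on the commutative square alone.
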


Now let us return to the study of Rg D-D bundles over Real groupoids.

\begin{pro}~\label{pro:type-map}
Let $(\cA,\al)\in \fwRBr(\cG)$. Then each fiber $\cA_x$ gives rise to an element $\bft_x^\cA\in \Inv\wfK$, and the family $\bft^\cA:=(\bft_x^\cA)_{x\in X}$ defines a cohomology class in $\check{H}R^0(\cG_\bullet,\Inv\wfK)$. This process defines a group homomorphism 
\[
\bft: \wRBr(\cG) \To \check{H}R^0(\cG_\bullet,\Inv\wfK),
\]
which is surjective. 	
\end{pro}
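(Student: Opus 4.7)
The strategy is to define the type map fiberwise, verify the three cocycle conditions, check Morita invariance and additivity, and finally address surjectivity by a local gluing construction.

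To define $\bft_x^\cA$, one uses the Real structure $\sigma$ of $(\cA,\al)$: for every $x\in X$ the anti-linear graded ${}^\ast$-isomorphism $\sigma_x:\cA_x\To \cA_{\bar{x}}$ gives a well-defined elementary involutive triple $(\cA_x,\cA_{\bar{x}},\sigma_x)$, whose stable isomorphism class we take to be $\bft_x^\cA$. Three conditions must be verified. First, $\cG$-invariance: for $g\in\cG$, condition (b) of Definition~\ref{df-DD-bdle} states precisely that $(\al_g,\al_{\bar{g}})$ is a morphism of elementary involutive triples $\bft_{s(g)}^\cA\To\bft_{r(g)}^\cA$, so these classes agree in $\Inv\wfK$. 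Second, the Real-structure compatibility: since $\sigma_{\bar{x}}\circ\sigma_x=\Id_{\cA_x}$, one has $\bft_{\bar{x}}^\cA=(\cA_{\bar{x}},\cA_x,\sigma_x^{-1})=-\bft_x^\cA$, which is exactly the cocycle relation for Real $0$-cocycles with values in a group with involution given by the inverse. Third, local constancy: because $\cA\To X$ is a locally trivial elementary Rg $\cstar$-bundle satisfying Fell's condition, the stable isomorphism class $\bft_x^\cA$ is locally constant on $X$, hence $\bft^\cA$ defines a class in $\check{H}R^0(\cG_\bullet,\Inv\wfK)$.

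Next I would check well-definedness on Morita classes and additivity. Suppose $(\cA,\al)\sim_{(\cX,V)}(\cB,\beta)$ in $\fwRBr(\cG)$; then each fiber $\cX_x$ is a graded $\cA_x$-$\cB_x$ imprimitivity bimodule endowed with a conjugate-linear graded isomorphism $(\sigma_\cX)_x:\cX_x\To \cX_{\bar{x}}$ intertwining the two inner products via $\sigma^\cA$ and $\sigma^\cB$. After stabilizing with $\bft_0$ (that is, tensoring with $\wK_0$ on each side), this gives an explicit isomorphism of elementary involutive triples between $\bft_x^\cA+\bft_0$ and $\bft_x^\cB+\bft_0$, so $\bft_x^\cA=\bft_x^\cB$ in $\Inv\wfK$. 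Additivity is immediate from the definition of the sum in both $\wRBr(\cG)$ and $\Inv\wfK$: the fiber of $(\cA\hat{\otimes}_X\cB,\al\hat{\otimes}\beta)$ at $x$ carries the Real structure $\sigma_x^\cA\hat{\otimes}\sigma_x^\cB$, which represents $\bft_x^\cA+\bft_x^\cB$. The zero element of $\wRBr(\cG)$ (the trivial bundle $X\times\CC$) clearly maps to the zero cocycle.

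For surjectivity, given a Real $0$-cocycle $f:X\To\Inv\wfK$, the locally constant, $\cG$-invariant, and Real-compatible ($f(\bar{x})=-f(x)$) nature of $f$ partitions $X$ into Real $\cG$-invariant clopen pieces on which $f$ is constantly equal to a chosen class $\bft=(\wKK,\wKK^-,\bft)\in\wfK$. Over each such piece $U$, I would form the trivial bundle $U\times\wKK$ on $U\cap\{\bar{x}\neq x\}$ and its conjugate $U\times\wKK^-$ on the image under $\tau$, glued via $\bft$ along the involution, and over the Real fixed locus take the trivial bundle associated to a representative at a Real point (where $\bft^2=\Id$ forces $\bft$ to come from an element of $\wRBr(\ast)\cong\ZZ_8$). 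Endowing the result with the $\cG$-action coming from $\cG$-invariance of $f$ (again trivial on each orbit up to the chosen representative) produces an element of $\fwRBr(\cG)$ with type $f$. The main obstacle will be showing that this gluing procedure carries a well-defined, continuous Real $\cG$-action compatible with the Rg $\cstar$-bundle structure; this requires using the LTCR framework together with the Douady--dal Soglio-H\'erault theorem to produce sufficient Real sections, and exploiting the fact that automorphisms of a fixed elementary involutive triple form a connected (in fact contractible up to stabilization) group so that local trivializations can be patched.
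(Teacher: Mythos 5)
Your fiberwise definition of $\bft_x^\cA$, the verification of $\cG$-invariance via condition (b) of Definition~\ref{df-DD-bdle}, the relation $\bft^\cA_{\bar x}=-\bft^\cA_x$, local constancy, and additivity all agree in substance with the paper's argument. Your Morita-invariance step is a genuinely different route: you argue fiberwise from the imprimitivity bimodule, stabilizing by $\bft_0$ in each fibre, which amounts to a Real graded Brown--Green--Rieffel-type statement that you only sketch; the paper instead observes that $\bft^{\wK_\cG}=0$ and reduces to Corollary~\ref{cor:characterization_Morita_BrR}, so that $\cA=\cB$ in $\wRBr(\cG)$ forces $\cA+\bar{\cB}+\wK_\cG\cong\cK(\hat{\cH}_\cG\hat{\otimes}_X\hat{\mathscr{H}})$ and hence $\bft^\cA-\bft^\cB=0$. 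Both are viable, but the paper's reduction reuses results already in hand and avoids the fibrewise equivariant stabilization you would still have to justify.

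The genuine gap is in surjectivity. First, the set $\{x\in X:\bar x\neq x\}$ is $\tau$-invariant, so ``$U\times\wKK$ on $U\cap\{\bar x\neq x\}$ and its conjugate on the image under $\tau$'' does not define a bundle: there is in general no fundamental domain for the free part of the involution. Second, the $2$-torsion phenomenon is not confined to the fixed locus. A class in $\check{H}R^0(\cG_\bullet,\Inv\wfK)$ is locally constant with $f(\bar x)=-f(x)$, so on \emph{every} connected $\tau$-invariant clopen piece one gets $f=-f$, including fixed-point-free cases (e.g.\ a connected space with a free involution), where your recipe prescribes nothing: a constant field $U\times\wKK$ admits a Real structure covering $\tau$ only when $\wKK$ (or rather its stabilization $\wKK\hat{\otimes}\wK_0$) carries a conjugate-linear involution, which is exactly the content of the $2$-torsion lemma and is not automatic on the nose. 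Third, and most seriously, you flag but do not resolve the construction of the $\cG$-action: when $g\in\cG$ joins points lying in pieces where you have chosen different, merely stably isomorphic, representative algebras, no isomorphism $\al_g$ is specified, and choosing such isomorphisms continuously and multiplicatively is itself a nontrivial cocycle problem. The paper disposes of all three difficulties at once with the single construction $\wK_{\cG,\bft}:=\coprod_{x\in X}\wK_x\hat{\otimes}\wKK_{\bft_x}$ of~\eqref{eq:construction-K_(G,t)}: tensoring with the canonical type-$0$ bundle $\wK_\cG$ of Example~\ref{ex:construction-K^G} supplies the continuous Real $\cG$-action and performs precisely the stabilization that turns stably isomorphic fibres into isomorphic ones. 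You should replace your gluing by this construction, or else prove a stabilized gluing lemma of comparable strength before your argument closes.
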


\begin{proof}
Denote by $\sigma$ the Real structure of $\cA$. Over all $x\in X$, there is a conjugate linear isomorphism of graded $\cstar$-algebras $\sigma_x:\cA_x\To \cA_{\bar{x}}$. Then the graded elementary (complex) $\cstar$-algebras $\cA_x$ and $\cA_{\bar{x}}$ are of the same parity. Let $(\cU,\vp)$ be a local trivialisation of the graded elementary complex $\cstar$-bundle $\cA$ such that $\cU=(U_i)$ is a Real open cover of $X$. Then the isomorphisms $\vp_i:U_i\times \wKK_i\To \cA_{|U_i}$ induces a family of graded isomorphisms $\vp_x:\wKK_x\To \cA_x$. Then  $\bft_x:= (\wKK_x,\wKK_{\bar{x}},t_x)$, where $t_x:=\vp_{\bar{x}}\circ \tau_x\circ \vp_x$, is an element of $\Inv\wfK$, and the assignment $X\ni x\mto \bft_x \in \Inv\wfK$ is a locally constant $\cG$-invariant Real function. Indeed, the $\cG$-invariance (i.e. $\bft_{r(g)}=\bft_{s(g)}$ in $\Inv\wfK$ for all $g\in \cG$) comes from the commutative diagram 
\[
\xymatrix{\wKK_{s(g)} \ar[r]^{\vp_{s(g)}}\ar[d]^{\bft_{s(g)}} & \cA_{s(g)} \ar[r]^{\al_g}\ar[d]^{\sigma_{s(g)}} & \cA_{r(g)} \ar[r]^{\vp_{r(g)}^{-1}} \ar[d]^{\sigma_{r(g)}} & \wKK_{r(g)} \ar[d]_{\bft_{r(g)}} \\ \wKK_{s(\bar{g})} \ar[r]^{\vp_{s(\bar{g})}} & \cA_{s(\bar{g})} \ar[r]^{\al_{\bar{g}}} & \cA_{\bar{g}} \ar[r]^{\vp_{r(\bar{g})}} & \wKK_{r(\bar{g})}} 
\]
Moreover, since $\sigma$ is a continuous function, $\bft^\cA:X\ni x\mto \bft_x \in \Inv\wfK$ is locally constant. Hence $\bft^\cA\in \check{H}R^0(\cG_\bullet,\Inv\wfK)$. 

That $\bft^{\bar{\cA}}=-\bft^\cA$ and $\bft^{\cA+\cB}=\bft^\cA+\bft^\cB$ is clear from the definition of the sum and the inverse in $\Inv\wfK$, and from the definition of the conjugate bundle and the tensor product of Rg D-D bundles.  Observe that from the construction of $\wK_\cG$, $\bft^{\wK_\cG}=\bft_0=0$ since $\wK_\cG\cong \coprod_{x\in X}\cK(L^2(\cG^x))\otimes \wK_0$ with involution given by $\wK_x\ni \vp\otimes T\mto \tau(\vp)\otimes Ad_{J_{0,\RR}}(T) \in \wK_{\bar{x}}$. Thus, if $\cA=\cB$ in $\wRBr(\cG)$, we have (thanks to Lemma~\ref{lem:A+K_G=A} and Lemma~\ref{cor:characterization_Morita_BrR}) $\cA+\bar{\cB}+\wK_\cG=\cK(\hat{\cH}_\cG\hat{\otimes}_X\hat{\mathscr{H}})=0$; hence $\bft^{\cA-\cB}=\bft^\cA-\bft^\cB=0$, which shows that $\bft:\wRBr(\cG)\To \check{H}R^0(\cG_\bullet,\Inv\wfK)$ is a group homomorphism. It is surjective since for all $\bft\in \check{H}R^0(\cG_\bullet,\Inv\wfK)$,
\begin{equation}~\label{eq:construction-K_(G,t)}
\wK_{\cG,\bft}:=\coprod_{x\in X}\wK_x\hat{\otimes}\wKK_{\bft_x},
\end{equation} 
equipped with the obvious involution and $\cG$-action, defines a Rg D-D bundle over $\cG$. 
\end{proof}

\begin{df}
For $(\cA,\al)\in \fwRBr(\cG)$, the element $\bft^\cA$ of Proposition~\ref{pro:type-map} is called \emph{the type of $(\cA,\al)$}. The homomorphism $\bft:\wRBr(\cG)
\To \check{H}R^0(\cG_\bullet,\Inv\wfK)$ is called \emph{the type map}. 
\end{df}

\begin{df}
A Rg D-D bundle $(\cA,\al)$ is said to be \emph{of type $i \mod 8$} if $\bft^\cA$ is the constant function $\bft^\cA=i \in \ZZ_8\subset \Inv\wfK$. By $\wRBr_i(\cG)$ we denote the set of Morita equivalence classes of Rg D-D bundles of type $i \mod 8$ over $\grpd$. Next, we define 
\[\wRBr_\ast(\cG):= \bigoplus_{i=0}^7 \wRBr_i(\cG).\]
\end{df}

\begin{ex}
Let $\hat{\cH}$ be, as usual, equipped with the Real structure $J_{,\RR}$. Then $\wK_0\To \cdot$ is a Rg D-D bundle of type $0$ over $\xymatrix{\wPU(\hat{\cH})\dar[r]& \cdot}$, where the Real $\wPU(\hat{\cH})$-action is given by $Ad$; \emph{i.e.} $[u]\cdot T:=Ad_u(T)$, for $[u]\in \wPU(\hat{\cH})\cong \Aut^{(0)}(\wK_0), T\in \wK_0$.	
\end{ex}

We have the following easy result which shows that the study of $\wRBr(\cG)$ reduces to that of Rg D-D bundles of type $0$.

\begin{pro}~\label{pro:simplification-BrR}
Let $\grpd$ be a Real groupoid. Then $\wRBr_0(\cG)$ is a subgroup of $\wRBr(\cG)$. Furthermore, the group sequences 
\begin{eqnarray}
	0 \To \wRBr_0(\cG) \stackrel{\iota_0}{\To} \wRBr(\cG) \stackrel{\bft}{\To} \check{H}R^0(\cG_\bullet,\Inv\wfK) \To 0\\
0 \To \wRBr_0(\cG) \stackrel{\iota_0}{\To} \wRBr_\ast(\cG) \stackrel{\bft}{\To} \check{H}R^0(\cG_\bullet,\ZZ_8) \To 0,	
	\end{eqnarray}	
where $\iota_0$ is the inclusion homomorphism, are split-exact. Therefore, we have two isomorphisms of abelian groups 
\[\wRBr(\cG)\cong \check{H}R^0(\cG_\bullet,\Inv\wfK)\oplus \wRBr_0(\cG), \ {\rm and } \quad \wRBr_\ast(\cG)\cong \check{H}R^0(\cG_\bullet,\ZZ_8)\oplus \wRBr_0(\cG).\] 
\end{pro}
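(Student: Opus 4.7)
The proof has three ingredients: identifying $\wRBr_0(\cG)$ as a kernel, producing a splitting at the level of bundles, and promoting that splitting from a set map to a group homomorphism.

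First, I would set up the exactness of the first sequence from what is already on the page. By Proposition~\ref{pro:type-map}, $\bft\colon \wRBr(\cG)\to \check{H}R^0(\cG_\bullet,\Inv\wfK)$ is a surjective group homomorphism, and by definition the set $\wRBr_0(\cG)$ of Morita classes with type equal to the constant function $0\in \ZZ_8\subset\Inv\wfK$ is exactly $\ker(\bft)$. This gives at once that $\wRBr_0(\cG)$ is a subgroup, that $\iota_0$ is injective, and that the sequence is exact at $\wRBr(\cG)$. Surjectivity of $\bft$ is furnished by the construction $\bft\mapsto \wK_{\cG,\bft}$ from equation~\eqref{eq:construction-K_(G,t)}; so the entire first sequence is short exact.

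Next I would produce a splitting $s\colon \check{H}R^0(\cG_\bullet,\Inv\wfK) \to \wRBr(\cG)$ by $s(\bft):=[\wK_{\cG,\bft}]$. By construction $\bft\circ s=\Id$, and independence of the choice of representatives $(\wKK_{\bft_x},\wKK^-_{\bft_x},t_x)$ (up to Morita equivalence in $\wRBr(\cG)$) follows because any two such fibrewise choices differ by stable isomorphisms, which are absorbed by the $\wK_x$ factor. The crucial check is that $s$ is additive. For Real graded types $\bft_1,\bft_2$, there is a canonical isomorphism of Rg D-D bundles
\[
\wK_{\cG,\bft_1}\,\hat{\otimes}_X\,\wK_{\cG,\bft_2} \;\cong\; \wK_\cG\,\hat{\otimes}_X\,\wK_{\cG,\bft_1+\bft_2},
\]
obtained by distributing the fibrewise tensor $(\wK_x\hat{\otimes}\wKK_{\bft_{1,x}})\hat{\otimes}(\wK_x\hat{\otimes}\wKK_{\bft_{2,x}})$ via the canonical graded flip that exchanges the middle factors, combined with the sum law in $\Inv\wfK$. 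Passing to $\wRBr(\cG)$ and applying Lemma~\ref{lem:A+K_G=A} to kill the extra $\wK_\cG$ factor gives $s(\bft_1)+s(\bft_2)=s(\bft_1+\bft_2)$. Thus $s$ is a group-theoretic section and the first sequence splits.

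For the second sequence I would argue by restriction. Because $\bft(\wRBr_i(\cG))$ consists of the constant function $i\in\ZZ_8$, and the type map is additive, $\bft(\wRBr_\ast(\cG))\subseteq \check{H}R^0(\cG_\bullet,\ZZ_8)$; conversely, for every $\cG$-invariant locally constant Real function $\bft\colon X\to \ZZ_8$, the bundle $\wK_{\cG,\bft}$ lies in $\wRBr_\ast(\cG)$, so the restriction $\bft|_{\wRBr_\ast(\cG)}$ is surjective onto $\check{H}R^0(\cG_\bullet,\ZZ_8)$ with kernel $\wRBr_0(\cG)$. The same splitting $s$, restricted to $\check{H}R^0(\cG_\bullet,\ZZ_8)$, lands in $\wRBr_\ast(\cG)$ by the same construction and is additive by the same computation. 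Finally, the direct-sum decompositions follow from the standard splitting lemma for short exact sequences of abelian groups.

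The main obstacle I anticipate is the multiplicativity of the section $s$: while the fibrewise identification is algebraically obvious, one must carefully check that the graded flip intertwines the Real structures (coming from the involutive triples $t_x$) and the canonical $\cG$-action $\theta$ of Example~\ref{ex:construction-K^G} on $\wK_\cG$. Once this is traced, Lemma~\ref{lem:A+K_G=A} closes the computation and everything else is formal.
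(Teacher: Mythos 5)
Your proposal is correct and follows essentially the same route as the paper: exactness comes from identifying $\wRBr_0(\cG)$ with $\ker\bft$ together with the surjectivity established in Proposition~\ref{pro:type-map}, and the splitting is the section $\bft\mapsto[\wK_{\cG,\bft}]$, whose well-definedness on stable classes and additivity (modulo an extra $\wK_\cG$ factor absorbed by Lemma~\ref{lem:A+K_G=A}) are exactly the points the paper checks. Your treatment of the additivity is in fact slightly more careful than the paper's, which asserts $\wK_{\cG,\bft+\bft'}\cong\wK_{\cG,\bft}\hat{\otimes}_X\wK_{\cG,\bft'}$ directly "from construction", but the substance is the same.
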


\begin{proof}
We only prove the first sequence, from which we deduce the second one. It is clear that $\bft\circ \iota_0 =0$ and $\iota_0$ is an injective homomorphism. We also proved in Proposition~\ref{pro:type-map} that $\bft$ was surjective. To show the sequence splits, we only have to verify that the correspondence $\bft\mto \wK_{\cG,\bft}$, where $\wK_{\cG,\bft} \To X$ is the Rg D-D bundle given by~\eqref{eq:construction-K_(G,t)} defines a group homomorphism $\check{H}R^0(\cG_\bullet,\Inv\wfK)\To \wRBr(\cG)$. This is immediate from construction: we have $\wK_{\cG,\bft+\bft'}\cong \wK_{\cG,\bft}\hat{\otimes}_X\wK_{\cG,\bft'}$, and a routine verification shows that any isomorphism $\bft+\bft_0 \cong \bft'+\bft_0$ induces an isomorphism of Rg D-D bundles $\wK_{\cG,\bft+\bft_0}\cong \wK_{\cG,\bft'+\bft_0}$ so that $\wK_{\cG,\bft}=\wK_{\cG,\bft'}$ in $\wRBr(\cG)$ if $\bft\sim_s \bft'$. Also from the definition of $-\bft$, we have $\wK_{\cG,-\bft}=\overline{\wK_{\cG,\bft}}=-\wK_{\cG,\bft}$. Finally it is obvious that $\wK_{\cG,\bft}$ is of type $\bft$.  
\end{proof}
%%%%%%%%%%%%%%% End of section

\section{Generalised classifying morphisms}~\label{par:classifying-morph}

Let $\hat{\cH}$ be the usual Rg Hilbert space with the involution induced by the degree $0$ Real structure $J_{0,\RR}$ on $\hat{\cH}$ (see Appendix A). Let the algebra $\cL(\hat{\cH})$ of bounded linear operators on $\hat{\cH}$ be endowed with the obvious grading and the Real structure $\bar{T}(h):=\overline{T(\bar{h})}, T\in \cL(\hat{\cH}), h\in \hat{\cH}$. Denote by $\wU(\hat{\cH})$ the group of all homogeneous unitaries in $\cL(\hat{\cH})$ ({\it i.e}. unitaries of degree $0$ or $1$)~\cite{Parker:Brauer} equipped with the Real structure inherited from that of $\cL(\hat{\cH})$. Next, we define the Real group $\wPU(\hat{\cH})$ to be the quotient $\wU(\hat{\cH})/\uc$, where $\uc$ is given the the Real structure defined by complex conjugation (it is obvious that the action of $\uc$ on $\wU(\hat{\cH})$ is compatible with the involutions).\\ 

In this section we are restricting attention to the class of the Rg D-D bundle $(\wK_0,Ad)$ of type $0$ over the Real groupoid $\xymatrix{\wPU(\hat{\cH})\dar[r] & \cdot}$, where $\wPU(\hat{\cH})$ is equipped with the compact-open topology and  

\begin{df}
Let $(\cA,\al)\in \fwRBr(\cG)$ of type $0$. A \emph{generalised classifying morphism} for $(\cA,\al)$ is a generalised Real homomorphism $P:\cG\To \wPU(\hat{\cH})$ such that $(\cA,\al)\cong (\wK_0^P,Ad^P)$ as Rg D-D bundles.	
\end{df}

\begin{rem}
Note that $\wK_0^P=P\times_{\wPU(\hat{\cH})}\wK_0:= P\times \wK_0/_{\sim}$, where the equivalence relation is $(\vp,T)\sim (\vp\cdot [u],[u^{-1}]\cdot T)$ for $[u]\in \wPU(\hat{\cH})$. The Real $\cG$-action by automorphisms is given by the Real (left) $\cG$-action on $P$. 
\end{rem}
 
Before going on the study of generalised classifying morphisms, we shall say something about generalised Real homomorphisms $\cG\To \wPU(\hat{\cH})$. First of all, recall from~\cite[Remark 2.50]{Moutuou:Real.Cohomology} that although the Real group $\wPU(\hat{\cH})$ is not abelian, it still is possible to define Real $\wPU(\hat{\cH})$-valued \v{C}ech $1$-cocycles over any Real groupoid $\cG$, hence to form the set $\check{H}R^1(\cG_\bullet,\wPU(\hat{\cH}))$, and we have a set-theoretic bijection between $\check{H}R^1(\cG_\bullet,\wPU(\hat{\cH}))$ and $\Hom_{\RG}(\cG,\wPU(\hat{\cH}))$. 

What's more, when identified with $\Hom_{\RG_\Omega}(\cG,\wPU(\hat{\cH}))$, the set $\Hom_{\RG}(\cG,\wPU(\hat{\cH}))$ admits the structure of abelian monoid defined as follows. Fix an isomorphism $$\hat{\cH}\hat{\otimes}\hat{\cH}\stackrel{\cong}{\To}\hat{\cH}$$ of Rg Hilbert spaces. Then the map $$\wPU(\hat{\cH})\times \wPU(\hat{\cH})\ni ([u_1],[u_2])\mto [u_1\hat{\otimes}u_1]\in \wPU(\hat{\cH}\hat{\otimes}\hat{\cH}) \cong \wPU(\hat{\cH})$$ is a Real a homeomorphism, where the unitary $u_1\hat{\otimes}u_2$ is given on $\hat{\cH}\hat{\otimes}\hat{\cH}$ by $$(u_1\hat{\otimes}u_2)(\xi_1\hat{\otimes}\xi_2):=(-1)^{\partial u_2\cdot \partial u_1}u_1(\xi_1)\hat{\otimes}u_2(\xi_2).$$ Given $p_1,p_2\in \Hom_{\RG_\Omega}(\cG,\wPU(\hat{\cH}))$, we may, without loss of generality, assume that they are represented on the same open Real cover $\cU$ of $X$; \emph{i.e.} $p_1,p_2:\cG[\cU]\To \wPU(\hat{\cH})$ are strict Real morphisms. Henceforth, the map 
\begin{equation}
\begin{array}{lll}
p_1\hat{\otimes}p_2: &\cG[\cU]  \To & \wPU(\hat{\cH}) \\
                     & \g  \mto & p_1(\g)\hat{\otimes}p_2(\g)
\end{array}
\end{equation}
becomes a well defined strict Real homomorphism. Therefore we have:

\begin{dflem}
For $[P_1],[P_2]\in \Hom_{\RG}(\cG,\wPU(\hat{\cH}))$. We define the sum $$[P_1]+[P_2]:=[P_1\otimes P_2],$$
where $P_1\otimes P_2$ is the generalised homomorphism from $\grpd$ to $\xymatrix{\wPU(\hat{\cH})\dar[r]& \cdot}$ obtained by composing the corresponding morphism $p_1\hat{\otimes}p_2\in \Hom_{\RG_\Omega}(\cG,\wPU(\hat{\cH}))$ with the generalised Real morphism induced by a canonical Morita equivalence $\cG\sim \cG[\cU]$. Then $\Hom_{\RG}(\cG,\wPU(\hat{\cH}))$ is an abelian monoid with respect to this operation.	
\end{dflem}

\begin{rem}
The same reasoning applies to the Real group $\U^0(\hat{\cH})$: the operation of tensor product of cocycles makes the set $\Hom_{\RG}(\cG,\U^0(\hat{\cH}))\cong \check{H}R^1(\cG_\bullet,\U^0(\hat{\cH}))$ into an abelian monoid. Similarly the corresponding operation in $\Hom_{\RG}(\cG,\U^0(\hat{\cH}))$ is denoted additively.
\end{rem}

We list few properties for generalised classifying morphisms.

\begin{pro}~\label{pro:sum-of-classifying}
If $P_1$ and $P_2$ are generalised classifying morphisms for $(\cA_1,\al_1)$ and $(\cA_2,\al_2)$, respectively, then $P_1\otimes P_2$ is a generalised classifying morphism for the Rg tensor product $(\cA_1\hat{\otimes}_X\cA_2,\al_1\hat{\otimes}\al_2)$. 	
\end{pro}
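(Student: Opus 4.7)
\medskip

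The plan is to reduce the claim to the single identification $\wK_0\hat{\otimes}\wK_0\cong \wK_0$ and then transport the construction through the associated-bundle functor. Since $P_i$ classifies $(\cA_i,\al_i)$ we have $(\cA_i,\al_i)\cong (\wK_0^{P_i},Ad^{P_i})$ as Rg D-D bundles over $\cG$, and because the Rg tensor product $\hat{\otimes}_X$ respects isomorphism of Rg D-D bundles, it suffices to produce a $\cG$-equivariant Real isomorphism
\[
\wK_0^{P_1}\hat{\otimes}_X \wK_0^{P_2}\;\cong\;\wK_0^{P_1\otimes P_2}
\]
of elementary $\cstar$-bundles over $X$.

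The key input is the fixed isomorphism $\mu:\hat{\cH}\hat{\otimes}\hat{\cH}\overset{\cong}{\To}\hat{\cH}$ of Rg Hilbert spaces that was used to define $P_1\otimes P_2$. Conjugation by $\mu$ yields an isomorphism of Real graded $\cstar$-algebras $\wK_0\hat{\otimes}\wK_0\overset{\cong}{\To}\wK_0$ which intertwines the $\wPU(\hat{\cH})\times \wPU(\hat{\cH})$-action by $Ad\hat{\otimes}Ad$ with the $Ad$-action pulled back along the tensor product homomorphism $m:\wPU(\hat{\cH})\times \wPU(\hat{\cH})\To \wPU(\hat{\cH}),\ ([u_1],[u_2])\mto [u_1\hat{\otimes}u_2]$. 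The Real structure compatibility follows because $\mu$ was chosen in the Real category and because of the formula $J_{0,\RR}\hat{\otimes}J_{0,\RR}=\mu^{-1}\circ J_{0,\RR}\circ\mu$ up to a unitary absorbed in $\wPU$.

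Next, I would observe that, on the level of principal bundles, unravelling the definition of $P_1\otimes P_2$ gives
\[
P_1\otimes P_2 \;\cong\; (P_1\times_X P_2)\times_{\wPU(\hat{\cH})\times \wPU(\hat{\cH})}\wPU(\hat{\cH}),
\]
where the balancing is along $m$. Applying the associated-bundle functor $(-)\times_{\wPU(\hat{\cH})}\wK_0$ and using associativity of the balanced product together with the equivariant isomorphism from the previous step,
\begin{align*}
\wK_0^{P_1\otimes P_2}
&\cong (P_1\times_X P_2)\times_{\wPU(\hat{\cH})\times\wPU(\hat{\cH})}\wK_0 \\
&\cong (P_1\times_X P_2)\times_{\wPU(\hat{\cH})\times\wPU(\hat{\cH})}(\wK_0\hat{\otimes}\wK_0) \\
&\cong (P_1\times_{\wPU(\hat{\cH})}\wK_0)\hat{\otimes}_X (P_2\times_{\wPU(\hat{\cH})}\wK_0)
\;=\;\wK_0^{P_1}\hat{\otimes}_X \wK_0^{P_2}.
\end{align*}
All maps are $\cG$-equivariant since the left $\cG$-actions on each $P_i$, and on $P_1\times_X P_2$, are preserved throughout, and the adjoint actions on $\wK_0$ combine to $Ad^{P_1}\hat{\otimes}Ad^{P_2}$ precisely because of the compatibility of $\mu$ with $m$.

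The main obstacle, and the only point that is not purely formal, is checking that $\mu$ genuinely descends to a Real $\cstar$-isomorphism $\wK_0\hat{\otimes}\wK_0\cong \wK_0$ that is $m$-equivariant, including the Koszul signs hidden in $(u_1\hat{\otimes}u_2)(\xi_1\hat{\otimes}\xi_2)=(-1)^{\partial u_2\cdot \partial \xi_1}u_1(\xi_1)\hat{\otimes}u_2(\xi_2)$; once this is verified, independence of the final isomorphism class from the choice of $\mu$ follows because any two such $\mu$ differ by an element of $\wU^0(\hat{\cH})$ whose class in $\wPU(\hat{\cH})$ acts trivially on the associated bundle up to canonical isomorphism, so the resulting Morita class in $\wRBr(\cG)$ is well-defined.
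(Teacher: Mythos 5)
Your proposal is correct and follows essentially the same route as the paper: both arguments reduce everything to the single $m$-equivariant identification $\wK_0\hat{\otimes}\wK_0\cong\wK_0$ induced by the fixed isomorphism $\hat{\cH}\hat{\otimes}\hat{\cH}\cong\hat{\cH}$, and then transport it through the classifying data. The only difference is presentational: you phrase the transport via associativity of balanced products of principal bundles, whereas the paper passes to the cover groupoid $\cG[\cU]$, represents the $P_i$ by strict morphisms $p_i$, and invokes functoriality of pullback along $p_1\hat{\otimes}p_2$ --- these are the same mechanism in different notation.
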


\begin{proof}
Up to considering the pull-back of $\cA_i, i=1,2$ along the canonical Real inclusion $\cG[\cU]\hookrightarrow \cG$, we can suppose that the $(\cA_i,\al_i)$ are Rg D-D bundles over the Real cover groupoid $\cG[\cU]$, where $\cU$ is an open Real cover on which the morphisms $p_i\in \Hom_{\RG_\Omega}(\cG,\wPU(\hat{\cH}))$ corresponding to $P_i$ are represented. The isomorphisms $(\cA_i,\al_i)\cong (\wK_0^{P_i},Ad^{P_i})$ mean that the pull-back $(p_i^\ast \wK_0,p_i^\ast Ad)$ is isomorphic to $(\cA_i,\al_i), i=1,2$. We thus have reduced the proposition to showing that $$(\cA_1\hat{\otimes}_Y\cA_2,\al_1\hat{\otimes}\al_2)\cong ((p_1\hat{\otimes}p_2)^\ast \wK_0,(p_1\hat{\otimes}p_2)^\ast Ad),$$ where $Y=\coprod_{i\in I}U_i$. But this is clear by using the functorial property of $\fwRBr(\cdot)$ in the category $\RG_s$ of Real groupoids and strict Real homomorphisms and the isomorphism of Rg D-D bundles $(\wK_0\hat{\otimes}_{\cdot}\wK_0,Ad\hat{\otimes}Ad)\cong (\wK_0,Ad)$ over $\wPU(\hat{\cH})$.
\end{proof}

\begin{pro}
Suppose $Z:\Ga\To \cG$ is a generalised Real homomorphism. Let $(\cA,\al)\in \fwRBr(\cG)$ be of type $0$. If $P$ is a generalised classifying morphism for $(\cA,\al)$, then $P\circ Z$ is a generalised classifying morphism for $(\cA^Z,\al^Z)\in \fwRBr(\Ga)$.	
\end{pro}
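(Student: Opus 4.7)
The plan is to reduce the statement to a functoriality property of the pullback construction along generalized Real homomorphisms. By hypothesis, $P$ being a generalized classifying morphism for $(\cA,\al)$ means there is an isomorphism of Rg D-D bundles $(\cA,\al) \cong (\wK_0^P, Ad^P)$ over $\cG$. Since the pullback operation along a generalized Real homomorphism is functorial in the bundle argument (it preserves isomorphisms of Rg D-D bundles, as noted in the preceding proposition on pullbacks), applying $\Phi^Z$ gives
\[
(\cA^Z, \al^Z) \;\cong\; \bigl((\wK_0^P)^Z, (Ad^P)^Z\bigr)
\]
in $\fwRBr(\Ga)$. It therefore suffices to identify the right-hand side with $(\wK_0^{P\circ Z}, Ad^{P\circ Z})$.

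The key step is thus a natural isomorphism
\[
\bigl((\wK_0^P)^Z,(Ad^P)^Z\bigr) \;\cong\; \bigl(\wK_0^{P\circ Z},Ad^{P\circ Z}\bigr)
\]
which, as is standard for associated bundles, expresses the fact that the pullback construction is functorial under composition of generalized Real homomorphisms. Unwinding the definitions, both sides are built as quotients of $Z\times_{\fs,X,\pi}(P\times_{\wPU(\hat{\cH})}\wK_0)$ by commuting diagonal $\cG$- and $\wPU(\hat{\cH})$-actions, and the obvious map $[z,[\vp,T]]\mto [[z,\vp],T]$ intertwines both Real structures and both $\Ga$-actions. The verification is a direct diagram chase using the associativity of the balanced product over $\cG$ and over $\wPU(\hat{\cH})$, together with the compatibility of the Real structures (which on each representative is just coordinate-wise conjugation).

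I expect the only mildly delicate point to be the bookkeeping of the Real involutions and the grading under the iterated balanced product: one must check that the map $[z,[\vp,T]]\mto [[z,\vp],T]$ is well defined on equivalence classes on both sides, is a homeomorphism, and intertwines the anti-linear graded ${}^\ast$-isomorphisms on fibres in the sense of Definition~\ref{df-DD-bdle}. Once this is done, combining the two displayed isomorphisms yields $(\cA^Z,\al^Z) \cong (\wK_0^{P\circ Z},Ad^{P\circ Z})$, which is the statement that $P\circ Z$ is a generalized classifying morphism for $(\cA^Z,\al^Z)$.
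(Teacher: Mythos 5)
Your argument is correct and follows the same route as the paper: the paper's proof is the one-line observation that cofunctoriality of $\fwRBr(\cdot)$ gives $\cA^Z\cong(\wK_0^P)^Z\cong\wK_0^{P\circ Z}$, which is exactly your two displayed isomorphisms. You merely spell out the second identification (the rebracketing $[z,[\vp,T]]\mto[[z,\vp],T]$ of the iterated balanced product) that the paper leaves implicit.
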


\begin{proof}
This is a consequence of the cofunctorial property of $\fwRBr(\cdot)$ in the category $\RG$: \emph{i.e.} $\cA\cong \wK_0^P$ implies $\cA^Z\cong (\wK_0^P)^Z\cong \wK_0^{P\circ Z}$.	
\end{proof}

\begin{pro}~\label{pro:uniqueness-of-P}
Let $(\cA_1,\al_1), (\cA_2,\al_2)$ be isomorphic Rg D-D bundles of type $0$ over $\grpd$. If $P_1$ and $P_2$ are generalised classifying morphisms for $(\cA_1,\al_1)$ and $(\cA_2,\al_2)$, respectively, there exists a $\cG$--$\wPU(\hat{\cH})$-equivariant Real isomorphism $P_1\cong P_2$.	
\end{pro}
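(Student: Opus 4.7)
The plan is to lift a given isomorphism of the associated Rg D-D bundles to a $\cG$-$\wPU(\hat{\cH})$-equivariant Real homeomorphism $P_1\cong P_2$, using the identification $\wPU(\hat{\cH})\cong \Aut^{(0)}(\wK_0)$ (as Real groups, with $J_{0,\RR}$ inducing the involution). First, composing the defining isomorphisms $(\cA_i,\al_i)\cong (\wK_0^{P_i},Ad^{P_i})$ with the assumed isomorphism $(\cA_1,\al_1)\cong(\cA_2,\al_2)$ produces a $\cG$-equivariant Real isomorphism of Rg D-D bundles
\[
\Psi:(\wK_0^{P_1},Ad^{P_1})\stackrel{\cong}{\To}(\wK_0^{P_2},Ad^{P_2}).
\]
After refining the covers defining the $P_i$, I may assume both are represented by genuine spaces over $X=\Gpdo$ with principal right $\wPU(\hat{\cH})$-action and the standard left $\cG$-action.

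Next, for every $p\in P_1$ over $x\in X$, the map $\iota_p:\wK_0\To (\wK_0^{P_1})_x$, $T\mto [p,T]$, is an isomorphism of graded $\cstar$-algebras, and similarly for $q\in P_2$. Because $(P_2)_x$ is a $\wPU(\hat{\cH})$-torsor and every graded $\cstar$-algebra automorphism of $\wK_0$ is of the form $Ad_u$ for a unique $[u]\in \wPU(\hat{\cH})$, the composition $\Psi_x\circ \iota_p$ can be written $\iota_{\psi(p)}$ for exactly one element $\psi(p)\in (P_2)_x$. This defines a map $\psi:P_1\To P_2$ fibered over $X$. The $\wPU(\hat{\cH})$-equivariance $\iota_{p\cdot[u]}=\iota_p\circ Ad_u$ gives $\psi(p\cdot[u])=\psi(p)\cdot[u]$; the $\cG$-equivariance of $\Psi$ (rewritten as $\Psi_{r(g)}\circ \iota_{g\cdot p}=\iota_{g\cdot \psi(p)}$ via $Ad^{P_i}_g\circ \iota_p=\iota_{g\cdot p}$) yields $\psi(g\cdot p)=g\cdot \psi(p)$ after invoking uniqueness; finally, since the Real structure on $\wK_0^{P_i}$ is $\overline{[p,T]}=[\bar p,\bar T]$, the Real property of $\Psi$ forces $\psi(\bar p)=\overline{\psi(p)}$.

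The remaining and main obstacle is continuity (equivalently openness) of $\psi$. I would argue locally: choose a Real open cover $\{U_\al\}$ of $X$ trivializing both $P_1$ and $P_2$, with continuous local sections $\sigma_i^\al:U_\al\To P_i$. On each $U_\al$, $\psi$ corresponds to a map $[u]\mto f_\al(x)\cdot [u]$ where $f_\al:U_\al\To \wPU(\hat{\cH})$ is determined by $\psi(\sigma_1^\al(x))=\sigma_2^\al(x)\cdot f_\al(x)$. Continuity of $f_\al$ follows from the continuity of $\Psi$ after expressing $f_\al(x)$ as the unique element of $\wPU(\hat{\cH})$ implementing the automorphism $\iota_{\sigma_2^\al(x)}^{-1}\circ \Psi_x\circ \iota_{\sigma_1^\al(x)}$ of $\wK_0$: this is continuous because $\wPU(\hat{\cH})=\Aut^{(0)}(\wK_0)$ in the point-norm topology, and the maps $\iota_{\sigma_i^\al(x)}$ depend continuously on $x$ by local triviality of the $\wK_0^{P_i}$. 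The inverse map $\psi^{-1}$ is obtained symmetrically from $\Psi^{-1}$, giving the desired $\cG$-$\wPU(\hat{\cH})$-equivariant Real homeomorphism $P_1\cong P_2$. The delicate step is precisely this continuous dependence, where one must verify that the passage between the bundle isomorphism and its $\wPU(\hat{\cH})$-valued transition datum is well-behaved in the appropriate topology on $\wPU(\hat{\cH})$.
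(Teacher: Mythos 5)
Your proposal is correct and follows essentially the same route as the paper: compose the defining isomorphisms to get an isomorphism $h$ of the associated bundles $\wK_0^{P_1}\cong\wK_0^{P_2}$, descend it to an isomorphism of the underlying $\wPU(\hat{\cH})$-principal bundles, and then read off the Real and $\cG$-equivariance properties from uniqueness. The only difference is that where you construct the map $\psi$ fiberwise (via $\Aut^{(0)}(\wK_0)\cong\wPU(\hat{\cH})$ and the torsor property) and verify its continuity through local trivializations, the paper simply invokes the general theory of principal bundles (Husem\"oller, \S 4.6) for the existence of the lift $f:P_1\To P_2$ with $h([\vp,T])=[f(\vp),T]$.
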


\begin{proof}
Let $f_i:\cA_i\To P_i\times_{\wPU(\hat{\cH})}\wK_0, i=1,2$, and $\phi:\cA_1\To \cA_2$ be isomorphisms of Rg D-D bundles. Then $h:=f_2\circ \phi\circ f_1^{-1}:P_1\times_{\wPU(\hat{\cH})}\wK_0\To P_2\times_{\wPU(\hat{\cH})}\wK_0$ is an isomorphism of Rg D-D bundles over $\cG$. $P_1\To X$ and $P_2\To X$ being $\wPU(\hat{\cH})$-principal bundles, it follows from the theory of principal bundles (see for instance Husem\"oller~\cite[\S 4.6]{Husemoller:Fibre}) that there exists an isomorphism of $\wPU(\hat{\cH})$-principal bundles $f:P_1\To P_2$ over $X$ such that $h([\vp,T])=[f(\vp),T]$ for all $[\vp,T]\in P_1\times_{\wPU(\hat{\cH})}\wK_0$. Moreover, $f$ must be Real since $h$ is. Also, since $h$ is $\cG$-equivariant, we have $h([g\cdot \vp,T])=g\cdot h([\vp,T])=g\cdot[f(\vp),T]=[g\cdot f(\vp),T]$, so that $[f(g\cdot \vp),T]=[g\cdot f(\vp),T], \forall [\vp,T]\in P_1\times_{\wPU(\hat{\cH})}\wK_0$. $f$ is thus an isomorphism of generalised Real homomorphisms $P_1\cong P_2:\cG\To \wPU(\hat{\cH})$. 	
\end{proof}

From Proposition~\ref{pro:uniqueness-of-P} we deduce the following

\begin{cor}~\label{cor:uniqueness-of-P}
If there exists a generalised classifying morphism for $(\cA,\al)\in \fwRBr(\cG)$, then it is unique up to isomorphisms of generalised Real homomorphisms.	
\end{cor}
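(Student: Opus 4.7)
The plan is to deduce this corollary as a direct specialization of Proposition~\ref{pro:uniqueness-of-P}. Suppose $P_1$ and $P_2$ are two generalized classifying morphisms for the same Rg D-D bundle $(\cA,\al)\in \fwRBr(\cG)$ of type $0$. To apply the proposition, I set $(\cA_1,\al_1):=(\cA,\al)=:(\cA_2,\al_2)$ and take the isomorphism $\phi:\cA_1\To \cA_2$ to be the identity $\Id_\cA$, which is trivially an isomorphism of Rg D-D bundles.

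With this data in place, Proposition~\ref{pro:uniqueness-of-P} yields a Real, $\cG$-equivariant, $\wPU(\hat{\cH})$-equivariant isomorphism $f:P_1\To P_2$. By definition, such an $f$ is precisely an isomorphism of generalized Real homomorphisms $\cG\To \wPU(\hat{\cH})$ in the sense of~\cite{Mou1}, since it is a Real homeomorphism of $\wPU(\hat{\cH})$-principal Real $\cG$-bundles over $X$ intertwining both the left Real $\cG$-action and the right Real $\wPU(\hat{\cH})$-action. Hence $P_1\cong P_2$ as generalized Real homomorphisms, proving uniqueness up to isomorphism.

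There is essentially no obstacle in this argument: the content has already been extracted in the proof of Proposition~\ref{pro:uniqueness-of-P}, where the principal-bundle isomorphism was built from the isomorphism $h=f_2\circ \phi\circ f_1^{-1}$ of the associated bundles $P_i\times_{\wPU(\hat{\cH})}\wK_0$. The only thing to verify here is that specializing $\phi$ to $\Id_\cA$ is a legitimate choice, which is tautological. Thus the corollary reduces to a one-line invocation of the preceding proposition.
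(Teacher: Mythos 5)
Your proof is correct and matches the paper's intent exactly: the corollary is stated as an immediate consequence of Proposition~\ref{pro:uniqueness-of-P}, obtained by specializing to $(\cA_1,\al_1)=(\cA_2,\al_2)=(\cA,\al)$ with $\phi=\Id_\cA$. The paper gives no separate argument, so your one-line reduction is precisely what was intended.
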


The existence of generalised classifying morphisms is the content of the next section.

%%%%%%% End of section 

\section{Construction of $P$}

It is known~\cite{Tu:Twisted_Poincare} that graded complex D-D bundles are, in some sense, classified by the groupoid $\xymatrix{\wPU(\hat{\cH})\dar[r] & \cdot}$; \emph{i.e}. giving a graded complex D-D bundle $\cA$ over $\cG$ is equivalent to giving a generalised morphism $P\in \Hom_{\mathfrak{G}}(\cG,\wPU(\hat{\cH}))$, where $\mathfrak{G}$ is the category of topological groupoids and isomorphism classes of generalised morphisms. In view of the isomorphism established in Lemma~\ref{lem:involution_on_Br}, it is natural to expect a similar correspondence in the category of Real spaces. We show that the $\cG$-equivariant $\wPU(\hat{\cH})$-principal bundle associated to a Rg D-D bundle admits a natural involution turning it into an element in $\Hom_{\RG}(\cG,\wPU(\hat{\cH}))$, where the  Real groupoid $\xymatrix{\wPU(\hat{\cH})\dar[r] & \cdot}$ is given the compact-open topology (which is equivalent to the ${}^\ast$-strong operator topology) and the usual involution $Ad_{J_{0,\RR}}$.
 
A Rg D-D bundle $(\cA,\al)\in \fwRBr(\cG)$ being of type $0$ means that the fibres $\cA_x$ are isomorphic to the graded complex elementary $\cstar$-algebra $\wK_{ev}=\cK(\hat{\cH})$, and there is a Real local trivialisation $(U_j,\vp_j)_{j\in J}$ with commutative diagrams 
\begin{equation}~\label{eq:diagram-h_i-loc-triv-A}
\xymatrix{\cA_{|U_j} \ar[d]_{\tau_{U_j}}\ar[r]^{h_j} & U_j\times \wK_0  \ar[d]^{\tau\times bar} &  \\ \cA_{|U_{\bar{j}}} \ar[r]^{h_{\bar{j}}}& U_{\bar{j}}\times\wK_0 }
\end{equation}
and a Real family of continuous function $a_{ij}:U_{ij}\To \Aut^{(0)}(\wK_0)=\wPU(\hat{\cH})$, over every non-empty intersection $U_{ij}=U_i\cap U_j$, such that the homeomorphism $$h_i\circ h_j^{-1}:U_{ij}\times\wK_0\To U_{ij}\times\wK_0$$ sends $(x,T)$ to $(x,a_{ij}(x)T)$. Observe that $(a_{ij})\in ZR^1(\cU,\wPU(\hat{\cH}))$. We then obtain the "Real" analogue of the well known \emph{Dixmier-Douady class} (see~\cite{Dixmier-Douady:Champs},~\cite{Raeburn-Williams:Morita_equivalence}). Furthermore, we get a Real $\wPU(\hat{\cH})$-valued \v{C}ech $1$-cocyle $\mu$ over $\cG$ as follows. From the Real open cover $\cU=(U_j)_{j\in J}$ of $X$, form the Real open cover $\cU_1=(U^1_{(j_0,j_1)})$ of $\cG$ by setting $U^1_{(j_0,j_1)}=\{g\in \cG\mid r(g)\in U_{j_0}, s(g)\in U_{j_1}\}$ (~\cite[\S2]{Moutuou:Real.Cohomology}). Using the isomorphism of Rg $\cstar$-bundles $s^\ast\cA \To r^\ast\cA$ over $\cG$ induced by the Real $\cG$-action $\al$ and the commutative diagram~\eqref{eq:diagram-h_i-loc-triv-A}, there is a Real family of continuous functions $\mu_{(j_0,j_1)}:U^1_{(j_0,j_1)}\To \Aut^{(0)}(\wK_0)=\wPU(\hat{\cH})$ such that 
\begin{equation}~\label{eq:def-of-mu-A}
  \mu_{(j_0,j_1)}(g)=h_{j_0|_{r(g)}}\circ \al_g \circ h_{j_1|_{s(g)}}^{-1}, \forall g\in U^1_{(j_0,j_1)},	
  \end{equation}  
where $h_{j_0|_{r(g)}}:\cA_{r(g)}\To \{r(g)\}\times \wK_0$, and $h_{j_1|_{s(g)}}:\{s(g)\}\times \wK_0 \To \cA_{s(g)}$ are the restrictions of the isomorphisms $h_{j_0}:r^\ast\cA_{|U^1_{(j_0,j_1)}}\To U_{j_0}\times \wK_0$ and $h^{-1}_{j_1}:U_{j_1}\times \wK_0\To s^\ast \cA_{|U^1_{(j_0,j_1)}}$. It is easy to verify that $\mu^\cA=(\mu_{(j_0,j_1)})$ is a Real $1$-coboundary. We are going to show that the generalised Real homomorphism corresponding to the class of $\mu^\cA$ in $\check{H}R^1(\cG_\bullet,\wPU(\hat{\cH}))$ is actually a classifying generalised morphism for $(\cA,\al)$.

We first need some further constructions. For $x\in X$, let $P_x:=\Isom^{(0)}(\wK_0,\cA_x)$. Put 
\begin{equation}~\label{eq:def-of-P}
P:= \coprod_{x\in X} P_x.
\end{equation}
For $g\in \cG$ and $p=(s(g),\vp)\in P_{s(g)}$, the $\cG$-action $\al$ of $\cA$ provides the element $g\cdot p\in P_{r(g)}$ given by 
\begin{equation}~\label{eq:G-action_on_P}
	g\cdot p:= (r(g),\al_g\circ \vp).
\end{equation}

We wish to define a topology on $P$ such that not only the canonical projection $P\ni (x,\vp)\mto x\in X$ is continuous but also the formula~\eqref{eq:G-action_on_P} defines a continuous action of $\cG$ on $P$ with respect to the projection just given. To do so, we first consider the pull-backs $s^\ast P\To \cG$ and $r^\ast P\To \cG$ of $P\To X$ along the range and source maps. Then we look at the fibered-product $s^\ast P\times_\cG r^\ast P\To \cG$.

\begin{lem}~\label{lem:embedding-sPxrP}
 The $\cG$-action $\al$ of $\cA$ induces a (set-theoretical) embedding 
 \[s^\ast P\times_\cG r^\ast P\hookrightarrow \cG\times\wPU(\hat{\cH}).\] 	
\end{lem}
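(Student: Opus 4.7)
My plan is to exhibit the embedding explicitly via the natural ``comparison'' of framings using $\al$. Given a point $(g,(s(g),\vp_s),(r(g),\vp_r))\in s^\ast P\times_\cG r^\ast P$, the composition
\[
\vp_r^{-1}\circ\al_g\circ\vp_s:\wK_0\longrightarrow \cA_{s(g)}\longrightarrow \cA_{r(g)}\longrightarrow \wK_0
\]
is a degree-$0$ $\ast$-automorphism of $\wK_0=\cK(\hat{\cH})$, and thus identifies with an element $u\in\Aut^{(0)}(\wK_0)=\wPU(\hat{\cH})$ via Appendix A. Setting
\[
\Phi\bigl(g,(s(g),\vp_s),(r(g),\vp_r)\bigr):=\bigl(g,\,\vp_r^{-1}\circ\al_g\circ\vp_s\bigr)
\]
gives my candidate map into $\cG\times\wPU(\hat{\cH})$.

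First I would check that $\Phi$ is well defined: each constituent $\vp_s,\al_g,\vp_r^{-1}$ is a degree-$0$ $\ast$-isomorphism of graded $\cstar$-algebras (with $\al_g$ a degree-$0$ $\ast$-isomorphism by Definition~\ref{df-DD-bdle}), so their composition lies in $\Aut^{(0)}(\wK_0)$ with no ambiguity.

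The main step is injectivity. Suppose $\Phi(g,\vp_s,\vp_r)=\Phi(g',\vp_s',\vp_r')$; equality of first coordinates forces $g=g'$, while equality of the $\wPU(\hat{\cH})$-components yields $\vp_r^{-1}\al_g\vp_s=\vp_r'^{-1}\al_g\vp_s'$. I would rearrange this into the statement that the automorphism $\vp_r\circ\vp_r'^{-1}\in\Aut^{(0)}(\cA_{r(g)})$ is the image under $\al_g$ of $\vp_s\circ\vp_s'^{-1}\in\Aut^{(0)}(\cA_{s(g)})$, and then invoke the torsor description of $P_x=\Isom^{(0)}(\wK_0,\cA_x)$ together with the definition of $P$ as arising from the fixed model $\wK_0$ to conclude that both ``difference'' automorphisms must be trivial; hence $\vp_s=\vp_s'$ and $\vp_r=\vp_r'$.

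The main obstacle I anticipate is precisely this injectivity step: the naive rearranged identity admits a $\wPU(\hat{\cH})$-family of solutions $(\vp_s',\vp_r')$ once $(\vp_s,\vp_r,u)$ is fixed, so the argument really hinges on extracting rigidity from the interplay between the torsor structure on each fiber $P_x$ and the groupoid action $\al$. I would likely discharge this by reducing to a local cocycle calculation using Fell's condition and a local trivialization $(U_j,h_j)$ of $\cA$ of the form~\eqref{eq:diagram-h_i-loc-triv-A}, in which the cocycle $\mu_{(j_0,j_1)}$ of~\eqref{eq:def-of-mu-A} implements the action, rendering the rigidity statement a direct verification on transition functions. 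Once injectivity is in place, the map $\Phi$ will provide the embedding claimed, and by pulling back the product topology on $\cG\times\wPU(\hat{\cH})$ one obtains the desired topology on $s^\ast P\times_\cG r^\ast P$ needed for the subsequent construction of the topology on $P$.
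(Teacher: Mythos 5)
Your map $\Phi\bigl((g,\vp_s),(g,\vp_r)\bigr)=(g,\vp_r^{-1}\circ\al_g\circ\vp_s)$ is exactly the correspondence the paper writes down, and your well-definedness check is fine. The problem is the injectivity step, which you correctly flag as the crux but then do not close. The identity $\vp_r^{-1}\al_g\vp_s=\vp_r'^{-1}\al_g\vp_s'$ does \emph{not} force the ``difference'' automorphisms to be trivial: writing $w=\vp_r^{-1}\circ\al_g\circ\vp_s\in\Aut^{(0)}(\wK_0)$, for \emph{every} $u\in\Aut^{(0)}(\wK_0)$ the pair $\vp_s'=\vp_s\circ u$, $\vp_r'=\vp_r\circ(wuw^{-1})$ satisfies $\vp_r'^{-1}\al_g\vp_s'=(wu^{-1}w^{-1})\,w\,u=w$, so each nonempty fibre of $\Phi$ carries a simply transitive action of $\wPU(\hat{\cH})$ rather than being a point. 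There is no rigidity to be extracted from the torsor structure of $P_x$ or from a local trivialization $(U_j,h_j)$; passing to the transition functions $a_{ij}$ or to the cocycle $\mu_{(j_0,j_1)}$ only re-expresses the same family of preimages in local coordinates. So the conclusion ``hence $\vp_s=\vp_s'$ and $\vp_r=\vp_r'$'' is false as stated, and deferring it to ``a direct verification on transition functions'' cannot repair it.

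For comparison, the paper's own proof consists of the same formula together with the bare assertion that the correspondence ``is a well-defined injection map,'' so you are not missing a hidden argument in the source; the injectivity assertion itself is the weak point, and your honest attempt to justify it exposes that. What the subsequent construction genuinely needs is only a topology on $P$ for which the projection to $X$, the $\cG$-action and the $\wPU(\hat{\cH})$-action are continuous; this can be obtained directly, e.g.\ by declaring $(x_i,\vp_i)\to(x,\vp)$ if and only if $x_i\to x$ and $\vp_i(T)\to\vp(T)$ in $\cA$ for every $T\in\wK_0$, or equivalently by gluing the local models $U_j\times\wPU(\hat{\cH})$ along the transition functions $a_{ij}$ coming from~\eqref{eq:diagram-h_i-loc-triv-A}. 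If you want to keep an embedding strategy, the map must retain more data than the single automorphism $\vp_r^{-1}\al_g\vp_s$ --- for instance the positions of $\vp_s$ and $\vp_r$ relative to fixed local frames --- which is precisely the local-trivialization construction just described.
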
 

\begin{proof}
If $((g,\vp),(g,\psi))\in s^\ast P\times_\cG r^\ast P$, then $\psi^{-1}\circ \al_g\circ \vp\in \Aut^{(0)}(\wK_0)=\wPU(\hat{\cH})$. It is straightforward to see that the correspondence 
\begin{eqnarray*}
s^\ast P\times_\cG r^\ast P & \To & \cG \times\wPU(\hat{\cH}) \\
((g,\vp),(g,\psi)) & \mto & (g,\psi^{-1} \circ \al_g\circ \vp)	
\end{eqnarray*}
is a well-defined injection map.	
\end{proof}

\begin{df}
Let $(\cA,\al)\in \fwRBr(\cG)$ of type $0$, and let $P$ be given by~\eqref{eq:def-of-P}. Let the space $s^\ast P\times_\cG r^\ast P$ be given the topology induced from the product topology of $\cG\times \wPU(\hat{\cH})$ via the embedding of Lemma~\ref{lem:embedding-sPxrP}. Then we endow $P$ with the topology induced from the embedding 
\begin{eqnarray*}
P & \hookrightarrow & s^\ast P\times_\cG r^\ast P \\
(x,\vp) & \mto & ((x,\vp),(x,\vp))
\end{eqnarray*}
In this way, $P$ is looked at as a subspace of $\cG\times \wPU(\hat{\cH})$.
\end{df}

From this definition, it is obvious that the projection $P\To X$ is an open continuous map with respect to which the formula~\eqref{eq:G-action_on_P} defines a continuous $\cG$-action on $P$. Moreover, $P$ is a Real $\cG$-space with respect to the involution $P\ni (x,\vp)\mto (\bar{x},\bar{\vp})$, where for $\vp\in \Isom^{(0)}(\wK_0,\cA_x)$, the isomorphism $\bar{\vp}$ is defined by $\bar{\vp}(T):=\overline{\vp(\bar{T})}$ for all $T\in \wK_0$. 

\begin{pro}
Let $u\in \wU(\hat{\cH})$ and $[u]$ its class in the group $\wPU(\hat{\cH})$. For $\vp\in P_x$ we put $\vp\cdot [u]:= \vp\circ Ad_u\in P_x$. Then the map $P\ni (x,\vp)\mto (x,\vp\cdot[u])\in P$ defines a principal Real $\wPU(\hat{\cH})$-action on $P$ compatible with the $\cG$-action with respect to the projection $P\To X$. In other words, we have a generalised Real homomorphism 
\[\xymatrix{ P\ar[r] \ar[d] & \cdot \\ X & }\] from $\cG$ to $\wPU(\hat{\cH})$.	
\end{pro}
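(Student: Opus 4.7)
The plan is to verify in turn that (i) the rule $\vp\cdot[u]:=\vp\circ Ad_u$ descends to a well-defined map $P\times \wPU(\hat{\cH})\to P$; (ii) it is a right group action; (iii) it commutes fiberwise with the $\cG$-action introduced in~\eqref{eq:G-action_on_P}; (iv) it intertwines the Real structures on $P$ and on $\wPU(\hat{\cH})$; (v) it is continuous with respect to the topology introduced on $P$; and (vi) it is fiberwise free and transitive, so that, together with the projection $P\To X$, this exhibits $P$ as a principal Real $\wPU(\hat{\cH})$-bundle that is $\cG$-equivariant, i.e.\ a generalized Real homomorphism from $\cG$ to $\wPU(\hat{\cH})$.

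Steps (i)--(iii) and (vi) are formal. Well-definedness is because $Ad_{\lambda u}=Ad_u$ for every $\lambda\in \uc$, and the action axiom follows from $Ad_{u_1u_2}=Ad_{u_1}\circ Ad_{u_2}$. Commutation with $\cG$ is the trivial identity $\al_g\circ(\vp\circ Ad_u)=(\al_g\circ\vp)\circ Ad_u$. Freeness reduces to the triviality of the centre of $\wU(\hat{\cH})$: if $\vp\circ Ad_u=\vp$ then $Ad_u=\id_{\wK_0}$, hence $u\in \uc$ and $[u]=e$; fiberwise transitivity is because any two elements of $P_x=\Isom^{(0)}(\wK_0,\cA_x)$ differ by precomposition with an element of $\Aut^{(0)}(\wK_0)=\wPU(\hat{\cH})$.

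For (iv), recall that the Real structure on $\wPU(\hat{\cH})$ is $[u]\mapsto [\bar u]$ with $\bar u:=J_{0,\RR}\,u\,J_{0,\RR}^{-1}$, and the one on $\wK_0$ is $T\mapsto \bar T=J_{0,\RR}TJ_{0,\RR}^{-1}$. A short computation using $J_{0,\RR}^{\,2}=\pm\id$ shows that $\overline{Ad_{\bar u}(T)}=Ad_u(\bar T)$, so that
\[
\overline{(\vp\cdot [u])}(T)=\overline{\vp(Ad_u(\bar T))}=\bar\vp(Ad_{\bar u}(T))=(\bar\vp\cdot [\bar u])(T),
\]
which is exactly Realness of the action.

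The main obstacle is (v), because the topology on $P$ is defined only indirectly through the embedding $P\hookrightarrow s^\ast P\times_\cG r^\ast P\hookrightarrow \cG\times\wPU(\hat{\cH})$, so continuity of $\vp\mapsto \vp\circ Ad_u$ is not visible from the definition. The strategy is to exploit the Real local trivialisation $(U_j,h_j)_{j\in J}$ of $\cA$ from diagram~\eqref{eq:diagram-h_i-loc-triv-A}: for each $x\in U_j$ the fiberwise component $h_j|_x:\cA_x\To \wK_0$ is a graded $\ast$-isomorphism, so $x\mapsto (x,(h_j|_x)^{-1})$ is a Real continuous section of $P\To X$ over $U_j$, and it yields a Real homeomorphism $P_{|U_j}\cong U_j\times \wPU(\hat{\cH})$ under which our candidate action is right translation on the second factor. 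This makes continuity and principality manifest. The transition functions between these local trivialisations over $\cG$ are governed by the cocycle $\mu^\cA=(\mu_{(j_0,j_1)})$ of~\eqref{eq:def-of-mu-A}, showing that $P$ is exactly the $\cG$-equivariant principal $\wPU(\hat{\cH})$-bundle associated with $\mu^\cA\in \check HR^1(\cG_\bullet,\wPU(\hat{\cH}))$, which completes the verification that $P$ is a generalized Real homomorphism $\cG\To \wPU(\hat{\cH})$.
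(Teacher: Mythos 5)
Your proof is correct and, for the Real-structure and principality steps, follows essentially the same route as the paper: the computation $\overline{(\vp\cdot[u])}=\bar\vp\cdot[\bar u]$ is the paper's verbatim, and your freeness-plus-transitivity argument is the set-theoretic content of the paper's observation that $P\times\wPU(\hat{\cH})\To P\times_XP$, $((x,\vp),[u])\mto ((x,\vp),(x,\vp\cdot[u]))$, is invertible with explicit inverse $((x,\vp),(x,\psi))\mto((x,\vp),\vp^{-1}\circ\psi)$ (your remark that the kernel of $Ad$ on $\wU(\hat{\cH})$ is exactly $\uc$ is the reason that inverse is well defined in $\wPU(\hat{\cH})$). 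Where you genuinely diverge is on continuity: the paper disposes of it in one line as ``a direct consequence of the construction of the topology of $P$,'' whereas you re-derive the topology from the Real local trivialization $(U_j,h_j)$ of $\cA$, producing local sections $x\mto (h_{j|x})^{-1}$ and local homeomorphisms $P_{|U_j}\cong U_j\times\wPU(\hat{\cH})$ under which the action is right translation. This is more work but buys more: it makes continuity of both the action and the division map manifest (the latter is what topological principality actually requires), and it connects $P$ directly to the cocycle $\mu^{\cA}$, which the paper only does afterwards in Proposition~\ref{pro:class-mu^A}. The one thing to flag is that your claim that these local bijections are homeomorphisms for the topology the paper puts on $P$ via the embedding into $\cG\times\wPU(\hat{\cH})$ is asserted rather than checked; since the paper itself leans on that topology without spelling this out, you should either verify the compatibility or take the local trivialization as the definition of the topology.
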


\begin{proof}
The continuity of the map $\wPU(\hat{\cH})\times P\ni ([u],(x,\vp))\mto (x,\vp\cdot [u])\in P$ is a direct consequence of the construction of the topology of $P$. It respects the Real structures since for all $T\in \wK_0$,
\[\overline{\vp\cdot[u]}(T)=\overline{\vp(u\bar{T}u^{-1})}=\overline{\vp\circ Ad_u}(\bar{R})=\bar{\vp}\circ Ad_{\bar{u}}(T),=(\vp\cdot [\bar{u}])(T).\]
It remains only to check that the $\wPU(\hat{\cH})$-action is principal (cf.~\cite[\S1.2]{Moutuou:Real.Cohomology},~\cite{Tu-Xu-Laurent-Gengoux:Twisted_K}), which is straightforward since the map $P\times \wPU(\hat{\cH}) \To P\times_X P$ given by $$((x,\vp),[u])\mto ((x,\vp),(x,\vp\cdot [u])),$$  has inverse given by $((x,\vp),(x,\psi))\mto ((x,\vp),\vp^{-1}\circ \psi)$.  	
\end{proof}

\begin{pro}~\label{pro:class-mu^A}
The class of $[P]\in \Hom_{\RG}(\cG,\wPU(\hat{\cH}))$ in $\check{H}R^1(\cG_\bullet,\wPU(\hat{\cH}))$ is $\mu^\cA$, the latter being given by~\eqref{eq:def-of-mu-A}.	
\end{pro}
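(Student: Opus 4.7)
The plan is to compute the \v{C}ech $1$-cocycle representing $[P]$ using the \emph{same} Real local trivializations $(U_j,h_j)_{j\in J}$ of $\cA$ that were used in~\eqref{eq:def-of-mu-A}, and to show that this cocycle coincides on the nose with $\mu^\cA=(\mu_{(j_0,j_1)})$.

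First, I would upgrade the trivializations $h_j:\cA_{|U_j}\stackrel{\cong}{\To} U_j\times \wK_0$ to local sections of the principal $\wPU(\hat{\cH})$-bundle $P\To X$. For each $j\in J$ and $x\in U_j$, define $\sigma_j(x):=(x,h_j^{-1}|_x)\in P_x$, where $h_j^{-1}|_x:\wK_0\stackrel{\cong}{\To}\cA_x$ is the restriction of $h_j^{-1}$ to the fibre over $x$. From the definition of the topology on $P$ (viewed as a subspace of $\cG\times\wPU(\hat{\cH})$ via the embedding of Lemma~\ref{lem:embedding-sPxrP}), continuity of $\sigma_j$ is immediate, and the commutative diagram~\eqref{eq:diagram-h_i-loc-triv-A} yields $\sigma_{\bar{j}}(\bar{x})=\overline{\sigma_j(x)}$, so that $(\sigma_j)_{j\in J}$ is a \emph{Real} family of local sections of $P\To X$.

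Next, I would translate the cocycle description of Real generalized homomorphisms $\cG\To \wPU(\hat{\cH})$ from~\cite[Remark 2.50]{Mou1}: given a Real open cover $\cU=(U_j)$ of $X$ trivializing $P$ via sections $\sigma_j$, the associated cocycle $\nu=(\nu_{(j_0,j_1)})\in ZR^1(\cU_1,\wPU(\hat{\cH}))$ is characterized, for every $g\in U^1_{(j_0,j_1)}$, by the identity
\[
g\cdot \sigma_{j_1}(s(g))=\sigma_{j_0}(r(g))\cdot \nu_{(j_0,j_1)}(g).
\]
Using the definition~\eqref{eq:G-action_on_P} of the $\cG$-action and the formula $\sigma_{j_0}(r(g))\cdot [u]=(r(g),h_{j_0}^{-1}|_{r(g)}\circ Ad_u)$, this identity becomes
\[
\al_g\circ h_{j_1}^{-1}|_{s(g)}=h_{j_0}^{-1}|_{r(g)}\circ Ad_{\nu_{(j_0,j_1)}(g)}.
\]
Solving for $\nu_{(j_0,j_1)}(g)$ and comparing with~\eqref{eq:def-of-mu-A} gives $\nu_{(j_0,j_1)}(g)=h_{j_0}|_{r(g)}\circ \al_g\circ h_{j_1}^{-1}|_{s(g)}=\mu_{(j_0,j_1)}(g)$.

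Finally, I would observe that since the sections $\sigma_j$ are Real, the cocycle $\nu$ automatically lies in $ZR^1(\cU_1,\wPU(\hat{\cH}))$ and that its class in $\check{H}R^1(\cG_\bullet,\wPU(\hat{\cH}))$ is independent of the choice of trivializing sections, so the equality $\nu=\mu^\cA$ of representative cocycles descends to the equality of cohomology classes $[P]=\mu^\cA$. The only subtle point—and the one I would verify most carefully—is the bijection between isomorphism classes of generalized Real homomorphisms to $\wPU(\hat{\cH})$ and Real \v{C}ech classes alluded to in~\cite[Remark 2.50]{Mou1}, which is exactly what permits reading off $\nu$ from the sections $\sigma_j$; everything else is bookkeeping.
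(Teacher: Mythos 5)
Your proposal is correct and follows essentially the same route as the paper: the paper's proof likewise takes the local sections $\mathsf{s}_j(x)=h_{j|_x}^{-1}$ arising from the Real trivialization~\eqref{eq:diagram-h_i-loc-triv-A} and reads off the cocycle from the identity $g\cdot\mathsf{s}_{j_1}(s(g))=\mathsf{s}_{j_0}(r(g))\cdot\mu_{(j_0,j_1)}(g)$. Your version merely spells out the continuity, Reality, and well-definedness checks that the paper leaves implicit.
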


\begin{proof}
The Real local trivialisation $(U_j,h_j)$ of~\eqref{eq:diagram-h_i-loc-triv-A} gives rise to a Real family of local sections $\mathsf{s}_j:U_j\To P$ such that $\mathsf{s}_j(x)=h_{j|_x}^{-1}\in \Isom^{(0)}(\wK_0,\cA_x)$. For $g\in U^1_{(j_0,j_1)}$, we have $g\cdot h^{-1}_{j_1|_{s(g)}}=\al_g\circ h^{-1}_{j_1|_{s(g)}}$, hence $g\cdot \mathsf{s}_{j_1}(s(g))=\mathsf{s}_{j_0}(r(g))\cdot \mu_{(j_0,j_1)}$, which proves the result.	
\end{proof}

\begin{pro}
Every Rg D-D bundle $(\cA,\al)$ of type $0$ over $\grpd$ admits a generalised classifying morphism $\mathbb{P}(\cA)$. Furthermore, the assignment $$[P]\mto \mathbb{A}([P]):=P\times_{\wPU(\hat{\cH})}\wK_0$$ induces a well defined surjective homomorphism of abelian monoids 
\begin{eqnarray}~\label{eq:map-A-classifying}
\mathbb{A}: \Hom_{\RG}(\cG_\bullet,\wPU(\hat{\cH})) \To & \wRBr_0(\cG).
\end{eqnarray}
\end{pro}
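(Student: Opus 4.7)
The first assertion will be handled by exhibiting $\mathbb{P}(\cA) := P$, with $P$ the Real $\cG$-$\wPU(\hat{\cH})$-biprincipal bundle constructed in~\eqref{eq:def-of-P}, and producing an explicit isomorphism $\wK_0^P \stackrel{\cong}{\To} \cA$ of Rg D-D bundles. The natural candidate is the map
\[
\Psi: P \times_{\wPU(\hat{\cH})} \wK_0 \To \cA, \quad [(x,\vp),T] \mto \vp(T)\in \cA_x.
\]
Well-definedness on orbits is immediate: $[(x,\vp\cdot [u]), Ad_{u^{-1}}(T)]\mto (\vp\circ Ad_u)(Ad_{u^{-1}}(T))=\vp(T)$. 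Fibrewise $\Psi_x$ is an isomorphism of graded complex $\cstar$-algebras since $\vp\in \Isom^{(0)}(\wK_0,\cA_x)$. Continuity follows from the fact that the topology on $P$ is defined precisely through the embedding into $\cG\times \wPU(\hat{\cH})$ coming from the local trivializations $(U_j,h_j)$; concretely, via the local sections $\mathsf{s}_j$ of Proposition~\ref{pro:class-mu^A} one checks that $\Psi$ restricts to the trivial isomorphism $U_j\times \wK_0 \cong \cA_{|U_j}$. Finally, Reality $\Psi(\overline{[(x,\vp),T]})=\overline{\Psi([(x,\vp),T])}$ is a direct consequence of $\bar{\vp}(\bar{T})=\overline{\vp(T)}$, and $\cG$-equivariance follows from the very definition~\eqref{eq:G-action_on_P} of the $\cG$-action on $P$: $\Psi([g\cdot(x,\vp),T])=(\al_g\circ\vp)(T)=\al_g(\Psi([(x,\vp),T]))$.

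For the well-definedness of $\mathbb{A}$, I would argue that if $[P_1]=[P_2]$ in $\Hom_{\RG}(\cG,\wPU(\hat{\cH}))$ then any Real $\cG$-$\wPU(\hat{\cH})$-equivariant isomorphism $f:P_1\To P_2$ induces the Rg D-D bundle isomorphism $P_1\times_{\wPU(\hat{\cH})}\wK_0 \To P_2\times_{\wPU(\hat{\cH})}\wK_0$, $[\vp,T]\mto [f(\vp),T]$, so $\mathbb{A}([P_1])=\mathbb{A}([P_2])$ in $\wRBr_0(\cG)$. The homomorphism property is precisely Proposition~\ref{pro:sum-of-classifying} read backwards: the canonical identification $(P_1\otimes P_2)\times_{\wPU(\hat{\cH})}\wK_0 \cong (P_1\times_{\wPU(\hat{\cH})}\wK_0)\hat{\otimes}_X (P_2\times_{\wPU(\hat{\cH})}\wK_0)$, built fibrewise from a chosen isomorphism $\hat{\cH}\hat{\otimes}\hat{\cH}\cong \hat{\cH}$ and compatible with Real structures by construction, gives $\mathbb{A}([P_1]+[P_2])=\mathbb{A}([P_1])+\mathbb{A}([P_2])$. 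The zero of the monoid, represented by the trivial bundle $X\times \wPU(\hat{\cH})$, goes to the trivial Rg D-D bundle $X\times \wK_0$, which is indeed zero in $\wRBr_0(\cG)$.

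Surjectivity is then immediate from the first part: given any $[\cA,\al]\in \wRBr_0(\cG)$, we have $\mathbb{A}([\mathbb{P}(\cA)])=[\wK_0^P,Ad^P]=[\cA,\al]$ via $\Psi$.

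The main obstacle, in my view, is not the algebraic/categorical bookkeeping — it is the topological point buried in the first paragraph: verifying that $\Psi$ is a homeomorphism, equivalently that the topology put on $P$ via the embedding of Lemma~\ref{lem:embedding-sPxrP} is the one for which the local trivializations of $\cA$ produce continuous Real local sections $\mathsf{s}_j:U_j\To P$ and for which the $\wPU(\hat{\cH})$-action is locally trivializing. This has to be checked directly from the definition of the topology on $s^\ast P\times_\cG r^\ast P$ and the commutation with the Real structures in~\eqref{eq:diagram-h_i-loc-triv-A}; once this is in place, all other statements reduce to the functorial manipulations above.
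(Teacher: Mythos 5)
Your proposal follows essentially the same route as the paper: the same fibrewise map $[(x,\vp),T]\mto \vp(T)$ for the isomorphism $\wK_0^P\cong \cA$, the same appeal to Proposition~\ref{pro:sum-of-classifying} together with uniqueness of the classifying morphism for additivity, and the same deduction of surjectivity from existence. The paper simply asserts that the map "is clearly an isomorphism of Rg D-D bundles," whereas you correctly isolate the continuity/homeomorphism check against the topology of Lemma~\ref{lem:embedding-sPxrP} as the one point requiring actual verification — a fair and accurate reading of where the work lies.
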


\begin{proof}
Let $P:\cG\To \wPU(\hat{\cH})$ be the generalised Real homomorphism defined by~\eqref{eq:def-of-P}. Then the family of fibre-wise maps $$P\times_{\wPU(\hat{\cH})}\wK_0\ni ((x,\vp),T)\mto \vp(T) \in \cA_x$$ is an isomorphism of Rg D-D bundles over $\cG$. Therefore, $P:\cG\To \wPU(\hat{\cH})$ is a generalised classifying morphism for $(\cA,\al)$. The uniqueness of $P$ is guaranteed by Corollary~\ref{cor:uniqueness-of-P}.

The map $\mathbb{A}$ is well defined since an isomorphism of generalised Real homomorphisms $P\cong P'$ obviously induces an isomorphism between the associated Rg D-D bundles. It is a homomorphism of abelian monoids, for if $[P],[P']\in \Hom_{\RG}(\cG,\wPU(\hat{\cH}))$ then, thanks to Proposition~\ref{pro:sum-of-classifying} and the uniqueness of the generalised classifying morphism,  $P\otimes P'$ is a generalised classifying morphism for $\mathbb{A}([P])\hat{\otimes}_X\mathbb{A}([P'])$ and for $\mathbb{A}([P]+[P'])$ at the same time; so that $\mathbb{A}([P\otimes P'])\cong \wK_0^{P\otimes P'}\cong \mathbb{A}([P])\hat{\otimes}_X\mathbb{A}([P'])$, which implies $\mathbb{A}([P] +[P'])=\mathbb{A}([P])+\mathbb{A}([P'])$ in $\wRBr_0(\cG)$. The surjectivity of $\mathbb{A}$ is a consequence of the existence of the generalised classifying morphism we just proved. 	
\end{proof}

\begin{rem}
Let $X$ be a locally compact Hausdorff space. Recall that Atiyah and Segal defined in~\cite[pp.11-12]{Atiyah-Segal:Twisted_K} the monoid $\operatorname{Proj}^\pm(X)$ to be the set of infinite dimensional projective graded complex Hilbert space bundles on $X$ subjected to the operation of graded tensor products, and showed that as a set, $\operatorname{Proj}^\pm(X)\cong \check{H}^1(X,\ZZ_2)\times \check{H}^2(X,\ZZ)$.  
If  $X$ is endowed with a Real structure $\tau$, then $\Hom_{\RG}(X,\wPU(\hat{\cH}))$ is nothing but the Real analogue of $\operatorname{Proj}^\pm(X)$. We thus may expect to have a similar result as in the complex case; this will be discussed in the next sections.	
\end{rem}

%%%%%% End of section 

\section{Intermediate isomorphism theorem}

Consider once again the abelian monoids $\Hom_{\RG}(\cG,\wU^0(\hat{\cH}))$ and $\Hom_{\RG}(\cG,\wPU(\hat{\cH}))$. There is a canonical monomorphism 
\[pr:\Hom_{\RG}(\cG,\U^0(\hat{\cH}))\To \Hom_{\RG}(\cG,\wPU(\hat{\cH}))\] 
induced by the canonical Real projection $\wU^0(\hat{\cH})\To \wPU(\hat{\cH})$; \emph{i.e.}, if $\mathbf{U}:\cG\To \U^0(\hat{\cH})$ is a generalised Real homomorphism, then we obtain a generalised Real homomorphism 
\[pr\circ \mathbf{U}:=\mathbf{U}\times_{\U^0(\hat{\cH})}\wPU(\hat{\cH}):\cG\To \wPU(\hat{\cH}).\]

\begin{df}~\label{df:stable-P}
An element $[P]\in \Hom_{\RG}(\cG,\wPU(\hat{\cH}))$ is called \emph{trivial} if $[P]=[pr\circ \mathbf{U}]$ for some $\mathbf{U}:\cG\To \U^0(\hat{\cH})$. 

Define an equivalence relation in $\Hom_{\RG}(\cG,\wPU(\hat{\cH}))$ by saying that $P_1,P_2:\cG\To \wPU(\hat{\cH})$ are \emph{stably isomorphic} if there exists a trivial generalised Real homomorphism $Q$ such that $$[P_1]+ [Q]= [P_2]+[Q].$$ In that case we write $[P_1]
\sim_{st} \ [ P_2]$. We define $$\Hom_{\RG}(\cG,\wPU(\hat{\cH}))_{st}:= \Hom_{\RG}(\cG,\wPU(\hat{\cH}))/_{\sim_{st}}.$$
The class of $[P]$ with respect to "$\sim_{st}$" is denoted by $[P]_{st}$
\end{df}

\begin{lem}~\label{lem:condition-for-P-triv}
$[P]$ is trivial if and only if $P$ is the generalised classifying morphism of a Rg D-D bundle of the form $(\cK(\hat{\sH}),Ad_U)$ where $(\hat{\sH},u)$ is a Rg Hilbert $\cG$-bundle.  	
\end{lem}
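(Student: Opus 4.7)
The plan is to exhibit a direct translation between Real graded Hilbert $\cG$-bundles (with their unitary $\cG$-actions) and generalized Real homomorphisms into $\U^0(\hat{\cH})$, and then push this translation down to $\wPU(\hat{\cH})$ via the canonical projection $pr$. Each direction of the lemma then reduces to identifying an associated bundle.

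First I would treat the ``only if'' direction. Assume $[P]_{st}$ is trivial, so there is $\mathbf{U}:\cG\To \U^0(\hat{\cH})$ with $[P]=[pr\circ \mathbf{U}]$ in $\Hom_{\RG}(\cG,\wPU(\hat{\cH}))$. Viewing $\mathbf{U}$ as a Real principal $\U^0(\hat{\cH})$-bundle over $\cG$, form the associated bundle
\[
\hat{\sH}\;:=\;\mathbf{U}\times_{\U^0(\hat{\cH})}\hat{\cH},
\]
where $\U^0(\hat{\cH})$ acts on $\hat{\cH}$ in the standard way (a degree-zero Real, graded action). The same kind of verification used to build $\cA^Z$ from a Real $\cG$-action on $\cA$ in Section~2 turns $\hat{\sH}$ into an LTCR Rg Hilbert $\cG$-bundle over $X$ with a unitary Real $\cG$-action $U$. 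Because the action of $\U^0(\hat{\cH})$ on $\wK_0=\cK(\hat{\cH})$ is by $Ad$ and factors through $\wPU(\hat{\cH})$, one has the canonical identification
\[
\cK(\hat{\sH})\;=\;\mathbf{U}\times_{\U^0(\hat{\cH})}\wK_0\;=\;(pr\circ \mathbf{U})\times_{\wPU(\hat{\cH})}\wK_0\;=\;\wK_0^{\,pr\circ \mathbf{U}},
\]
and this isomorphism intertwines $Ad_U$ with $Ad^{pr\circ \mathbf{U}}$. Hence $pr\circ \mathbf{U}$ (equivalently, $P$, by Corollary~\ref{cor:uniqueness-of-P}) is a generalized classifying morphism for $(\cK(\hat{\sH}),Ad_U)$.

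For the ``if'' direction, suppose $P$ is the generalized classifying morphism of some $(\cK(\hat{\sH}),Ad_U)$ with $(\hat{\sH},U)$ a Rg Hilbert $\cG$-bundle. Repeating the construction of Section~7 with $\hat{\sH}$ in place of $\cA$ (that is, forming the Real frame bundle $\widetilde{\mathbf U}=\coprod_{x\in X}\Isom^{(0)}(\hat{\cH},\hat{\sH}_x)$, endowed with the topology inherited from $\cG\times \U^0(\hat{\cH})$ in complete analogy with the embedding of Lemma~\ref{lem:embedding-sPxrP}), one obtains a generalized Real homomorphism $\mathbf{U}:\cG\To \U^0(\hat{\cH})$ such that $\hat{\sH}\cong \mathbf{U}\times_{\U^0(\hat{\cH})}\hat{\cH}$ as Rg Hilbert $\cG$-bundles; the $\U^0(\hat{\cH})$-principality reduces to the ordinary fact that a Hilbert bundle trivialisation provides unitary frames, and the Real compatibility comes from the intertwining of $U$ with the involution of $\hat{\sH}$. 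Applying the same associated-bundle computation as above yields $(\cK(\hat{\sH}),Ad_U)\cong \wK_0^{pr\circ \mathbf U}$, so $pr\circ \mathbf U$ is also a classifying morphism for $(\cK(\hat{\sH}),Ad_U)$. By the uniqueness of classifying morphisms (Corollary~\ref{cor:uniqueness-of-P}) we conclude $[P]=[pr\circ \mathbf{U}]$, i.e.\ $[P]$ is trivial.

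The only non-routine point is the construction of $\mathbf U$ from $\hat{\sH}$ in the converse direction: one must verify that the Real frame bundle with the topology induced as in Section~7 is indeed a LTCR principal Real $\U^0(\hat{\cH})$-bundle, and that $\U^0(\hat{\cH})$-equivariance is compatible with both the $\cG$-action and the Real structure on $\hat{\sH}$. Once this is done, the rest is a formal associated-bundle manipulation together with the uniqueness statement of Corollary~\ref{cor:uniqueness-of-P}.
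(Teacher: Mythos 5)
Your proof is correct and follows essentially the same route as the paper: both directions come down to the dictionary between Rg Hilbert $\cG$-bundles and generalized Real homomorphisms $\cG\To \U^0(\hat{\cH})$, pushed down through $pr$ and combined with Corollary~\ref{cor:uniqueness-of-P}. The only difference is cosmetic — you phrase the correspondence via associated and frame bundles, whereas the paper packages the same data as Real \v{C}ech cocycles $[\om]$ and $[c]$ and glues $\hat{\sH}$ from local trivializations.
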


\begin{proof}
Assume $P\cong pr\circ \mathbf{U}$ is trivial. Let $[\om]\in \check{H}R^1(X,\wU^0(\hat{\cH}))$ be the class of the Real $\U^0(\hat{\cH})$-principal bundle $\mathbf{U}\To X$, and let $[c]\in \check{H}R^1(\cG_\bullet,\U^0(\hat{\cH}))$ be the class of $\mathbf{U}$ as Real $\U^0(\hat{\cH})$-principal $\cG$-bundle. Suppose, without loss of generality, that $\cU=(U_j)_{j\in J}$ is a Real open cover of $X$ on which $\om$ is represented, and such that $c$ is represented on the Real open cover $\cU_1=(U^1_{(j_0,j_1)})_{j_0,j_1\in J}$. Then we get a Rg Hilbert $\cG$-bundle $(\hat{\sH},u)$ by setting: 
\begin{equation}
	\hat{\sH}:= \coprod_{j\in J} U_j\times \hat{\cH}_{/{\sim}}	
	\end{equation}
where $U_i\times\hat{\cH}\ni (x,\xi) \sim (x,\om_{ij}(x)\xi)\in U_i\times \hat{\cH}$; if $[x,\xi]_j$ denotes the class in $\hat{\sH}$ of $(x,\xi)\in U_j\times \hat{\cH}$, we define the Real structure $[x,\xi]_j\mto [\bar{x},\bar{\xi}]_{\bar{j}}$ (where as usual the "bar" in $\hat{\cH}$ is the Real structure $J_{0,\RR}$), and the projection $\pi:\hat{\sH}\To X$ by $\pi([x,\xi]_j)=x$; the Real $\cG$-action $U$ is 
\begin{equation}
U_g([s(g),\xi]_{j_1}):=[r(g),c_{(j_0,j_1)}(g)\xi]_{j_0}.
\end{equation}
By construction, we see that $pr\circ \mathbf{U}=\mathbf{U}\times_{\U^0(\hat{\cH})}\wK_0$ is a generalised classifying morphism for $(\cK(\hat{\sH}),Ad_U)$ and that the class $\mu^{\cK(\hat{\sH})}$ (recall~\eqref{eq:def-of-mu-A}) in $\check{H}R^1(\cG,\wPU(\hat{\cH}))$ is $Ad_c$, where $(Ad_c)_{(j_0,j_1)}:=Ad_{c_{(j_0,j_1)}}$. 

Conversely, a Rg Hilbert $\cG$-bundle $(\hat{\sH},U)$ gives rise to a class $[\om]\in\check{H}R^1(\cG_\bullet,\U^0(\hat{\cH}))$, hence to a generalised Real homomorphism $\mathbf{U}:\cG\To \U^0(\hat{\cH})$. It follows from Proposition~\ref{pro:class-mu^A} that $pr\circ \mathbf{U}:\cG\To \wPU(\hat{\cH})$ is a generalised classifying morphism for $(\cK(\hat{\sH}),Ad_U)$; therefore $P\cong pr\circ \mathbf{U}$ by Corollary~\ref{cor:uniqueness-of-P}.		
\end{proof}

\begin{lem}
$\Hom_{\RG}(\cG,\wPU(\hat{\cH}))_{st}$ is an abelian group with respect to the sum; the inverse of $[P]_{st}$ is $[P^\ast]_{st}$ where $P^\ast$ is the generalised classifying morphism for the conjugate bundle of $\mathbb{A}([P])$.
\end{lem}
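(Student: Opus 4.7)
Commutativity and associativity of $+$ are inherited from the abelian monoid structure on $\Hom_{\RG}(\cG,\wPU(\hat{\cH}))$, so I need only check three things: (i) that $+$ descends to the quotient by $\sim_{st}$; (ii) that an identity element exists; (iii) that $[P^\ast]_{st}$ is the inverse of $[P]_{st}$.

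For (i), if $[P_i]+[Q_i]=[P_i']+[Q_i]$ with $Q_i$ trivial ($i=1,2$), then $Q_1\otimes Q_2$ is again trivial because the image of $pr$ is closed under the tensor product; explicitly, $(pr\circ\mathbf{U}_1)\otimes(pr\circ\mathbf{U}_2)\cong pr\circ(\mathbf{U}_1\otimes\mathbf{U}_2)$. Then $[P_1]+[P_2]+[Q_1\otimes Q_2]=[P_1']+[P_2']+[Q_1\otimes Q_2]$, so the operation descends to the quotient. For (ii), the class of any trivial homomorphism serves as an identity; a standard Kasparov-style absorption argument for Rg Hilbert $\cG$-bundles, exploiting the isomorphism $\hat{\cH}\hat{\otimes}\hat{\cH}\cong\hat{\cH}$ of Rg Hilbert spaces, shows that all trivial elements collapse to the same neutral class in $\Hom_{\RG}(\cG,\wPU(\hat{\cH}))_{st}$.

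The heart of the argument is (iii). I take $P^\ast$ to be a generalized classifying morphism for the conjugate Rg D-D bundle $\overline{\mathbb{A}([P])}$, whose existence was established in the previous section. By Proposition~\ref{pro:sum-of-classifying}, $P\otimes P^\ast$ is then a generalized classifying morphism for $\mathbb{A}([P])\hat{\otimes}_X\overline{\mathbb{A}([P])}$. Since $\cA+\overline{\cA}=0$ in $\wRBr_0(\cG)$ for any $(\cA,\al)\in \fwRBr(\cG)$ of type $0$, Corollary~\ref{cor:characterization_Morita_BrR} provides a Rg Hilbert $\cG$-bundle $(\hat{\mathscr{H}},U)$ together with an isomorphism
\[
\mathbb{A}([P])\hat{\otimes}_X\overline{\mathbb{A}([P])}\cong (\cK(\hat{\mathscr{H}}),Ad_U)
\]
in $\fwRBr(\cG)$. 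Combining the uniqueness of classifying morphisms (Corollary~\ref{cor:uniqueness-of-P}) with Lemma~\ref{lem:condition-for-P-triv}, I conclude that $P\otimes P^\ast$ is itself trivial; equivalently, $[P]_{st}+[P^\ast]_{st}$ equals the identity of the quotient.

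The main obstacle will be (ii): establishing that all trivial classes collapse to a single identity element. This demands a Kasparov-stabilization-type absorption argument for Rg Hilbert $\cG$-bundles performed compatibly with both the Real involution and the $\cG$-equivariance, ultimately resting on the isomorphism $\hat{\cH}\hat{\otimes}\hat{\cH}\cong\hat{\cH}$. Once that is in hand, (iii) follows cleanly by combining the existence and uniqueness of generalized classifying morphisms with the characterization of Morita equivalent Rg D-D bundles from Corollary~\ref{cor:characterization_Morita_BrR}.
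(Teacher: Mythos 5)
Your argument for the inverse is exactly the paper's: $P\otimes P^\ast$ classifies $\mathbb{A}([P])\hat{\otimes}_X\overline{\mathbb{A}([P])}$, Corollary~\ref{cor:characterization_Morita_BrR} identifies this with some $(\cK(\hat{\mathscr{H}}),Ad_U)$, and Lemma~\ref{lem:condition-for-P-triv} then makes $P\otimes P^\ast$ trivial. The paper declares that only the inverse needs checking and omits your points (i) and (ii) entirely, so your additional care about descent of the sum and the collapse of trivial classes to a single identity is extra diligence rather than a divergence in method.
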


\begin{proof}
We only need to verify the existence of the inverse. $P\otimes P^\ast$ is a generalised classifying morphism for the Rg D-D bundle $\mathbb{A}([P])\hat{\otimes}_X\overline{\mathbb{A}([P])}$. From Corollary~\ref{cor:characterization_Morita_BrR}, $P\otimes P^\ast$ is then a generalised classifying morphism for $(\cK(\hat{\sH}),Ad_U)$ where $(\hat{\sH},U)$ is a Rg Hilbert $\cG$-bundle. Therefore, $[P\otimes P^\ast]$ is trivial, by Lemma~\ref{lem:condition-for-P-triv}.  	
\end{proof}

The main result of this section is the following

\begin{thm}~\label{thm:first-isom-BrR}
Let $\grpd$ be a Real groupoid. Then $\wRBr_0(\cG) \cong \Hom_{\RG}(\cG,\wPU(\hat{\cH}))_{st}$.	
\end{thm}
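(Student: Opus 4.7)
The plan is to promote the surjective monoid homomorphism $\mathbb{A}:\Hom_{\RG}(\cG,\wPU(\hat{\cH}))\to \wRBr_0(\cG)$ of~\eqref{eq:map-A-classifying} to a well-defined map $\overline{\mathbb{A}}$ on the quotient $\Hom_{\RG}(\cG,\wPU(\hat{\cH}))_{st}$, and then verify it is a bijective group homomorphism. The entire argument rests on the correspondence between \emph{trivial} generalized classifying morphisms and Rg D-D bundles of the form $(\cK(\hat{\sH}),Ad_U)$ furnished by Lemma~\ref{lem:condition-for-P-triv}, together with the Morita-equivalence criterion of Corollary~\ref{cor:characterization_Morita_BrR}.

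First I would verify that $\mathbb{A}$ descends to a map $\overline{\mathbb{A}}:\Hom_{\RG}(\cG,\wPU(\hat{\cH}))_{st}\to \wRBr_0(\cG)$. Suppose $[P_1]\sim_{st}[P_2]$, so $[P_1]+[Q]=[P_2]+[Q]$ for some trivial $Q$. By Lemma~\ref{lem:condition-for-P-triv}, $\mathbb{A}([Q])\cong (\cK(\hat{\sH}),Ad_U)$ for some Rg Hilbert $\cG$-bundle $(\hat{\sH},U)$; by the lemma characterizing the zero of $\wRBr(\cG)$ (just before Corollary~\ref{cor:characterization_Morita_BrR}), $\mathbb{A}([Q])=0$ in $\wRBr_0(\cG)$. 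Applying $\mathbb{A}$ to both sides and cancelling (since we are now in the abelian group $\wRBr_0(\cG)$) yields $\mathbb{A}([P_1])=\mathbb{A}([P_2])$. This simultaneously shows well-definedness and, since $\mathbb{A}$ is already a monoid homomorphism, that $\overline{\mathbb{A}}$ is a group homomorphism. Surjectivity is inherited directly from the surjectivity of $\mathbb{A}$.

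The substantive step is injectivity. Assume $\overline{\mathbb{A}}([P_1]_{st})=\overline{\mathbb{A}}([P_2]_{st})$, i.e.\ $\mathbb{A}([P_1])=\mathbb{A}([P_2])$ in $\wRBr_0(\cG)$. Then
\[
\mathbb{A}([P_1])\hat{\otimes}_X\overline{\mathbb{A}([P_2])}=0 \quad\text{in}\quad \wRBr_0(\cG).
\]
By Proposition~\ref{pro:sum-of-classifying} and the construction of $P^\ast$, the tensor product $P_1\otimes P_2^\ast$ is a generalized classifying morphism for $\mathbb{A}([P_1])\hat{\otimes}_X\overline{\mathbb{A}([P_2])}$. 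Invoking Corollary~\ref{cor:characterization_Morita_BrR}, there exists a Rg Hilbert $\cG$-bundle $(\hat{\sH},U)$ with $\mathbb{A}([P_1\otimes P_2^\ast])\cong (\cK(\hat{\sH}),Ad_U)$ as Rg D-D bundles. Lemma~\ref{lem:condition-for-P-triv} then forces $[P_1\otimes P_2^\ast]$ to be trivial, so $[P_1\otimes P_2^\ast]_{st}=0$. Using that $[P_2^\ast]_{st}$ is the inverse of $[P_2]_{st}$ in the group $\Hom_{\RG}(\cG,\wPU(\hat{\cH}))_{st}$ (as established in the preceding lemma), we conclude
\[
[P_1]_{st} \;=\; [P_1]_{st}+[P_2^\ast]_{st}+[P_2]_{st} \;=\; 0+[P_2]_{st} \;=\; [P_2]_{st},
\]
which finishes the injectivity argument.

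The hardest conceptual point is the symmetric use of Lemma~\ref{lem:condition-for-P-triv} in both directions: it is what makes the trivial subclass exactly what one needs to quotient by in order to match Morita equivalence of Rg D-D bundles of type $0$. Once one has fixed the principal $\wPU(\hat{\cH})$-bundle perspective of Section~\ref{par:classifying-morph}, the verifications are essentially formal manipulations in the two abelian monoids $\Hom_{\RG}(\cG,\wPU(\hat{\cH}))$ and $\fwRBr_0(\cG)$; no new analytic input is required beyond the results already assembled.
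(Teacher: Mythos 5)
Your proposal is correct and follows essentially the same route as the paper: the paper packages the argument as the exact sequence of monoids $0 \to \Hom_{\RG}(\cG,\U^0(\hat{\cH})) \to \Hom_{\RG}(\cG,\wPU(\hat{\cH})) \to \wRBr_0(\cG)\to 0$ and then identifies the quotient by $\range(pr)$ with the stable quotient, whereas you unpack the same facts (Lemma~\ref{lem:condition-for-P-triv}, Corollary~\ref{cor:characterization_Morita_BrR}, and the inverse $[P^\ast]_{st}$) into a direct verification that $\mathbb{A}$ descends to a bijective group homomorphism. The content and the key lemmas invoked are identical.
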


The proof is based on the following lemma (compare with~\cite[p.3]{Tu:Twisted_Poincare}).

\begin{lem}
The sequence of abelian monoids
\begin{equation}~\label{eq:exact-sequence-BrR}
	0 \To \Hom_{\RG}(\cG,\U^0(\hat{\cH})) \stackrel{pr}{\To} \Hom_{\RG}(\cG,\wPU(\hat{\cH})) \stackrel{\mathbb{A}}{\To} \wRBr_0(\cG) \To 0	
	\end{equation}
is exact.		
\end{lem}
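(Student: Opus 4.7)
The plan is to verify each of the three nontrivial assertions of exactness in turn, and to rely crucially on Lemma~\ref{lem:condition-for-P-triv} together with the characterization of zero elements in $\wRBr_0(\cG)$ (the unnamed lemma preceding Corollary~\ref{cor:characterization_Morita_BrR}). The injectivity of $pr$ indicated by the leading ``$0$'' was already asserted at the beginning of the section and rests on the bijection between $\Hom_{\RG}(\cG,\U^0(\hat{\cH}))$ and $\check{H}R^1(\cG_\bullet,\U^0(\hat{\cH}))$; I would add a brief remark noting that any Real $\wPU(\hat{\cH})$-equivariant isomorphism of bundles of the form $pr(\mathbf{U}_i)=\mathbf{U}_i\times_{\U^0(\hat{\cH})}\wPU(\hat{\cH})$ descends from a Real $\U^0(\hat{\cH})$-equivariant isomorphism of the $\mathbf{U}_i$.

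Surjectivity of $\mathbb{A}$ is essentially tautological from Section~\ref{par:classifying-morph}: given $[\cA,\al]\in \wRBr_0(\cG)$, the generalized Real homomorphism $P$ associated to $\cA$ by formula~\eqref{eq:def-of-P} satisfies $\wK_0^P\cong \cA$, so $\mathbb{A}([P])=[\cA,\al]$. For the inclusion $\mathrm{Im}(pr)\subseteq \ker(\mathbb{A})$, take any $\mathbf{U}:\cG\To\U^0(\hat{\cH})$ and let $(\hat{\sH},U)$ be the Rg Hilbert $\cG$-bundle constructed in the proof of Lemma~\ref{lem:condition-for-P-triv}. That lemma identifies $pr(\mathbf{U})$ as a generalized classifying morphism for $(\cK(\hat{\sH}),Ad_U)$; the trivialization lemma (characterizing $0$ in $\wRBr_0(\cG)$) then gives $\mathbb{A}(pr(\mathbf{U}))=[\cK(\hat{\sH}),Ad_U]=0$.

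The main content is the reverse inclusion $\ker(\mathbb{A})\subseteq \mathrm{Im}(pr)$. Suppose $\mathbb{A}([P])=0$ in $\wRBr_0(\cG)$. By the same trivialization lemma, there is a Rg Hilbert $\cG$-bundle $(\hat{\sH},U)$ with $(\wK_0^P,Ad^P)\cong (\cK(\hat{\sH}),Ad_U)$ as Rg D-D bundles. Let $\mathbf{U}:\cG\To\U^0(\hat{\cH})$ be the generalized Real homomorphism classifying $(\hat{\sH},U)$; Lemma~\ref{lem:condition-for-P-triv} asserts that $pr(\mathbf{U})$ is also a generalized classifying morphism for $(\cK(\hat{\sH}),Ad_U)$. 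Uniqueness of classifying morphisms (Corollary~\ref{cor:uniqueness-of-P}) then forces $P\cong pr(\mathbf{U})$, whence $[P]=pr([\mathbf{U}])\in \mathrm{Im}(pr)$. The main point to watch is simply that the Rg Hilbert $\cG$-bundle supplied by the trivialization lemma is genuinely of the right ``size'' to be classified by a morphism into $\U^0(\hat{\cH})$ (as opposed to some stabilized version); but since the isomorphism $(\wK_0^P,Ad^P)\cong(\cK(\hat{\sH}),Ad_U)$ already places $\hat{\sH}$ in the appropriate Kasparov-stable setting, this is automatic and no additional absorption argument is needed.
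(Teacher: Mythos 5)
Your proposal is correct and follows essentially the same route as the paper: both reduce the statement to $\ker\mathbb{A}=\range(pr)$ and settle the two inclusions by combining Lemma~\ref{lem:condition-for-P-triv} with the characterization of the zero class in $\wRBr_0(\cG)$ and the uniqueness of generalized classifying morphisms (Corollary~\ref{cor:uniqueness-of-P}).
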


\begin{proof}
We have already seen that $pr$ was a monomorphism of abelian monoids, and $\mathbb{A}$ was an epimorphism of abelian monoids. It then remains to show that $\ker \mathbb{A}=\range(pr)$.

$ \range(pr) \subset \ker \mathbb{A}$: indeed, from Lemma~\ref{lem:condition-for-P-triv} and Corollary~\ref{cor:uniqueness-of-P}, for all $[\mathbf{U}]\in\Hom_{\RG}(\cG,\U^0(\hat{\cH}))$, the Rg D-D bundle $\mathbb{A}([pr\circ \mathbf{U}])$ is of the form $(\cK(\hat{\sH}),Ad_U)$, hence $\mathbb{A}([pr\circ \mathbf{U}])=0$ in $\wRBr_0(\cG)$ by Corollary~\ref{cor:characterization_Morita_BrR}.  

$\ker \mathbb{A}\subset \range(pr)$: if $\mathbb{A}([P])=0$ then $P$ is the generalised classifying morphism for some $(\cK(\hat{\sH}),Ad_U)$. So, by Lemma~\ref{lem:condition-for-P-triv}, $[P]$ is trivial; in other words, $[P]=[pr\circ \mathbf{U}]\in \range(pr)$.
\end{proof}

\begin{proof}[Proof of Theorem~\ref{thm:first-isom-BrR}]
First of all, observe that there is a canonical isomorphism of abelian monoids $$\frac{\Hom_{\RG}(\cG,\wPU(\hat{\cH}))}{\range(pr)} \cong\Hom_{\RG}(\cG,\wPU(\hat{\cH}))_{st},$$   the quotient monoid is then an abelian group. Moreover, from the exact sequence~\eqref{eq:exact-sequence-BrR} we deduce an isomorphism of abelian monoids $$\mathscr{P}:\wRBr_0(\cG)\stackrel{\cong}{\To} \Hom_{\RG}(\cG,\wPU(\hat{\cH}))_{st},$$
such that $\mathscr{P}(\cA)$ is the class in $\Hom_{\RG}(\cG,\wPU(\hat{\cH}))_{st}$ of the generalised classifying morphism $P$ of $(\cA,\al)$. Furthermore, by definition of the inverse in $\Hom_{\RG}(\cG,\wPU(\hat{\cH}))_{st}$, we see that this isomorphism respects the inversion; it therefore is an isomorphism of abelian groups, which completes the proof.	
\end{proof}

%%%% End of section 

\section{Example: case of a Real group}

Here we apply the observations of the previous sections to compute the Rg Brauer group of a locally compact Real group $\xymatrix{G\dar[r] & \cdot}$.

If $\cG=\xymatrix{G\dar[r]&\cdot}$ is a Real group, then for any Real group $S$, $\Hom_{\RG}(G,S)$ identifies with the set $\Hom(G,S)_R$ of continuous Real group homomorphisms from $G$ to $S$. In particular 
\[\Hom_{\RG}(G,\wPU(\hat{\cH}))_{st}\cong \frac{\Hom(G,\wPU(\hat{\cH}))_R}{\Hom(G,\U^0(\hat{\cH}))_R}.\]

For instance, if $G$ is equipped with the trivial Real structure, then 
\[\Hom_{\RG}(G,\wPU(\hat{\cH}))_{st}\cong \frac{\Hom(G,\wPU(\hat{\cH}_\RR))}{\Hom(G,\U^0(\hat{\cH}_\RR))}.\]

Moreover, a Rg D-D bundle over $\xymatrix{G\dar[r]&\cdot}$ is obviously of constant type since it is given by a Real bundle over the point together with a Real action of $G$; so $\wRBr(G)=\wRBr_\ast(G)$. It is  convenient to write $\wRBr_G(\ast)$ instead of $\wRBr(G)$ since it is exactly the Rg Brauer group of the point with the trivial Real $G$-action. Similarly, we write $\wBRO_G(\ast)$ and $\wBr_G(\ast)$.

Now, applying Proposition~\ref{pro:simplification-BrR} and Theorem~\ref{thm:first-isom-BrR} to $G$, we get 

\begin{pro}
 Let $G$ be a locally compact Real group. Then
 \begin{equation*}
 \wRBr_G(\ast) \cong \ZZ_8\oplus \frac{\Hom(G,\wPU(\hat{\cH}))_R}{\Hom(G,\U^0(\hat{\cH}))_R}, \quad 
 \wBRO_G(\ast)  \cong  \ZZ_8\oplus \frac{\Hom(G,\wPU(\hat{\cH}_\RR))}{\Hom(G,\U^0(\hat{\cH}_\RR))}.
 \end{equation*}	
\end{pro}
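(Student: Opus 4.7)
The plan is to derive both isomorphisms as direct specializations of the general structural results of the paper to the case of a Real groupoid $\xymatrix{G\dar[r]&\cdot}$ with single-point unit space.

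For the first formula, I would begin by noting two automatic simplifications. Since $\Gpdo=\{\cdot\}$ is a point, every Rg D-D bundle over $G$ has constant type, so $\wRBr(G)=\wRBr_\ast(G)$; and the Real $0$-cochains of $G_\bullet$ with coefficients in $\Inv\wfK$ reduce to elements of $\Inv\wfK$ itself (the $G$-invariance condition is vacuous over a point and the Real structure on $\Inv\wfK$ is trivial), so $\check{H}R^0(G_\bullet,\ZZ_8)\cong \ZZ_8$. Applying the split-exact sequence of Proposition~\ref{pro:simplification-BrR} yields
\[
\wRBr_G(\ast)\cong \ZZ_8\oplus \wRBr_0(G).
\]
Theorem~\ref{thm:first-isom-BrR} then identifies $\wRBr_0(G)\cong \Hom_{\RG}(G,\wPU(\hat{\cH}))_{st}$.

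Next I would simplify the stable-Hom term. Any Real principal $\wPU(\hat{\cH})$-bundle over a point is trivial, so every generalized Real homomorphism $G\to \wPU(\hat{\cH})$ admits a Real trivialization, under which the commuting left Real $G$-action is encoded by a continuous Real group homomorphism. This gives $\Hom_{\RG}(G,S)\cong \Hom(G,S)_R$ for any Real group $S$, applied to both $S=\wPU(\hat{\cH})$ and $S=\U^0(\hat{\cH})$. Recalling from the proof of Theorem~\ref{thm:first-isom-BrR} the identification $\Hom_{\RG}(\cG,\wPU(\hat{\cH}))_{st}\cong \Hom_{\RG}(\cG,\wPU(\hat{\cH}))/\range(pr)$ and specializing gives
\[
\wRBr_0(G)\cong \frac{\Hom(G,\wPU(\hat{\cH}))_R}{\Hom(G,\U^0(\hat{\cH}))_R},
\]
which combined with the $\ZZ_8$ summand completes the first formula.

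For the second formula, I would invoke Theorem~\ref{thm:Br_vs_BrR_vs_BrO}(2), valid when the Real structure of $G$ is trivial, to obtain $\wBRO_G(\ast)\cong \wRBr_G(\ast)$. Under this triviality assumption, a Real homomorphism $G\to \wPU(\hat{\cH})$ is precisely a homomorphism landing in the fixed-point set of the involution $Ad_{J_{0,\RR}}$ on $\wPU(\hat{\cH})$, and this fixed-point set identifies naturally with $\wPU(\hat{\cH}_\RR)$; the analogous identification holds for $\U^0(\hat{\cH})$. Substituting gives the stated $\wBRO_G(\ast)$ expression.

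The main obstacle is the identification $\Hom_{\RG}(G,S)\cong \Hom(G,S)_R$. This requires unpacking the definition of generalized Real morphisms between Real groupoids whose bases are points, using triviality of Real principal $S$-bundles over a point to reduce any such morphism to a strict Real group homomorphism, and then verifying that the tensor-product monoid structure on $\Hom_{\RG}(G,S)$ matches the one on $\Hom(G,S)_R$ under the identification $\hat{\cH}\hat{\otimes}\hat{\cH}\cong \hat{\cH}$ fixed earlier. Once this is in place, the remaining steps are purely formal substitutions into the already-proved isomorphisms.
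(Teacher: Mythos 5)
Your argument follows the paper's own proof essentially verbatim: constancy of type over a point, the splitting of Proposition~\ref{pro:simplification-BrR} producing the $\ZZ_8$ summand, Theorem~\ref{thm:first-isom-BrR} identifying $\wRBr_0(G)$ with $\Hom_{\RG}(G,\wPU(\hat{\cH}))_{st}$, the identification $\Hom_{\RG}(G,S)\cong\Hom(G,S)_R$, and Theorem~\ref{thm:Br_vs_BrR_vs_BrO}(2) for the orthogonal case. The only difference is that you sketch a justification (triviality of Real principal bundles over a point, plus compatibility of the monoid structures) for the step $\Hom_{\RG}(G,S)\cong\Hom(G,S)_R$, which the paper merely asserts without proof.
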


%%%% end of section 

\section{The main isomorphisms}

The purpose of this section is to prove the following theorem.

\begin{thm}~\label{thm:main-RBr}
Let $\grpd$ be a Real groupoid. Then
\begin{eqnarray*}
 		\wRBr(\cG) &\cong &\check{H}R^0(\cG_\bullet,\Inv\wfK)\oplus \left(\check{H}R^1(\cG_\bullet,\ZZ_2)\ltimes \check{H}R^2(\cG_\bullet,\uc)\right); \\
 		\wRBr_\ast(\cG) &\cong & \check{H}R^0(\cG_\bullet,\ZZ_8)\oplus \left(\check{H}R^1(\cG_\bullet,\ZZ_2)\ltimes \check{H}R^2(\cG_\bullet,\uc)\right). 		
 	\end{eqnarray*} 	
\end{thm}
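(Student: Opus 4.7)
The plan is to reduce the two asserted isomorphisms to a single cohomological description of $\wRBr_0(\cG)$. First I would invoke Proposition~\ref{pro:simplification-BrR} to split off the factor $\check{H}R^0(\cG_\bullet, \Inv\wfK)$ (resp.\ $\check{H}R^0(\cG_\bullet, \ZZ_8)$) from $\wRBr(\cG)$ (resp.\ $\wRBr_\ast(\cG)$), reducing both statements to proving
\[
\wRBr_0(\cG) \cong \check{H}R^1(\cG_\bullet, \ZZ_2) \ltimes \check{H}R^2(\cG_\bullet, \uc).
\]
Next, I would apply Theorem~\ref{thm:first-isom-BrR} to identify $\wRBr_0(\cG)$ with $\Hom_{\RG}(\cG, \wPU(\hat{\cH}))_{st}$. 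Since the latter is by definition the quotient of $\check{H}R^1(\cG_\bullet, \wPU(\hat{\cH}))$ by the image of $\check{H}R^1(\cG_\bullet, \U^0(\hat{\cH}))$, the problem becomes an entirely cohomological one concerning the Real topological group $\wPU(\hat{\cH})$.

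The heart of the argument is then to analyse the Real extension
\[
1 \to \U^0(\hat{\cH})/\uc \to \wPU(\hat{\cH}) \stackrel{\partial}{\to} \ZZ_2 \to 1,
\]
where $\partial$ records the degree of a class of graded unitary, together with the central Real extension
\[
1 \to \uc \to \U^0(\hat{\cH}) \to \U^0(\hat{\cH})/\uc \to 1.
\]
Post-composing a generalized Real morphism with $\partial$ yields a homomorphism
$\partial_\ast \colon \Hom_{\RG}(\cG, \wPU(\hat{\cH}))_{st} \to \check{H}R^1(\cG_\bullet, \ZZ_2)$, which is well defined on the stable quotient because $\U^0(\hat{\cH})$ sits inside $\ker \partial$. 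The kernel of $\partial_\ast$ consists of classes of generalized Real morphisms lifting to $\U^0(\hat{\cH})/\uc$, and a Real analogue of Kuiper's theorem -- asserting that $\U^0(\hat{\cH})$ is Real-contractible in the compact-open topology -- identifies, via the connecting map of the central extension,
\[
\check{H}R^1(\cG_\bullet, \U^0(\hat{\cH})/\uc) \cong \check{H}R^2(\cG_\bullet, \uc).
\]

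To split $\partial_\ast$ I would fix a Real odd self-adjoint unitary $u_0 \in \wU(\hat{\cH})$, for example the flip associated with a preferred identification $\hat{\cH} \cong \hat{\cH}^0 \oplus \hat{\cH}^0$, so that $[u_0]$ defines a Real section $\ZZ_2 \to \wPU(\hat{\cH})$ of $\partial$. Post-composing a Real $\ZZ_2$-valued cocycle with this section yields a set-theoretic section of $\partial_\ast$, hence a bijection
\[
\Hom_{\RG}(\cG, \wPU(\hat{\cH}))_{st} \;\cong\; \check{H}R^1(\cG_\bullet, \ZZ_2) \times \check{H}R^2(\cG_\bullet, \uc).
\]
Promoting this bijection to an isomorphism of groups with the claimed semidirect structure amounts to computing the graded tensor product of two representative morphisms $(w_i, \alpha_i)$: because $u_0$ fails to strictly commute with even unitaries on $\hat{\cH} \hat{\otimes} \hat{\cH}$, the resulting cocycle picks up a Bockstein-type cup-product correction $w_1 \cup w_2 \in \check{H}R^2(\cG_\bullet, \ZZ_2) \hookrightarrow \check{H}R^2(\cG_\bullet, \uc)$, which is exactly the defect measuring the semidirect product multiplication.

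The main obstacle I foresee is the Real version of Kuiper's contractibility theorem needed to establish the identification $\check{H}R^1(\cG_\bullet, \U^0(\hat{\cH})/\uc) \cong \check{H}R^2(\cG_\bullet, \uc)$: one must exhibit a null-homotopy of the Real group $\U^0(\hat{\cH})$ that is itself equivariant under the fixed Real structure, which is delicate in the equivariant setting and has to be proved separately. The subsidiary bookkeeping -- checking that the splitting $u_0$ passes to the stable quotient and that the twisting term really is a cup product rather than a more complicated Massey-type correction -- is routine by comparison, but requires careful tracking of cocycle representatives through the two extensions.
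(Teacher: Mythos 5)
Your opening moves coincide exactly with the paper's: reduce to $\wRBr_0(\cG)$ via Proposition~\ref{pro:simplification-BrR} and identify $\wRBr_0(\cG)$ with $\Hom_{\RG}(\cG,\wPU(\hat{\cH}))_{st}$ via Theorem~\ref{thm:first-isom-BrR}. Where you diverge is the middle step. The paper does \emph{not} run the Dixmier--Douady-style analysis of the two extensions $1\to \U^0(\hat{\cH})/\uc\to \wPU(\hat{\cH})\to\ZZ_2\to 1$ and $1\to\uc\to\U^0(\hat{\cH})\to\U^0(\hat{\cH})/\uc\to 1$ directly on \v{C}ech cocycles. Instead it constructs an explicit isomorphism $\Hom_{\RG}(\cG,\wPU(\hat{\cH}))_{st}\cong \wRExt(\cG,\uc)$: the map $\mathscr{T}$ pulls back the generic Rg $\uc$-central extension $\EE_{\wK_0}$ of $\wPU(\hat{\cH})$ along a generalized classifying morphism, and the inverse $\mathscr{P}'$ assigns to an extension $\EE=(\wGa,\Ga,\del,Z)$ the principal $\wPU(\hat{\cH})$-bundle $\PP\hat{\sU}_{\wGa}$ of homogeneous unitaries from $\hat{\cH}$ to the $\uc$-equivariant $L^2$-bundle of $\wGa$; it then cites the already established isomorphism $dd:\wRExt(\cG,\uc)\cong \check{H}R^1(\cG_\bullet,\ZZ_2)\ltimes\check{H}R^2(\cG_\bullet,\uc)$ from the companion paper \cite{Mou1}. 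The trade-off: the paper's route buys a geometric interpretation (the associated extension $\EE_\cA$, used in the final section) and outsources all the hard cohomological content --- including, in effect, the Real Kuiper theorem and the cup-product twist you correctly identify as the semidirect-product defect --- to a result that is already proven elsewhere; your route is more self-contained in spirit but leaves precisely those two points (equivariant contractibility of $\U^0(\hat{\cH})$ in the Real setting, and the verification that the correction is exactly $\del\smile\del'$ rather than something finer) as unproven obligations, which are the genuinely delicate parts of the argument. As a proof sketch your plan is viable and is essentially the classical Parker/Tu argument transported to the Real category, but to be complete it would need those two lemmas supplied, whereas the paper's proof is complete modulo the citation of \cite[Theorem 2.60]{Mou1}.
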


We first deduce a generalization of Donovan-Karoubi's isomorphism~\eqref{eq:isom_GBrO-cohom}:

\begin{cor}
Let $\grpd$ be a groupoid. Then
\[
\wBRO(\cG) \cong  \check{H}^0(\cG_\bullet,\ZZ_8)\oplus \left(\check{H}^1(\cG_\bullet,\ZZ_2)\ltimes \check{H}^2(\cG_\bullet,\ZZ_2)\right).
\]	
\end{cor}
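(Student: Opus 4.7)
The plan is to specialize Theorem~\ref{thm:main-RBr} to the case in which $\cG$ is given the trivial involution $\tau=\id_\cG$. In this setting the original Haar system is automatically a Real Haar system, so Theorem~\ref{thm:Br_vs_BrR_vs_BrO}(2) applies: it identifies $\wBRO(\cG)\cong\wRBr(\cG)$ and shows that every class $\cA\in\wRBr(\cG)$ is a $2$-torsion element.

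I would then use the $2$-torsion property to collapse $\wRBr$ onto $\wRBr_\ast$. Since the type map $\bft:\wRBr(\cG)\To\check{H}R^0(\cG_\bullet,\Inv\wfK)$ is a group homomorphism, its image must land in the $2$-torsion subgroup of $\Inv\wfK$, which by the lemma immediately preceding Proposition~\ref{pro:type-map} is precisely $\wRBr(\ast)\cong\ZZ_8$. Hence $\wRBr(\cG)=\wRBr_\ast(\cG)$ in the present setting, and the second isomorphism of Theorem~\ref{thm:main-RBr} gives
\[
\wBRO(\cG)\;\cong\;\check{H}R^0(\cG_\bullet,\ZZ_8)\oplus\bigl(\check{H}R^1(\cG_\bullet,\ZZ_2)\ltimes\check{H}R^2(\cG_\bullet,\uc)\bigr).
\]

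The remaining step is to identify each Real cohomology group on the right with its ordinary \v{C}ech counterpart. Since $\cG$ carries the trivial involution, the Real cocycle condition $c(\bar g_1,\ldots,\bar g_n)=\overline{c(g_1,\ldots,g_n)}$ collapses to $c=\bar c$, which forces cocycles to take values in the fixed-point subgroup of the coefficient Real group; the coboundary formula is unaffected, so $\check{H}R^n(\cG_\bullet,A)$ reduces to $\check{H}^n(\cG_\bullet,A^{\text{fix}})$. For $A=\ZZ_8$ and $A=\ZZ_2$ the involution is trivial so the coefficients remain the same; for $A=\uc$ equipped with complex conjugation the fixed subgroup is $\{\pm1\}\cong\ZZ_2$. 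Substituting these identifications produces the claimed formula.

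The one subtlety worth flagging is this last identification: it rests on the precise cochain-level definition of $\check{H}R^\bullet$ given in~\cite{Mou1}, and requires that Real coboundaries and ordinary coboundaries agree on cocycles taking values in the fixed subgroup. That is the only point at which the argument depends on material outside this excerpt, but it is essentially a formal unwinding of definitions rather than a substantive obstacle.
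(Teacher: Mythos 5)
Your proof is correct and follows the same route as the paper, which deduces the corollary from Theorem~\ref{thm:Br_vs_BrR_vs_BrO}(2), Theorem~\ref{thm:main-RBr}, and the identification $\check{H}R^n(\cG_\bullet,\uc)\cong\check{H}^n(\cG_\bullet,\ZZ_2)$ for the trivial involution (citing~\cite[Proposition 2.43]{Mou1} for the last point). The one thing you add is the explicit $2$-torsion argument showing that the type map then lands in $\ZZ_8\subset\Inv\wfK$, so that the degree-zero coefficients collapse from $\Inv\wfK$ to $\ZZ_8$ --- a step the paper leaves implicit, and a worthwhile clarification.
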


\begin{proof}
This follows from Theorem~\ref{thm:Br_vs_BrR_vs_BrO} 2), Theorem~\ref{thm:main-RBr}, and the fact that when $\cG$ is given the trivial involution, $\check{H}R^n(\cG_\bullet,\uc)\cong \check{H}^n(\cG_\bullet,\ZZ_2)$, thanks to~\cite[Proposition 2.43]{Moutuou:Real.Cohomology}.
\end{proof}

The proof of Theorem~\ref{thm:main-RBr} is divided into several steps that mainly consist of constructing an isomorphism $\Hom_{\RG}(\cG,\wPU(\hat{\cH}))_{st}\cong \wRExt(\cG,\uc)$. 

Let us consider the following "generic" Rg $\uc$-central extension $\EE_{\wK_0}$ of the Real groupoid $\xymatrix{\wPU(\hat{\cH})\dar[r]& \cdot}$
\begin{eqnarray}
	\xymatrix{\uc \ar[r] & \wU(\hat{\cH}) \ar[r]^{pr} & \wPU(\hat{\cH}) \ar[d]^\partial \\ & & \ZZ_2}
\end{eqnarray}
where $\partial([u])$ is the degree of the homogeneous unitary $u$.

Let $\grpd$ be a Real groupoid and let $P:\cG\To \wPU(\hat{\cH})$ be a generalised Real homomorphism. Then we get a Rg $\uc$-central extension $P^\ast\EE_{\wK_0}$ of $\cG$ by pulling back $\EE_{\wK_0}$ via $P$ (see~\cite[\S1]{Moutuou:Real.Cohomology}).

\begin{lem}
The assignment $P\mto P^\ast\EE_{\wK_0}$ induces a well defined homomorphism of Abelian monoids
\[
\begin{array}{lll}
	\mathscr{T}: & \Hom_{\RG}(\cG,\wPU(\hat{\cH})) & \To  \wRExt(\cG,\uc) \\ &  \quad \quad \quad \quad [P] & \mto [P^\ast \EE_{\wK_0}]	
	\end{array}
	\]	
\end{lem}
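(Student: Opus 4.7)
The claim has three ingredients: well-definedness on isomorphism classes, preservation of the unit, and additivity. I would handle them in that order.

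\textbf{Well-definedness.} Given an isomorphism $f:P_1\stackrel{\cong}{\To}P_2$ of generalized Real homomorphisms $\cG\To\wPU(\hat{\cH})$, the pull-back $P_i^\ast\EE_{\wK_0}$ is by definition the fibered product $P_i\times_{\wPU(\hat{\cH})}\wU(\hat{\cH})$ (with its induced Real/graded structure, $\uc$-action, and $\cG$-action), together with the quotient map onto $\cG$. The isomorphism $f$ therefore induces $f\times_{\wPU(\hat{\cH})}\Id:P_1^\ast\EE_{\wK_0}\To P_2^\ast\EE_{\wK_0}$, and one checks directly that this is a morphism of Rg $\uc$-central extensions in the sense of~\cite{Mou1}. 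This is the standard functoriality of pull-back and requires only bookkeeping.

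\textbf{Unit.} The zero element of $\Hom_{\RG}(\cG,\wPU(\hat{\cH}))$ is the class of the trivial generalized Real homomorphism (corresponding to a constant cocycle). Its pull-back of $\EE_{\wK_0}$ is the trivial Rg $\uc$-central extension $\cG\times\uc\To\cG$, which is $0\in \wRExt(\cG,\uc)$.

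\textbf{Additivity.} This is the crux. Fix once and for all the chosen isomorphism $\mathfrak{m}:\hat{\cH}\hat{\otimes}\hat{\cH}\stackrel{\cong}{\To}\hat{\cH}$ used to define the monoid law on $\Hom_{\RG}(\cG,\wPU(\hat{\cH}))$. The graded tensor product of unitaries $(u_1,u_2)\mto u_1\hat{\otimes}u_2$, transported by $\mathfrak{m}$, gives a continuous Real graded homomorphism $\hat{\otimes}:\wU(\hat{\cH})\times\wU(\hat{\cH})\To\wU(\hat{\cH})$ covering the corresponding map on $\wPU(\hat{\cH})$ and restricting to the multiplication $\uc\times\uc\To\uc$ on the kernels. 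In other words, $\hat{\otimes}$ defines a morphism of Real central extensions
\[
\xymatrix{
\uc\times\uc \ar[r]\ar[d]^{\mathrm{mult}} & \wU(\hat{\cH})\times\wU(\hat{\cH}) \ar[r]\ar[d]^{\hat{\otimes}} & \wPU(\hat{\cH})\times\wPU(\hat{\cH}) \ar[d]^{\hat{\otimes}} \\
\uc \ar[r] & \wU(\hat{\cH}) \ar[r] & \wPU(\hat{\cH})
}
\]
compatible with the gradings $\partial$. Now given $P_1,P_2$, represent them by strict Real morphisms $p_1,p_2:\cG[\cU]\To\wPU(\hat{\cH})$ on a common Real open cover, as in the discussion preceding the lemma. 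Their sum $P_1\otimes P_2$ is represented by $\hat{\otimes}\circ(p_1,p_2)$. Pulling back $\EE_{\wK_0}$ along $\hat{\otimes}\circ(p_1,p_2)$ and using the diagram above identifies $(P_1\otimes P_2)^\ast\EE_{\wK_0}$ with the pull-back of $\EE_{\wK_0}\times_{\cdot}\EE_{\wK_0}$ along $(p_1,p_2)$, pushed out along the multiplication $\uc\times\uc\To\uc$. This is exactly the Baer sum $P_1^\ast\EE_{\wK_0}+P_2^\ast\EE_{\wK_0}$ in $\wRExt(\cG,\uc)$, yielding $\mathscr{T}([P_1]+[P_2])=\mathscr{T}([P_1])+\mathscr{T}([P_2])$.

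\textbf{Expected obstacle.} The only genuinely nontrivial point is verifying that the identification in the additivity step respects \emph{all} the extra structures simultaneously: the $\ZZ_2$-grading $\partial$, the Real involution, and the $\cG$-action. In particular one must check that the choice of $\mathfrak{m}$ does not affect the class in $\wRExt(\cG,\uc)$; this follows because any two choices of $\mathfrak{m}$ differ by an element of $\wU(\hat{\cH})$, which modifies the pulled-back extension by a coboundary. The rest is a routine unwinding of cocycle formulas, and may be done concretely by writing $P_i$ as Real $1$-cocycles $\mu_i$ and choosing (local) Real lifts $\tilde\mu_i$ to $\wU(\hat{\cH})$; the resulting $\uc$-valued $2$-cocycle of $(P_1\otimes P_2)^\ast\EE_{\wK_0}$ is the product of those of $P_1^\ast\EE_{\wK_0}$ and $P_2^\ast\EE_{\wK_0}$.
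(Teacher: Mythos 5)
Your proposal is correct and follows essentially the same route as the paper: well-definedness via functoriality of the pull-back after representing the morphisms strictly on a common cover, and additivity via the map $(u_1,u_2)\mto u_1\hat{\otimes}u_2$ covering the tensor product on $\wPU(\hat{\cH})$ and multiplicative on the central $\uc$'s, which is exactly the explicit isomorphism $((u_1,\g),(u_2,\g))\mto (u_1\hat{\otimes}u_2,\g)$ the paper writes down. Your Baer-sum/push-out phrasing and the remark on independence of the choice of $\hat{\cH}\hat{\otimes}\hat{\cH}\cong\hat{\cH}$ are just slightly more explicit packaging of the same computation.
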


\begin{proof}
Assume $P\cong P'$ are isomorphic generalised Real homomorphisms from $\cG$ to $\wPU(\hat{\cH})$. As usual, we may assume that $P$ and $P'$ are represented in $\Hom_{\RG_\Om}(\cG,\wPU(\hat{\cH}))$ (see~\cite[\S1]{Moutuou:Real.Cohomology}) on the same Real open cover $\cU$ of $X$ by two Real strict homomorphisms $f:\cG[\cU]\To \wPU(\hat{\cH})$ and $f':\cG[\cU]\To \wPU(\hat{\cH})$, respectively. The pull-backs $P^\ast\EE_{\wK_0}$ and $(P')^\ast\EE_{\wK_0}$ are then Morita equivalent to 
\[\xymatrix{\uc \ar[r] & \wU(\hat{\cH})\times_{pr,\wPU(\hat{\cH}),f}\cG[\cU] \ar[r] & \cG[\cU] \ar[d]^{\partial\circ f} \\ & & \ZZ_2 } 
\]
and 
\[\xymatrix{\uc \ar[r] & \wU(\hat{\cH})\times_{pr,\wPU(\hat{\cH}),f'}\cG[\cU]\ar[r] & \cG[\cU] \ar[d]^{\partial\circ f'} \\ & & \ZZ_2} 
\] respectively, where in both cases, the projection is the canonical one onto the second factor. Since $P\cong P'$, there is an isomorphism of Real groupoids $\phi: \cG[\cU]\To \cG[\cU]$ such that $f'=f\circ \phi$. Therefore, the map 
\[
\wU(\hat{\cH})\times_{pr,\wPU(\hat{\cH}),f}\cG[\cU]  \To  \wU(\hat{\cH})\times_{pr,\wPU(\hat{\cH}),f'}\cG[\cU] \]
defined by $(u,\g) \mto (u,\phi(\g))$ induces an isomorphism of Rg $\uc$-central extension $P^\ast\EE_{\wK_0}\cong (P')^\ast\EE_{\wK_0}$. Hence $\mathscr{T}$ is well defined. 

Let us check that $\mathscr{T}$ is a homomorphism. Let $[P_1],[P_2]\in \Hom_{\RG}(\cG,\wPU(\hat{\cH}))$ and let $p_1,p_2:\cG[\cU]\To \wPU(\hat{\cH})$ be Real strict homomorphisms representing $[P_1]$ and $[P_2]$, respectively in the category $\RG_{\Om}$. Then the map
\[
\left(\wU(\hat{\cH})\times_{pr,\wPU(\hat{\cH}),p_1}\cG[\cU]\right)\times_{\cG[\cU]}\left(\wU(\hat{\cH})\times_{pr,\wPU(\hat{\cH}),p_2}\cG[\cU])\right)\to \wU(\hat{\cH}\hat{\otimes}\hat{\cH})\times_{pr,\wPU(\hat{\cH}\hat{\otimes}\hat{\cH}),p_1\hat{\otimes}p_2}\cG[\cU])
\]
 defined by $$\left((u_1,\g),(u_2,\g)\right)\mto (u_1\hat{\otimes}u_2,\g),$$ 
is easily checked to define an isomorphism of Rg $\uc$-central extensions $$(P_1^\ast\EE_{\wK_0})\hat{\otimes}(P_2^\ast\EE_{\wK_0})\cong (P_1\otimes P_2)^\ast\EE_{\wK_0}.$$ 
Thus, $\mathscr{T}([P_1]+[P_2])=\mathscr{T}([P_1])+\mathscr{T}([P_2])$, and we are done.
\end{proof}

\begin{lem}
If $[P]$ is trivial, then $\mathscr{T}([P])=0$ in $\wRExt(\cG,\uc)$. Therefore, $\mathscr{T}$ induces a homomorphism of abelian groups 
\[\Hom_{\RG}(\cG,\wPU(\hat{\cH}))_{st} \To \wRExt(\cG,\uc),\] also denoted by $\mathscr{T}$.	
\end{lem}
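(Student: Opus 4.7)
The plan is to unwind what triviality of $[P]$ means and then exhibit an explicit Real splitting of the pulled-back extension, after which the factorization statement is formal.

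First, I would unpack the hypothesis. By Definition~\ref{df:stable-P}, $[P]$ trivial means $[P]=[pr\circ\mathbf{U}]$ for some Real generalized homomorphism $\mathbf{U}:\cG\To\U^0(\hat{\cH})$. Represent $\mathbf{U}$ by a strict Real morphism $u:\cG[\cU]\To\U^0(\hat{\cH})$ on some open Real cover $\cU$ of $X$; then $P$ is represented by $f:=pr\circ u:\cG[\cU]\To\wPU(\hat{\cH})$. Since $\mathscr{T}([P])$ depends only on the isomorphism class of $P$, it is enough to compute it on this representative. As in the previous lemma, the pullback $P^\ast\EE_{\wK_0}$ is Morita equivalent to
\[
\xymatrix{\uc\ar[r] & \wU(\hat{\cH})\times_{pr,\wPU(\hat{\cH}),f}\cG[\cU]\ar[r] & \cG[\cU]\ar[d]^{\partial\circ f}\\ & & \ZZ_2.}
\]

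Next, I would exhibit the splitting. Because $u$ lifts $f$ to the degree-$0$ subgroup $\U^0(\hat{\cH})\subset\wU(\hat{\cH})$, the formula
\[
\mathsf{s}:\cG[\cU]\To \wU(\hat{\cH})\times_{pr,\wPU(\hat{\cH}),f}\cG[\cU],\qquad \g\mto (u(\g),\g)
\]
is well-defined and is a strict Real groupoid homomorphism (it is Real because $u$ is, a homomorphism because $u$ is, and it commutes with the projection onto $\cG[\cU]$ by construction). Moreover $\partial\circ f=\partial\circ pr\circ u\equiv 0$ since $u$ lands in $\U^0(\hat{\cH})$, so the grading component of $P^\ast\EE_{\wK_0}$ is the trivial homomorphism to $\ZZ_2$. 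Therefore $\mathsf{s}$ provides a splitting of $P^\ast\EE_{\wK_0}$ as a Rg $\uc$-central extension, so $\mathscr{T}([P])=[P^\ast\EE_{\wK_0}]=0$ in $\wRExt(\cG,\uc)$.

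Finally, I would deduce the factorization. Since $\mathscr{T}$ is an additive monoid homomorphism, if $[P_1]\sim_{st}[P_2]$ via a trivial $[Q]$, then
\[
\mathscr{T}([P_1])+\mathscr{T}([Q])=\mathscr{T}([P_1]+[Q])=\mathscr{T}([P_2]+[Q])=\mathscr{T}([P_2])+\mathscr{T}([Q]).
\]
Because $\wRExt(\cG,\uc)$ is an abelian group, we may cancel; combined with $\mathscr{T}([Q])=0$ this gives $\mathscr{T}([P_1])=\mathscr{T}([P_2])$. Hence $\mathscr{T}$ descends to a well-defined group homomorphism $\Hom_{\RG}(\cG,\wPU(\hat{\cH}))_{st}\To\wRExt(\cG,\uc)$.

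The main point to be careful about is the second step: one must check that $\mathsf{s}$ genuinely respects all the structure involved in a \emph{Rg} $\uc$-central extension, namely (i) that it is Real, (ii) that it is a groupoid homomorphism splitting the projection, and (iii) that the $\ZZ_2$-grading of the pulled-back extension is the zero homomorphism. All three follow from the single fact that $u$ takes values in the degree-$0$ Real subgroup $\U^0(\hat{\cH})$, which is exactly what the definition of triviality guarantees.
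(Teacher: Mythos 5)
Your proposal is correct and follows essentially the same route as the paper: reduce to $P=pr\circ\mathbf{U}$ represented by a strict Real morphism $u:\cG[\cU]\To\U^0(\hat{\cH})$, and observe that $\g\mto(u(\g),\g)$ is a Real section splitting the pulled-back twist, so that $P^\ast\EE_{\wK_0}$ is a (strictly) trivial Rg $\uc$-central extension. Your explicit cancellation argument for the descent to $\Hom_{\RG}(\cG,\wPU(\hat{\cH}))_{st}$ is left implicit in the paper but is exactly the formal step intended.
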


\begin{proof}
Since $\mathscr{T}([P])$ depends only on the isomorphism class of $P$, it suffices to suppose $P=pr\circ \mathbf{U}$ for some generalised Real homomorphism $\mathbf{U}:\cG\To \U^0(\hat{\cH})$. Let $u:\cG[\cU]\to \U^0(\hat{\cH})$ be a Real strict homomorphism representing $\mathbf{U}$ in the category $\RG_\Om$. Then the Real groupoid morphism $p:\cG[\cU]\To \wPU(\hat{\cH})$ given by $ p(\g)= [u_\g]$ represents $P$. It follows that the map 
\[
\begin{array}{lll}
\cG[\cU] & \To & \wU(\hat{\cH})\times_{pr,\wPU(\hat{\cH}),p}\cG[\cU] \\
\g & \mto & \quad \quad \quad \quad (u_\g,\g)
\end{array}
\]
is a well defined section of the projection of $(p^\ast \wPU(\hat{\cH}),\cG[\cU],\partial\circ p)$; the latter is then a strictly trivial Rg $\uc$-twist~\cite{Moutuou:Real.Cohomology}. Therefore, $P^\ast\EE_{\wK_0}$ is a trivial Rg $\uc$-central extension of $\cG$.
\end{proof}

At this point, we are following closely~\cite[\S2.6]{Tu-Xu-Laurent-Gengoux:Twisted_K} to construct a homomorphism $\mathscr{P}'$ in the other direction; and then we will show that $\mathscr{T}$ and $\mathscr{P}'$ are inverses of each other.

Let $\EE=(\wGa,\Ga,\del,Z)$ be a Rg $\uc$-central extension of $\grpd$.

\begin{df}(~\cite[Definition 2.37]{Tu-Xu-Laurent-Gengoux:Twisted_K}).
A function $\xi\in \cC_c(\wGa)$ is said $\uc$-equivariant if $\xi(\lambda \tilde{\g})=\lambda^{-1}\xi(\tilde{\g})$ for any $\lambda\in \uc$ and any $\tilde{\g}\in \wGa$. 	
\end{df}

 Let $\mu=\{\mu^y\}_{y\in Y}$ be a Real Haar system of the Real groupoid $\xymatrix{\wGa\dar[r]& Y}$. For $y\in Y$, consider the graded Hilbert space $\cL_y^2:=L^2(\wGa^y)^{\uc}$ consisting of $\uc$-equivariant functions on $\wGa^y$ which are $L^2$ with respect to $\mu^y$. Note that the $\ZZ_2$-grading of $\cL_y^2$ is the one induced by $\del$; \emph{i.e.}, for $\xi\in \cC_c(\wGa^y)^{\uc}$, define $\del\xi$ by 
\begin{equation}~\label{eq:df-sH_Ga-grading}
	(\del\xi)(\tilde{\g}) := (-1)^{\del(\g)}\xi(\tilde{\g}),
\end{equation}
where $\g\in \Ga$ is such that $\pi(\tilde{\g})=\g$. Let 
\begin{equation}~\label{eq:df-sH_Ga}
	\sH_{\wGa,y} = \cL_y^2\otimes \cH, \quad {\rm and\ }\tilde{\sH}_{\wGa,y}\mydot=\coprod_y\sH_{\wGa,y},
\end{equation}
where, as usual $\cH=l^2(\NN)$ is the generic separable infinite dimensional Hilbert space, endowed with the Real structure $J_\RR$ given by the complex conjugation with respect to the canonical basis. Then the countably generated continuous field of infinite dimensional graded Hilbert spaces $\tilde{\sH}_{\wGa}\To Y$, is a locally trivial graded Hilbert bundle, and hence trivial thanks to~\cite[Th\'eor\`eme 5]{Dixmier-Douady:Champs}. By identifying $\sH_{\wGa,y}$ with the space 
\[L^2(\wGa^y;\cH)^{\uc}:=\{\xi\in \cC_c(\wGa^y;\cH) \mid \xi(\lambda \tilde{\g})=\lambda^{-1}\xi(\tilde{\g}), \forall \lambda\in \uc,\tilde{\g}\in \wGa^y\},\]
we define the Real structure on $\tilde{\sH}_{\wGa}$ by 
\begin{equation}~\label{eq:df-sH_Ga-involution}
	\sH_{\wGa,y} \ni \xi\mto \bar{\xi}\in \sH_{\wGa,\bar{y}},
\end{equation}
 where $\bar{\xi}(\tilde{\g}):=\overline{\xi(\overline{\tilde{\g}})}$ for all $\tilde{\g}\in \wGa^{\bar{y}}$. Together with this involution, $\tilde{\sH}_{\wGa}$ is clearly a Rg Hilbert bundle over $Y$. 
 
 For $y\in Y$, let $\hat{\sU}_{\wGa,y}=\U^0(\hat{\cH},\sH_{\wGa,y})\bigcup \U^1(\hat{\cH},\sH_{\wGa,y})$ be the space of homogeneous unitary operators from $\hat{\cH}$ to $\sH_{\wGa,y}$. Put 
 $$\hat{\sU}_{\wGa}=\coprod_{y\in Y}\sU_{\wGa,y}.$$
  The field $\hat{\sU}_{\wGa}$ is endowed with the topology induced from $\tilde{\sH}$: a section $Y\ni y \mto u_y\in \hat{\sU}_{\wGa,y}$ is continuous if and only if for every $\xi\in \hat{\cH}$, the map $y\mto u_y\xi$ is a continuous section of $\tilde{\sH}_{\wGa}\To Y$. The bundle $\hat{\sU}_{\wGa}\To Y$ is, in an obvious way, a Real $\wU(\hat{\cH})$-principal bundle, with the Real structure $\hat{\sU}_{\wGa,y}\ni u\mto \bar{u}\in \hat{\sU}_{\wGa,\bar{y}}$, where for $\xi\in\hat{\cH}$, $\bar{u}(\xi):=\overline{u(\bar{\xi})}\in \sH_{\wGa,\bar{y}}$. Notice that scalar multiplication with elements of the fibres $\hat{\sU}_{\wGa,y}$ induces a Real $\uc$-action on $\hat{\sU}_{\wGa}$. It follows that its quotient 
\begin{equation}~\label{eq:df-PU_wGa}
 \PP\hat{\sU}_{\wGa}:=\hat{\sU}_{\wGa}/\uc	
 \end{equation} 
is a Real $\wPU(\hat{\cH})$-principal bundle over $Y$. We write $[(x,u)]$ for the class of $(x,u)\in \hat{\sU}_{\wGa,y}$ in the quotient $\PP\hat{\sU}_{\wGa}$.

One defines a Real left $\Ga$-action on $\PP\hat{\sU}_{\wGa}\To Y$ in the following way: let $\g\in \Ga$ and $[(s(\g),u)]\in P\hat{\sU}_{\wGa}$, then $\g\cdot [(s(g),u)]$ is the class $[(r(\g),\g\cdot u)]$ of the element $(r(\g),\g\cdot u)\in \hat{\sU}_{\wGa,r(\g)}$, where for each $\xi\in \hat{\cH}$, the function $(\g\cdot u)\xi\in L^2(\wGa^{r(\g)};\hat{\cH})^{\uc}$ is given by
\[(\g\cdot u)\xi: \wGa^{r(\g)}\ni h \mto (u\xi)(\tilde{\g}^{-1}h),\]
where $\tilde{\g}\in \wGa_\g$ is any lift of $\g$ with respect to the projection $\wGa\To \Ga$. It is easy to verify that with respect to this well defined Real action, $\PP\hat{\sU}_{\wGa}$ is a Real $\wPU(\hat{\cH})$-principal bundle over the Real groupoid $\gamgpd$, in other words, it is a generalised Real homomorphism from $\Ga$ to $\xymatrix{\wPU(\hat{\cH})\dar[r]& \cdot}$. 

Now the composite $\cG\stackrel{Z^{-1}}{\To} \Ga \stackrel{P\hat{\sU}_{\wGa}}{\To} \wPU(\hat{\cH})$ provides us with the generalised Real homomorphism 
\begin{equation}
P\EE:= \PP\hat{\sU}_{\wGa}\circ Z^{-1}:\cG\To \wPU(\hat{\cH}).
\end{equation}

\begin{rem}
Observe that the Real $\Ga$-action on $\PP\hat{\sU}_{\wGa}$ is induced by the Real $\wGa$-action on $\hat{\sU}_{\wGa}$ defined by $\tilde{\g}\cdot (s(\tilde{\g}),u):=(r(\tilde{\g}),\tilde{\g}\cdot u)$, where for $\xi\in \hat{\cH}$, $((\tilde{\g}\cdot u)\xi)(h):=(u\xi)(\tilde{\g}^{-1}h)$ for all $h\in \wGa^{r(\tilde{\g})}$. In fact, $\hat{\sU}_{\wGa}$ is a Real $\wU(\hat{\cH})$-principal bundle over the Real groupoid $\xymatrix{\wGa\dar[r]& Y}$.	
\end{rem}

\begin{lem}~\label{lem:lem1-def-pp}
Let $\EE_i=(\wGa_i,\Ga_i,\del_i,Z_i), i=1,2$ be Morita equivalent Rg $\uc$-central extensions of $\grpd$, with equivalence implemented by an $\uc$-equivarient Morita equivalence $Z:\wGa_1\To \wGa_2$. Denote by $Z'=Z/\uc:\Ga_1\To \Ga_2$ the induced Morita equivalence. Then $\PP\hat{\sU}_{\wGa_2}\circ Z'$ and $\PP\hat{\sU}_{\wGa_1}$ are isomorphic.
\end{lem}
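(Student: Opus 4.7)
The plan is to build an explicit isomorphism
$$\Phi : \PP\hat{\sU}_{\wGa_1} \xrightarrow{\ \cong\ } \PP\hat{\sU}_{\wGa_2}\circ Z'$$
of Real left-$\Ga_1$, right-$\wPU(\hat{\cH})$ bibundles, by transporting the fiberwise $L^2$-structure of Section's definition~\eqref{eq:df-sH_Ga} through the $\uc$-equivariant Morita equivalence $Z$. Unpacking the associated-bundle construction, $\PP\hat{\sU}_{\wGa_2}\circ Z'$ is the balanced product $(Z'\times_{Y_2}\PP\hat{\sU}_{\wGa_2})/\Ga_2$ equipped with the induced Real structure and $\Ga_1$-$\wPU(\hat{\cH})$ actions.

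First I would extract, for each $z\in Z$ with $\tilde{r}(z)=y_1$ and $\tilde{s}(z)=y_2$, the identification of fiber Hilbert spaces determined by $z$. Freeness and transitivity of the right $\wGa_2$-action along fibers of $\tilde{r}$ produce, for every $\tilde{\g}_2\in\wGa_2^{y_2}$, a unique $\psi_z(\tilde{\g}_2)\in\wGa_1^{y_1}$ characterized by $\psi_z(\tilde{\g}_2)\cdot z = z\cdot \tilde{\g}_2$. Because $Z$ is Real, $\uc$-equivariant, and covers the underlying equivalence $Z'$ compatibly with $\del_1,\del_2$, the map $\psi_z$ is a $\uc$-equivariant, grading-preserving Real homeomorphism $\wGa_2^{y_2}\to \wGa_1^{y_1}$. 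Pulling back along $\psi_z$ and tensoring with $\id_\cH$ then yields a degree $0$ Real unitary
$$V_z : \sH_{\wGa_1,y_1} \longrightarrow \sH_{\wGa_2,y_2},$$
and rescaling $z$ by $\lambda\in\uc$ multiplies $V_z$ by a scalar; hence the assignment $u\mapsto V_z\circ u$ is well defined modulo $\uc$, i.e.\ at the level of $\PP\hat{\sU}$.

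I would then set
$$\Phi\bigl([(y_1,u)]\bigr) := \bigl[\,z,\,[(y_2,\,V_z\circ u)]\,\bigr]$$
for any $z$ with $\tilde{r}(z)=y_1$, and check in sequence: (i) independence of the choice of $z$---two such choices differ by a right $\wGa_2$-element, whose effect on $V_z\circ u$ is exactly cancelled by the diagonal $\Ga_2$-quotient in $(Z'\times_{Y_2}\PP\hat{\sU}_{\wGa_2})/\Ga_2$, while the residual $\uc$-phase dies upon passing to $\PP$; (ii) Realness, via the Real structure of $Z$ intertwining the involutions of~\eqref{eq:df-sH_Ga-involution}; (iii) left $\Ga_1$-equivariance, by chasing the identity $\g_1\cdot z = z\cdot \g_2'$ (with $\g_2'$ determined by $z$) through the definitions and using that $Z'$ intertwines the $\Ga_1$- and $\Ga_2$-actions; (iv) right $\wPU(\hat{\cH})$-equivariance, which is immediate since $V_z$ acts on the left and $\wPU(\hat{\cH})$ on the right of $u$. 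Finally, $\Phi^{-1}$ is produced by the mirror construction applied to the opposite Morita equivalence $Z^{-1}:\wGa_2\to\wGa_1$, with $V_{z^{-1}}=V_z^{-1}$, and a direct comparison shows $\Phi$ and $\Phi^{-1}$ are mutual inverses.

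The main obstacle I anticipate is the $\uc$-bookkeeping. One has to verify precisely that the scalar picked up by $V_z$ under $z\mapsto \lambda z$ is matched by the phase absorbed in passing from $\hat{\sU}_{\wGa_2}$ to $\PP\hat{\sU}_{\wGa_2}$, taking into account how the central $\uc$-subgroup of $\wGa_2$ enters both sides of the balanced product, and that this matching is compatible with the Real involutions. Once that central compatibility is pinned down, the remaining verifications of continuity, equivariance under all three group(oid) actions, and intertwining of Real structures reduce to routine diagram chases, directly paralleling the construction of the $\PP\hat{\sU}_{\wGa_i}$ themselves.
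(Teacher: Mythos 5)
Your overall strategy---composing with a fibrewise unitary $V_z$ attached to each point of the equivalence bimodule---is reasonable in spirit, but the construction of $V_z$ breaks down at its first step, and this is not a repairable bookkeeping issue. You define $\psi_z(\tilde{\g}_2)\in\wGa_1^{y_1}$ by the equation $\psi_z(\tilde{\g}_2)\cdot z=z\cdot\tilde{\g}_2$. Taking $\tilde{s}$ of both sides forces $s(\tilde{\g}_2)=\tilde{s}(z)=y_2=r(\tilde{\g}_2)$, and dually taking $\tilde{r}$ forces $r(\psi_z(\tilde{\g}_2))=y_1=s(\psi_z(\tilde{\g}_2))$: the relation $\tilde{\g}_1\cdot z=z\cdot\tilde{\g}_2$ only identifies the \emph{isotropy} groups at $y_1$ and $y_2$, not the range fibres $\wGa_1^{y_1}$ and $\wGa_2^{y_2}$. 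Since $\sH_{\wGa_i,y_i}=L^2(\wGa_i^{y_i})^{\uc}\otimes\cH$ is built from the whole range fibre, which for a general groupoid is strictly larger than the isotropy group (already for a \v{C}ech groupoid the isotropy is trivial while the range fibres are not), no unitary $V_z$ is obtained by pulling back along $\psi_z$. What the bimodule actually provides are bijections $\wGa_2^{y_2}\cong\tilde{r}^{-1}(y_1)$ (via $\tilde{\g}_2\mapsto z\tilde{\g}_2$) and $\wGa_1^{y_1}\cong\tilde{s}^{-1}(y_2)$ (via $\tilde{\g}_1\mapsto\tilde{\g}_1^{-1}z$), which land in two \emph{different} subsets of $Z$, so there is no pointwise identification of the two range fibres. (Your construction does work in the special case where $\Ga_1,\Ga_2$ are groups, since there range fibre and isotropy coincide, but the lemma concerns general groupoids.) A secondary issue, which would remain even after fixing this, is that the identification must also intertwine the Haar systems for the pullback to be unitary.

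Note also that the paper does not argue this way: its proof is a direct appeal to \cite[Lemma 2.39]{TLX}, where the comparison of $\PP\hat{\sU}_{\wGa_1}$ with $\PP\hat{\sU}_{\wGa_2}\circ Z'$ is carried out through the $L^2$-spaces of fibres of the linking groupoid assembled from $\wGa_1$, $\wGa_2$, $Z$ and $Z^{op}$, rather than through a fibrewise bijection $\wGa_1^{y_1}\to\wGa_2^{y_2}$. If you want a self-contained argument, you would need to replace your $\psi_z$ by an identification of that kind and then carry the Real structures and the $\uc$-equivariance through it.
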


\begin{proof}
This is a consequence of~\cite[Lemma 2.39]{Tu-Xu-Laurent-Gengoux:Twisted_K}.	
\end{proof}

\begin{lem}
Assume $\EE_1$ and $\EE_2$ are Morita equivalent Rg  $\uc$-central extensions of $\cG$. Then $[P\EE_1]= [P\EE_2]$. 
\end{lem}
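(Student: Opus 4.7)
The plan is to exploit the previous lemma (Lemma~\ref{lem:lem1-def-pp}) together with the natural functoriality of generalized Real homomorphisms under composition. Recall that by definition, a Morita equivalence between the Rg $\uc$-central extensions $\EE_1=(\wGa_1,\Ga_1,\del_1,Z_1)$ and $\EE_2=(\wGa_2,\Ga_2,\del_2,Z_2)$ of $\cG$ is implemented by a $\uc$-equivariant Real Morita equivalence $Z:\wGa_1\to\wGa_2$ whose quotient $Z'=Z/\uc:\Ga_1\to\Ga_2$ fits into a commutative triangle of generalized Real homomorphisms, namely $Z_1\cong Z_2\circ Z':\Ga_1\to\cG$ (this compatibility is exactly what "Morita equivalence of extensions of $\cG$" means: the equivalence is required to cover the identity on $\cG$).

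First, I would make this identification explicit by writing down representatives on a common Real open cover of $X$ and checking that the triangle $Z_1\cong Z_2\circ Z'$ holds in $\Hom_{\RG}(\Ga_1,\cG)$; equivalently, inverting the Morita equivalences, $Z_1^{-1}\cong (Z')^{-1}\circ Z_2^{-1}$ in $\Hom_{\RG}(\cG,\Ga_1)$. Second, I would invoke Lemma~\ref{lem:lem1-def-pp}, which supplies a canonical isomorphism
\[
\PP\hat{\sU}_{\wGa_1}\;\cong\;\PP\hat{\sU}_{\wGa_2}\circ Z'
\]
of generalized Real homomorphisms from $\Ga_1$ to $\wPU(\hat{\cH})$. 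Combining these, I would compute
\[
P\EE_1 \;=\; \PP\hat{\sU}_{\wGa_1}\circ Z_1^{-1}
\;\cong\; \PP\hat{\sU}_{\wGa_2}\circ Z'\circ (Z')^{-1}\circ Z_2^{-1}
\;\cong\; \PP\hat{\sU}_{\wGa_2}\circ Z_2^{-1}
\;=\; P\EE_2,
\]
where the middle step uses that $Z'\circ(Z')^{-1}$ is isomorphic to the identity generalized homomorphism of $\Ga_2$. This yields the desired equality $[P\EE_1]=[P\EE_2]$ in $\Hom_{\RG}(\cG,\wPU(\hat{\cH}))$, which a fortiori holds in the stable quotient.

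The only genuinely delicate point is the second step: one must be sure that the isomorphism produced by Lemma~\ref{lem:lem1-def-pp} is compatible with the Real structures and with the $\cG$-action entering through $Z_1,Z_2$. The Real compatibility is built into the lemma's statement (the $\uc$-equivariant equivalence $Z$ is Real), so no new verification is needed; the $\cG$-compatibility follows because all three generalized morphisms $Z_1,Z_2,Z'$ are already given as Real Morita equivalences over $\cG$. The remaining work is essentially bookkeeping with generalized Real homomorphisms, so no substantive obstacle is expected.
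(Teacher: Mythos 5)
Your proposal is correct and follows essentially the same route as the paper: the paper likewise combines the isomorphism $\PP\hat{\sU}_{\wGa_1}\cong\PP\hat{\sU}_{\wGa_2}\circ Z'$ from Lemma~\ref{lem:lem1-def-pp} with the commutative triangle $Z_1\cong Z_2\circ Z'$ over $\cG$ (phrased there via the induced maps $Z_{1\ast},Z_{2\ast},Z'_\ast$ on $\Hom_{\RG}(-,\wPU(\hat{\cH}))$) to conclude $P\EE_1\cong P\EE_2$.
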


\begin{proof}
A Morita equivalence $\Ga\sim_Z \cG$ induces an isomorphism of abelian monoids $$Z_\ast: \hom_{\RG}(\Ga,\wPU(\hat{\cH}))\To \Hom_{\RG}(\cG,\wPU(\hat{\cH}))$$ given by $Z_\ast [P] :=[P\circ Z^{-1}]$ (~\cite[Proposition 2.35]{Tu-Xu-Laurent-Gengoux:Twisted_K}). \\
Now if $Z$ is a Morita equivalence from $\EE_1$ and $\EE_2$, then under the notations of Lemma~\ref{lem:lem1-def-pp}, the commutative diagram of generalised Real homomorphisms 
\[\xymatrix{ \Ga_1\ar[rr]^{Z'} \ar[dr]_{Z_1} && \Ga_2 \ar[dl]^{Z_2} \\ & \cG & }\]
induces a commutative diagram of abelian monoids 

\[\xymatrix{ \Hom_{\RG}(\Ga_1,\wPU(\hat{\cH}))\ar[rr]^{Z'_\ast} \ar[dr]_{Z_{1\ast}} && \Hom_{\RG}(\Ga_2,\wPU(\hat{\cH})) \ar[dl]^{Z_{2\ast}} \\ & \Hom_{\RG}(\cG,\wPU(\hat{\cH})) & }
\]
Consequently, $\PP\hat{\sU}_{\wGa_2}\circ (Z')^{-1}\circ Z_2^{-1}\cong \PP\hat{\sU}_{\wGa_1}\circ Z_1^{-1}=P\EE_1$. But from Lemma~\ref{lem:lem1-def-pp}, $\PP\hat{\sU}_{\wGa_1}\circ (Z')^{-1}\cong \PP\hat{\sU}_{\wGa_2}$. Therefore, $P\EE_2\cong P\EE_1$.	
\end{proof}

\begin{lem}~\label{lem:e-rond-p=1}
The assignment $\EE\mto P\EE$ induces group homomorphism 
\[\mathscr{P}': \wRExt(\cG,\uc)\mto \Hom_{\RG}(\cG,\wPU(\hat{\cH}))_{st}, [\EE]\mto [P\EE]_{st}.\]
Moreover $\mathscr{T}\circ \mathscr{P}'=\Id$.
\end{lem}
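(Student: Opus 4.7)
The plan has three parts: well-definedness, the homomorphism property, and the identity $\mathscr{T}\circ\mathscr{P}'=\Id$. For the first, the preceding lemma has already shown that Morita-equivalent Rg $\uc$-central extensions produce isomorphic generalized Real homomorphisms $P\EE:\cG\To\wPU(\hat{\cH})$, so the assignment $[\EE]\mto[P\EE]_{st}$ is at least a well-defined map of sets from $\wRExt(\cG,\uc)$ to $\Hom_{\RG}(\cG,\wPU(\hat{\cH}))_{st}$.

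For the homomorphism property, given $\EE_i=(\wGa_i,\Ga_i,\del_i,Z_i)$, $i=1,2$, I would produce a canonical isomorphism $P(\EE_1\hat{\otimes}\EE_2)\cong P\EE_1\otimes P\EE_2$ in $\Hom_{\RG}(\cG,\wPU(\hat{\cH}))_{st}$. The key step is a Real $\wU(\hat{\cH}\hat{\otimes}\hat{\cH})$-equivariant identification $\hat{\sU}_{\wGa_1\hat{\otimes}_\uc\wGa_2}\cong\hat{\sU}_{\wGa_1}\hat{\otimes}_{\uc}\hat{\sU}_{\wGa_2}$ coming from the natural isomorphism of graded Hilbert bundles $\sH_{\wGa_1\hat{\otimes}_\uc\wGa_2}\cong\sH_{\wGa_1}\hat{\otimes}\sH_{\wGa_2}$; indeed, a $\uc$-equivariant $L^2$-function on the Baer-sum groupoid $\wGa_1\hat{\otimes}_\uc\wGa_2$ corresponds by partial integration of the $\uc$-action to an element of the graded tensor product of the two individual Hilbert bundles. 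Passing to the projective quotient and composing with the chosen Rg isomorphism $\hat{\cH}\hat{\otimes}\hat{\cH}\cong\hat{\cH}$ yields the desired identification; the arguments of the preceding lemma then transport it along the Morita equivalence $Z_1\hat{\otimes}Z_2$ to $\cG$. The inverse in $\wRExt(\cG,\uc)$ is represented by the opposite extension $\EE^{\op}$ (given by reversing the $\uc$-action), and a parallel argument shows $P(\EE^{\op})\cong (P\EE)^\ast$ stably, so the inverse is respected.

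For $\mathscr{T}\circ\mathscr{P}'=\Id$, I would unwind the definitions: write $P\EE=\PP\hat{\sU}_{\wGa}\circ Z^{-1}$ and note that the pullback of the generic extension $\EE_{\wK_0}=(\wU(\hat{\cH})\to\wPU(\hat{\cH}),\partial)$ along a generalized morphism factors through the pullback along $\PP\hat{\sU}_{\wGa}:\Ga\To\wPU(\hat{\cH})$. By construction of $\PP\hat{\sU}_{\wGa}$ as the projectivization of the Real $\wU(\hat{\cH})$-principal bundle $\hat{\sU}_{\wGa}\To Y$, the pullback $(\PP\hat{\sU}_{\wGa})^\ast\wU(\hat{\cH})$ is canonically identified with $\hat{\sU}_{\wGa}$ itself, equipped with the grading $\partial\circ\PP\hat{\sU}_{\wGa}$; the latter agrees with $\del$ in view of the grading~\eqref{eq:df-sH_Ga-grading} on $\sH_{\wGa}$. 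Now $\hat{\sU}_{\wGa}\To\Ga$ is a Real $\wU(\hat{\cH})$-principal bundle over the Real groupoid $\xymatrix{\wGa\dar[r]&Y}$, and the tautological map $\wGa\ni\tilde{\g}\mto\tilde{\g}\cdot u_0$ (for any choice of global section, locally) exhibits a Morita equivalence of Rg $\uc$-central extensions of $\Ga$ between $\wGa\To\Ga$ and $(\PP\hat{\sU}_{\wGa})^\ast\EE_{\wK_0}$. Transporting along $Z$ gives $(P\EE)^\ast\EE_{\wK_0}\sim\EE$ as Rg $\uc$-central extensions of $\cG$, i.e.\ $\mathscr{T}(\mathscr{P}'([\EE]))=[\EE]$.

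The main obstacle is the homomorphism step: tracking Real structures, $\ZZ_2$-gradings, and the anti-diagonal $\uc$-action through the Baer sum and the identification of $\uc$-equivariant $L^2$-sections requires careful bookkeeping, in particular to ensure that the isomorphism $\sH_{\wGa_1\hat{\otimes}_\uc\wGa_2}\cong\sH_{\wGa_1}\hat{\otimes}\sH_{\wGa_2}$ is genuinely Real and graded-equivariant, and that the resulting map on projective unitary bundles is $\cG$-equivariant. The other parts reduce to essentially formal unwinding once the constructions of Section~7 are in hand.
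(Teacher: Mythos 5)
Your well-definedness step and your argument for $\mathscr{T}\circ\mathscr{P}'=\Id$ are fine; the paper simply cites~\cite[Proposition 2.38]{TLX} for the latter, and your unwinding of $(\PP\hat{\sU}_{\wGa})^\ast\EE_{\wK_0}$ via the tautological $\wGa$-$\wU(\hat{\cH})$-bibundle $\hat{\sU}_{\wGa}$ is essentially the content of that citation. The problem is the homomorphism step, and it is not a matter of bookkeeping: the isomorphism you rely on, $\sH_{\wGa_1\hat{\otimes}\wGa_2}\cong\sH_{\wGa_1}\hat{\otimes}\sH_{\wGa_2}$, is false. Fibrewise, $L^2(\wGa_i^y)^{\uc}$ is the space of $L^2$-sections of a Hermitian line bundle $L_i$ over $\Ga^y$, and $L^2((\wGa_1\hat{\otimes}\wGa_2)^y)^{\uc}$ is the space of $L^2$-sections of the \emph{fibrewise} tensor product $L_1\otimes L_2$ over the same base $\Ga^y$; this is the balanced tensor product $L^2(\wGa_1^y)^{\uc}\hat{\otimes}_{L^2(\Ga^y)}L^2(\wGa_2^y)^{\uc}$ (balanced over the $\cC_c(\Ga^y)$-action), whereas the plain graded Hilbert-space tensor product $L^2(\wGa_1^y)^{\uc}\hat{\otimes}L^2(\wGa_2^y)^{\uc}$ is the space of sections of the external product $L_1\boxtimes L_2$ over $\Ga^y\times\Ga^y$. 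Your ``partial integration'' would need a natural unitary between $L^2$ of the fibred product $\wGa_1^y\times_{\Ga^y}\wGa_2^y$ and $L^2$ of the full product, and there is none.

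This discrepancy is precisely why the target of $\mathscr{P}'$ has to be the stable quotient. The paper absorbs the missing $L^2(\Ga^y)$ multiplicity by introducing the trivial generalized Real homomorphism $pr\circ\sU(\Ga)$, where $\sU(\Ga)$ is built from the fibres $L^2(\Ga^y)\otimes\cH$, and constructs an isomorphism
\[
\PP\hat{\sU}_{\wGa_1}\otimes\PP\hat{\sU}_{\wGa_2}\otimes(pr\circ\sU(\Ga))\;\cong\;\PP\hat{\sU}_{\wGa_1\hat{\otimes}\wGa_2},
\]
which only yields $[\PP\hat{\sU}_{\wGa_1\hat{\otimes}\wGa_2}]\sim_{st}[\PP\hat{\sU}_{\wGa_1}\otimes\PP\hat{\sU}_{\wGa_2}]$, i.e.\ additivity after passing to $\Hom_{\RG}(\cG,\wPU(\hat{\cH}))_{st}$. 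You should replace your claimed natural isomorphism by this stable one (or an equivalent device); as written, the central step of your additivity argument would fail. (Your extra discussion of inverses via the opposite extension is unnecessary once additivity between groups is established, but it is not harmful.)
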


\begin{proof}
The second statement follows from~\cite[Proposition 2.38]{Tu-Xu-Laurent-Gengoux:Twisted_K}.

For the first statement, we shall first check that $[P\EE]$ is trivial if $\EE$ is. But, thanks to the previous lemma, it suffices to show that $P\EE_0$ comes from a generalised Real homomorphism $\mathbf{U}:\cG\To \U^0(\hat{\cH})$, where $\EE_0$ is the trivial extension $(\cG\times\uc,\cG,0)$. This is obvious since $L^2(\cG^x\times\uc)^{\uc}\cong L^2(\cG^x)$, which implies in particular that there is a canonical Real graded $\cG$-action on the Real (trivially) graded Hilbert bundle $\sH_{\cG\times\uc}$.

Let $[\EE_1],[\EE_2]\in \wRExt(\cG,\uc)$. For the sake of simplicity, we shall assume $\EE_1$ and $\EE_2$ are represented by Rg $\uc$-twists $(\wGa_1,\Ga,\del_1)$ and $(\wGa_2,\Ga,\del_2)$ over the same Real groupoid $\gamgpd$ Morita equivalent to $\cG$. Let $\mu_i$ be a Real Haar system of $\wGa_i,i=1,2$. Then $\wGa_1\hat{\otimes}\wGa_2$ is equipped with the Real Haar system $\mu_1\times_{\uc}\mu_2$, and $\Ga$ is equipped with the image of the latter (here $\mu_1\times_{\uc}\mu_2$ is meant to say the product measure is invariant under the diagonal action by $\uc$). For $y\in Y$, we denote by  $\cC_c(\wGa^y_1;\cH)^{\uc}\hat{\otimes}_{\cC_c(\Ga^y)}\cC_c(\wGa_2;\cH)^{\uc}$ the completion, with respect to the inductive limit topology in $\cC_c(\wGa^y_1\hat{\otimes}\wGa_2^y)^{\uc}$, of the $\cC_c(\Ga^y)$-linear span of

\[ \left\{\xi_1\hat{\odot}\xi_2 \mid \xi_1\in \cC_c(\wGa^y_1)^{\uc},\xi_2\in \cC_c(\wGa_2^y)^{\uc}\right\},\]
where $\xi_1\hat{\odot}\xi_2\in \cC_c(\wGa_1^y\hat{\otimes}\wGa_2)^{\uc}$ is defined by $[(\tilde{\g}_1,\tilde{\g}_2)]\mto \xi_1(\tilde{\g}_1)\xi_2(\tilde{\g}_2)$, and where $\cC_c(\Ga^y)$ acts on $\cC_c(\wGa_i^y)^{\uc}$ by the formulas: $(\xi_1\cdot \phi)(\tilde{\g}_1)=\xi_1(\tilde{\g}_1)\phi(\pi_1(\tilde{\g}_1))$, and $(\phi\cdot \xi_2)(\tilde{\g}_2)=\phi(\pi_2(\tilde{\g}_2))\xi_2(\tilde{\g}_2)$ for $\xi_1\in \cC_c(\wGa_1^y)^{\uc},\phi\in \cC_c(\Ga^y),\xi_2\in \cC_c(\wGa^y_2)$ and $\tilde{\g}_i\in \wGa_i^y,i=1,2$. Then, passing to the $L^2$-norms, the graded Hilbert spaces $L^2(\wGa_1^y\hat{\otimes}\wGa_2^y)^{\uc}$ and $L^2(\wGa_1^y)^{\uc}\hat{\otimes}_{L^2(\Ga^y)}L^2(\wGa_2^y)^{\uc}$ are isomorphic.

Define a generalised Real homomorphism $\sU(\Ga):\Ga\To \U^0(\hat{\cH})$ by the Real field $$\sU(\Ga):=\coprod_{y\in Y}\U^0(\hat{\cH},L^2(\Ga^y)\otimes\cH),$$
where the Real $\Ga$-action is induced by the Real $\Ga$-action on the Rg Hilbert $\Ga$-bundle $\tilde{\sH}(\Ga)=\coprod_{y\in Y}L^2(\Ga^y)\otimes\cH\To Y$ defined by $\g\cdot(s(\g),\xi):= (r(\g),\g\cdot \xi)$, with $(\g\cdot\xi)(h):=\xi(\g^{-1}h)$ for all $h\in \Ga^{r(\g)}$ (note that the grading of $\tilde{\sH}(\Ga)$ is carried by $L^2(\Ga^y)$ and is given by $(\del_1\otimes\del_2)\xi(\g):=(-1)^{\del_1(\g)+\del_2(\g)}\xi(\g)$).
Then, remarking that $pr\circ \sU(\Ga)=\sU(\Ga)/\uc$, we define an isomorphism of Real $\wPU(\hat{\cH})$-principal bundles over $\gamgpd$
\begin{equation}~\label{eq:isom:P_Ga_1xP_Ga_2}
\begin{array}{lll}
\PP\hat{\sU}_{\wGa_1} \otimes \PP\hat{\sU}_{\wGa_2}\otimes (pr\circ \sU(\Ga)) & \stackrel{\cong}{\To} & \PP\hat{\sU}_{\wGa_1\hat{\otimes}\wGa_2} \\

[(y,u_1)]\otimes [(y,u_2)] \otimes [(y,v)] & \mto & [(y,v\cdot (u_1\hat{\odot}u_2))]
\end{array}
\end{equation}
as follows: given $\xi\in \hat{\cH}$, we write $u_i(\xi)=\sum_{j}\phi_i^j\otimes \eta_i^j$, $i=1,2$, where $\phi_i^j\in L^2(\wGa^y_i)^{\uc}, \eta_i^j\in \cH$, and similarly, $v(\xi)=\sum_j\psi^j\otimes \zeta^j \in L^2(\Ga^y)\otimes \cH$; then the unitary $v\cdot (u_1\hat{\odot}u_2)$ is defined by 
\begin{equation}~\label{eq:df:v.(u_1-otimes-u_2)}
\begin{array}{ll}
(v\cdot (u_1\hat{\odot}u_2))(\xi)& :=\sum_j\left(\psi^j\cdot \phi_1^j\hat{\odot}\phi_2^j\right)\otimes \eta_1^j\otimes \eta_2^j\otimes\zeta^j \\
 & \in L^2(\wGa_1^y\otimes \wGa^y_2)^{\uc}\otimes\cH\otimes\cH\otimes\cH\cong L^2(\wGa_1^y\otimes \wGa^y_2)^{\uc}\otimes\cH.
\end{array}
\end{equation} 
Thus $[\PP\hat{\sU}_{\wGa_1\hat{\otimes}\wGa_2}]\sim_{st} \ [\PP\hat{\sU}_{\wGa_1}\otimes \PP\hat{\sU}_{\wGa_2}]$ in $\Hom_{\RG}(\Ga,\wPU(\hat{\cH}))$. Therefore, by functoriality, we have $[P(\EE_1\hat{\otimes}\EE_1)]\sim_{st} \ [(P\EE_2)\otimes (P\EE_2)]$, which means that $\mathscr{P}'([\EE_1]+[\EE_2])=\mathscr{P}'([\EE_1])+\mathscr{P}'([\EE_2])$.	
\end{proof}

\begin{lem}~\label{lem:Hom_RG-vs-ExtR}
We have $\mathscr{P}'\circ \mathscr{T}=\Id$; consequently, we have a group isomorphism 
\[\Hom_{\RG}(\cG,\wPU(\hat{\cH}))_{st}\cong \wRExt(\cG,\uc).\]	
\end{lem}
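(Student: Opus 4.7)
The plan is to leverage $\mathscr{T}\circ\mathscr{P}'=\Id$ from Lemma~\ref{lem:e-rond-p=1} and establish the reverse identity $\mathscr{P}'\circ\mathscr{T}=\Id$ via the isomorphism $\mathscr{P}:\wRBr_0(\cG)\stackrel{\cong}{\To}\Hom_{\RG}(\cG,\wPU(\hat{\cH}))_{st}$ of Theorem~\ref{thm:first-isom-BrR}. Given $[P]_{st}$, set $\EE:=P^\ast\EE_{\wK_0}$ and aim to show $[P\EE]_{st}=[P]_{st}$; by the injectivity of $\mathscr{P}$ this reduces to proving the isomorphism of Rg D-D bundles $\wK_0^{P\EE} \cong \wK_0^{P}$ in $\wRBr_0(\cG)$.

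To exhibit this isomorphism I represent $P$ by a strict Real morphism $p:\cG[\cU]\To\wPU(\hat{\cH})$ on a Real open cover $\cU$ of $X$, so that $\EE$ is Morita equivalent to the concrete Rg $\uc$-twist $(\wGa,\cG[\cU],\partial\circ p)$ with $\wGa=\wU(\hat{\cH})\times_{pr,\wPU(\hat{\cH}),p}\cG[\cU]$. The decisive observation is that $\wGa$ is the $p$-pullback of the tautological $\uc$-bundle $\wU(\hat{\cH})\To\wPU(\hat{\cH})$, so its associated line bundle, tensored fiberwise with $\hat{\cH}$, is canonically trivialized by the pairing $(u,c)\otimes\eta\mapsto c\cdot u\eta$. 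Translating this pairing into the $L^2$-setting of~\eqref{eq:df-sH_Ga}, I obtain a canonical Real $\cG[\cU]$-equivariant unitary
\[
\sH_{\wGa,y} \;\cong\; \hat{\cH}\otimes L^2(\cG[\cU]^y)\otimes \cH,
\]
in which $\cG[\cU]$ acts projectively on the first factor via $p$, by left translation on the second, and trivially on the third; the Real involutions and $\ZZ_2$-gradings split accordingly.

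Passing to the associated bundle of compact operators, invoking $\hat{\cH}\hat{\otimes}\hat{\cH}\cong\hat{\cH}$ to identify $\cK(\sH_{\wGa,y})\cong \cK(\hat{\cH})\hat{\otimes}\cK(L^2(\cG[\cU]^y))\hat{\otimes}\cK(\cH)$, and descending through the canonical Morita equivalence $Z:\cG\sim\cG[\cU]$, the identification above yields an isomorphism in $\fwRBr(\cG)$
\[
\wK_0^{P\EE} \;\cong\; \wK_0^{P} \,\hat{\otimes}_X\, \wK_\cG \,\hat{\otimes}_X\, (X\times\cK(\cH)).
\]
Both extra factors represent the zero element of $\wRBr_0(\cG)$ by Lemma~\ref{lem:A+K_G=A} and the triviality criterion for Rg D-D bundles of the form $\cK(\hat{\sH})$ (cf.~the lemma preceding Corollary~\ref{cor:characterization_Morita_BrR}), so they may be absorbed, giving $\wK_0^{P\EE}=\wK_0^P$ in $\wRBr_0(\cG)$ and hence $[P\EE]_{st}=[P]_{st}$ by Theorem~\ref{thm:first-isom-BrR}.

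The main obstacle is the explicit equivariant trivialization $\sH_{\wGa,y}\cong\hat{\cH}\otimes L^2(\cG[\cU]^y)\otimes\cH$: one must verify that the pairing $(u,c)\otimes\eta\mapsto cu\eta$ globalizes into a unitary of Rg Hilbert $\cG[\cU]$-bundles intertwining the Real structure~\eqref{eq:df-sH_Ga-involution}, the twisted $\ZZ_2$-grading~\eqref{eq:df-sH_Ga-grading}, and the left $\wGa$-action underlying the definition of $\PP\hat{\sU}_{\wGa}$. Once these compatibilities are in place, the absorption of the trivial Rg D-D summands is routine.
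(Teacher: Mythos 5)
Your proof is correct, but it takes a genuinely different route from the paper's. The paper works directly at the level of principal bundles: it represents $[P]$ by a strict morphism $p:\cG[\cU]\To\wPU(\hat{\cH})$, observes that the pulled-back twist is $\wGa=p^\ast\wU(\hat{\cH})$ over $\cG[\cU]$, and identifies $\PP\hat{\sU}_{\wGa}$ with the generalized homomorphism $P'=\coprod_jU_j\times\wPU(\hat{\cH})$ induced by $p$ --- thereby obtaining an actual isomorphism of generalized Real homomorphisms $P\EE\cong P$, not merely a stable one. You instead descend to the Brauer group: using that $\mathscr{P}$ of Theorem~\ref{thm:first-isom-BrR} is a well-defined isomorphism with $\mathscr{P}(\mathbb{A}([Q]))=[Q]_{st}$, you reduce to showing $\wK_0^{P\EE}=\wK_0^{P}$ in $\wRBr_0(\cG)$ and then exhibit $\cK(\sH_{\wGa})$ as $\wK_0^{P'}$ tensored with summands killed by Lemma~\ref{lem:A+K_G=A} and the triviality criterion. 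The two arguments share the same kernel --- the tautological trivialization of the line bundle of $p^\ast\wU(\hat{\cH})$ after tensoring with $\hat{\cH}$, via $(u,c)\otimes\eta\mapsto cu\eta$ --- but your detour buys the right to work modulo trivial factors (so the bookkeeping of where the $\hat{\cH}$ factor sits in $\sH_{\wGa,y}=L^2(\wGa^y)^{\uc}\otimes\cH$ can be absorbed into the stabilization), at the cost of concluding only stable equality, which is all the lemma requires. One small caveat: your displayed identification $\sH_{\wGa,y}\cong\hat{\cH}\otimes L^2(\cG[\cU]^y)\otimes\cH$ is not quite canonical as written, since the pairing trivializes $L\otimes\hat{\cH}$ rather than $L$ itself; the clean statement is $L^2(\wGa^y)^{\uc}\hat{\otimes}\hat{\cH}\cong L^2(\cG[\cU]^y)\otimes\hat{\cH}$, so one should first tensor with an extra $\wK_0$ (harmless in $\wRBr_0$) before splitting off the factors. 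With that adjustment, and the equivariance/Reality checks you already flag, the argument goes through.
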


\begin{proof}
In view of Lemma~\ref{lem:e-rond-p=1}, we only have to verify that $\mathscr{P}'\circ \mathscr{T}=\Id$. Let $[P]\in \Hom_{\RG}(\cG,\wPU(\hat{\cH}))$ be represented by a pair $[(\cU,p)]\in \Hom_{\RG_\Om}(\cG,\wPU(\hat{\cH}))$. Recall (cf.~\cite[Proposition 1.33]{Moutuou:Real.Cohomology}) that $P\cong P'\circ Z_{\iota_\cU}^{-1}$, where $\iota_\cU:\cG[\cU]\hookrightarrow \cG$ is the canonical Real inclusion and $P'$ is (isomorphic to) the generalised Real homomorphism induced from the strict Real homomorphism $p:\cG[\cU]\To \wPU(\hat{\cH})$. Notice that $P'=\coprod_jU_j \times \wPU(\hat{\cH})$ together with the Real $\cG[\cU]$-action $g_{j_0j_1}\cdot (s(g)_{j_1},[u]):=(r(g)_{j_0},[u])$. On the other hand, there is canonical Real $\cG[\cU]$-action on the Rg Hilbert bundle $\tilde{\sH}_{p^\ast \wU(\hat{\cH})}$, hence $P\hat{\sU}_{p^\ast\wU(\hat{\cH})}=\wPU(\hat{\cH},\tilde{\sH}_{p^\ast\wU(\hat{\cH})})\cong \coprod_jU_j\times\wPU(\hat{\cH})=P'$. It follows that $P\hat{\sU}_{p^\ast\wU(\hat{\cH})}\circ Z_{\iota_\cU}^{-1}\cong P'\circ Z_{\iota_\cU}^{-1}=P$, and hence $\mathscr{P}'(\mathscr{T}([P]))=[P]$.	
\end{proof}

\begin{proof}[Proof of Theorem~\ref{thm:main-RBr}]
By~\cite[Theorem 2.60]{Moutuou:Real.Cohomology}, there is an isomorphism 
\[dd: \wRExt(\cG,\uc)\stackrel{\cong}{\To} \check{H}R^1(\cG_\bullet,\ZZ_2)\ltimes \check{H}R^2(\cG_\bullet,\uc),\]
where the operation in the semi-direct product is $(\del,\om)+(\del',\om'):=(\del+\del',(\del\smile \del')\cdot \om\cdot\om')$.
Therefore, by Theorem~\ref{thm:first-isom-BrR} and Lemma~\ref{lem:Hom_RG-vs-ExtR}: we have an isomorphism 
\[ dd\circ \mathscr{T}\circ \mathscr{P}: \wRBr_0(\cG) \To \check{H}R^1(\cG_\bullet,\ZZ_2)\ltimes\check{H}^2(\cG_\bullet,\uc).\] 
The result then follows from~\ref{pro:simplification-BrR}.
\end{proof}

%%% End of section 

\section{Oriented Rg D-D bundles}

For $\cA \in \wRBr(\cG)$, its image in $\check{H}R^0(\cG_\bullet,\Inv\wfK)\oplus \left(\check{H}R^1(\cG_\bullet,\ZZ_2)\ltimes\check{H}R^2(\cG_\bullet,\uc)\right)$ is called \emph{the D-D class of $\cA$} and is denoted by $DD(\cA)$. 

\begin{df}
A Rg D-D bundle $(\cA,\al)$ is called \emph{oriented} if its D-D class is of the form $(0,0,\frc)$; $(\cA,\al)$ is then of type $0$. By $\wRBr^+(\cG)$ we denote the subset of $\wRBr(\cG)$ consisting of oriented Rg D-D bundles.
\end{df}

Given a Rg D-D bundle $(\cA,\al)$ of type $0$, the Rg $\uc$-central extension obtained by the composite 
\[\wRBr_0(\cG) \stackrel{\mathscr{P}}{\To} \Hom_{\RG}(\cG,\wPU(\hat{\cH}))\stackrel{\mathscr{T}}{\To} \wRExt(\cG,\uc)\]
is called \emph{the associated Rg extension} and is denoted by $\EE_\cA$.	\\

It should be noted that the associated Rg extension $\EE_\cA$ of an oriented Rg D-D bundle is \emph{even} in the sense that the grading $\del$ of $\EE_\cA$ is the zero function. Of course Morita equivalence and tensor product of Rg D-D bundles preserve orientation. Thus $\wRBr^+(\cG)$ is a subgroup of $\wRBr_0(\cG)$. Indeed, using similar arguments as in~\cite[\S3.4]{Karoubi:Twisted}, we obtain the "Real analogue" of Kumjian-Muhly-Renault-Williams~\cite{Kumjian-Muhly-Renault-Williams:Brauer}.

\begin{thm}
Let $\grpd$ be a Real groupoid. Then
 \[ \wRBr^+(\cG)\cong \Hom_{\RG}(\cG,\wPU^0(\hat{\cH}))_{st}\cong \check{H}R^2(\cG_\bullet,\uc).\]	
\end{thm}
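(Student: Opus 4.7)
The plan is to restrict the two isomorphisms appearing in the proofs of Theorem~\ref{thm:first-isom-BrR} and Theorem~\ref{thm:main-RBr} to the subgroup $\wRBr^+(\cG)$ and show that the orientation condition amounts to killing the $\ZZ_2$-grading in $\check{H}R^1(\cG_\bullet,\ZZ_2)\ltimes\check{H}R^2(\cG_\bullet,\uc)$.

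First I would show that an oriented Rg D-D bundle $(\cA,\al)$ admits a generalized classifying morphism $P$ that factors through the degree-$0$ piece $\wPU^0(\hat{\cH})\subset \wPU(\hat{\cH})$. Concretely, from the construction in Section~\ref{par:classifying-morph}, the \v Cech $1$-cocycle $(a_{ij})$ coming from a Real local trivialization of $\cA$ determines the grading component $\del:\cG[\cU]\To \ZZ_2$ of the associated Rg $\uc$-central extension $\EE_\cA=P^\ast\EE_{\wK_0}$ as the composite $\partial\circ p$. Since $(\cA,\al)$ is oriented, this grading class vanishes in $\check{H}R^1(\cG_\bullet,\ZZ_2)$, and a standard refinement argument (replacing $\cU$ by a Real open cover on which the $\ZZ_2$-cocycle is trivialized) produces a representative $p:\cG[\cU]\To \wPU^0(\hat{\cH})$, whence $P$ is isomorphic to a generalized Real homomorphism with values in $\wPU^0(\hat{\cH})$. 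Conversely, $\mathbb{A}([P])$ of such a $P$ is clearly oriented, so $\mathbb{A}$ restricts to a surjection $\Hom_{\RG}(\cG,\wPU^0(\hat{\cH})) \twoheadrightarrow \wRBr^+(\cG)$ with kernel exactly $\range(pr)$ restricted to the even unitary group. By exactness of the analogue of sequence~\eqref{eq:exact-sequence-BrR} in the ``even'' setting, one obtains $\wRBr^+(\cG)\cong \Hom_{\RG}(\cG,\wPU^0(\hat{\cH}))_{st}$.

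Next, I would compose with the restriction of $\mathscr{T}$ to oriented objects. The image $\mathscr{T}([P])=[P^\ast\EE_{\wK_0}]$ of a $\wPU^0(\hat{\cH})$-valued $P$ is a Rg $\uc$-central extension whose grading $\del$ equals $\partial\circ p\equiv 0$, i.e.\ an \emph{even} Rg $\uc$-central extension. Conversely the construction $\EE\mto P\EE$ applied to an even extension produces (by formula~\eqref{eq:df-sH_Ga-grading}) a trivially graded Rg Hilbert bundle $\sH_{\wGa}$, so that the classifying morphism $\PP\hat\sU_{\wGa}\circ Z^{-1}$ takes values in $\wPU^0(\hat{\cH})$. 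Thus Lemma~\ref{lem:Hom_RG-vs-ExtR} restricts to a group isomorphism
\[
\Hom_{\RG}(\cG,\wPU^0(\hat{\cH}))_{st}\;\cong\; \wRExt^{ev}(\cG,\uc),
\]
where $\wRExt^{ev}(\cG,\uc)$ denotes the subgroup of even Real graded $\uc$-central extensions.

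Finally, under the Dixmier--Douady isomorphism $dd: \wRExt(\cG,\uc)\cong \check{H}R^1(\cG_\bullet,\ZZ_2)\ltimes\check{H}R^2(\cG_\bullet,\uc)$ invoked in the proof of Theorem~\ref{thm:main-RBr}, evenness of $\EE$ corresponds exactly to the vanishing of the first factor $\del\in \check{H}R^1(\cG_\bullet,\ZZ_2)$. The semi-direct product thus degenerates to the direct factor $\check{H}R^2(\cG_\bullet,\uc)$, yielding the second claimed isomorphism $\wRBr^+(\cG)\cong \check{H}R^2(\cG_\bullet,\uc)$. The main subtlety will be the refinement step establishing that an oriented bundle really admits a $\wPU^0(\hat{\cH})$-valued classifying morphism (rather than merely one whose grading cocycle is cohomologous to zero); this is where one uses that equivalent cocycles in $ZR^1(\cU,\wPU(\hat{\cH}))$ differ by a Real $0$-cochain whose degree component can be absorbed after passing to a refinement, exploiting the principal $\uc$-fibration $\wU^0(\hat{\cH})\To \wPU^0(\hat{\cH})$.
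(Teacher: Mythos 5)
Your proposal is correct and follows essentially the route the paper intends: the paper itself only records that the associated Rg extension $\EE_\cA$ of an oriented bundle is even and that orientation is preserved by Morita equivalence and tensor products, then refers to Karoubi's arguments for the rest, so your restriction of the exact sequence~\eqref{eq:exact-sequence-BrR}, of $\mathscr{T}$ and $\mathscr{P}'$, and of $dd$ to the even/degree-zero sub-objects is exactly the intended argument. The refinement step you single out (upgrading a classifying morphism whose grading cocycle $\partial\circ p$ is merely cohomologous to zero to one genuinely valued in $\wPU^0(\hat{\cH})$) is indeed the only point requiring real care, and your sketch of it is sound.
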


\begin{rem}
We shall note that the above result generalises J. Rosenberg classification of \emph{real continuous-trace $\cstar$-algebras} given in~\cite{Rosenberg:Continuous-trace}. Indeed, let $(X,\tau)$ be a compact Real space. Then $\wRBr^+(X)\cong \check{H}R^2(X,\uc)\cong \check{H}R^3(X,\ZZ^{0,1})$. Thus, if $\cA\in \wRBr^+(X)$ with Dixmier-Douady class $DD(\cA)=\al \in \check{H}R^3(X,\ZZ^{0,1})$, we have $\tau^\ast \al =-\al$, which coincides with~\cite[Proposition 3.1]{Rosenberg:Continuous-trace}. 	
\end{rem}

%%%%%%%%%%%% End of section 

%%%%%%%%%==============Appendix

\appendix

%%%%% Appendix A: Real graded cstar-algebras

\section{Rg $\cstar$-algebras}

Recall that \emph{a complexification} of a real Banach space  $(E,\|.\|)$ is a complex Banach space $(E_\CC,\|.\|_c)$ such that $E_\CC = E+iE$ as a complex linear space, the norm $\|.\|_c$ restricts to $\|.\|$ on $E$, and $\| \eta +i\xi \| = \| \eta -i\xi \| $ for all $\eta , \xi \in E$ (i.e., $E_\CC =E\otimes_\RR \CC$). Moreover, for any real Banach space $E$ , there is a unique (up to equivalence) complexification of it. We refer the reader to~\cite{Li:Real_algebras} for a general theory of real Banach spaces and real Banach ($^\ast$-)algebras and to~\cite[chap.1]{Schroder:KR-theory} for an extensive exposition of real $\cstar$-algebra. \\
In this way, associated to any real Banach ($^\ast$-)algebra $A$, there is a complex Banach ($^\ast$-)algebra $A_\CC=A\otimes_\RR \CC$. In particular, if $A$ is a real $\cstar$-algebra, then $A_\CC$ admits a structure of a complex $\cstar$-algebra. It is however natural to ask the following question

\begin{qst}~\label{qst:Reality}
Let $B$ be a complex $\cstar$-algebra. Does there exist a closed real $\cstar$-subalgebra $B_r$ of $B$ such that $B\cong B_r\otimes_\RR \CC$?
\end{qst}

Although it was mentioned in~\cite{Li:Real_algebras} that this question was remained open, the answer is in fact "no". Indeed, as we will see later, the existence of $B_r$ is equivalent to the existence of a conjugate-linear involution on $B$, which is also equivalent to $B$ being isomorphic to its conjugate algebra via a $2$-periodic isomorphism). But such an involution induces an involutory anti-automorphism $\vp:B\To B$ ({\it i.e}. $\vp$ verifies $\vp(ab)=\vp(b)\vp(a), \forall a,b\in B$ and $\vp^2=\id$). On the other hand, A. Connes~\cite{Connes:anti-isom} and T. Giordano~\cite{Giordano:Antiautomorphismes} have constructed examples of von Neumann algebras that are not anti-isomorphic to themselves. Very recently, other explicit examples of $\cstar$-algebras not isomorphic to their conjugate algebras have been constructed by N.C. Phillips~\cite{Phillips:anti-isom} and N.C. Phillips and M.G. Viola~\cite{Phillips-Viola:anti-isom}).  

We shall however point out that being anti-isomorphic to itself is not sufficient for a $\cstar$-algebra $B$ to admit a conjugate-linear involution, as it was proved by V. Jones in~\cite{Jones:anti-isom}. 

%%%%%
\subsection{Generalities}
In this subsection we are concerned with those ($\cstar$-)algebras for which Question~\ref{qst:Reality} has a positive answer.

\begin{df}
A \emph{Real ($\ZZ_2$)-graded} $\cstar$-algebra consists of a $\cstar$-algebra $A$ together with 
\begin{itemize}
\item[(i)] an involutive $^\ast$-homomorphism $\alpha: A\To A$ with $\alpha^2=\id$; $\alpha$ is called the \emph{grading};
\item[(ii)] an involutive $^\ast$-automorphism $\sigma_A: A\To A$ which is antilinear, such that $\sigma_A^2=\id$, and $\sigma_A \circ \alpha = \alpha \circ \sigma_A$. $\sigma_A$ is called the \emph{Real structure} of $A$. 
\end{itemize}
We will say that $A$ is a Rg $\cstar$-algebra, for short.
\end{df}

We will often write $(A,\sigma_A)$ for such a Rg $\cstar$-algebra and we decompose $A$ into the direct sum $A=A^{0}\oplus A^1$ where $A^0=\text{Ker}(\frac{\id-\alpha}{2})$ and $A^1=\text{Ker}(\frac{\id+\alpha}{2})$.

An element $a\in A$ is called homogeneous of degree $i$, for $i=0,1 \mod 2$  , if $a\in A^i$. $a$ is said to be \emph{invariant} if it is of degree $0$ and $\sigma_A(a)=a$. We write $|a|$ for the degree of an element $a\in A$. Moreover, it is easy to see that $A^0$ is a $\cstar$-subalgebra of $A$ while $A^1$ is not.\

\begin{ex}
Let $A=A^0\oplus A^1$ be a graded real $\cstar$-algebra. Then its complexification $A_\CC$ is also graded. Indeed, we have $A_\CC=A_\CC^0 \oplus A_\CC^1$. Now the \emph{bar operation} $^-: A_\CC \To A_\CC$ given by $\overline{a+ib}:=a-ib$ defines a Real structure on $A_\CC$.
For instance, any real $\cstar$-algebra $A$ gives rise to a Rg $\cstar$-algebra by taking $A^1=0$.
\end{ex}

\begin{ex}
Given a real $\cstar$-algebra $A$, the direct sum $A\oplus A$ admits a canonical grading given by $(a,b)\mto (b,a)$; then $(A\oplus A)^0 = \lbrace (a,a) \ | \ a\in A \rbrace$ and $(A\oplus A)^1 = \lbrace (a,-a) \ | \ a\in A \rbrace$. This induces a grading on the complex $\cstar$-algebra $A_\CC \oplus A_\CC$ which becomes a Real graded $\cstar$-algebra. This grading is called \emph{the standard odd grading}. In particular, the complex Clifford algebra $\CC l_1 = \CC \oplus \CC$ is a Rg $\cstar$-algebra with its canonical Real structure given by the complex conjugation (see~\cite{Schroder:KR-theory}, for instance).
\end{ex}

\begin{df}
Let $(A,\sigma_A)$ and $(B,\sigma_B)$ be Rg $\cstar$-algebras. A \emph{Real graded homomorphism} between $A$ and $B$ is a homomorphism of $\cstar$-algebras $\vp:A\To B$ that intertwines the Real structures and the gradings. 
\end{df}

In particular, we say that $(A,\sigma_A)$ and $(B,\sigma_B)$ are isomorphic as Rg $\cstar$-algebras, and we write $(A,\sigma_A)\cong (B,\sigma_B)$, if there exists a Rg isomorphism between them.

If $(A,\sigma_A)$ is a Rg $\cstar$-algebra, then the multiplier algebra $\cM(A)$ has also a structure of Real graded $\cstar$-algebra. Indeed, if $\epsilon$ is the grading on $A$ and $(T_1,T_2) \in \cM(A)$, we put $Ad_\epsilon(T_1,T_2):=(\epsilon T_1 \epsilon, \epsilon T_2 \epsilon)$ and  it is easy to see that this defines a grading on $\cM(A)$ with $\cM(A)^{(i)}=\lbrace    (T_1,T_2)\in \cM(A) \ | \ \epsilon T_k \epsilon =(-1)^i T_k, \ k=1,2 \rbrace$; moreover the Real structure is given by $$\sigma_A(T_1,T_2):= (\sigma_A T_1  \sigma_A, \sigma_A  T_2  \sigma_2).$$ A subspace $B$ of $A$ is \emph{Real graded} if it is invariant under $\sigma_A$ and if it is the direct sum of the intersections $B \cap A^i$ (or equivalently, if it is invariant under the grading of $A$). For instance, it is easy to check that the centre of any Rg $\cstar$-algebra is Rg.

Le $I$ be a Real graded ideal in $(A,\sigma)$. Let $[a]$ denote the class of $a$ in $A/I$, then we can show that the maps $\sigma ([a]):= [\sigma(a)]$ and $\epsilon([a]):=[\epsilon(a)]$, are well defined from $A/I$ to $A/I$, giving us a grading and a Real structure on the quotient $\cstar$-algebra $A/I$.

Now let us give the following simple characterisation of Rg $\cstar$-algebras.

\begin{lem}~\label{bar-isom}
Let $(A,\sigma_A)$ be a Rg $\cstar$-algebra. Then there exists a real $\ZZ_2$-graded $\cstar$-algebra $A_\RR$ such that $(A,\sigma_A) \cong (A_\RR \otimes_\RR \CC, ^-)$, where ($^-$) is the bar operation.
\end{lem}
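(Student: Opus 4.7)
The plan is to take $A_\RR$ to be the fixed-point set of the Real structure, namely
\[
A_\RR := \{a \in A \mid \sigma_A(a) = a\},
\]
and then exhibit the natural map $\Phi: A_\RR \otimes_\RR \CC \to A$, $\Phi(a \otimes \lambda) := \lambda a$, as the required Rg isomorphism. First I would check that $A_\RR$ is a real graded $\cstar$-algebra: it is closed under addition, real-scalar multiplication, multiplication, and the involution $*$ because $\sigma_A$ is a conjugate-linear $*$-automorphism with $\sigma_A^2=\id$, and it is norm-closed since $\sigma_A$ is continuous (indeed, every conjugate-linear $*$-automorphism of a $\cstar$-algebra is isometric). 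Since $\sigma_A$ commutes with the grading $\alpha$, the decomposition $A = A^0 \oplus A^1$ restricts to $A_\RR = A_\RR^0 \oplus A_\RR^1$ with $A_\RR^i = A_\RR \cap A^i$, so $A_\RR$ is a $\ZZ_2$-graded real $\cstar$-algebra.

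Next I would verify that every $a \in A$ admits a unique decomposition $a = x + iy$ with $x,y \in A_\RR$. Existence follows from the explicit formulas
\[
x := \tfrac{1}{2}(a + \sigma_A(a)), \qquad y := -\tfrac{i}{2}(a - \sigma_A(a)),
\]
both of which are fixed by $\sigma_A$ (using antilinearity: $\sigma_A(iy) = -i\sigma_A(y)$) and satisfy $x + iy = a$. Uniqueness follows because if $x + iy = 0$ with $x,y \in A_\RR$, applying $\sigma_A$ gives $x - iy = 0$, whence $x = y = 0$. This shows $\Phi$ is a bijective $\CC$-linear map.

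It is then straightforward to see that $\Phi$ is a $*$-homomorphism (multiplicativity: $\Phi((a\otimes\lambda)(b\otimes\mu)) = \lambda\mu \cdot ab = (\lambda a)(\mu b)$; and $\Phi((a\otimes \lambda)^*) = \bar\lambda a^*$), respects the grading by construction, and intertwines the Real structures: for $a \in A_\RR$ and $\lambda \in \CC$,
\[
\Phi(\overline{a \otimes \lambda}) = \Phi(a \otimes \bar\lambda) = \bar\lambda\, a = \bar\lambda\, \sigma_A(a) = \sigma_A(\lambda a) = \sigma_A(\Phi(a \otimes \lambda)).
\]
The only step requiring a little care is the compatibility of norms, which is the point where one could worry: one should argue that the $\cstar$-norm inherited from $A$ restricts on $A_\RR$ to a real $\cstar$-norm, and that the resulting complexification norm on $A_\RR \otimes_\RR \CC$ agrees with the $\cstar$-norm of $A$. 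This follows from uniqueness of the $\cstar$-norm on $A$ together with the fact that $\Phi$ is a $*$-algebra isomorphism onto a dense (in fact all of) $A$, so it is automatically isometric. The main obstacle, if any, is thus essentially this functional-analytic point; everything else is a direct verification.
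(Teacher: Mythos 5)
Your proposal is correct and follows essentially the same route as the paper: both take $A_\RR$ to be the fixed-point set of $\sigma_A$ and use the decomposition $a=\frac{a+\sigma_A(a)}{2}+i\bigl(\frac{a-\sigma_A(a)}{2i}\bigr)$ to identify $A$ with the complexification of $A_\RR$ equipped with the bar operation. Your additional attention to norm-closedness of $A_\RR$ and the compatibility of the $\cstar$-norms only fills in details the paper leaves as "very easy to check."
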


Mainly speaking, a Rg $\cstar$-algebra is just a graded complex $\cstar$-algebra which is the complexification of a graded real  $\cstar$-algebra, together with the bar operation. This justifies the terminology "\emph{Real}" used.

\begin{proof}
Put $A_\RR:=\lbrace a\in A \ | \ \sigma_A(a)=a \rbrace$. Then $A_\RR$ is a graded real $\cstar$-algebra. Moreover, it is very easy to check that the map $A \To A_\RR + iA_\RR, \ a\mto \frac{a+\sigma_A(a)}{2} + i(\frac{a-\sigma_A(a)}{2i})$ extends to an isomorphism of complex $\cstar$-algebras intertwining the Real structures and the gradings.
\end{proof}

\begin{rem}
Similarly, we will call \emph{Rg Banach space} any complex graded Banach space which is the complexification of a Banach space over $\RR$.
\end{rem}

\begin{ex}
Let $(X,\tau)$ be a (Hausdorff and locally compact) Real space. Then $\tau$ induces a Real structure, also denoted by $\tau$, on the $\cstar$-algebra $\Co(X)$ of complex valued functions on $X$ vanishing at infinity, given by $\tau(f)(x)=\overline{f(\tau(x))}$, for $f\in \Co(X), \ x\in X$. Therefore, from Lemma~\ref{bar-isom} we have $(\Co(X),\tau)\cong (\Co(X,\tau) \otimes_\RR \CC , ^-)$ where $\Co(X,\tau):= \lbrace f\in \Co(X) \ | f(\tau(x))=\overline{f(x)} , \ \forall x\in X \rbrace$ is the real $\cstar$-algebra of invariant elements of $(\Co(X),\tau)$.
\end{ex}

We must also say something about the tensor product of two Real graded $\cstar$-algebras. This paragraph is a direct adaptation of~\cite{Parker:Brauer} to the Real case. Let $(A,\sigma)$ be a Real graded $\cstar$-algebra. A \emph{Real graded linear functional} on $A$ is a linear functional $f:A\To \CC$ such that $f_{|_{A^1}}=0$ and $f(\sigma(a))=\overline{f(a)}$ for all $a\in A$. A Real graded state on $A$ is a positive linear functional $s$ on $A$ such that $\| s\| =1$. Suppose that $(A,\sigma)$ and $(B,\varsigma)$ are separable, Real graded  $\cstar$-algebras, then $(A\hat{\odot}B,\sigma\hat{\odot}\varsigma)$ denotes the algebraic Real graded tensor product of $A$ and $B$, where elements are graded be $|a\hat{\odot}b| = |a| +|b|$, and the Real structure is given by $\sigma\hat{\odot}\varsigma(a\hat{\odot}b):=\sigma(a)\hat{\odot}\varsigma(b)$. The product and involutions are defined by 
$$(a\hat{\odot}b)(a'\hat{\odot}b'):=(-1)^{|b||a'|}(aa'\hat{\odot}bb'),$$ $$(a\hat{\odot}b)^\ast:=(-1)^{|a||b|}(a^\ast \hat{\odot}b^\ast).$$

Now if $s$ and $t$ are Real graded states on $A$ and $B$ respectively, let $$(s\hat{\odot}t)(c^\ast c):= \sum_{i,j=1}^n s(a_i^\ast a_j)t(b_i^\ast b_j),$$ for $c=\sum_{i=1}^n a_i\hat{\odot}b_i \in A\hat{\odot}B$. Then $s\hat{\odot}t$ is a Real graded state on $A\hat{\odot}B$. We define a $\cstar$-norm on $A\hat{\odot}B$ by $$\Vert c \Vert:= \underset{s,t,d}{sup}\frac{(s\hat{\odot}t)(d^\ast c^\ast cd)}{(s\hat{\odot}t)(d^\ast d)},$$ where the supremum is taken over all Real graded states $s$ on $A$, $t$ on $B$, and over all $d\in A\hat{\odot}B$ with $(s\hat{\odot}t)(d^\ast d) \neq 0$. The completion of $A\hat{\odot}B$ with respect to this norm is a graded $\cstar$-algebra denoted by $A\hat{\otimes}B$; moreover, $\sigma\hat{\odot}\varsigma$ extends to a Real involution on $A\hat{\otimes}B$ which gives a Real graded $\cstar$-algebra $(A\hat{\otimes}B,\sigma\hat{\otimes}\varsigma)$ called the {\it (Real graded) tensor product} of $(A,\sigma)$ and $(B,\varsigma)$.

%====================End of section

%%%%
\subsection{Elementary graded complex $\cstar$-algebras}

A complex graded $\cstar$-algebra $A$ is called \emph{elementary of parity} $0$ (resp. \emph{of parity} $1$) if it isomorphic as a graded $\cstar$-algebra to $\cK(\hat{\cH})$ (resp. to $\cK(\cH)\oplus \cK(\cH)$), where $\hat{\cH}$ (resp. $\cH$) is a complex graded Hilbert space (resp. a complex Hilbert space),  and $\cK(\cH)\oplus \cK(\cH)$ is equipped with the standard odd grading.

\begin{ex}[The complex Clifford $\cstar$-algebras]~\label{ex:Clifford}
The complex Clifford $\cstar$-algebras $\Cl_p$ can be defined as graded $\cstar$-algebras of compact operators in the following way. If $p=2m$, $\Cl_p$ is $\Cl_{2m}:=\cK(\CC^{2^{m-1}}\oplus \CC^{2^{m-1}})$ equipped with the standard even grading $\text{Ad}_\epsilon$, where $\epsilon=\left(\begin{array}{ll}0&1\\1&0\end{array}\right)$; if $p=2m+1$ is odd, then $\Cl_{2m+1}:=\cK(\CC^{2^m})\oplus \cK(\CC^{2^m})$ with the standard odd grading. We then see that the $\Cl_{2m}$'s are graded elementary $\cstar$-algebras of parity $0$, while the $\Cl_{2m+1}$'s are graded elementary $\cstar$-algebras of parity $1$. Moreover, these algebras verify $\Cl_p\hat{\otimes}\Cl_q \cong \Cl_{p+q}$ as graded $\cstar$-algebras (see for instance~\cite[\S.14.5]{Blackadar:K-theory},~\cite{Atiyah-Bott-Schapiro:Clifford}).	
\end{ex}

For the sake of simplicity, we assume in what follows that $\cH$ is a complex separable infinite-dimensional Hilbert space. Then, by choosing an isomorphism $\cH\cong \cH\oplus \cH$, we have a complex graded Hilbert space $\hat{\cH}:=\cH\oplus \cH=(\cH\oplus\cH)^{0}\oplus (\cH\oplus\cH)^{1}$, where the grading is given by $(x,y)\mto (y,x)$. We thus obtain a complex graded elementary $\cstar$-algebra $\what{\cK}_{ev}:=\cK(\hat{\cH})$ of parity $0$ (here "\emph{ev}" stands for \emph{even}) whose grading automorphism is the unitary $\begin{pmatrix}0&1\\1&0\end{pmatrix}$. We also get a graded elementary $\cstar$-algebra $\what{\cK}_{odd}:=\cK(\cH)\oplus \cK(\cH)$ with the standard odd grading. The next subsections are aimed at describing the Real structures of $\what{\cK}_{ev}$ and $\what{\cK}_{odd}$.

%%% End of section

\subsection{Real structures on $\what{\cK}_{ev}$}

\begin{df}~\label{df:structures_H}
A {\it Real structure} (resp. {\it quaternionic structure}) on $\what{\cH}$ is a homogeneous anti-unitary $J:\what{\cH}\To \what{\cH}$ such that $J^2=1$ (resp. such that $J^2=-1$). \\
Real structures on $\what{\cH}$ will be denoted as $J_{\RR}$,  or as $J_{i,\RR}, i=0,1$ if we need to emphasise the degree $i$ of $J_{\RR}$. Similarly, quaternionic structures will be denoted as $J_{\HH}$, or $J_{i,\HH}, i=0,1$.  	
\end{df}

Given a Real structure $J_\RR:\what{\cH}\To \what{\cH}$, its $(+1)$-eigenspace $\what{\cH}_{J_{\RR}}:=\{x\in \what{\cH} \mid J_\RR(x)=x\}$ (that we will also denote by $\what{\cH}_\RR$ if there is no risk of confusion) is a real graded separable infinite-dimensional Hilbert space such that $\what{\cH}\cong \what{\cH}_{J_\RR}\otimes_{\RR}\CC$. Furthermore, there exists an orthonormal basis $\{e_n\}_{n\in \NN}$ of $\what{\cH}$, unique up to conjugation with homogeneous elements in the orthogonal group $O(\what{\cH}_{J_\RR})$, such that $J_\RR$ is given by $J_\RR(x):=\sum_n\bar{x}_ne_n$ for all $x=\sum_n x_ne_n\in \what{\cH}$. Writing $J_\RR$ in this form, we get the following straightforward lemma. 

\begin{lem}
Let $J_\RR$ be as above. Define $\sigma_\RR: \what{\cK}_{ev}\To \what{\cK}_{ev}$ by $\sigma_{\RR}(T):=J_\RR TJ_\RR$. Then $\sigma_\RR$ is a Real structure on $\what{\cK}_{ev}$ such that $(\what{\cK}_{ev})_{\sigma_\RR}\cong \cK_\RR(\what{\cH}_\RR)$ as real graded $\cstar$-algebras.	
\end{lem}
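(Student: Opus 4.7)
The plan is to verify the two assertions in turn: first, that $\sigma_\RR$ satisfies the axioms of a Real structure on the graded $\cstar$-algebra $\what{\cK}_{ev}$; second, that its fixed-point subalgebra is precisely $\cK_\RR(\what{\cH}_\RR)$.

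For the first part I would check the axioms one by one, using only the defining properties of $J_\RR$ (anti-unitary, homogeneous, and $J_\RR^2 = 1$). Since $J_\RR$ is conjugate-linear, the composition $J_\RR T J_\RR$ is $\CC$-linear for every $\CC$-linear $T$, so $\sigma_\RR$ maps $\what{\cK}_{ev}$ to itself and scalars pull out as conjugates, giving conjugate-linearity. Involutivity is immediate from $J_\RR^2 = 1$. Multiplicativity follows by inserting $J_\RR^2 = 1$ between the two factors: $\sigma_\RR(TS) = J_\RR T J_\RR\cdot J_\RR S J_\RR = \sigma_\RR(T)\sigma_\RR(S)$, while $\sigma_\RR(T^*) = \sigma_\RR(T)^*$ is the standard identity $(J_\RR T J_\RR)^* = J_\RR T^* J_\RR$ for anti-unitary $J_\RR$. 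Finally, $J_\RR$ being homogeneous, conjugation by $J_\RR$ shifts degrees by $2|J_\RR| \equiv 0 \pmod 2$, so $\sigma_\RR$ preserves the $\ZZ_2$-grading and commutes with the grading automorphism.

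For the second part I would exhibit the restriction map
\[
\Phi : (\what{\cK}_{ev})_{\sigma_\RR} \To \cK_\RR(\what{\cH}_\RR), \qquad T \mto T|_{\what{\cH}_\RR},
\]
as the required isomorphism. An operator $T$ is $\sigma_\RR$-invariant precisely when $T J_\RR = J_\RR T$, and such a $T$ sends the $(+1)$-eigenspace $\what{\cH}_\RR$ of $J_\RR$ into itself, where it acts $\RR$-linearly. Conversely, every $\RR$-linear compact operator on $\what{\cH}_\RR$ extends uniquely by $\CC$-linearity to $\what{\cH} = \what{\cH}_\RR \otimes_\RR \CC$, and this extension automatically commutes with $J_\RR$; in the orthonormal basis $\{e_n\}$ adapted to $J_\RR$, this is just the identification of real matrices with complex matrices having real entries. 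The map $\Phi$ is clearly isometric, $\ast$-preserving, multiplicative, and (because $\what{\cH}_\RR$ inherits the grading of $\what{\cH}$) grading-preserving, and therefore defines the desired isomorphism of real graded $\cstar$-algebras.

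There is no serious obstacle: the statement is essentially the operator-algebraic reformulation of the standard fact that a complex $\cstar$-algebra admits a Real structure if and only if it is the complexification of a real $\cstar$-algebra, realised here through conjugation by an anti-unitary involution on $\what{\cH}$. Apart from bookkeeping on gradings, the only point requiring care is the passage from $\RR$-linear operators on $\what{\cH}_\RR$ to their $\CC$-linear extensions, which is routine in view of the decomposition $\what{\cH} \cong \what{\cH}_\RR \otimes_\RR \CC$ supplied by $J_\RR$.
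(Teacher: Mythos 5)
Your proof is correct, and it supplies exactly the routine verification the paper omits: the paper states this lemma without proof, calling it ``straightforward,'' and your checks (conjugate-linearity, involutivity, multiplicativity, $\ast$- and grading-preservation of $\sigma_\RR$, followed by the restriction/complexification correspondence $T \leftrightarrow T|_{\what{\cH}_\RR}$ between $J_\RR$-commuting compacts and $\cK_\RR(\what{\cH}_\RR)$) are the intended ones. Nothing further is needed.
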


Now suppose $J_\HH:\what{\cH}\To \what{\cH}$ is a quaternionic structure. Define the degree $0$ operator $I:\what{\cH}\To \what{\cH}$ by $Ix:=ix$. Then $I^2=-1$, and $IJ=-JI$. Thus, we can define the operator $K:=IJ:\what{\cH}\To \what{\cH}$ which has the same degree as $J$ and which is such that $K^2=-1=IJK$. It turns out that there exists a graded action of the quaternions $\HH$ on $\what{\cH}$ given by $(i,x)\mto ix, (j,x)\mto jx:=Jx$, and $(k,x)\mto kx:=Kx=IJx$, where $\{1,i,j,k\}$ is the usual basis of the division ring $\HH$. Let $\what{\cH}_{J_\HH}$ (or just $\cH_\HH$ if there is no risk of confusion) be the quaternionic graded Hilbert space, where the $\HH$-valued inner product is given by $\<x,y\>_\HH:=\<x,y\>+\<x,Jy\>j$ if $\<\cdot,\cdot\>$ denotes the complex scalar product of $\what{\cH}$.  

\begin{lem}
Let $J_\HH$ be as above. Define $\sigma_\HH:\what{\cK}_{ev}\To \what{\cK}_{ev}$ by $\sigma_\HH(T):=-J_\HH TJ_\HH$. Then $\sigma_\HH$ is a Real structure on $\what{\cK}_{ev}$ such that $(\what{\cK}_{ev})_{\sigma_\HH}$ is isomorphic, under a graded isomorphism, to the graded real $\cstar$-algebra $\cK_\HH(\what{\cH}_\HH)$ of the compact $\HH$-linear operators on the graded quaternionic Hilbert space $\what{\cH}_\HH$.  	
\end{lem}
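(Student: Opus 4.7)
The plan is to mimic, step by step, the verification done for $\sigma_\RR$, while keeping careful track of the sign introduced by $J_\HH^2=-1$, which is precisely what forces the extra minus sign in the definition of $\sigma_\HH$ and what makes the fixed-point algebra quaternion-linear rather than complex-conjugate.

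First I would check that $\sigma_\HH$ is a Real structure on $\what{\cK}_{ev}$. Conjugate linearity and degree preservation are immediate from the fact that $J_\HH$ is a homogeneous anti-unitary (the conjugation $T\mapsto J_\HH T J_\HH$ preserves the grading because it raises degrees by $2\partial J_\HH\equiv 0\pmod 2$). For the involutive property, since $J_\HH^2=-1$ one computes
\[
\sigma_\HH^2(T)=-J_\HH\bigl(-J_\HH T J_\HH\bigr)J_\HH=J_\HH^2\,T\,J_\HH^2=T.
\]
Multiplicativity is the place where the minus sign pays off: a direct computation gives
\[
\sigma_\HH(T)\sigma_\HH(S)=(-J_\HH TJ_\HH)(-J_\HH SJ_\HH)=J_\HH T J_\HH^2 S J_\HH=-J_\HH TS J_\HH=\sigma_\HH(TS).
\]
Finally, $J_\HH^{\ast}=J_\HH^{-1}=-J_\HH$ (again using $J_\HH^2=-1$), so $\sigma_\HH(T)^{\ast}=-J_\HH^{\ast}T^{\ast}J_\HH^{\ast}=-J_\HH T^{\ast}J_\HH=\sigma_\HH(T^{\ast})$, and $\sigma_\HH$ is a graded ${}^{\ast}$-automorphism.

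Next I would identify the fixed-point algebra. The equation $\sigma_\HH(T)=T$ is equivalent, after multiplying on the right by $J_\HH$ and using $J_\HH^2=-1$, to the commutation relation $TJ_\HH=J_\HH T$. Since $J_\HH$ implements (together with multiplication by $i$) the quaternionic action $j\cdot x:=J_\HH x$ and $k\cdot x:=iJ_\HH x$ on $\what{\cH}_\HH$, an operator commutes with $J_\HH$ if and only if it commutes with the action of $j$ (it already is $\CC$-linear, hence commutes with multiplication by $i$, hence also with $k=ij$). Therefore $(\what{\cK}_{ev})_{\sigma_\HH}$ is exactly the $\RR$-subalgebra of $\HH$-linear compact operators on $\what{\cH}_\HH$.

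The last routine step is to verify that the obvious identification map $(\what{\cK}_{ev})_{\sigma_\HH}\To \cK_\HH(\what{\cH}_\HH)$ is an isomorphism of real graded $\cstar$-algebras: it is manifestly $\RR$-linear, multiplicative, and ${}^{\ast}$-preserving (the involution on $\cK_\HH(\what{\cH}_\HH)$ coming from the $\HH$-valued inner product $\<x,y\>_\HH=\<x,y\>+\<x,J_\HH y\>j$ restricts to the usual adjoint), and it preserves the $\ZZ_2$-grading because the grading of $\what{\cK}_{ev}$ is inherited from that of $\what{\cH}$, which also defines the grading on $\what{\cH}_\HH$ (this uses that $J_\HH$ is homogeneous, so the $\HH$-action respects the grading). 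The main point to be careful about is really the sign bookkeeping coming from $J_\HH^2=-1$; once that is done consistently everything else is formal, as in the $J_\RR$ case.
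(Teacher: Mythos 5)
Your proposal is correct and follows essentially the same route as the paper: the paper also dismisses the verification that $\sigma_\HH$ is a Real structure as routine and concentrates on showing that $\sigma_\HH(T)=T$ is equivalent to $TJ_\HH=J_\HH T$, which (together with $\CC$-linearity) identifies the fixed-point algebra with $\cK_\HH(\what{\cH}_\HH)$. Your sign bookkeeping with $J_\HH^2=-1$ and $J_\HH^\ast=-J_\HH$ is accurate, and your observation that commuting with $I$ and $J_\HH$ forces commutation with $K=IJ_\HH$ is the same content as the paper's explicit extension formula $\widetilde{T}(jx):=J_\HH(Tx)$.
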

 
\begin{proof}
 The only thing we need to show is the graded isomorphism. Suppose that $T\in (\what{\cK}_{ev})_{\sigma_\HH}$. Then, $TJ_\HH =J_\HH T$, so that $T$ extends uniquely to a compact $\HH$-linear operator $\widetilde{T}:\what{\cH}_\HH \To \what{\cH}_\HH$ through the formula $\widetilde{T}(jx):=J_\HH(Tx)$ for $x\in \what{\cH}$. This provides a homomorphism of real graded $\cstar$-algebras $(\what{\cK}_{ev})_{\sigma_\HH}\To \cK_\HH(\what{\cH}_\HH), T\mto \widetilde{T}$. Conversely, any $\widetilde{T}\in \cK_\HH(\what{\cH}_\HH)$ induces a unique $T\in \cL(\what{\cH})$ such that $Tx=\widetilde{T}x$ for all $x\in \what{\cH}$. Then $T\in \what{\cK}_{ev}$. Moreover, one has $(TJ_\HH) x=T(J_\HH x)=\widetilde{T}(jx)=j\widetilde{T}x=(J_\HH T)x$; hence, $TJ_\HH =J_\HH T$, and then $\sigma_\HH(T)=T$. We then get a homomorphism of real graded $\cstar$-algebras $\cK_\HH(\what{\cH}_\HH)\To (\what{\cK}_{ev})_{\sigma_\HH}, \widetilde{T}\mto T$. It is easy to check that these two homomorphisms are inverse of each other. 	
\end{proof}

The following result classifies all the Real structures on $\what{\cK}_{ev}$.
 
\begin{pro}~\label{pro:R-structures_K}
Suppose that $\sigma$ is a Real structure on $\what{\cK}_{ev}$. Then, $\sigma$ is either of the form $\sigma_\RR$, or of the form $\sigma_\HH$.	
\end{pro}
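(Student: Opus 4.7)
The plan is to exploit the classical fact that every $\ast$-automorphism of $\cK(\hat{\cH})$, linear or conjugate-linear, is spatially implemented, and then to analyse the implementer using the conditions $\sigma^2=\id$ and $\sigma\circ\alpha=\alpha\circ\sigma$.

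First I would show that there exists an anti-unitary $J:\hat{\cH}\to\hat{\cH}$ such that $\sigma=\mathrm{Ad}_J$. Fix any Real structure $K$ on $\hat{\cH}$, so that $\mathrm{Ad}_K$ is a conjugate-linear $\ast$-automorphism of $\what{\cK}_{ev}$. Then $\sigma\circ\mathrm{Ad}_K$ is a genuine $\CC$-linear $\ast$-automorphism of $\cK(\hat{\cH})$, and by the standard result that every $\ast$-automorphism of $\cK(\hat{\cH})$ is inner (a consequence of the uniqueness up to unitary equivalence of the irreducible representation), it equals $\mathrm{Ad}_U$ for some unitary $U$ on $\hat{\cH}$. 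Setting $J:=UK$ gives the desired anti-unitary implementer, which is unique up to multiplication by an element of $\uc$.

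Next I would establish that $J$ is homogeneous. Denoting by $\epsilon$ the grading involution of $\hat{\cH}$, the compatibility $\sigma\circ\mathrm{Ad}_\epsilon=\mathrm{Ad}_\epsilon\circ\sigma$ translates into $\mathrm{Ad}_{J\epsilon J^{-1}}=\mathrm{Ad}_\epsilon$ on $\cK(\hat{\cH})$. Since $\cK(\hat{\cH})$ acts irreducibly on $\hat{\cH}$, its commutant in $\cL(\hat{\cH})$ is $\CC\cdot\id$, so $J\epsilon J^{-1}=c\,\epsilon$ for some $c\in\uc$; but $J\epsilon J^{-1}$ is self-adjoint and squares to $\id$, forcing $c\in\{\pm 1\}$. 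Hence $J$ commutes or anti-commutes with $\epsilon$, i.e.\ $J$ has degree $0$ or $1$ in the sense of Definition~\ref{df:structures_H}.

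Applying the same commutant argument to the relation $\sigma^2=\id$, which reads $\mathrm{Ad}_{J^2}=\id$, I conclude that $J^2=\lambda\cdot\id$ for some $\lambda\in\uc$. Using the anti-linearity of $J$, the trivial identity $J\cdot J^2 = J^2\cdot J$ becomes $\bar\lambda J=\lambda J$, whence $\lambda=\bar\lambda$ and therefore $\lambda\in\{\pm 1\}$. If $\lambda=1$ then $J$ is a homogeneous Real structure $J_\RR$ on $\hat{\cH}$ and $\sigma(T)=JTJ^{-1}=J_\RR TJ_\RR=\sigma_\RR(T)$; if $\lambda=-1$ then $J$ is a homogeneous quaternionic structure $J_\HH$ with $J^{-1}=-J$, so $\sigma(T)=JTJ^{-1}=-J_\HH TJ_\HH=\sigma_\HH(T)$. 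These two cases are disjoint and exhaustive, which proves the proposition.

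The main obstacle is really the first step: one must carefully justify that a \emph{conjugate-linear} $\ast$-automorphism of $\cK(\hat{\cH})$ is spatially implemented by an anti-unitary rather than only a unitary. Once this reduction to the well-known linear Wigner--Dixmier statement is made precise via precomposition with a fixed Real structure, the remainder of the argument is a clean computation with scalars in the centre.
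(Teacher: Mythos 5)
Your proposal is correct and follows essentially the same route as the paper: reduce to the complex-linear case by precomposing with a fixed anti-unitary conjugation, invoke spatial implementation of automorphisms of $\cK(\hat{\cH})$ to produce a homogeneous anti-unitary $J$ with $\sigma=\mathrm{Ad}_J$, and then use $\sigma^2=\id$ together with irreducibility to force $J^2=\pm 1$. Your explicit commutant argument for the homogeneity of $J$ and the check that the scalar $\lambda$ in $J^2=\lambda\,\id$ is real just make explicit steps the paper leaves implicit.
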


\begin{proof}
Choose an orthonormal basis $\{e_n\}$ of $\what{\cH}$, and for $T\in \what{\cK}_{ev}$, define $\overline{T}\in \what{\cK}_{ev}$ by $\overline{T}(x):=\overline{T(\bar{x})}$, where if $x=\sum_n x_ne_n$, we set $\bar{x}:=\sum_n\bar{x}_ne_n$. Then $\overline{T}=vTv$, where $v:\what{\cH}\To \what{\cH}$ is the anti-unitary defined by the complex conjugation with respect to the basis $\{e_n\}$. Moreover, $v^2=1$. Now, define $\bar{\sigma}\in \text{Aut}^{(0)}(\what{\cK}_{ev})$ by $\bar{\sigma}(T):=\sigma(\overline{T})$. Then, there exists a homogeneous unitary $u\in \what{\U}(\what{\cH})$ such that $\bar{\sigma}=\text{Ad}_u$. Whence, $\sigma(T)=\bar{\sigma}(\overline{T})=uvTvu^{-1}=JTJ^{-1}$, where $J:=uv$. Observe that $J$ is a homogeneous anti-unitary since $v$ is. Furthermore, for all $T\in \what{\cK}_{ev}$, we have $T=\sigma^2(T)=J^2T(J^{-1})^2$; therefore $J^2=\pm 1$.
\end{proof}

\begin{df}
We say that a Rg elementary $\cstar$-algebra $(A,\sigma)$ of parity $0$ is (of type) $[0;\ve,\eta]$, where $\ve=0,1$, $\eta=\pm$, if its Real structure is induced by an anti-unitary $J$ of degree $\ve$ such that $J^2=\eta1$. 	
\end{df}

\begin{rem}
It follows from Proposition~\ref{pro:R-structures_K} that there are four types of Rg elementary $\cstar$-algebras of parity $0$: $[0;0,+], [0;0,-],[0;1,+]$, and $[0;1,-]$. 
\end{rem}

\begin{rem}
Regarding $\cK(\cH)$ as of parity $0$ (with the trivial grading of $\cH$), we get that every Real structure on $\cK(\cH)$ is the conjugation with an anti-unitary $J:\cH\To \cH$ such that $J^2=\pm1$. Thus, a Real graded $\cstar$-algebra of the form $\cK(\cH)$ is either a $[0;0,+]$ or $[0;0,-]$.   	
\end{rem}

\begin{ex}[Real structures on $\Cl_2$]~\label{ex:Real_Cl_2}
Consider the second Clifford algebra $\Cl_2=\cK(\CC\oplus \CC)=M_2(\CC)$, equipped with the standard even grading. There is a canonical Real structure $J_\RR$ of degree $0$ on the graded Hilbert space $\CC\oplus \CC$ given by the complex conjugation, and a canonical quaternionic structure of degree $0$ $J_{0,\HH}=iJ_{0,\RR}$, which induce the same Real structure $cl_{0,2}$ on $\Cl_2$ such that $(\Cl_2)_{\sigma_\RR}\cong (\Cl_2)_{cl_{0,2}}\cong M_2(\RR) \cong Cl_{0,2}$. In other words, $\Cl_2$ is the complexification of the second real Clifford algebra $Cl_{0,2}$ (see~\cite{Atiyah-Bott-Schapiro:Clifford} for more details on the real Clifford algebras $Cl_{p,q}$). 
However, $\Cl_2$ is also the complexification of the quaternions $\HH$ as follows. Define the quaternionic structure $J_{1,\HH}:\CC\oplus \CC\To \CC\oplus \CC$ of degree $1$ by $(x,y)\mto (\bar{y},-\bar{x})$. The graded quaternionic Hilbert space obtained is $\HH$; the Real structure induced by $J_{1,\HH}$ is denoted by $cl_{2,0}$. Observe that $(\Cl_2)_{cl_{2,0}}=\cK_\HH(\HH)=\HH\cong Cl_{2,0}$. Moreover, this Real structure is equivalent to the one induced by the anti-unitary $J_{1,\RR}(x,y):=(\bar{y},\bar{x})$. These two Real structures will play a central role in the classification of elementary Rg $\cstar$-algebras in Subsection~\ref{subsect:BrR(pt)}.
 \end{ex}
 
 %%%%%% End of section
 
\subsection{Real structures on $\what{\cK}_{odd}$}

In this subsection we describe the Real structures on $\what{\cK}_{odd}$. We start by some observations which will be useful. Suppose we are given a trivially graded $\cstar$-algebra $A$. Then, any Real structure $\sigma$ on $A$ defines two different Real structures $\sigma\oplus \sigma$ and $\sigma\oplus(-\sigma)$ on the graded $\cstar$-algebra $A\oplus A$ (with the standard odd grading), respectively given by $(a,b)\mto (\sigma(a),\sigma(b))$ and $(a,b)\mto (\sigma(a),-\sigma(b))$. Notice that the latter Real structure is equivalent to $(a,b)\mto (\sigma(b),\sigma(a))$. Furthermore, if we denote $A_\RR:=A_\sigma$, then on the one hand, we get that $(A\oplus A)_{\sigma\oplus\sigma}$ is the graded real $\cstar$-algebra $A_\RR\oplus A_\RR$ with the standard odd grading, and on the other hand,  $(A\oplus A)_{\sigma\oplus(-\sigma)}=A_\RR\oplus iA_\RR$ is isomorphic to the graded graded $\cstar$-algebra $A_{real}$ which is the underlying $\RR$-algebra of $A$. It is easy to see that the grading of $A_{real}$ is given by $A_{real}^0=A_\RR$ and $A_{real}^1=iA_\RR$. Conversely, we have the following.

\begin{pro}
Let $A$ be a complex $\cstar$-algebra, and let $A\oplus A$ be equipped with the standard odd grading $(a,b)\mto (b,a)$. Suppose $\tau$ is a Real structure on $A\oplus A$. Then, $\tau$ is either of the form $(a,b)\mto (\sigma(a),\sigma(b))$ or $(a,b)\mto (\sigma(b),\sigma(a))$, where $\sigma:A\To A$ is a Real structure on the ungraded $\cstar$-algebra $A$.	
\end{pro}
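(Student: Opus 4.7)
The plan is to exploit the grading to reduce the analysis to the homogeneous components of $\tau$, and then to reconstruct $\tau$ from its multiplicativity. Since $\tau$ is grading-preserving, it restricts to the even and odd subspaces $(A\oplus A)^0=\{(a,a)\mid a\in A\}$ and $(A\oplus A)^1=\{(a,-a)\mid a\in A\}$. The $\cstar$-algebra isomorphism $A\stackrel{\cong}{\To}(A\oplus A)^0$, $a\mto(a,a)$, transports $\tau|_{(A\oplus A)^0}$ to a conjugate-linear $\cstar$-involution $\sigma\colon A\To A$; this is my candidate Real structure on $A$. Similarly, there is a conjugate-linear, $\cstar$-preserving map $\phi\colon A\To A$ such that $\tau(a,-a)=(\phi(a),-\phi(a))$ for all $a\in A$.

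Next, I extract algebraic identities linking $\sigma$ and $\phi$ from $\tau(xy)=\tau(x)\tau(y)$, $\tau(x^*)=\tau(x)^*$, and $\tau^2=\Id$. Applying $\tau$ to the three products $(a,a)(b,-b)$, $(a,-a)(b,b)$ and $(a,-a)(b,-b)$ yields
\[\phi(ab)=\sigma(a)\phi(b)=\phi(a)\sigma(b),\qquad \phi(a)\phi(b)=\sigma(ab),\]
together with $\phi(a^*)=\phi(a)^*$ and $\phi\circ\phi=\Id$. Passing, if needed, to the unitization, set $u:=\phi(1)$. The first relation reads $\phi(a)=\sigma(a)u$; the second then forces $u\sigma(b)=\sigma(b)u$ for all $b$, so $u\in Z(A)$; the third gives $u^2=1$; the $\cstar$-identity gives $u^*=u$; and $\phi\circ\phi=\Id$ together with $\sigma^2=\Id$ gives $\sigma(u)=u$.

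In the framework of the appendix, $A$ is an elementary $\cstar$-algebra and hence simple, so $Z(A)=\CC$. The constraints $u=u^*$ and $u^2=1$ then leave only $u=\pm 1$. Using the decomposition
\[(a,b)=\tfrac{1}{2}(a+b,a+b)+\tfrac{1}{2}(a-b,-(a-b))\]
and applying $\tau$ through Step~1, I read off $\tau(a,b)=(\sigma(a),\sigma(b))$ when $u=1$ and $\tau(a,b)=(\sigma(b),\sigma(a))$ when $u=-1$, which is the announced dichotomy.

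The delicate step is the third one: the central element $u$ is produced purely algebraically, but pinning it down to $\pm 1$ relies on the implicit hypothesis that $Z(A)$ has no nontrivial self-adjoint involutions. Without simplicity of $A$, the same computation only produces a splitting of $\tau$ across the central projections $\tfrac{1+u}{2}$ and $\tfrac{1-u}{2}$, with $\tau$ acting in diagonal form on one summand and in swap form on the other; the clean dichotomy of the proposition is recovered precisely in the elementary setting relevant to the appendix.
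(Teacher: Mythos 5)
Your decomposition of $\tau$ into its actions on $(A\oplus A)^0\cong A$ and $(A\oplus A)^1$ is exactly the paper's first step, but you execute the second step differently and, frankly, more carefully. The paper only squares an odd element: from $(b,-b)^2=(b^2,b^2)$ it gets $(\tau^-(b,-b))^2=(\sigma(b)^2,\sigma(b)^2)$ and then asserts $\tau^-(b,-b)=(\pm\sigma(b),\mp\sigma(b))$ with a sign independent of $b$ --- a jump, since $c^2=d^2$ does not force $c=\pm d$ in a $\cstar$-algebra. Your polarization via the mixed products $(a,a)(b,-b)$, $(a,-a)(b,b)$, $(a,-a)(b,-b)$ replaces that leap by the identity $\phi(a)=\sigma(a)u$ with $u$ a central self-adjoint symmetry, which is the honest content of the step. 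In doing so you also expose a genuine hypothesis the paper suppresses: for $A=\CC\oplus\CC$ the map $\bigl((a_1,a_2),(b_1,b_2)\bigr)\mto\bigl((\bar a_1,\bar b_2),(\bar b_1,\bar a_2)\bigr)$ is a Real structure on $A\oplus A$ of neither of the two announced forms, so the proposition as stated for an arbitrary complex $\cstar$-algebra is false; it holds when the relevant center is trivial, which is the case for the elementary algebras $\cK(\cH)$ to which the paper actually applies it (Corollary A.27). Your closing remark about the splitting along $\tfrac{1\pm u}{2}$ is the correct general statement.

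One small repair: ``passing to the unitization'' to define $u=\phi(1)$ is not automatic, because the unitization of $A\oplus A$ is not $\tilde A\oplus\tilde A$ and $\tau$ need not extend fiberwise. Cleaner is to set $\psi:=\sigma\circ\phi$, which is linear and satisfies $\psi(ab)=a\psi(b)=\psi(a)b$; for a non-degenerate $A$ this makes $\psi$ a central multiplier, so $u\in Z(M(A))$, and the rest of your argument ($u=u^\ast$, $u^2=1$, hence $u=\pm1$ when $Z(M(A))=\CC$) goes through verbatim. With that substitution your proof is complete and strictly sharper than the one in the paper.
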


\begin{proof}
Since $\tau$ is of degree $0$, it can be written in the form $\tau=\begin{pmatrix}\tau^+&0\\0&\tau^-\end{pmatrix}$ with respect to the decomposition $A\oplus A=(A\oplus A)^0\oplus (A\oplus A)^1$, where $\tau^+:(A\oplus A)^0\To (A\oplus A)^0$ is a Real structure on the $\cstar$-subalgebra $(A\oplus A)^0$ of $A\oplus A$, and $\tau^-:(A\oplus A)^1\To (A\oplus A)^1$ is an anti-linear isomorphism of vector space. For all $(a,a)\in (A\oplus A)^0$, $\tau^+(a,a)\in (A\oplus A)^0$, so that it is in the form $(\sigma(a),\sigma(a))$. If $(a_i,a_i)\To (a,a)\in (A\oplus A)^0$, then $(\sigma(a_i),\sigma(a_i))=\tau^+(a_i,a_i)\To \tau^+(a,a)=(\sigma(a),\sigma(a))$, and then $\sigma(a_i)\To \sigma(a)$ in $A$. Furthermore, it is straightforward that $\sigma(ab)=\sigma(a)\sigma(b)$, $\sigma(\lambda a)=\bar{\lambda}\sigma(a)$ for all $\lambda\in \CC, a\in A$, and that $\sigma^2=\id$, so that $\sigma$ is a Real structure on $A$. Now, for all $(b,-b)\in (A\oplus A)^1$, $(b,-b)\cdot (b,-b)=(b^2,b^2)\in (A\oplus A)^0$; thus, $$(\tau^-(b,-b))^2=\tau(b^2,b^2)=\tau^+(b^2,b^2)=(\sigma(b)^2,\sigma(b)^2).$$ Hence, since this is true for all $b\in A$, we obtain $\tau^-(b,-b)=(\pm \sigma(b),\mp \sigma(b))$. If $\tau^-(b,-b)=(\sigma(b),-\sigma(b))$, then $\tau$ is given by $\tau(a,b)=(\sigma(a),\sigma(b))$, for all $(a,b)\in a\oplus a$, and if $\tau^-(b,-b)=(-\sigma(b),\sigma(b))$, then  for all $(a,b)\in A\oplus A$, $\tau(a,b)=(\sigma(b),\sigma(a))$. 	
\end{proof}

\begin{df}
A Real structure $\tau$ on $A\oplus A$ is called {\it even} if it is of the form $(a,b)\mto (\sigma(a),\sigma(b))$, it is {\it odd} if it is of the form $(a,b)\mto (\sigma(b),\sigma(a))$, where $\sigma$ is a Real structure on the ungraded $\cstar$-algebra $A$. 	
\end{df}

\begin{pro}
Assume $\tau:A\oplus A\To A\oplus A$ is a Real structure. Then 
\begin{itemize}
	\item[(a)] $(A\oplus A,\tau)\cong (A\hat{\otimes}\Cl_1,\sigma\hat{\otimes}cl_{0,1})$, if $\tau$ is even, and
	\item[(b)] $(A\oplus A,\tau)\cong (A\hat{\otimes}\Cl_1,\sigma\hat{\otimes}cl_{1,0})$, if $\tau$ is odd.	
	\end{itemize}	
\end{pro}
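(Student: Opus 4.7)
The plan is to identify $A\oplus A$ with $A\hat{\otimes}\Cl_1$ once and for all as Real graded $\cstar$-algebras, and then simply read off which Real structure on the right matches each type of $\tau$ on the left. Because the grading on $\Cl_1=\CC\oplus \CC$ is the standard odd one, the canonical complex algebra map $\Cl_1\stackrel{\cong}{\To}\CC\oplus \CC$ sending the odd generator $e$ (with $e^2=1$) to $(1,-1)$ and $1$ to $(1,1)$ is an isomorphism of graded $\cstar$-algebras. Tensoring with $A$ then produces a graded $\cstar$-isomorphism
\[
\Phi: A\hat{\otimes}\Cl_1 \stackrel{\cong}{\To} A\oplus A, \qquad a\otimes (z,w)\longmapsto (za,wa),
\]
which in particular sends $a\otimes 1$ to $(a,a)$ and $a\otimes e$ to $(a,-a)$. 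Under $\Phi$ the grading on $A\hat{\otimes}\Cl_1$ coincides with the standard odd grading on $A\oplus A$.

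Next I would spell out the two Real structures on $\Cl_1$ in the coordinates $\CC\oplus\CC$. By definition, $cl_{0,1}$ has fixed-point algebra $Cl_{0,1}\cong \RR\oplus\RR$, so under the above identification it must act diagonally, $cl_{0,1}(z,w)=(\bar z,\bar w)$. Similarly $cl_{1,0}$ has fixed-point algebra $Cl_{1,0}\cong \CC$ embedded as $\{(z,\bar z)\}\subset \CC\oplus\CC$, so it must act by the swap $cl_{1,0}(z,w)=(\bar w,\bar z)$. One checks that both maps are indeed homogeneous (of degree $0$) conjugate-linear ${}^\ast$-involutions and that the resulting fixed algebras recover $Cl_{0,1}$ and $Cl_{1,0}$ respectively, agreeing with the conventions of Example~\ref{ex:Real_Cl_2} for $\Cl_2\cong \Cl_1\hat{\otimes}\Cl_1$.

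The proof is then a one-line computation with $\Phi$. Using anti-linearity of $\sigma$, for $a\in A$ and $z,w\in \CC$,
\[
\Phi\bigl((\sigma\hat{\otimes}cl_{0,1})(a\otimes (z,w))\bigr)=\Phi(\sigma(a)\otimes(\bar z,\bar w))=(\sigma(za),\sigma(wa)),
\]
\[
\Phi\bigl((\sigma\hat{\otimes}cl_{1,0})(a\otimes (z,w))\bigr)=\Phi(\sigma(a)\otimes(\bar w,\bar z))=(\sigma(wa),\sigma(za)).
\]
Writing $(x,y):=\Phi(a\otimes (z,w))=(za,wa)$, the first line becomes $(\sigma(x),\sigma(y))$ (the even Real structure) and the second becomes $(\sigma(y),\sigma(x))$ (the odd one). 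Hence $\Phi$ intertwines $\sigma\hat{\otimes}cl_{0,1}$ with the even $\tau$ and $\sigma\hat{\otimes}cl_{1,0}$ with the odd $\tau$, proving (a) and (b).

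The ``hard part'' here is not really hard: it is just pinning down the correct conventions for the two Real structures $cl_{0,1}$ and $cl_{1,0}$ on $\Cl_1$ so that the claimed pairing (even $\leftrightarrow cl_{0,1}$, odd $\leftrightarrow cl_{1,0}$) comes out correctly; the classification in Proposition~\ref{pro:R-structures_K} together with the fixed-point identifications $(\Cl_1)_{cl_{\varepsilon,\delta}}\cong Cl_{\varepsilon,\delta}$ forces the choices above, after which the isomorphism $\Phi$ finishes the argument.
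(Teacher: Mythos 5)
Your proof is correct, but it takes a different route from the paper's. The paper argues entirely on the level of real forms: it invokes the equivalence between Rg $\cstar$-algebras and real graded $\cstar$-algebras (Lemma~\ref{bar-isom}) and simply identifies the fixed-point algebra $(A\oplus A)_\tau$ with $A_\RR\oplus A_\RR\cong A_\RR\hat{\otimes}Cl_{0,1}$ in the even case and with $A_{real}=A_\RR\oplus iA_\RR\cong A_\RR\hat{\otimes}Cl_{1,0}$ in the odd case, then passes back to complexifications. You instead stay on the complex side throughout: you write down the explicit graded isomorphism $\Phi:A\hat{\otimes}\Cl_1\To A\oplus A$, pin down $cl_{0,1}$ and $cl_{1,0}$ in the coordinates $\CC\oplus\CC$ via their fixed-point algebras $\RR\oplus\RR$ and $\{(z,\bar z)\}\cong\CC$, and verify by direct computation (using conjugate-linearity of $\sigma$) that $\Phi$ intertwines the two Real structures with the even and odd $\tau$ respectively. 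Your version is more explicit and self-contained -- it does not lean on Lemma~\ref{bar-isom} to upgrade an isomorphism of real forms to an isomorphism of Rg algebras, and it makes the intertwining visible -- at the cost of having to fix conventions for $cl_{0,1}$ and $cl_{1,0}$ by hand (which the paper never spells out on $\Cl_1$; your fixed-point normalization is the consistent choice). The paper's version is shorter because it reuses the structure theory already in place. One small point worth making explicit in your write-up: since $A$ is trivially graded here, $A\hat{\otimes}\Cl_1=A\otimes\Cl_1$ with no Koszul signs, which is what makes $\Phi$ multiplicative; you use this tacitly.
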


\begin{proof}
As graded complex $\cstar$-algebras, $A\oplus A\cong A\hat{\otimes}\Cl_1\cong A\otimes \Cl_1$ (cf.~\cite[Corollary 14.5.3]{Blackadar:K-theory}). If $\tau$ is even, then as graded real $\cstar$-algebras, $(A\oplus A)_\RR \cong A_\RR\oplus A_\RR \cong (A_\RR\hat{\otimes}Cl_{0,1})=(A\hat{\otimes}\Cl_1)_{\sigma\hat{\otimes}cl_{0,1}}$, where $A_\RR:=A_\sigma$, and $(A\oplus A)_\RR:=(A\oplus A)_\tau$; this establishes (a). If $\tau$ is odd, then $(A\oplus A)_\RR\cong A_{real}\cong A_\RR\hat{\otimes}\CC\cong A_\RR\hat{\otimes}Cl_{1,0}\cong(A\hat{\otimes}\Cl_1)_{\sigma\hat{\otimes}cl_{1,0}}$, which establishes (b).	
\end{proof}

\begin{cor}~\label{cor:K_odd_Cl}
Suppose $\sigma$ is a Real structure on $\what{\cK}_{odd}$. Then, there exists an anti-unitary $J:\cH\To \cH$ with $J^2=\pm1$, such that either $(\what{\cK}_{odd},\sigma)\cong (\cK(\cH)\hat{\otimes}\Cl_1,\text{Ad}_J\hat{\otimes}cl_{0,1})$, or $(\what{\cK}_{odd},\sigma)\cong (\cK(\cH)\hat{\otimes}\Cl_1,\text{Ad}_J\hat{\otimes}cl_{1,0})$. 
\end{cor}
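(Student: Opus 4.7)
The plan is to deduce Corollary~\ref{cor:K_odd_Cl} as an immediate combination of the preceding proposition with the classification (already contained in Proposition~\ref{pro:R-structures_K} and the subsequent remark) of Real structures on the ungraded $\cstar$-algebra $\cK(\cH)$.

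First I would apply the proposition that precedes the corollary to $A=\cK(\cH)$. This tells us that any Real structure $\sigma$ on $\what{\cK}_{odd}=\cK(\cH)\oplus \cK(\cH)$ is determined by a Real structure $\sigma_0$ on the ungraded $\cstar$-algebra $\cK(\cH)$, and is either even, in which case $(\what{\cK}_{odd},\sigma)\cong (\cK(\cH)\hat{\otimes}\Cl_1,\sigma_0\hat{\otimes}cl_{0,1})$, or odd, in which case $(\what{\cK}_{odd},\sigma)\cong (\cK(\cH)\hat{\otimes}\Cl_1,\sigma_0\hat{\otimes}cl_{1,0})$.

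Next, I would invoke the classification of Real structures on the trivially graded compact operators. Viewing $\cK(\cH)$ as an elementary Rg $\cstar$-algebra of parity $0$ with the trivial grading, Proposition~\ref{pro:R-structures_K} (together with the remark immediately following it) says that $\sigma_0$ must be of the form $\text{Ad}_J$ for a (necessarily degree-$0$) anti-unitary $J:\cH\to \cH$ with $J^2=\pm 1$; the grading constraint forces $J$ to be of degree $0$, which is automatic here since $\cH$ is trivially graded. Substituting this into the two cases of the first step yields exactly the two possibilities claimed in the corollary.

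The only thing requiring any care is to make sure that what is called $\sigma$ in the ungraded classification genuinely coincides with the $\sigma_0$ produced by the proposition, i.e.\ that the $\sigma_0$ appearing in the isomorphism $(a,b)\mto(\sigma_0(a),\sigma_0(b))$ (resp.\ $(a,b)\mto(\sigma_0(b),\sigma_0(a))$) is an honest Real $^\ast$-automorphism of $\cK(\cH)$ to which Proposition~\ref{pro:R-structures_K} applies. This is built into the statement of the preceding proposition, so there is no serious obstacle; the corollary is essentially a two-line assembly of the two preceding results, and no new estimates or constructions are needed.
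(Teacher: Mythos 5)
Your proposal is correct and is exactly the argument the paper intends: the corollary is an immediate combination of the preceding proposition (which reduces a Real structure on $\what{\cK}_{odd}$ to an even or odd one built from a Real structure $\sigma_0$ on the ungraded $\cK(\cH)$, and identifies the two cases with $\sigma_0\hat{\otimes}cl_{0,1}$ and $\sigma_0\hat{\otimes}cl_{1,0}$) with the classification $\sigma_0=\text{Ad}_J$, $J^2=\pm1$, from Proposition~\ref{pro:R-structures_K} and the remark following it. No gaps.
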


\begin{df}
We say that a Rg elementary $\cstar$-algebra $(\what{\cK}_{odd},\sigma)$ of parity $1$ is (of type) $[1;\ve,\eta]$, if the Real structure is of parity $\ve$ ({\it i.e}., $\ve$ is $0$ if $\sigma$ is even, and $1$ if $\sigma$ is odd), and if the anti-unitary $J$ of Corollary~\ref{cor:K_odd_Cl} is such that $J^2=\eta1$, where $\eta=\pm$.	
\end{df}

It then follows that there are four of such types: $[1;0,+],[1;0,-],[1;1,+]$, and $[1;1,-]$.

\begin{ex}
$(\Cl_1,cl_{0,1})$ and $(\Cl_1,cl_{1,0})$ are of types $[1;0,+]$ and $[1;1,+]$, respectively.
\end{ex}

\medskip 

%%%%% End of section

\subsection{The classification table}~\label{subsect:BrR(pt)}
We start this subsection with the following lemma.

\begin{lem}~\label{lem:product_even}
Let $\hat{\cH}_1$ and $\hat{\cH}_2$ be two graded complex Hilbert spaces, and let $J_i, \ i=1,2$ be an anti-unitary of degree $\ve_i$ on $\hat{\cH}_i$ such that $J_i^2=\pm1$. Denote by $g_i, i=1,2$ the grading automorphism of $\cK(\hat{\cH}_i)$. Then, there is an isomorphism of Real graded (elementary) $\cstar$-algebras \[(\cK(\hat{\cH}_1)\hat{\otimes}\cK(\hat{\cH}_2),\text{Ad}_{J_1}\hat{\otimes}\text{Ad}_{J_2})\cong (\cK(\hat{\cH}_1\hat{\otimes}\hat{\cH}_2),\text{Ad}_J),\]
where $J:=J_1g_1^{\ve_2}\hat{\otimes}J_2g_2^{\ve_2}$. 	
\end{lem}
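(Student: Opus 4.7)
The plan is to exhibit an explicit anti-unitary $J$ on the graded Hilbert space $\hat{\cH}_1\hat{\otimes}\hat{\cH}_2$ implementing the tensor product Real structure by conjugation, and then verify that the resulting identification preserves every relevant piece of structure.

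First, I would invoke the canonical graded $\cstar$-isomorphism $\cK(\hat{\cH}_1)\hat{\otimes}\cK(\hat{\cH}_2)\cong\cK(\hat{\cH}_1\hat{\otimes}\hat{\cH}_2)$ (compare~\cite[\S14.5]{Bla}). Under this identification, a homogeneous elementary tensor $T\hat{\otimes}S$ acts on homogeneous vectors by the Koszul rule
\[(T\hat{\otimes}S)(\xi\hat{\otimes}\eta)=(-1)^{|S||\xi|}\,T\xi\hat{\otimes}S\eta,\]
and the grading automorphism on both sides is $\text{Ad}_{g_1\hat{\otimes}g_2}$, where $g_1\hat{\otimes}g_2$ is the grading operator of $\hat{\cH}_1\hat{\otimes}\hat{\cH}_2$. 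Thus the identification is one of \emph{graded} $\cstar$-algebras.

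Next, I would define $J := J_1 g_1^{\ve_2}\hat{\otimes}J_2 g_2^{\ve_2}$ as an anti-linear operator on $\hat{\cH}_1\hat{\otimes}\hat{\cH}_2$, where again $\hat{\otimes}$ is interpreted with the Koszul sign convention. The grading factors $g_i^{\ve_j}$ are inserted precisely to compensate for the Koszul signs produced when the anti-unitary $J_j$ (which has degree $\ve_j$) is transported past a homogeneous vector of the other factor. A direct computation on homogeneous vectors shows that $J$ is anti-unitary of total degree $\ve_1+\ve_2 \bmod 2$; using $g_i^2=\mathbf{1}$ together with $J_i^2=\pm\mathbf{1}$, an analogous computation yields $J^2=\pm\mathbf{1}$.

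The crux of the proof is the identity
\[\text{Ad}_J(T\hat{\otimes}S)=\text{Ad}_{J_1}(T)\hat{\otimes}\text{Ad}_{J_2}(S)\]
for homogeneous $T\in\cK(\hat{\cH}_1)$, $S\in\cK(\hat{\cH}_2)$. I would check this by evaluating both sides on a homogeneous vector $\xi\hat{\otimes}\eta$. The right-hand side produces $(-1)^{|S||\xi|}J_1TJ_1^{-1}\xi\hat{\otimes}J_2SJ_2^{-1}\eta$, with only the one Koszul sign since $\text{Ad}_{J_i}$ preserves degrees. The left-hand side accumulates several signs: one from the graded tensor product of operators, one from each grading factor $g_i^{\ve_j}$, and additional signs from $J_1,J_2$ acting on real scalars (which, being real, pass through the anti-linear maps untouched). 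Careful bookkeeping shows that these signs combine to exactly $(-1)^{|S||\xi|}$ and nothing more. Since the identification is $\cstar$-algebraic, preserves the $\ZZ_2$-grading, and by construction intertwines the Real structures, it yields the required isomorphism of Real graded $\cstar$-algebras.

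The main obstacle is precisely this sign bookkeeping: the slightly unusual form of $J$ (with the grading-operator corrections) is dictated by the need to cancel the Koszul signs that appear both when evaluating tensors of operators on tensors of vectors and when composing with the odd-degree pieces of $J_1$ and $J_2$. Once the correct choice of corrective factors $g_i^{\ve_j}$ is identified, the verification becomes a matter of mechanical sign tracking; everything else — $\cstar$-algebraic structure, grading, and Real involution — then follows formally.
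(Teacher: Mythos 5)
Your proposal is correct and follows essentially the same route as the paper: identify $\cK(\hat{\cH}_1)\hat{\otimes}\cK(\hat{\cH}_2)$ with $\cK(\hat{\cH}_1\hat{\otimes}\hat{\cH}_2)$ via the Koszul-signed action on elementary tensors, and verify the key identity $\text{Ad}_J(T\hat{\otimes}S)=\text{Ad}_{J_1}(T)\hat{\otimes}\text{Ad}_{J_2}(S)$ --- the paper merely carries out the sign bookkeeping algebraically on operators (using the graded product and adjoint rules) rather than by evaluating on vectors. One caveat: the correct corrective factors are the \emph{crossed} ones $g_1^{\ve_2}$ and $g_2^{\ve_1}$, as in the paper's own computation and as your remark that $g_i^{\ve_j}$ compensates for transporting $J_j$ past the other factor already indicates; the $g_2^{\ve_2}$ you copied from the statement is a typo there, and with it a residual sign $(-1)^{(\ve_1+\ve_2)|S|}$ would fail to cancel.
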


\begin{proof}
The isomorphism of graded $\cstar$-algebras $\cK(\hat{\cH}_1)\hat{\otimes}\cK(\hat{\cH}_2)\To \cK(\hat{\cH}_1\hat{\otimes}\hat{\cH}_2)$ is given on homogeneous tensors by \[(T_1\hat{\otimes}T_2)(x_1\hat{\otimes}x_2)=(-1)^{|T_2|\cdot |x_1|}T_1(x_1)\hat{\otimes}T_2(x_2).\]
Moreover, a simple calculation shows that this is actually a Real isomorphism, when $\cK(\hat{\cH}_1\hat{\otimes}\hat{\cH}_2)$ is equipped with the Real structure $\text{Ad}_J$; indeed,  
\begin{align*}
\text{Ad}_J(T_1\hat{\otimes}T_2) & =\left(J_1g_1^{\ve_2}\hat{\otimes}J_2g_2^{\ve_1}\right)\left(T_1\hat{\otimes}T_2\right)\left(J_1g_1^{\ve_2}\hat{\otimes}J_2g_2^{\ve_1}\right)^\ast \\
 & = (-1)^{\ve_1\ve_2+\ve_2|T_1|}\left(J_1g_1^{\ve_2}T_1\hat{\otimes}J_2g_2^{\ve_1}T_2\right)\left((g_1^\ast)^{\ve_2}J_1^\ast \hat{\otimes}(g_2^\ast)^{\ve_1}J_2^\ast\right) \\
  & = (-1)^{\ve_2|T_1|+\ve_1|T_2|}\left((J_1g_1^{\ve_2}T_1(g_1^\ast)^{\ve_2}J_1^\ast)\hat{\otimes}(J_2g_2^{\ve_1}T_2(g_2^\ast)^{\ve_1}J_2^\ast)\right)\\
  & = \text{Ad}_{J_1}(T_1)\hat{\otimes}\text{Ad}_{J_2}(T_2).
\end{align*}
\end{proof}

A particular case of this lemma is the following.

\begin{cor}~\label{cor:product_K-Cl2}
Le $J$ be an anti-unitary on the ungraded Hilbert space $\cH$ such that $J^2=\eta1$, where as usual $\eta=\pm$. Let $cl_{0,2}$ and $cl_{2,0}$ be the Real structures of $\Cl_2$ defined in Example~\ref{ex:Real_Cl_2}. Then,
\begin{itemize} 
\item $[0;0,\eta]\cong (\cK(\cH),\text{Ad}_J)\hat{\otimes}(\Cl_2,cl_{0,2})$, where $J:\cH\To \cH$ is such that $J^2=\eta1$, and
\item $[0;1,\eta]\cong (\cK(\cH),\text{Ad}_J)\hat{\otimes}(\Cl_2,cl_{2,0})$, where $J:\cH\To \cH$ is such that $J^2=-\eta1$.
\end{itemize}	
\end{cor}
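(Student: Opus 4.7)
The plan is to derive both isomorphisms as direct consequences of Lemma~\ref{lem:product_even}, taking $\hat{\cH}_1=\cH$ (trivially graded, so the grading operator $g_1$ equals the identity) and $\hat{\cH}_2=\CC\oplus\CC$ with the standard even grading that realizes $\Cl_2=\cK(\CC\oplus\CC)$. In both cases the underlying graded $\cstar$-algebra coming out of the lemma is $\cK(\cH\hat{\otimes}(\CC\oplus\CC))\cong\cK(\hat{\cH})$ for $\hat{\cH}$ a complex graded separable infinite-dimensional Hilbert space, so the only genuine content is the identification of the total anti-unitary $J_{\mathrm{total}}$ produced by the lemma, together with its degree and the sign of its square.

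For the first isomorphism I take $J_1=J$ of degree $\ve_1=0$ with $J_1^2=\eta\,1$, and $J_2=J_{0,\RR}$ the degree-$0$ complex conjugation on $\CC\oplus\CC$ which satisfies $J_2^2=1$ and induces $cl_{0,2}$ (see Example~\ref{ex:Real_Cl_2}). Since $\ve_1=\ve_2=0$, the formula of Lemma~\ref{lem:product_even} degenerates to $J_{\mathrm{total}}=J_1\hat{\otimes}J_2$, which is still of degree $0$, and a direct application of the graded tensor-product identity
\[
(A\hat{\otimes}B)^2=(-1)^{|A||B|}\,A^2\hat{\otimes}B^2\qquad(A,B\text{ homogeneous anti-linear})
\]
gives $J_{\mathrm{total}}^2=\eta\cdot 1\cdot 1=\eta\,1$. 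Hence the tensor product falls into type $[0;0,\eta]$, which settles the first assertion.

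For the second isomorphism I choose the representative $J_2=J_{1,\HH}$ of $cl_{2,0}$, which has degree $\ve_2=1$ and satisfies $J_2^2=-1$; and I take $J_1=J$ of degree $\ve_1=0$ with $J_1^2=-\eta\,1$. Because $\ve_1=0$ the factor $g_2^{\ve_1}$ is the identity, and because $\cH$ is trivially graded $g_1$ is the identity as well, so Lemma~\ref{lem:product_even} yields $J_{\mathrm{total}}=J\hat{\otimes}J_{1,\HH}$, of degree $\ve_1+\ve_2=1$. Applying the same Koszul formula gives
\[
J_{\mathrm{total}}^2=(-1)^{\ve_1\ve_2}J_1^2\cdot J_2^2=1\cdot(-\eta)\cdot(-1)=\eta\,1,
\]
so the Real structure on $\cK(\hat{\cH})$ produced is induced by an anti-unitary of degree $1$ squaring to $\eta\,1$, that is, of type $[0;1,\eta]$.

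The essentially mechanical step is the bookkeeping with the Koszul sign rule for $(A\hat{\otimes}B)^2$; the only place where care is really required is the choice of representative for $cl_{2,0}$. If one used the equivalent but ungraded-looking representative $J_{1,\RR}$ (for which $J_{1,\RR}^2=+1$) instead of the quaternionic $J_{1,\HH}$, the sign in $J_{\mathrm{total}}^2$ would come out wrong by a factor of $-1$; this is the only subtlety one must watch for, and it is resolved by observing that although $J_{1,\RR}$ and $J_{1,\HH}$ induce conjugate (hence isomorphic) Real structures on $\Cl_2$, only the genuinely quaternionic representative correctly tracks the sign when computing the square of a tensor product of anti-unitaries.
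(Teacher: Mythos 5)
Your proof is correct and follows essentially the paper's own route: the corollary is stated there as an immediate specialization of Lemma~\ref{lem:product_even} (taking $\hat{\cH}_1=\cH$ trivially graded and $\hat{\cH}_2=\CC\oplus\CC$), and your degree and sign bookkeeping, via $J^2=(-1)^{\ve_1\ve_2}J_1^2\hat{\otimes}J_2^2$, reproduces exactly the computation the paper carries out when it later proves formula~\eqref{eq:1:product_even}. The only quibble is with your closing remark: $J_{1,\RR}$ and $J_{1,\HH}$ do not actually induce isomorphic Real structures on $\Cl_2$ (their fixed-point algebras are $M_2(\RR)$ and $\HH$, respectively), so there is no genuine ambiguity of representative to resolve --- $cl_{2,0}$ is by definition $\text{Ad}_{J_{1,\HH}}$, which is what you correctly use.
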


The next theorem can be viewed as a generalisation of C.T. Wall's result~\cite[Theorem 3]{Wall:Brauer} to the infinite dimensional case.

\begin{thm}~\label{thm:products_BrR_pt}
The type of the Real graded tensor product of two Real graded elementary $\cstar$-algebras $(A,\sigma_A)$ and $(B,\sigma_B)$ depends only on those of $(A,\sigma_A)$ and $(B,\sigma_B)$. Moreover, we have the formulas 
\begin{align}
  	[0;\ve_1,\eta_1]\hat{\otimes} [0;\ve_2,\eta_2] & = [0;\ve_1+\ve_2, (-)^{\ve_1\ve_2}\eta_1\eta_2] ~\label{eq:1:product_even}\\
  	[0;\ve_1,\eta_1]\hat{\otimes}[1;\ve_2,\eta_2] &  = [1;\ve_1+\ve_2,(-)^{\ve_1+\ve_1\ve_2}\eta_1\eta_2] ~\label{eq:2:product_even-odd} \\
  	[1;\ve_1,\eta_1]\hat{\otimes}[1;\ve_2,\eta_2] & = [0;1+\ve_1+\ve_2,(-)^{\ve_1\ve_2}\eta_1\eta_2] ~\label{eq:3:product_odd}, 
\end{align}  
where the sum of degrees is mod $2$.	
\end{thm}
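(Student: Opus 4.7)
My plan is to reduce all three formulas to Lemma~\ref{lem:product_even}, invoking Corollary~\ref{cor:K_odd_Cl} to split each parity-$1$ factor as $\cK(\cH)\hat\otimes\Cl_1$. The well-definedness assertion is automatic once the formulas are established, since their right-hand sides involve only the labels $\ve_i,\eta_i$.

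Formula~(\ref{eq:1:product_even}) is a direct application of Lemma~\ref{lem:product_even}. Writing $[0;\ve_i,\eta_i]=(\cK(\hat{\cH}_i),\text{Ad}_{J_i})$, the lemma realizes the product as $(\cK(\hat{\cH}_1\hat\otimes\hat{\cH}_2),\text{Ad}_J)$ with $J=J_1g_1^{\ve_2}\hat\otimes J_2g_2^{\ve_1}$, of degree $\ve_1+\ve_2\pmod 2$ by additivity. Using the composition rule $(A\hat\otimes B)(A'\hat\otimes B')=(-1)^{|B||A'|}AA'\hat\otimes BB'$ (valid also for anti-linear operators) together with the commutation $g_iJ_i=(-1)^{\ve_i}J_ig_i$, each internal square yields $(J_ig_i^{\ve_j})^2=(-1)^{\ve_1\ve_2}\eta_i$, whence $J^2=(-1)^{\ve_1\ve_2}\eta_1\eta_2$, as required.

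For~(\ref{eq:3:product_odd}), I apply Corollary~\ref{cor:K_odd_Cl} to each factor, obtaining $[1;\ve_i,\eta_i]=(\cK(\cH_i)\hat\otimes\Cl_1,\text{Ad}_{J_i}\hat\otimes cl_{(i)})$ with $\cH_i$ ungraded and $cl_{(i)}\in\{cl_{0,1},cl_{1,0}\}$. As the $\cK(\cH_i)$'s are trivially graded, they slide past the $\Cl_1$'s sign-freely, so the product rearranges as $\cK(\cH_1\otimes\cH_2)\hat\otimes(\Cl_1\hat\otimes\Cl_1)$; the first factor is of type $[0;0,\eta_1\eta_2]$ by~(\ref{eq:1:product_even}). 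Realizing $\Cl_1\hat\otimes\Cl_1\cong\Cl_2$ via $e\hat\otimes 1=\sigma_x$, $1\hat\otimes e=\sigma_z$ endows the underlying $\CC^2$ with the grading $\text{Ad}_{\sigma_y}$ rather than the ``standard'' $\text{Ad}_{\sigma_x}$ of Example~\ref{ex:Clifford}. A case-check over the four possibilities for $(\ve_1,\ve_2)$ shows that an anti-unitary on $(\CC^2,\sigma_y)$ inducing $cl_{(1)}\hat\otimes cl_{(2)}$ has degree $1+\ve_1+\ve_2\pmod 2$ and square $(-1)^{\ve_1\ve_2}$; combining with $[0;0,\eta_1\eta_2]$ via~(\ref{eq:1:product_even}) yields~(\ref{eq:3:product_odd}). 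Formula~(\ref{eq:2:product_even-odd}) proceeds in the same spirit, with only one $\Cl_1$ factor surviving (so the output is parity-$1$) and the sign $(-1)^{\ve_1+\ve_1\ve_2}=(-1)^{\ve_1(1+\ve_2)}$ arising from commuting $J_1g_1^{\ve_2}$ past $J_2$ and past the grading unitary of the surviving $\Cl_1$.

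The main obstacle will be the asymmetric shift in formula~(\ref{eq:3:product_odd}): tracking the difference between the two natural gradings $\text{Ad}_{\sigma_x}$ and $\text{Ad}_{\sigma_y}$ of $\Cl_2$ --- isomorphic as abstract gradings of $M_2(\CC)$ yet producing different labels $[0;\ve,\eta]$ for the same Real structure --- is precisely what accounts for the ``$+1$'' in the $\ve$-slot of~(\ref{eq:3:product_odd}) and for the sign asymmetry between the two formulas involving parity-$1$ factors.
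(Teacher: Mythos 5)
Your proposal is correct and follows essentially the same route as the paper: formula~\eqref{eq:1:product_even} is Lemma~\ref{lem:product_even}, and \eqref{eq:2:product_even-odd}--\eqref{eq:3:product_odd} are obtained by splitting each parity-$1$ factor as $\cK(\cH)\hat{\otimes}\Cl_1$ via Corollary~\ref{cor:K_odd_Cl} and reducing to a computation with $\Cl_1$ and $\Cl_2$. The only difference is one of execution: where the paper invokes Corollary~\ref{cor:product_K-Cl2} together with the real Clifford isomorphisms $(\Cl_1,cl_{0,1})\hat{\otimes}(\Cl_1,cl_{0,1})\cong(\Cl_2,cl_{0,2})$ and their companions, you compute the degree and square of the inducing anti-unitary on $\Cl_1\hat{\otimes}\Cl_1$ directly in a matrix model, which has the merit of making explicit the grading bookkeeping on $\Cl_2$ (the actual source of the shift $1+\ve_1+\ve_2$ in \eqref{eq:3:product_odd}) that the paper leaves implicit.
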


\begin{proof}
The formula~\eqref{eq:1:product_even} is nothing more than Lemma~\ref{lem:product_even}. Indeed, we have seen that the Real structure on $\cK(\hat{\cH}_1\hat{\otimes}\hat{\cH}_2)$ is defined by the anti-unitary $J=J_1g_1^{\ve_2}\hat{\otimes}J_2g_2^{\ve_1}$. The degree of $J$ is then $\ve=\ve_1+\ve_2$, and $J^2=(-1)^{\ve_1\ve_2}J_1^2\hat{\otimes}J_2^2=(-1)^{\ve_1\ve_2}\eta_1\eta_2 1\hat{\otimes}1$.\\
Also, combining Corollary~\ref{cor:K_odd_Cl}, Corollary~\ref{cor:product_K-Cl2}, we get ~\eqref{eq:2:product_even-odd}, by considering the isomorphism of Rg $\cstar$-algebras \[(\cK(\hat{\cH}_1),\text{Ad}_{j_1})\hat{\otimes}(\cK(\cH_2)\hat{\otimes}\Cl_1,\text{Ad}_{J_2}\hat{\otimes}\tau_1)\cong (\cK(\hat{\cH}_1\hat{\otimes}\cH_2)\hat{\otimes}\Cl_1,\text{Ad}_J\hat{\otimes}\tau_1),\]
where $J=J_1\hat{\otimes}J_2$, and $\tau_1$ is either $cl_{0,1}$ or $cl_{1,0}$. \\
Finally, the equality~\eqref{eq:3:product_odd} follows from Corollary~\ref{cor:product_K-Cl2} and the following isomorphisms of Rg $\cstar$-algebras, which can be established by merely using the properties of the real Clifford algebras~\cite{Atiyah-Bott-Schapiro:Clifford}:
\begin{align*}
	(\Cl_1,cl_{0,1})\hat{\otimes}(\Cl_1,cl_{0,1}) & \cong (\Cl_2,cl_{0,2})	\\
	(\Cl_1,cl_{0,1})\hat{\otimes}(\Cl_1,cl_{1,0}) & \cong (\Cl_2,cl_{0,2})\\
	(\Cl_1,cl_{1,0})\hat{\otimes}(\Cl_1,cl_{1,0}) & \cong (\Cl_2,cl_{2,0}).
	\end{align*}	
\end{proof}

We summarize all the preceding discussions by the following result.

\begin{dfpro}
Denote by $\wK_0$, the Rg elementary $\cstar$-algebra $(\wK_{ev},\text{Ad}_{J_\RR})$, where $J_\RR$ is the anti-unitary of degree $0$ on $\hat{\cH}$ defined by $(x,y)\mto (\bar{x},\bar{y})$ ("${}^-$" is the complex conjugation with respect to an arbitrary orthonormal basis of $\what{\cH}$). Then $\wK_0$ is of type $[0;0,+]$.

Say that two Rg elementary $\cstar$-algebras $A$ and $B$ are \emph{stably isomorphic} if $A\hat{\otimes}\wK_0\cong B\hat{\otimes}\wK_0$, as Rg $\cstar$-algebras.

Stable isomorphism classes of Rg elementary $\cstar$-algebras form an abelian group of order $8$ under Rg tensor products, denoted by $\wRBr(\ast)$, and called \emph{the Rg Brauer group of the point}. The zero element of $\wRBr(\ast)$ is the element $\wK_0$.

 Furthermore, elements of $\wRBr(\ast)$ are, up to stable isomorphisms, classified by the following $8$-periodic table

\renewcommand{\arraystretch}{1} %donne la distance entre les lignes%
\setlength{\tabcolsep}{0.7cm} %donne la distance entre les collones%
\begin{table}[!h]
\centering

\begin{tabular}{|cc|} \hline
 Parity $0$  & Parity $1$ \\ \hline
  $\what{\cK}_0:=[0;0,+]$    & $\what{\cK}_1:=[1;0,+]$  \\ 
  $\what{\cK}_2:=[0;1,+]$    & $\what{\cK}_3:=[1;1,-]$  \\ 
  $\what{\cK}_4:=[0;0,-]$    & $\what{\cK}_5:=[1;0,-]$   \\ 
  $\what{\cK}_6:=[0;1,-]$    & $\what{\cK}_7:=[1;1,+]$   \\ \hline

\end{tabular}
\caption{Classification of Rg elementary $\cstar$-algebras\label{classification}}
\end{table} 	

\end{dfpro}

\begin{rem}
Under the notations of Table~\ref{classification}, we set for all $n\in \NN^\ast$:
\[\what{\cK}_n:= \underbrace{\what{\cK}_1\hat{\otimes}\cdots\hat{\otimes}\what{\cK}_1}_{n-times}.\]
Then $\what{\cK}_p\hat{\otimes}\what{\cK}_q\cong \what{\cK}_{p+q}$, and from Theorem~\ref{thm:products_BrR_pt}, $\what{\cK}_n\cong \what{\cK}_{n+8}$ for all $n\in \NN$. Now, define $\what{\cK}_{-n}$ as the inverse of $\what{\cK}_n$ in $\wRBr(\ast)$. Then $\what{\cK}_{-n}= \what{\cK}_{8-n}$	
\end{rem}

\begin{ex}~\label{ex:type-of-Clifford}(Cf.~\cite{Schroder:KR-theory}).
One can determine the Real structures of the graded Clifford $\cstar$-algebras $\Cl_n$ (recall Example~\ref{ex:Clifford}), for $n\in \NN^\ast$, in the following way: decompose $n$ into a sum $p+q$, and consider the Real space $\RR^{p,q}\otimes_\RR\CC$, with the obvious involution; this latter induces a Real structure $cl_{p,q}$ on the graded $\cstar$-algebra $\Cl_n=Cl(\RR^{p,q}\otimes_\RR\CC)$, such that the Real part is isomorphic to the graded real Clifford algebra $Cl_{p,q}$. For this reason, we denote the thus obtained Real graded $\cstar$-algebra by $\Cl_{p,q}$. Indeed, for every decomposition $n=p+q$, it is not hard to check that $\Cl_{p,q}$ is a Rg elementary $\cstar$-algebra of type $q-p \mod 8$ (see for instance~\cite{Donovan-Karoubi}).	
\end{ex}

%
%%%%%%% End of Appendix A %%%%%%%%%%%%
%

%%%%
\begin{bibdiv}\begin{biblist}

\bib{Atiyah:K_Reality}{article}{
  author={Atiyah, M. F.},
  title={$K$\nobreakdash -theory and reality},
  journal={Quart. J. Math. Oxford Ser. (2)},
  volume={17},
  date={1966},
  pages={367--386},
  issn={0033-5606},
  review={\MRref {0206940}{34\,\#6756}},
}

\bib{Atiyah-Bott-Schapiro:Clifford}{article}{
  author={Atiyah, M. F.},
  author={Bott, R.},
  author={Shapiro, A.},
  title={Clifford modules},
  journal={Topology},
  volume={3},
  date={1964},
  number={suppl. 1},
  pages={3--38},
  issn={0040-9383},
  review={\MR {0167985 (29 \#5250)}},
}

\bib{Atiyah-Segal:Twisted_K}{article}{
  author={Atiyah, M.},
  author={Segal, G.},
  title={Twisted $K$-theory},
  journal={Ukr. Mat. Visn.},
  volume={1},
  date={2004},
  number={3},
  pages={287--330},
  issn={1810-3200},
  translation={ journal={Ukr. Math. Bull.}, volume={1}, date={2004}, number={3}, pages={291--334}, issn={1812-3309}, },
  review={\MR {2172633 (2006m:55017)}},
}

\bib{Blackadar:K-theory}{book}{
  author={Blackadar, Bruce},
  title={\(K\)\nobreakdash -theory for operator algebras},
  series={Mathematical Sciences Research Institute Publications},
  volume={5},
  edition={2},
  publisher={Cambridge University Press},
  place={Cambridge},
  date={1998},
  pages={xx+300},
  isbn={0-521-63532-2},
  review={\MRref {1656031}{99g:46104}},
}

\bib{Connes:anti-isom}{article}{
  author={Connes, A.},
  title={A factor not anti-isomorphic to itself},
  journal={Bull. London Math. Soc.},
  volume={7},
  date={1975},
  pages={171--174},
  issn={0024-6093},
  review={\MR {0435864 (55 \#8815)}},
}

\bib{Crocker-Kumjian-Raeburn-Williams:Brauer}{article}{
  author={Crocker, D.},
  author={Kumjian, A.},
  author={Raeburn, I.},
  author={Williams, D. P.},
  title={An equivariant Brauer group and actions of groups on $C^*$-algebras},
  journal={J. Funct. Anal.},
  volume={146},
  date={1997},
  number={1},
  pages={151--184},
  issn={0022-1236},
  review={\MR {1446378 (98j:46076)}},
  doi={10.1006/jfan.1996.3010},
}

\bib{Dixmier-Douady:Champs}{article}{
  author={Dixmier, J.},
  author={Douady, A.},
  title={Champs continus d'espaces hilbertiens et de $C^{\ast } $-alg\`ebres},
  language={French},
  journal={Bull. Soc. Math. France},
  volume={91},
  date={1963},
  pages={227--284},
  issn={0037-9484},
  review={\MR {0163182 (29 \#485)}},
}

\bib{Donovan-Karoubi}{article}{
  author={Donovan, P.},
  author={Karoubi, M.},
  title={Graded Brauer groups and $K$-theory with local coefficients},
  journal={Inst. Hautes \'Etudes Sci. Publ. Math.},
  number={38},
  date={1970},
  pages={5--25},
  issn={0073-8301},
  review={\MR {0282363 (43 \#8075)}},
}

\bib{Doran-Fell:Representations}{book}{
  author={Fell, J. M. G.},
  author={Doran, R. S.},
  title={Representations of $^*$\nobreakdash -algebras, locally compact groups, and Banach $^*$\nobreakdash -algebraic bundles. Vol. 1},
  series={Pure and Applied Mathematics},
  volume={125},
  note={Basic representation theory of groups and algebras},
  publisher={Academic Press Inc.},
  place={Boston, MA},
  date={1988},
  pages={xviii+746},
  isbn={0-12-252721-6},
  review={\MRref {936628}{90c:46001}},
}

\bib{Freed-Hopkins:Twisted_K2}{article}{
  author={Freed, D. S.},
  author={Hopkins, M. J.},
  author={Teleman, C.},
  title={Loop groups and twisted $K$-theory II},
  journal={J. Amer. Math. Soc.},
  volume={26},
  date={2013},
  number={3},
  pages={595--644},
  issn={0894-0347},
  review={\MR {3037783}},
  doi={10.1090/S0894-0347-2013-00761-4},
}

\bib{Giordano:Antiautomorphismes}{article}{
  author={Giordano, T.},
  title={Antiautomorphismes involutifs des facteurs de von Neumann injectifs. I},
  language={French},
  journal={J. Operator Theory},
  volume={10},
  date={1983},
  number={2},
  pages={251--287},
  issn={0379-4024},
  review={\MR {728909 (85h:46087)}},
}

\bib{Hilsum-Skandalis:Morphismes}{article}{
  author={Hilsum, M.},
  author={Skandalis, G.},
  title={Morphismes \(K\)\nobreakdash -orient\'es d'espaces de feuilles et fonctorialit\'e en th\'eorie de Kasparov \textup (d'apr\`es une conjecture d'A. Connes\textup )},
  language={French, with English summary},
  journal={Ann. Sci. \'Ecole Norm. Sup. (4)},
  volume={20},
  date={1987},
  number={3},
  pages={325--390},
  issn={0012-9593},
  review={\MRref {925720}{90a:58169}},
}

\bib{Husemoller:Fibre}{book}{
  author={Husem\"oller, D.},
  title={Fibre bundles},
  series={Graduate Texts in Mathematics},
  volume={20},
  edition={3},
  publisher={Springer-Verlag},
  place={New York},
  date={1994},
  pages={xx+353},
  isbn={0-387-94087-1},
  review={\MR {1249482 (94k:55001)}},
}

\bib{Jones:anti-isom}{article}{
  author={Jones, V. F. R.},
  title={A ${\rm II}_{1}$ factor anti-isomorphic to itself but without involutory antiautomorphisms},
  journal={Math. Scand.},
  volume={46},
  date={1980},
  number={1},
  pages={103--117},
  issn={0025-5521},
  review={\MR {585235 (82a:46075)}},
}

\bib{Karoubi:Twisted}{article}{
  author={Karoubi, M.},
  title={Twisted $K$-theory---old and new},
  conference={ title={$K$-theory and noncommutative geometry}, },
  book={ series={EMS Ser. Congr. Rep.}, publisher={Eur. Math. Soc., Z\"urich}, },
  date={2008},
  pages={117--149},
  review={\MR {2513335 (2010h:19010)}},
  doi={10.4171/060-1/5},
}

\bib{Kasparov:Operator_K}{article}{
  author={Kasparov, G. G.},
  title={The operator \(K\)\nobreakdash -functor and extensions of \(C^*\)\nobreakdash -algebras},
  language={Russian},
  journal={Izv. Akad. Nauk SSSR Ser. Mat.},
  volume={44},
  date={1980},
  number={3},
  pages={571--636, 719},
  issn={0373-2436},
  translation={ language={English}, journal={Math. USSR-Izv.}, volume={16}, date={1981}, number={3}, pages={513--572 (1981)}, },
  review={\MRref {582160}{81m:58075}},
}

\bib{Kumjian-Muhly-Renault-Williams:Brauer}{article}{
  author={Kumjian, A.},
  author={Muhly, P. S.},
  author={Renault, J. N.},
  author={Williams, D. P.},
  title={The Brauer group of a locally compact groupoid},
  journal={Amer. J. Math.},
  volume={120},
  date={1998},
  number={5},
  pages={901--954},
  issn={0002-9327},
  review={\MR {1646047 (2000b:46122)}},
}

\bib{Li:Real_algebras}{book}{
  author={Li, B.},
  title={Real operator algebras},
  publisher={World Scientific Publishing Co. Inc.},
  place={River Edge, NJ},
  date={2003},
  pages={xiv+241},
  isbn={981-238-380-8},
  review={\MR {1995682 (2004k:46100)}},
  doi={10.1142/9789812795182},
}

\bib{Moutuou:Twistings}{article}{
  author={Moutuou, E.M.},
  title={Twistings of $KR$ for Real groupoids},
  status={eprint},
  note={\arxiv {1110.6836}},
  date={2011},
}

\bib{Moutuou:Real.Cohomology}{article}{
  author={Moutuou, E. M.},
  title={On groupoids with involutions and their cohomology},
  journal={New York J. Math.},
  volume={19},
  year={2013},
  pages={729--792},
}

\bib{Moutuou:Thesis}{thesis}{
  author={Moutuou, E.M.},
  title={Twisted groupoid $KR$--Theory},
  type={Ph.D. thesis},
  institution={Universit\'e de Lorraine - Metz, and Universit\"at Paderborn},
  eprint={http://www.theses.fr/2012LORR0042},
  date={2012},
}

\bib{Parker:Brauer}{article}{
  author={Parker, E. M.},
  title={The Brauer group of graded continuous trace $C^*$-algebras},
  journal={Trans. Amer. Math. Soc.},
  volume={308},
  date={1988},
  number={1},
  pages={115--132},
  issn={0002-9947},
  review={\MR {946434 (89h:46080)}},
  doi={10.2307/2000953},
}

\bib{Phillips:anti-isom}{article}{
  author={Phillips, N. C.},
  title={A simple separable $C^*$-algebra not isomorphic to its opposite algebra},
  journal={Proc. Amer. Math. Soc.},
  volume={132},
  date={2004},
  number={10},
  pages={2997--3005 (electronic)},
  issn={0002-9939},
  review={\MR {2063121 (2005b:46127)}},
  doi={10.1090/S0002-9939-04-07330-7},
}

\bib{Phillips-Viola:anti-isom}{article}{
  author={Phillips, N. C.},
  author={Viola, M. G.},
  title={A simple separable exact ${\rm C}^*$-algebra not anti-isomorphic to itself},
  journal={Math. Ann.},
  volume={355},
  date={2013},
  number={2},
  pages={783--799},
  issn={0025-5831},
  review={\MR {3010147}},
  doi={10.1007/s00208-011-0755-z},
}

\bib{Raeburn-Williams:Morita_equivalence}{book}{
  author={Raeburn, I.},
  author={Williams, D. P.},
  title={Morita equivalence and continuous-trace $C^*$-algebras},
  series={Mathematical Surveys and Monographs},
  volume={60},
  publisher={American Mathematical Society},
  place={Providence, RI},
  date={1998},
  pages={xiv+327},
  isbn={0-8218-0860-5},
  review={\MR {1634408 (2000c:46108)}},
}

\bib{Rosenberg:Continuous-trace}{article}{
  author={Rosenberg, J.},
  title={Continuous-trace algebras from the bundle theoretic point of view},
  journal={J. Austral. Math. Soc. Ser. A},
  volume={47},
  date={1989},
  number={3},
  pages={368--381},
  issn={0263-6115},
  review={\MR {1018964 (91d:46090)}},
}

\bib{Saltman:Azumaya}{article}{
  author={Saltman, David J.},
  title={Azumaya algebras with involution},
  journal={J. Algebra},
  volume={52},
  date={1978},
  number={2},
  pages={526--539},
  issn={0021-8693},
  review={\MR {495234 (80a:16013)}},
  doi={10.1016/0021-8693(78)90253-3},
}

\bib{Schroder:KR-theory}{book}{
  author={Schr{\"o}der, H.},
  title={$K$-theory for real $C^*$-algebras and applications},
  series={Pitman Research Notes in Mathematics Series},
  volume={290},
  publisher={Longman Scientific \& Technical},
  place={Harlow},
  date={1993},
  pages={xiv+162},
  isbn={0-582-21929-9},
  review={\MR {1267059 (95f:19006)}},
}

\bib{Tu:Twisted_Poincare}{article}{
  author={Tu, J.-L.},
  title={Twisted $K$\nobreakdash -theory and Poincar\'e duality},
  journal={Trans. Amer. Math. Soc.},
  volume={361},
  date={2009},
  number={3},
  pages={1269--1278},
  issn={0002-9947},
  review={\MRref {2457398}{}},
}

\bib{Tu-Xu-Laurent-Gengoux:Twisted_K}{article}{
  author={Tu, J.-L.},
  author={Xu, P.},
  author={Laurent-Gengoux, C.},
  title={Twisted $K$\nobreakdash -theory of differentiable stacks},
  language={English, with English and French summaries},
  journal={Ann. Sci. \'Ecole Norm. Sup. (4)},
  volume={37},
  date={2004},
  number={6},
  pages={841--910},
  issn={0012-9593},
  review={\MRref {2119241}{2005k:58037}},
}

\bib{Wall:Brauer}{article}{
  author={Wall, C. T. C.},
  title={Graded Brauer groups},
  journal={J. Reine Angew. Math.},
  volume={213},
  date={1963/1964},
  pages={187--199},
  issn={0075-4102},
  review={\MR {0167498 (29 \#4771)}},
}

 \end{biblist}\end{bibdiv}

\end{document}